\documentclass[9pt, amsfonts]{amsart}
\usepackage {amssymb, adjustbox, enumerate, amsbsy, comment}
\usepackage {amsmath}
\usepackage[mathscr]{eucal}
\usepackage{amsthm}
\usepackage{graphicx}
\usepackage{hyperref}
\usepackage {amscd}
\usepackage {epic}
\usepackage[alphabetic]{amsrefs}
\usepackage{adjustbox}
\usepackage{color}
\usepackage{stmaryrd}



\voffset=-40pt
\hoffset=-10pt
\oddsidemargin = 31pt
\topmargin = 20pt
\headheight = 0pt
\headsep = 25pt
\textheight = 620pt
\textwidth = 412pt
\marginparsep = 10pt
\marginparwidth = 23pt
\footskip = 30pt



\newcommand{\spec}[0]{\operatorname{Spec}}
\newcommand{\red}[0]{\operatorname{red}}

\newcommand{\Aut}[0]{\operatorname{Aut}}


\newcommand{\cX}{{\mathcal{X}}}
\newcommand{\cY}{\mathcal{Y}}

\newcommand{\bB}{\mathbf{B}}
\newcommand{\bD}{\mathbf{D}}
\newcommand{\bchi}{\boldsymbol{\chi}}

\newcommand{\bT}{\mathbf{T}}

\newcommand{\mX}{{\mathcal{X}}}

\newcommand{\cD}{{\mathcal{D}}}

\newcommand{\Supp}{{\rm Supp}}
\newcommand{\sym}{\mathrm{Sym}}
\newcommand{\Ric}{\mathrm{Ric}}


\newtheorem{thm}{Theorem}[section]

\newtheorem{lem}[thm]{Lemma}
\newtheorem{cor}[thm]{Corollary}

\newtheorem{prop}[thm]{Proposition}


\theoremstyle{definition}
\newtheorem{defn}[thm]{Definition}

\newtheorem{exmp}[thm]{Example}


\newtheorem{rem}[thm]{Remark}

\newtheorem{summ}[thm]{Summary}         
\newtheorem*{ack}{Acknowledgments}      

\newtheorem{defn-thm}[thm]{Definition--Theorem}  
\newtheorem{defn-lem}[thm]{Definition--Lemma}  

\newtheorem{assumption}[thm]{Assumption}

\newcommand{\AAA}{\mathbb{A}}
\newcommand{\HH}{\mathbb{H}}
\newcommand{\QQ}{\mathbb{Q}}
\newcommand{\NN}{\mathbb{N}}
\newcommand{\GG}{\mathbb{G}}
\newcommand{\PP}{\mathbb{P}}
\newcommand{\RR}{\mathbb{R}}
\newcommand{\CC}{\mathbb{C}}
\newcommand{\ZZ}{\mathbb{Z}}
\newcommand{\cZ}{\mathcal{Z}}

\newcommand{\rU}{\mathrm{U}}
\newcommand{\sE}{\mathscr{E}}
\newcommand{\sL}{\mathscr{L}}
\newcommand{\sO}{\mathscr{O}}

\newcommand{\cE}{\mathcal{E}}
\newcommand{\cM}{\mathcal{M}}
\newcommand{\cW}{\mathcal{W}}

\newcommand{\fg}{\mathfrak{g}}
\newcommand{\fk}{\mathfrak{k}}
\newcommand{\fp}{\mathfrak{p}}
\newcommand{\fsl}{\mathfrak{sl}}
\newcommand{\reg}{\mathrm{reg}}

\newcommand{\ti}{\tilde}

\newcommand{\aut}{\mathrm{Aut}}

\newcommand{\faut}{\mathfrak{aut}}
\newcommand{\dist}{\mathrm{dist}}

\newcommand{\chow}{\mathrm{Chow}}

\newcommand{\hilb}{\mathrm{Hilb}}
\newcommand{\Hilb}{\mathrm{Hilb}}
\newcommand{\Hom}{\mathrm{Hom}}
\newcommand{\isom}{\mathrm{Isom}}

\newcommand{\proj}{\mathrm{Proj}}
\newcommand{\DF}{\mathrm{DF}}

\newcommand{\GH}{\mathrm{GH}}
\newcommand{\CM}{\mathrm{CM}}
\newcommand{\CH}{\mathrm{CH}}

\newcommand{\KE}{\mathrm{KE}}
\newcommand{\kst}{\mathrm{kst}}
\newcommand{\kps}{\mathrm{kps}}

\newcommand{\FS}{\mathrm{FS}}

\newcommand{\SL}{\mathrm{SL}}
\newcommand{\SP}{\mathrm{sp}}
\newcommand{\topo}{\mathrm{top}}
\newcommand{\tr}{\mathrm{Tr}}
\newcommand{\fuu}{\mathfrak{u}}
\newcommand{\fgg}{\mathfrak{g}}

\newcommand{\nr}{\mathrm{fd}}
\newcommand{\UU}{\mathrm{U}}
\newcommand{\SU}{\mathrm{SU}}
\newcommand{\Red}{}
\newcommand{\Blue}{}

\newcommand{\lam}{\lambda}

\newcommand{\al}{\alpha}
\newcommand{\be}{\beta}
\newcommand{\bbe}{\mathfrak{B}}
\newcommand{\ep}{\epsilon}
\newcommand{\si}{\sigma}
\newcommand{\vep}{\varepsilon}

\newcommand{\vpi}{\varpi}
\newcommand{\de}{\delta}
\newcommand{\De}{\Delta}
\newcommand{\ga}{\gamma}

\newcommand{\pp}{{\prime\prime}}
\newcommand{\ddbar}{\sqrt{-1}\partial\bar\partial}
\newcommand{\la}{\langle}
\newcommand{\ra}{\rangle}
\theoremstyle{remark}
\newtheorem{claim}[thm]{Claim}




\input{xy}
\xyoption{all}

\begin{document}

\title{On the proper moduli spaces of smoothable K\"ahler-Einstein Fano varieties }

\author{Chi Li}
\address{Mathematics Department, Stony Brook University\\ Stony Brook NY, 11794-3651\\ USA}
\address{Current Address: Department of Mathematics, Purdue University\\ West Lafayette, IN 47907-2067\\ USA}
\email{li2285@purdue.edu}
\vspace{.5cm}

\author{Xiaowei Wang}
\address{Department of Mathematics and Computer Science\\
           Rutgers University, Newark NJ 07102-1222\\ USA}
\email{xiaowwan@rutgers.edu}
\vspace{.5cm}

\author{Chenyang Xu}
\address{Beijing International Center of Mathematics Research\\ 5 Yiheyuan Road, Haidian District, Beijing, 100871, China}
\email{cyxu@math.pku.edu.cn}
\address{Current Address: Department of Mathematics\\Massachusetts Institute of Technology\\ Cambridge, MA 02139-4307\\ USA}
\email{cyxu@math.mit.edu}

\date{\today}
\maketitle
\begin{abstract}

In this paper, we investigate the geometry of the orbit space of the closure of the subscheme  parametrizing smooth  Fano K\"ahler-Einstein manifolds inside an appropriate Hilbert scheme.  In particular, we  prove that being K-semistable is a Zariski open condition and establish  the uniqueness for the Gromov-Hausdorff limit for a punctured flat family of Fano K\"ahler-Einstein manifolds. {Based on these,  we  construct a proper scheme parameterizing the S-equivalent classes of $\QQ$-Gorenstein smoothable, K-semistable Fano varieties, and verify various necessary properties  to guarantee that it is a {\em good} moduli space.}
\end{abstract}

\tableofcontents
\section{Introduction}
Constructing  moduli spaces for higher dimensional  algebraic varieties is a fundamental problem in algebraic geometry. For dimension one case, the moduli space parametrizing Deligne-Mumford stable curves was constructed via various kind of methods, e.g. geometric invariant theory (GIT), Teichm\"uler space  quotient by mapping class group, etc.  For higher dimensional case, one of the natural classes to consider is all canonically polarized manifolds, for which GIT machinery  is quite successful  (see \cite{Aubin78, Yau78, Vie83, Don01}). However,  to construct a geometrically natural  compactification for these moduli spaces, the GIT method in its classical form fails to produce that(cf. \cite{WX}), thus people have to develop substitutes. In fact,  it has been quite a while for people to realize what kind of varieties should be included in order to form a {\em proper} moduli (cf. \cite{KSB}). Thanks to the recent breakthrough coming from the theory of minimal model program (see \cite{BCHM} etc.),  one is able to obtain a rather satisfactory theory on proper projective moduli spaces parameterizing  KSBA-stable varieties, named after Koll\'ar-Shepher-Barron-Alexeev (see \cite{Kollar13} for a concise survey of this theory).  We also remark that it is realized later that this compactification should coincide with the compactification from K\"ahler-Einstein metric/K-stability (cf. \cite{Odaka13,WX,BG13}).

As for Fano varieties, the story is much subtler. Apart from some local properties, e.g. having only {\em Kawamata log terminal} (klt) singularities when a Fano variety is assumed to be K-semistable  (cf. \cite{Odaka13} ) and admitting klt Fano degenerations as long as a general fiber is a klt Fano variety in a one parameter family (cf. \cite{LX}), it is still not clear what kind of general Fano varieties we should parametrize in order for us to obtain a nicely behaved moduli space, especially if we aim to find a compact Hausdorff one, and how to construct it.  The recent breakthrough in K\"ahler-Einstein problem, namely the solution to the Yau-Tian-Donaldson Conjecture (\cite{CDS1, CDS2, CDS3} and \cite{Tian2014}) is a major step forward, especially for understanding those Fano manifolds with K\"ahler-Einstein metrics. Furthermore, it implies that the right limits of smooth  K\"ahler-Einstein manifolds form a bounded family.  In this paper, we aim to use the analytic results they established to investigate the geometry of the compact space of orbits which is the closure of the  space parametrizing smooth Fano varieties.

\subsection{Main results}

Our first main result of this paper is  the following:
\begin{thm}\label{main}Let $\cX\to C$ be a flat family of projective varieties over a pointed smooth curve
$(C,0)$ with $0\in C$. Suppose
\begin{enumerate}
\item $K_\cX$ is $\QQ$-Cartier and $ -K_{\cX/C}$ is relatively ample over $C$;
\item\label{sm}for any $t\in C^\circ:=C\setminus \{0\}$, $\cX_t$ is smooth and $\cX_0$ is klt; 
\item\label{Kst} $\cX_0$ is {K-polystable}.
\end{enumerate}
Then
\begin{enumerate}[(i)]
\item  there is a Zariski open neighborhood $U$ of $0\in C$ on which $\cX_t$ is K-semistable for all $t\in U$, and  K-stable if we assume further  $\cX_0$ has a discrete automorphism group;
\item  for any other flat projective family $\cX'\to C$ satisfying (1)-(3) as above and
$$
\cX'\times_C C^\circ\cong \cX\times_C C^\circ,
$$
we can conclude  $\cX'_0\cong \cX_0$;
\item  ${\cX_0}$ admits a weak K\"ahler-Einstein metric $\omega_\KE(\cX_0)$. Moreover, if we assume further that $\cX_t$ is K-polystable for all $t\in C^\circ$, then $(\cX_0,\omega_\KE(\cX_0))$ 
is the Gromov-Hausdorff limit of a family $\{(\cX_t,\omega_\KE(\cX_t)\}_{t\in C^\circ}$ as $t\to 0$ where $\omega_\KE(\cX_t)$ is a K\"ahler-Einstein metric on $\cX_t$ for each $t\in C^\circ$.
\end{enumerate}
\end{thm}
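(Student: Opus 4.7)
The plan is to attack parts (iii), (ii), (i) in that order, since existence of the weak Kähler--Einstein metric on $\cX_0$ and uniqueness of the Gromov--Hausdorff limit will feed directly into the openness argument. The overall strategy is analytic-to-algebraic: produce a Gromov--Hausdorff limit of smooth KE fibers along a sequence $t_i\to 0$, invoke Donaldson--Sun's partial $C^0$-estimate package to identify this limit with a normal $\QQ$-Fano variety sitting in an appropriate Hilbert scheme, and then match it algebraically with $\cX_0$ by exploiting the fact that K-polystable points should have ``closed'' orbits.

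\textbf{Setup.} Fix $r$ so that $-rK_{\cX/C}$ is relatively very ample and relatively embed $\cX\hookrightarrow \PP^N_C$; every fiber acquires a Hilbert point in a fixed $\mathrm{Hilb}\subset\mathrm{Hilb}_{\PP^N}$. Let $G=\mathrm{PGL}_{N+1}$. For $t_i\to 0$, using K-polystability of $\cX_0$ together with a continuity/perturbation argument along the family, show (after $G$-translates that balance the embedding) that a cofinal subfamily of $\cX_{t_i}$ admits genuine Kähler--Einstein metrics by the Yau--Tian--Donaldson theorem. Donaldson--Sun then provides a Gromov--Hausdorff subsequential limit $W_\infty$ carrying the structure of a normal $\QQ$-Fano variety with klt singularities and a weak KE metric, with the convergence realized inside $\mathrm{Hilb}$ via $L^2$-normalized pluri-anticanonical embeddings.

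\textbf{Identification step (the main obstacle).} The crux is to prove $W_\infty\cong \cX_0$. Both arise as $\mathrm{Hilb}$-limits of $G$-translates of the generic Hilbert point, hence both lie inside the orbit closure $\overline{G\cdot[\cX_\eta]}$. Since $\cX_0$ is K-polystable by hypothesis and $W_\infty$ is K-polystable by Berman's theorem (weak KE implies K-polystability), the expected mechanism is that in a GIT-like picture a K-polystable point has closed $G$-orbit, and two closed orbits in the same orbit closure must coincide. The difficulty is that a classical GIT quotient is not available here, so the argument has to be done by hand: construct an $\mathrm{Aut}(\cX_0)$-equivariant analytic Luna-type slice through $[\cX_0]$ in $\mathrm{Hilb}$, show that any K-polystable point in a sufficiently small slice neighborhood is $G$-equivalent to $[\cX_0]$, and verify that $W_\infty$ lands in such a neighborhood (up to the $G$-action) because it is the GH limit of fibers specializing to $\cX_0$. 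This slice-plus-separation step is the technical heart.

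\textbf{Conclusion.} Once $W_\infty\cong\cX_0$, part (iii) follows: $\cX_0$ inherits the weak KE metric, and since every subsequential GH limit is isomorphic to $\cX_0$ the full family GH-converges as $t\to 0$. Part (ii) is then immediate: any other family $\cX'$ with $\cX'|_{C^\circ}\cong\cX|_{C^\circ}$ produces the same GH limit, hence $\cX'_0\cong\cX_0$. For (i), I would argue by contradiction using uniqueness: if K-semistability fails along some $t_i\to 0$, invoke the MMP-based results of Li--Xu to replace each $\cX_{t_i}$ by a K-semistable, then K-polystable, special degeneration $\cY_i$; assemble the $\cY_i$ into an auxiliary flat family whose K-polystable limit, by (ii), must again be $\cX_0$, contradicting K-instability of $\cX_{t_i}$. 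When $\mathrm{Aut}(\cX_0)$ is discrete the implicit function theorem for the complex Monge--Ampère equation applies, upgrading the GH convergence to honest KE metrics on nearby smooth $\cX_t$, and Berman's theorem combined with discreteness of $\mathrm{Aut}$ then yields K-stability rather than just K-semistability.
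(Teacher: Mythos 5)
There is a genuine circularity at the foundation of your plan. You prove (iii) first and (i) last, but the very first step of your argument for (iii) --- ``show that a cofinal subfamily of $\cX_{t_i}$ admits genuine K\"ahler--Einstein metrics by Yau--Tian--Donaldson'' --- requires knowing that the nearby fibers are K-polystable, which is (essentially) part (i). Worse, this input is simply false in general: condition (3) only makes $\cX_0$ K-polystable, and the nearby smooth fibers are in general only K-\emph{semi}stable, so they need not carry K\"ahler--Einstein metrics at all; no ``continuity/perturbation argument'' can produce them when $\mathrm{Aut}(\cX_0)$ is positive-dimensional or $\cX_0$ is singular. The paper's entire architecture exists to get around exactly this: it throws in an auxiliary divisor $\cD\in|-mK_{\cX}|$ and works with \emph{conical} K\"ahler--Einstein metrics of angle $\beta<1$, which do exist on the nearby fibers because $\beta$-K-(poly)stability for $\beta<1$ follows from K-semistability by the interpolation Lemma \ref{interpolate}; and K-semistability of the nearby fibers is itself established not by your Li--Xu contradiction (one cannot in general ``assemble the $\cY_i$ into an auxiliary flat family,'' and a K-unstable Fano need not admit any K-polystable degeneration), but by proving that the K-semistable threshold is a constructible function on the base (Proposition \ref{p-con}) and then lower semicontinuous at $0$ via the upper semicontinuity of stabilizer dimensions.

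Your identification step also has a gap that the local Luna-slice picture cannot close by itself. The slice argument (the paper's Lemma \ref{disj}) only rules out non-equivalent K-polystable orbits \emph{near} $[\cX_0]$; a priori the Gromov--Hausdorff limit $W_\infty$ sits somewhere in the limiting broken orbit $\overline{BO}$, possibly far from $[\cX_0]$ and not in the closure of its orbit, and ``two closed orbits in a common orbit closure coincide'' is not available outside genuine GIT. The paper bridges this with two further ingredients you are missing: an \emph{anchor} --- the purely algebraic small-angle uniqueness Theorem \ref{unique}, proved via the ACC for log canonical thresholds, which forces the limit to equal $(\cX_0,\cD_0)$ for all $\beta\le\beta_0$ --- and a \emph{connecting mechanism}, the intermediate-value Lemma \ref{IVT} applied to the continuous path $\beta\mapsto\chow(\cX_t,(1-\beta)\cD_t)$, which shows that if the limit ever jumped away from $[\cX_0]$ one could manufacture a test configuration with vanishing Futaki invariant between two non-isomorphic K-polystable pairs. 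Without the continuity path in $\beta$ (you work only at $\beta=1$) you have neither the anchor nor the bridge, so the claim that $W_\infty$ ``lands in such a neighborhood'' is unsubstantiated.
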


If both $\cX_0$ and $\cX'_0$ are assumed to be {\em smooth} K\"ahler-Einstein manifolds then part of Theorem \ref{main} is a consequence of the work \cite{Sz2010}, where the more general case for arbitrary polarization is established. When the fiber is of dimension 2, this is also implied by the work of \cite{Tian1990, OSS} as explicit compactifications of K\"ahler-Einstein Del Pezzo surfaces are constructed there.  We remark that the Zariski openness has already been established in \cite{Don2013,Odaka14} when the fibers are Fano K\"ahler-Einstein {\em manifolds} with {\em discrete} automorphism.  Finally, we remark that there is an independent work in \cite{SSY} obtaining similar results along this line, see Remark \ref{r-history}.

\bigskip

Now let us give a brief account of our approach to Theorem \ref{main}. First we note that although  part of our theorem is stated in algebro-geometric  terms, the proof indeed relies heavily on known analytic results, especially the recent work in \cite{CDS2, CDS3, Tian2014}. On the other hand, we remark that no  further analytic tools are developed beyond their work in our paper. So our argument is actually more of an algebro-geometric nature.

The first main tool for us is a continuity method very similar to the one proposed by Donaldson in \cite{Don2011}. Indeed, by throwing in an auxiliary divisor $\cD\in |-mK_\cX|$, we consider the following log extension of Theorem \ref{main}.

\begin{thm}\label{log-main}For a fixed $\be\in [0,1]$, let $\cX\to C$ be a flat family over a pointed smooth curve
$(C,0)$ with a relative codimension one cycle $\cD$ over $C$. Suppose
\begin{enumerate}
\item  $-K_{\cX/C}$ is ample and $\cD\sim_{C} -mK_{\cX/C}$ for some positive integer $m>1$;
\item\label{l-sm}for any $t\in C^\circ:=C\setminus \{0\}$, $\cX_t$ and $\cD_t$  are smooth, $(\cX_0,\frac{1}{m}\cD_0)$ is klt;
\item\label{l-Kst}  $(\cX_0, \cD_0)$ is $\be$-K-polystable. (cf. Definition \ref{be-K}).
\end{enumerate}
Then
\begin{enumerate}[(i)]
\item  there is a Zariski neighborhood  $U$ of  $0\in C$,  on which $(\cX_t, \cD_t)$ is $\be$-K-semistable (in fact $\be$-K-polystable if $\be<1$) for all $t\in U$;
\item  for any other flat projective family  $(\cX',\cD')\to C$  with a relative codimension one cycle $\cD'$  satisfying (1)-(3) as above and
$$
(\cX',\cD')\times_C C^\circ \cong (\cX,\cD)\times_C C^\circ,
$$
we can conclude $(\cX'_0,\cD'_0)\cong (\cX_0,\cD_0)$;
\item ${(\cX_0,\cD_0)}$ admits a conical weak K\"ahler-Einstein metric with cone angle $2\pi(1-(1-\be)/m)$ along $\cD_0$, which is the Gromov-Hausdorff limit of $(\cX_{t_i},\cD_{t_i})$ endowed with the conical K\"ahler-Einstein metric with cone angle $2\pi (1-(1-\be_i)/m)$ along $\cD_{t_i}\subset \cX_{t_i}$ for any sequence $t_i\to 0$ and ${\beta_i\nearrow \beta}$.
\end{enumerate}
 \end{thm}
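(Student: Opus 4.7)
The plan is to establish (iii) first via the conical Yau-Tian-Donaldson correspondence, and then derive (i) and (ii) from it using Gromov-Hausdorff compactness and a continuity argument in the cone-angle parameter. For (iii), the $\beta$-K-polystability of $(\cX_0, \cD_0)$ combined with the conical/singular YTD theorem of \cite{CDS2, CDS3, Tian2014} yields a weak conical K\"ahler-Einstein metric $\omega_0$ on $(\cX_0, \cD_0)$ with cone angle $2\pi(1-(1-\beta)/m)$ along $\cD_0$. For the Gromov-Hausdorff statement, given sequences $t_i\to 0$ and $\beta_i\nearrow \beta$ with smooth conical KE metrics on $(\cX_{t_i}, \cD_{t_i})$, the uniform Ricci lower bound together with Cheeger-Colding-Tian type compactness (adapted to conical metrics in the cited work) extracts a subsequential GH limit, a normal $\QQ$-Fano variety $W$ carrying a weak conical KE of the limiting cone angle. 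The central task is then to identify $W$ algebraically with $\cX_0$.

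To prove (i), I would set up a continuity method in both $t$ and the auxiliary angle parameter $\beta'$. Define
$$T = \{(t, \beta') \in U \times [0, 1] : (\cX_t, \cD_t) \text{ admits a conical KE with cone angle } 2\pi(1-(1-\beta')/m) \text{ along } \cD_t\},$$
for $U$ a small analytic neighborhood of $0 \in C$. Openness in $\beta'$ at smooth $\cX_t$ follows from the implicit function theorem in appropriate weighted H\"older spaces of conical metrics, with the polystable case at $\beta' < 1$ handled by a transversality argument that kills obstructions coming from holomorphic vector fields tangent to $\cD$. Openness in $t$ is a similar perturbation. Closedness of $T$ follows from the GH-compactness above, provided the analytic limit can be matched with the algebraic one. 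A connectedness argument starting from $(0, \beta) \in T$ then propagates existence of conical KE to $(t, \beta)$ for $t$ in a Zariski open neighborhood of $0$; the easy direction of YTD (Berman's argument in the conical setting) then yields $\beta$-K-semistability, and $\beta$-K-polystability when $\beta < 1$.

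For (ii), if $(\cX'_0, \cD'_0)$ is another $\beta$-K-polystable filling of the same punctured family, then both $(\cX_0, \cD_0)$ and $(\cX'_0, \cD'_0)$ carry weak conical KE metrics and both arise as GH limits of the same smooth conical KE metrics on $(\cX_t, \cD_t)$; uniqueness of the GH limit then forces an isometry, and the conical analogue of Donaldson-Sun theory upgrades this to an isomorphism of $\QQ$-Fano pairs. The principal obstacle throughout is the identification of the analytic GH limit $W$ with the algebraic central fiber $\cX_0$: one must embed a neighborhood of $0$ uniformly into a common projective space using sections of $-\ell K_{\cX/C}$ (with $\cD$ cut out), use Donaldson-Sun's peaked-section technique to produce a corresponding embedding of $W$ in the same Hilbert scheme, and then realize $W$ as a flat algebraic degeneration of $\cX_0$ via a test configuration. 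At this point K-polystability of $(\cX_0, \cD_0)$ forces the test configuration to be a product, giving $W \cong \cX_0$ and closing the loop that makes both the continuity method and the uniqueness argument work.
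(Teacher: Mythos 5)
Your proposal is circular at its foundation. In (iii) you invoke a ``conical/singular YTD theorem'' to produce the weak conical K\"ahler--Einstein metric on $(\cX_0,\cD_0)$ directly from its $\be$-K-polystability, but the results of \cite{CDS2,CDS3,Tian2014} apply to \emph{smooth} Fano pairs only; the existence of a weak (conical) KE metric on a singular, smoothable, K-polystable $\QQ$-Fano pair is precisely one of the main statements being proved here (and independently in \cite{SSY}), not an available input. For the same reason your ``openness in $t$'' at $t=0$ is not a routine perturbation: it would require deforming a weak conical KE metric on the \emph{singular} central fiber to nearby fibers, which goes beyond the implicit function theorem in Donaldson's conical H\"older spaces (the paper uses that IFT only along smooth fibers, as in Proposition \ref{cont-path}), and the paper's whole strategy is designed to avoid exactly this analytic step by instead degenerating \emph{from} the smooth fibers.

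The second gap is the step you yourself flag as the central task: identifying the Gromov--Hausdorff limit $W$ with $\cX_0$. The limit $W$ lives in the limit of the orbit closures of the nearby fibers $\cX_t$, and a priori it need not be connected to $\cX_0$ by any test configuration in either direction, so the sentence ``K-polystability of $(\cX_0,\cD_0)$ forces the test configuration to be a product'' has nothing to act on. The paper supplies the missing mechanism in three pieces: an algebraic small-angle uniqueness theorem based on ACC of log canonical thresholds (Theorem \ref{unique}), which anchors the continuity method in $\be$ on the interval $[\ep,\be_0]$; the minimal-orbit lemma for points with reductive stabilizer (Lemma \ref{disj}) together with the intermediate-value-type Lemma \ref{IVT}, which convert a hypothetical different limit into a polystable limit lying on (or degenerating to) the orbit closure of $\chow(\cX_0,\cD_0)$, producing the contradiction with polystability; and, to remove the hidden assumption that the nearby fibers are semistable (needed even to have the conical KE metrics whose limits you take, and to get a \emph{Zariski} rather than analytic neighborhood in (i)), the constructibility of the K-semistable threshold via a GIT weight argument plus a semicontinuity argument using the dimension of automorphism groups (Theorem \ref{t-open}, Proposition \ref{p-con}). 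Without these, your closedness step fails --- for fixed $t$ the GH limit as $\be'\nearrow\be$ can jump to a different variety when $(\cX_t,\cD_t)$ is not semistable up to $\be$ --- and consequently neither the propagation in (i) nor the uniqueness argument in (ii) goes through.
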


To prove Theorem \ref{log-main},  one notices that the uniqueness is  well-understood  when the angle is small.  We give an account to this fact using a completely algebro-geometric means. To be precise, we use the result that the set of log canonical thresholds satisfies ascending chain condition (ACC) (see \cite{HMX2014}) to show that when the angle $\beta$ is smaller than a positive number $\beta_0>0$ there is only one extension with at worst  klt singularities. Fix $\epsilon$, such that $0<\epsilon<\beta_0$. We define a set $\bB\subset [\ep,1]$ (cf. Section \ref{ss-continuity} for the precise definition) for which the conclusions of Theorem \ref{log-main} hold for the angles belonging to the set $\bB$. The result on small angle case implies $\bB\supset [\epsilon,\beta_0]$.

Now to prove Theorem \ref{main}, let us  first {\it assume} that all  the nearby fibers $\cX_t$ are  K-semistable. Then it suffices to show that $\bB$ is open and closed in $[\ep,1)$. We establish them using two facts. First we prove a simple but very useful fact (see Lemma \ref{disj}), which says that for a point $p$ on the {\em limiting orbit} with {\em reductive stabilizer}, there is  a Zariski open neighborhood $p\in U$ such that the closure of the ${\rm SL}(N+1)$-orbit of any point in the limiting orbit near $p$  actually contains $g\cdot p$ for some $g\in {\rm SL}(N+1)$. In particular, it guarantees that there is no nearby non-equivalent K-polystable points on the limiting orbit. With this, using a crucial {\em Intermediate Value Theorem} type of results (cf. Lemma \ref{IVT}), we show that if there is a different limit, which a priori could be far away from the  given central fiber in the parametrizing Hilbert scheme, then we can indeed always find another limit which either specializes to $(X_0,D_0)$  in a test configuration or becomes the central fiber of a test configuration of $(X_0,D_0)$, violating the K-stability assumption. Similarly, this argument can also  be applied to study the case when $\beta\nearrow 1$.

To finish the proof, we need to verify the assumption that all the nearby fibers $\cX_t$ are {\em K-semistable}. For this, one needs  two observations. First, it follows from the work of \cite{CDS2, CDS3, Tian2014} that to check K-semistability of $\cX_t,\ t\ne 0$, it suffices to test for all  one-parameter-group (1-PS) degenerations in a fixed $\mathbb{P^N}$. Second, it follows from a straightforward GIT argument that {\em K-semistable threshold} (kst) (cf. Section \ref{ss-zopen}) is a constructible function. So what remains to show is that it is also lower semi-continuous (which is also observed in \cite{SSY}), but this is a consequence of the upper semi-continuity of the dimension of the automorphism groups and the continuity method deployed in the proof of Theorem \ref{log-main}.

With all this knowledge in hand,  we are able to achieve the main goal of this paper, i.e. constructing a proper {\em good} moduli space for all $\QQ$-Gorenstein smoothable K-semistable Fano varieties.
\begin{thm}\label{t-good}
For $N\gg 0$, let $Z^*$ be the semi-normalization of the locus inside $\hilb_\chi(\mathbb{P}^N)$ parametrizing all $\QQ$-Gorenstein smoothable K-semistable Fano varieties in $\PP^N$ with fixed Hilbert polynomial $\chi$ (see Section \ref{ss-luna} for the precise definition of $Z^*$). Then the algebraic stack
$[Z^*/{\rm SL}(N+1)]$ admits a proper semi-normal {\em scheme} $\mathcal{KF}_N$ as its \em good moduli space (in the sense of \cite{Alp13}). Furthermore, for sufficiently large $N$, $\mathcal{KF}_N$ does not depend on $N$.
\end{thm}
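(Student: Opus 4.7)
The plan is to apply the criterion of \cite[Section 1.2]{Alp13} for the existence of a good moduli space of an algebraic stack to the quotient stack $\mathcal{X}:=[Z^*/\mathrm{SL}(N+1)]$. Since $Z^*$ is a locally closed subscheme of $\chow(\PP^N)$ the stack $\mathcal{X}$ is of finite type, so it suffices to verify: (a) the stabilizer of every closed point of $\mathcal{X}$ is reductive; (b) every orbit closure in $Z^*$ contains a unique closed $\mathrm{SL}(N+1)$-orbit; and (c) the resulting algebraic space $\mathcal{KF}_N$ is separated and universally closed.

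The first step is to identify the closed orbits in $Z^*$ with points parametrising K-polystable $\QQ$-Fano varieties. If $[X]\in Z^*$ and $X$ is not K-polystable, by definition there is a non-product test configuration for $X$ with vanishing generalised Futaki invariant, which, once we have chosen a linearly equivalent embedding into $\PP^N$, yields a one-parameter subgroup of $\mathrm{SL}(N+1)$ degenerating $[X]$ to a non-isomorphic K-semistable central fibre; this central fibre still lies in $Z^*$ by Theorem \ref{main}(i), so $\mathrm{SL}(N+1)\cdot[X]$ is not closed. Conversely, if $X$ is K-polystable, Theorem \ref{main}(ii) says that it is, up to isomorphism, the unique K-polystable extension of its punctured family, which simultaneously gives (b) and the closedness of K-polystable orbits; here Lemma \ref{disj} is essential in order to separate the K-polystable orbit from the other nearby orbits in $Z^*$. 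Condition (a) is the Matsushima-type theorem: every K-polystable $\QQ$-Fano admits a weak K\"ahler-Einstein metric by Theorem \ref{main}(iii) and therefore has reductive automorphism group by \cite{CDS3, Tian2014}.

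For (c), separatedness is the valuative criterion applied to families over a pointed smooth curve, which is exactly Theorem \ref{main}(ii). Universal closedness says that every family $\cX^\circ\to C^\circ$ of smoothable K-semistable $\QQ$-Fanos, after possibly a finite base change, admits a flat extension $\cX\to C$ with $\cX_0$ K-polystable and $[\cX_0]\in Z^*$; this combines the existence of klt Fano degenerations from \cite{LX} with the analytic compactness of \cite{CDS1,CDS2,CDS3,Tian2014}, which provides a K-polystable Gromov--Hausdorff limit, and with the uniqueness statement in Theorem \ref{main} to identify the analytic limit with the algebraic one. Independence of $N$ follows by comparing re-embeddings: for $N'>N\gg 0$, the Veronese-type map induced by a higher power of $-K_X$ gives an equivariant embedding $Z^*_N\hookrightarrow Z^*_{N'}$ that is a bijection on $\mathrm{SL}$-orbits, hence descends to an isomorphism $\mathcal{KF}_N\cong\mathcal{KF}_{N'}$.

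The main obstacle I anticipate is universal closedness, specifically extracting from the Gromov--Hausdorff compactness of smooth K\"ahler-Einstein manifolds an algebraic extension whose central fibre realises an actual point of $Z^*$ in the correct $\mathrm{SL}(N+1)$-orbit, rather than merely an isomorphism class of K-polystable $\QQ$-Fano. This requires feeding the analytic limit through the MMP-based degeneration machinery of \cite{LX} and then invoking Theorem \ref{main}(ii) again to match the algebraic and analytic limits; without Theorem \ref{main} the two a priori different compactifications could fail to agree, and the whole argument would collapse.
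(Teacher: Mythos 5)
There is a genuine gap at the very first step: the criterion you invoke does not exist. \cite[Section 1.2]{Alp13} \emph{defines} good moduli spaces and lists their properties; it does not assert that a finite type quotient stack with reductive stabilizers at closed points and a unique closed orbit in every orbit closure admits a good moduli space. Your conditions (a) and (b) are consequences a good moduli space would have, not sufficient conditions for its existence, and this is exactly the difficulty the paper is built around: lacking a global GIT linearization for K-stability, existence has to be verified through the local criterion of \cite[Proposition 3.1]{AFSV14}, namely that around every closed point $z_0$ one can produce an affine, \emph{strongly \'etale} (stabilizer preserving, closed-point preserving) local presentation $[U_W/\Aut(X)]\to[Z^*/\SL(N+1)]$ with $U_W\subset \PP W$ an $\Aut(X)$-invariant slice, together with the affineness of $G\cdot\pi_W^{-1}(0)$ so that closures of points admit good moduli spaces. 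None of this follows formally from (a)+(b): stabilizer preservation can genuinely fail under specialization (Richardson's example in Section \ref{Richard} is included precisely to show this), and the paper's proof of it requires the continuous $\UU(N+1)$-invariant slice built from Tian's embeddings of the (conical) K\"ahler--Einstein metrics, the local GIT comparison ``K-polystable $=$ GIT-polystable on the slice'' (Theorem \ref{K-luna}), $\aut(Y)<\aut(X)$ for nearby fibers (Lemma \ref{aut-Y}, Corollary \ref{semi-sp}), the finite-distance estimate (Lemma \ref{NR}) and the finiteness of $G\times_{G_{z_0}}U_r\to G\cdot U_r$ (Lemma \ref{fini}). Your argument, by contrast, never produces a local quotient presentation at all, so it cannot yield the existence of $\mathcal{KF}_N$; the separatedness and universal closedness you discuss under (c) only become meaningful once the algebraic space exists.

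Two smaller points. Your identification of closed orbits with K-polystable points and your use of Theorem \ref{main} for separatedness and for the uniqueness of polystable degenerations are in the right spirit (these are Theorem \ref{ss}, Lemma \ref{K-near} and the uniqueness of the minimal orbit in $\overline{BO}_z$ in the paper), but they enter as inputs to the slice construction, not as a substitute for it. Finally, the independence of $N$ is not obtained in the paper by an $\SL(N+1)$-equivariant ``Veronese'' embedding $Z^*_N\hookrightarrow Z^*_{N'}$ (the groups are different, and such a map would not be equivariant in any useful sense); instead one observes via the boundedness statement (Lemma \ref{K-bdd}) that the closed points of $\mathcal{KF}_N$ stabilize for $N\gg 0$, and then uses semi-normality together with \cite[7.2]{Kol96} to upgrade the bijection on points to an isomorphism.
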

Recall  from \cite[Section 1.2]{Alp13} that a quasi-compact morphism $\phi: \cZ \longrightarrow M$ from an Artin stack  $\cZ$ to an algebraic space $M$ is a {\em good moduli space} if
\begin{enumerate}
\item  The push-forward functor on quasi-coherent sheaves is exact,
\item The induced morphism on sheaves $\sO_M \rightarrow \phi_\ast\sO_\cZ$ is an isomorphism.
\end{enumerate}
This concept is a generalization of {\em good quotient} in the classic GIT. In more concrete terms,  Theorem \ref{t-good} says that each  ${\rm SL}(N+1)$-orbit inside $Z^*$ corresponds to a $\QQ$-Gorenstein smoothable K-semistable $\mathbb{Q}$-Fano variety, and $Z^\ast$ admits a categorical quotient $\mathcal{ KF}_N$, whose points correspond to the $S$-equivalence (i.e., the equivalence relation generated by the orbital closure inclusion) classes of  ${\rm SL}(N+1)$-action on $Z^*$. In particular, the set of $\CC$-points in $\mathcal{KF}_N$ precisely corresponds to set of {\em closed minimal} ${\rm SL}(N+1)$-orbits in $Z^\ast$, i.e., the set of $\QQ$-Gorenstein smoothable K-polystable $\QQ$-Fano varieties over $\CC$.

The existence of a moduli space for K\"ahler-Einstein Fano manifolds is well expected  after the work of  \cite{Tian1990}.   A local quotient picture was suggested in \cite[Section 5.3]{Don2008} and \cite{Sz2010}, and was explicitly conjectured in \cite[Secion 1.3 and 1.4]{Sp2012} and \cite[Conjecture 6.2]{OSS}. Furthermore, the moduli space is speculated to be
projective by the existence of the descending of the {\rm CM}-line bundle (see e.g. \cite{PaTi06} and \cite{OSS}).  We also remark that for smooth K\" ahler-Einstein Fano manifolds with  discrete automorphism which  are known to be asymptotically Chow stable by  \cite{Don01}, they admit (possibly non-proper) algebraic moduli spaces thanks to the work of \cite{Don2013} and \cite{Odaka14}.

Now let us explain  our approach to Theorem \ref{t-good}. Due to the lack of a {\em global GIT } interpretation of the K-stability, { our  strategy is to replace GIT by the work of \cite{AFSV14}. So to obtain a good quotient, one needs to verify  all the assumptions of {\cite[Theorem 1.2]{AFSV14}}. In particular,
among other things one needs to establish the following two key properties:}
\begin{enumerate}
\item the stabilizer preserving condition for the local presentation of the moduli stack;
\item  the affineness of the quotient morphism.
\end{enumerate}
Intuitively,
the first property implies that the local Zarski open charts of the moduli space can be glued together; while the second property guarantees  that the  local charts constructed above are actually  {\em affine}. {Moreover, the second property guarantees the {\em goodness} of the quotient $[Z^*/{\rm SL}(N+1)]\to \mathcal{KF}_N$, and as a consequence the restriction of CM line bundle to $Z^*\subset \hilb$ can be descent to the good moduli space. This  will be crucial in our study of the projectivity of the moduli space $\mathcal{KF}$ in  \cite{LWX2018}}. We single out these two properties as they depend on the existence of a {\em global  proper}  topological (equipped with Gromov-Hausdorff topology) moduli space in a essential way.

We remark  that both properties follow from the famous Luna's \'etale slice theorem for a reductive group $G$-acting on an {\em affine} variety $Z$,  that is, if $z\in Z$ and  the  $G$-orbit $G\cdot z\subset Z$ is {\em closed} then there is a nice slice containing $z$ satisfying the above two properties. Unfortunately,  we are unable to verify the {\em affiness} assumption of  Luna's theorem since there is no global GIT interpretation of K-stability, but the closedness of $G\cdot x$ in an affine variety will be a  {\em consequence} of our proof  instead,  which is based on  the existence  of a nice {\em continuous proper slice} (although non-algebraic) lying over the stack. The slice is obtained via a family version of Tian's embeddings of K\"ahler-Einstein Fano varieties and its  properness follows from Theorem \ref{main}. The slice can be regarded as an alternative to the zero set of the moment map in the classical Kempf-Ness-Kirwan picture.

\bigskip

Finally we close the introduction by outlining  the plan of the paper. In Section \ref{s-pre}, we give the basic definitions.   In Section \ref{ggp}, we review some  facts on the linear action of a reductive group on a projective space. In Section \ref{GH-cont}, we list the main analytic results we need in this note. First we recall the recent results appeared in \cite{CDS2, CDS3, Tian2014}. Then we also state the Gromov-Hausdorff continuity for conical K\"ahler-Einstein metrics on a smooth family of Fano pairs (see \cite{CDS2, CDS3, Tian2014}).  In Section \ref{uni}, we prove that when the angle is small enough, the  filling is always unique.  In Section \ref{ss-continuity}, we establish the main technical tool of our argument, which is a continuity theorem. We remark, with it we can already show Theorem \ref{log-main} under the assumption that the nearby fibers are all $\be$-K-polystable.   In Section \ref{s-K-semistable}, we will  prove the K-semi-stability of the nearby points by applying the continuity method. First in Section \ref{K-semistable} we prove Theorem \ref{ss} which says that any orbit closure of a K-semistable Fano manifold contains only one isomorphic class of K-polystable $\QQ$-Fano variety. In particular, this is an extension of the result of \cite{CS2014} for the Fano case. In Section \ref{ss-zopen}, we show that a smoothing of a K-semistable $\mathbb{Q}$-Fano variety  is always K-semistable. In Section \ref{ss-proof}, by puting all the results together, we finish the proof of  Theorem \ref{main} and \ref{log-main}. In Section \ref{ss-luna}, we apply our results and prove a Luna slice  type theorem for K-stability, which is used to establish Theorem \ref{t-good}. In Section \ref{s-apendix}, we will discuss several technical results that are needed on the general theory of linear action of a reductive group  on projective space.

\bigskip

 \begin{rem}[{Remarks on the history}]\label{r-history}
This paper was original titled as {\it `Degeneration of Fano K\"ahler-Einstein manifolds'} (see  \cite{LWX2014}). In the first version, we {have} established the separateness of the moduli space and proved the uniqueness of K-polystable degeneration for K-semistable Fano manifolds.  After it was posted on the arXiv, we were informed by the authors of \cite{SSY} who independently investigated similar questions with a circle of parallel ideas but  in a more analytic fashion and obtained results which are closely related. In particular, in \cite{SSY}, the authors obtained {\em first} the existence of weak K\"ahler-Einstein metrics on $\QQ$-Gorenstein smoothable K-polystable  $\QQ$-Fano varieties; the {\em analytic} openness of K-stability under the assumption  of {\em finite} automorphism group; the lower semi-continuity of the cone angle for conical K\"ahler-Einstein metrics.  Those statements are not included in the first version of our preprint. As a consequence, the uniqueness of K-stable filling  with {\em finite} automorphism group was also independently obtained in \cite{SSY}. After the appearance of \cite{SSY} on the arXiv, we realize  that the approach in the first version of our paper  can be naturally  extended and give  a complete picture as in the current version. We would like to thank the authors of \cite{SSY} for communicating their work to us.
After we had posted the second version of our paper on the arXiv, we were contacted by Odaka, who claimed (see \cite{Odaka14a}) to have independently obtained part of Theorem \ref{t-good}  based on the work of \cite{LWX2014}  and \cite{SSY}.
\end{rem}


\begin{ack}
 The first author is partially supported by NSF: DMS-1405936. The second author is partially supported by a Collaboration Grants for Mathematicians from Simons Foundation:281299 and NSF:DMS-1609335, and
he also wants to thank Professor D.H. Phong, Jacob Sturm and Jian Song for their constant encouragement over the years.  The third author is partially supported by the grant `The Recruitment Program of Global Experts'. We are very grateful of Jacob Sturm for many valuable suggestions and comments. We also would like to thank Jarod Alper, Daniel Greb, Reyer Sjamaar and Chris Woodward for helpful comments. We are indebted to the anonymous referee{s} for numerous useful suggestions. A large part of this work was done when CX visits the Institute for Advanced Study, which is partially sponsored by Ky Fan and Yu-Fen Fan Membership Funds, S.S. Chern Foundation and NSF: DMS-1128155, 1252158.
\end{ack}
\section{Preliminaries}\label{s-pre}
In this section, we will fix our convention of the paper. The definition of K-stability (resp. $\beta$-K-stability) below are recalled  from  \cite{Tian1997,Don00} (resp. \cite{Don2011}). The readers may also consult the lecture notes \cite{PS2010} and \cite{Th2006} for both an analytic and an algebro-geometric  point of view.
\begin{defn}\label{log-tc}
Let $(X,D;L)$ be an $n$-dimensional projective variety polarized by an ample line bundle $L$ together with an effective 
 divisor $D\subset X$. A {\em log test configuration} of $(X,D;L)$ consists of
\begin{enumerate}
\item A projective flat morphism  $\pi:(\cX;\sL)\to \AAA^1$ and an effective divisor $\cD$ on $\cX$ such that $\Supp(\cD)$ does not contain any component of the central fiber $\cX_0$;
\item A $\GG_m$-action on $(\cX,\cD;\sL)$, such that $\pi$ is $\GG_m$-equivariant with respect to the standard $\GG_m$-action on $\AAA^1$ via multiplication;
\item $\sL$ is relative ample and we have $\GG_m$-equivariant isomorphism.
\begin{equation}\label{X0}
(\cX^\circ,\cD^\circ; \sL|_{\cX^\circ})\cong (X\times \GG_m, D\times\GG_m; \pi_X^\ast L)
\end{equation}
where $(\cX^\circ,\cD^\circ)=(\cX,\cD)\times_{\AAA^1}\GG_m$ and $\pi_X:X\times \GG_m\to X$.
\end{enumerate}
{A log test configuration is called a {\em product} test configuration if $(\cX,\cD;\sL)\cong (X\times \AAA^1,D\times \AAA^1; \pi_X^\ast L)$ where $\pi_X:X\times \AAA^1\to X$,  and a {\em trivial} test configuration if $\pi:(\cX,\cD;\sL)\to \AAA^1$ is a product test configuration with $\GG_m$ acting trivially on $X$.}
\end{defn}

Assume $X$ to be normal. Let  $\chi$ denote the Hilbert polynomial of $(X,L)$ and we introduce $a_i, \ti a_i, b_i, \ti b_i\in \QQ$ via  the following expansions.
\begin{itemize}
\item $\chi(X,L^{\otimes k}):=\dim H^0(X,L^k)=a_0 k^n+a_1 k^{n-1}+O(k^{n-2})$;
\item $\chi(D,(L|_D)^{\otimes k}):=\dim H^0(D,L^k|_D)=\ti a_0 k^{n-1}+O(k^{n-2})$;
\item $w(k):=$ weight of $\GG_m$-action on $\wedge^\topo H^0(\cX_0,\sL^{\otimes k}|_{\cX_0})=b_0k^{n+1}+b_1k^n+O(k^{n-1})$;
\item $\ti w(k):=$ weight of $\GG_m$-action on $\wedge^\topo H^0(\cD_0,\sL^{\otimes k}|_{\cD_0})=\ti b_0k^{n}+O(k^{n-1})$.
\end{itemize}

In this article, we will focus on the projective pairs $(X,D)$ introduced in Definition \ref{log-tc} satisfying  that the divisor $D$ is prime and $(X,\frac1m D)$ is a projective pair with  {\em Kawamata log terminal} (klt) singularities (see \cite[2.34]{KM98}) for a given positive integer $m$.
\begin{defn} We call a projective klt pair $(X,D)$ to be {\em a log Fano pair} if $-(K_X+D)$ is an ample  $\QQ$-Cartier divisor  and {\em a $\QQ$-Fano variety} if $D=0$.
\end{defn}

Now we are ready to  state the algebro-geometric criterion for the existence of  conical K\"ahler-Einstein metric  on a Fano manifold $X$  with cone angle $2\pi (1-(1-\be)/m)$ for  $\be\in(0,1]$ along a divisor $D\in |-mK_X|$.
\begin{defn}\label{be-K}
For a  $\QQ$-Fano variety $X$ with $D\in |-mK_X|$  and a real number $\beta\in [0,1]$, we define the {\em log generalized Futaki invariant with the angle $\beta$} as following:
\begin{eqnarray*}
\DF_{1-\be}(\cX,\cD;\sL)
&=&\DF(\cX;\sL)+(1-\beta)\cdot \CH(\cX,\cD;\sL)\\
\end{eqnarray*}
with
$$ \DF(\cX;\sL):=\frac{a_1b_0-a_0b_1}{a_0^2}\text{ and } \CH(\cX,\cD;\sL):=\frac{1}{m}\cdot \frac{a_0\ti b_0-b_0\ti a_0}{2a_0^2}\text{ (cf. \cite[ Definition 3.3]{LS2014} )}\ .$$
Then $$\DF_{1-\be}(\cX,\cD;\sL^{\otimes r})=\DF_{1-\be}(\cX,\cD;\sL)\ .$$
We say $(X,D;L)$ is called {\em $\be$-K-semistable} if $\DF_{1-\be}(\cX,\cD;\sL)\geq 0$ for any  normal test configuration $(\cX,\cD;\sL)$, and {\em $\be$-K-polystable} (resp. $\beta$-K-stable) if it is $\be$-K-semistable with $\DF_{1-\be}(\cX,\cD;\sL)=0$ if and only if $(\cX,\cD;\sL)$ is a {product test configuration (resp. trivial test configuration)}.
\end{defn}

Thanks to the linear dependence of
$\DF_{1-\beta}(\cX,\cD; \sL)$  on $\beta$, we immediately obtain the following interpolation property:
\begin{lem}\label{interpolate}
If $(X,D; L)$ is both $\beta_1$-K-semistable and $\beta_2$-K-polystable with $\beta_1<\beta_2$ (resp. $\beta_2<\beta_1$), then $(X, D; L)$ is $\beta$-K-polystable for any
$\beta\in (\beta_1, \beta_2]$ (resp. $\beta\in [\beta_2, \beta_1)$).
\end{lem}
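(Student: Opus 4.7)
The plan is to exploit directly the affine-linear dependence of $\DF_{1-\beta}$ on $\beta$ that is highlighted just before the statement. Since for any fixed normal test configuration $(\cX,\cD;\sL)$ the map
$$\beta\mapsto \DF_{1-\beta}(\cX,\cD;\sL)=\DF(\cX;\sL)+(1-\beta)\cdot\CH(\cX,\cD;\sL)$$
is an affine linear function of $\beta$, any $\beta$ lying between $\beta_1$ and $\beta_2$ can be written as a convex combination $\beta=t\beta_1+(1-t)\beta_2$ with $t\in[0,1]$, and the corresponding Futaki invariant decomposes as
$$\DF_{1-\beta}(\cX,\cD;\sL)=t\,\DF_{1-\beta_1}(\cX,\cD;\sL)+(1-t)\,\DF_{1-\beta_2}(\cX,\cD;\sL).$$

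First I would establish $\beta$-K-semistability. By the $\beta_1$-K-semi\-stability hypothesis the first summand is non-negative, and by the $\beta_2$-K-polystability hypothesis the second summand is non-negative; hence the combination is non-negative for every normal test configuration, which is exactly $\beta$-K-semistability of $(X,D;L)$ for every $\beta$ in the relevant interval.

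Next I would promote this to $\beta$-K-polystability for $\beta\in(\beta_1,\beta_2]$ (the argument in the other case is symmetric). Suppose $(\cX,\cD;\sL)$ is a normal test configuration with $\DF_{1-\beta}(\cX,\cD;\sL)=0$. If $\beta=\beta_2$ there is nothing to do, since we are assuming $\beta_2$-K-polystability. For $\beta\in(\beta_1,\beta_2)$ the coefficient $t$ lies in $(0,1)$, so the vanishing of the convex combination forces both $\DF_{1-\beta_1}(\cX,\cD;\sL)=0$ and $\DF_{1-\beta_2}(\cX,\cD;\sL)=0$. The latter equality, together with $\beta_2$-K-polystability, forces $(\cX,\cD;\sL)$ to be a product test configuration. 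This verifies the polystability condition and completes the proof.

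Since every step is essentially a one-line consequence of affine linearity plus the definition of $\beta_i$-K-(semi/poly)stability, there is no serious obstacle; the only mild subtlety is being careful at the endpoints, namely excluding $t=0$ or $t=1$ in the polystability step so that the vanishing of a sum of non-negative numbers forces each term to vanish. The statement for $\beta_2<\beta_1$ is obtained by swapping the roles of $\beta_1$ and $\beta_2$ in the convex combination.
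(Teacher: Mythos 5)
Your proof is correct and is exactly the argument the paper intends: the lemma is stated as an immediate consequence of the affine-linear dependence of $\DF_{1-\beta}$ on $\beta$, and your convex-combination decomposition $\DF_{1-\beta}=t\,\DF_{1-\beta_1}+(1-t)\,\DF_{1-\beta_2}$ with the endpoint bookkeeping is precisely how that observation is made rigorous. No issues.
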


\begin{rem}Notice that if for $(X,D;K_X^{\otimes(-r)})$ where $X$ is a $\QQ$-Fano variety with $D\in|-mK_X|$,
$$\lam:\GG_m\to {\rm SL}(N_r+1) {\text{ with } N_r+1:=\dim H^0(X,K_X^{\otimes(-r)})}$$ induces a test configuration $(\cX,\cD;\sL)$, then
\begin{equation}\label{CH}
\CH(\cX,\cD;\sL)=\frac{1}{2mr^n(-K_X)^n}\cdot\left(\CH(\cD_0)-\frac{{n}m}{(n+1)}r\CH(\cX_0)\right)
\end{equation}
with $\CH(\cD_0)$ and $\CH(\cX_0)$ being precisely the $\lam$-weight for the Chow points of
$\cD_0,\cX_0\subset \PP^{N_r}.$
\end{rem}

\section{Linear action of reductive groups on projective spaces}\label{ggp}
In this section, we prove a basic fact on a reductive group acting on $\PP^M$, which will be crucial  for the later argument.
Let $G$ be a reductive  algebraic group acting on $\PP^M$ via a  rational representation $\rho:G\to {\rm SL}(M+1)$  and  $z:C\to \PP^M$ be an algebraic morphism satisfying $z(0)=z_0\in\PP^M$ where $(0\in C)$ is a smooth pointed curve germ. Let
$$ \overline{ BO}:=\lim_{t\to 0} \overline{O_{z(t)}}$$
with  $O_{z(t)}:=G\cdot z(t)$  and $ \overline{O_{z(t)}}\subset \PP^M $ be its closure, {that is, $\overline{BO}$ is a union of (broken) orbits that $\overline{O_{z(t)}}$ specialized to.}

\begin{lem}\label{disj}
Suppose  $G_{z_0}< G$, the  stabilizer  of $z_0\in \PP^M$ for the $G$-action on $\PP^M$, is reductive.  Then there is a $G$-invariant Zariski open neighbourhood of  $z_0\in U\subset \PP^M$ satisfying:
\begin{equation}\label{Oz}
\overline{BO}\cap U=\bigcup_{ \stackrel{O_p\subset \overline{BO}\   }{O_{z_0}\cap \overline{O_p}\ne \varnothing}} O_p\cap U\text{ where } O_p :=G\cdot p\subset \overline{BO},
\end{equation}
i.e. the closure of the $G$-orbit of any point in $\overline{BO}$ near $z_0$  contains $g\cdot z_0$ for some (hence for all)  $g\in G$.
We will call $O_{z_0}$ a {\em minimal} orbit.
\end{lem}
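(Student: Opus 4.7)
The strategy is to realize a $G$-invariant open neighborhood of $O_{z_0}$ in $\PP^N$ as the domain of a good GIT quotient in which $O_{z_0}$ appears as the unique closed orbit above a distinguished point, and then exploit continuity of the quotient map to pin every orbit in $\overline{O}\cap U$ inside a single fiber.

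To build such a neighborhood, I would first use the hypothesis that $G_{z_0}$ is reductive. By Matsushima's criterion the orbit $O_{z_0}\cong G/G_{z_0}$ is affine, and combining this with the linearity of the $G$-action on $\PP^N$ one can produce a $G$-invariant affine open $W\subset \PP^N$ containing $O_{z_0}$ as a \emph{closed} subvariety; this is precisely one of the two general results on group actions that we plan to record in Section~\ref{s-apendix}. With $W$ in hand, Luna's \'etale slice theorem applies at $z_0$: there is a $G_{z_0}$-invariant locally closed affine slice $S\subset W$ through $z_0$, and a $G$-saturated open neighborhood $U=G\cdot S\subset W$ of $O_{z_0}$, equipped with an affine good quotient $\pi\colon U\to U/\!\!/G\cong S/\!\!/G_{z_0}$ that makes $O_{z_0}$ the unique closed $G$-orbit in $\pi^{-1}(\pi(z_0))$.

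With this setup, I claim $\overline{O}\cap U\subset \pi^{-1}(\pi(z_0))$. Indeed, for every $t\in C^\circ$ sufficiently close to $0$ one has $z(t)\in U$; the orbit closure $\overline{O_{z(t)}}\cap U$ is $G$-invariant and closed in $U$, so the good quotient property places it entirely inside the single fiber $\pi^{-1}(\pi(z(t)))$. Passing to the limit $t\to 0$ and using continuity of $\pi$, any $p\in\overline{O}\cap U$ satisfies $\pi(p)=\pi(z_0)$, hence $p\in \pi^{-1}(\pi(z_0))$. Since $O_{z_0}$ is the unique closed orbit in this fiber, every $G$-orbit there has $O_{z_0}$ in its closure; in particular $O_{z_0}\subset\overline{O_p}$ for each such $p$, which gives the non-trivial inclusion in~\eqref{Oz}. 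The reverse inclusion is immediate from the definitions.

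The main obstacle is the very first step, the production of the $G$-invariant affine open $W\subset\PP^N$ in which $O_{z_0}$ appears as a closed subvariety. Matsushima alone yields only an abstract affineness statement about the orbit, and converting that into a concrete invariant affine neighborhood inside projective space requires an extra input of GIT-theoretic flavor (effectively the choice of an equivariant linearization whose non-vanishing locus cuts out the desired open); once that ingredient is isolated, the remainder of the argument is a formal consequence of Luna's slice theorem and the fiber structure of good quotients.
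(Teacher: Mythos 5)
There is a genuine gap, and it sits exactly where you flagged ``the main obstacle'': the claim that a reductive stabilizer $G_{z_0}$ yields a $G$-invariant \emph{affine} open $W\subset\PP^N$ containing $O_{z_0}$ as a closed subvariety is false in general. Take $G={\rm SL}(2)$ acting on $\PP(\mathrm{Sym}^3\CC^2)\cong\PP^3$ and $z_0=[x^2y]$. Its stabilizer is the diagonal torus $\GG_m$, hence reductive, but the invariant ring of binary cubics is generated by the discriminant $\Delta$, and since ${\rm SL}(2)$ has no nontrivial characters every irreducible $G$-invariant hypersurface in $\PP^3$ is $\{\Delta=0\}$. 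The complement of an affine open in $\PP^3$ has pure codimension one, so the only nonempty $G$-invariant affine open is $\{\Delta\ne 0\}$, which does not contain $[x^2y]$. Thus no invariant affine neighborhood exists at all, Luna's slice theorem in the form you invoke never gets off the ground, and the ``extra GIT input'' you hope for (a linearization whose non-vanishing locus cuts out $W$) is unavailable: for a group with trivial character lattice there is no twisting freedom, and reductivity of the stabilizer does not make $z_0$ semistable, let alone polystable. Matsushima only tells you the abstract orbit $G/G_{z_0}$ is affine, which is a much weaker statement than its being closed in an invariant affine chart of $\PP^N$. (Note also that the lemma only asserts $U$ is $G$-invariant and Zariski open; it cannot in general be taken affine or saturated, precisely because of such examples.)

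The way the actual argument avoids this is instructive. The character-twisting trick does work, but only when $z_0$ is a \emph{fixed point} of the acting group: then twisting the linearization by the inverse of the character on $\sO_{\PP^N}(1)|_{z_0}$ makes $z_0$ polystable and one takes $U$ to be the non-vanishing locus of an invariant section (Step~1 of the paper's proof). To reduce to that case one first passes to a $G_{z_0}$-invariant linear slice $\PP W$ transversal to the orbit (Donaldson's construction), shows by a transversality/dimension count that near $z_0$ the limit set $\overline{O}$ is covered by finitely many $G_{z_0}$-orbit closures of arcs inside the slice, and only then applies the GIT argument to the $G_{z_0}$-action on $\PP W$, where $z_0$ is fixed. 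Your downstream reasoning (orbit closures mapping to single fibers of a good quotient, continuity of $\pi$, uniqueness of the closed orbit in the fiber) would be fine if the setup existed, but the setup is the whole difficulty here.
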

\begin{proof}
We divide the proof into two steps:

{\em Step 1: $G=G_{z_0}$.}
The representation  $\rho:G\to {\rm SL}(M+1)$  induces a  $G$-linearization  of  $\sO_{\PP^M}(1)\to\PP^M$. Let $\rho_0:G\to \GG_m$ be the character of the resulting $G$-action on $\sO_{\PP^M}(1)|_{z_0}$, since $z_0$ is fixed by $G$. Then $z_0$ is GIT polystable with respect the linearization of $\sO_{\PP^M}(1)$ induced by the representation $\rho\otimes \rho_0^{-1}:G\to {\rm SL}(M+1)$.
It follows from the  construction in classic GIT that  the semistable locus $z_0\in U:=(\PP^M)^{\rm ss}\subset \PP^M$ is  $G$-invariant and Zariski open.
To see that $U$ serves our purpose, it suffices to notice that $G\cdot z_0$ is the {\em unique} polystable orbit in $(\PP^M)^{\rm ss}\cap\overline{BO}$ and for any $z\in \overline{BO}\cap U$, $O_{z_0}\subset \overline{G\cdot z}$, which follows from the classical result of Kempf-Ness \cite[proof of Theorem 8.3 ]{MFK}.

{\em Step 2: $G>G_{z_0}$.} Since $G_{z_0}$ is reductive, we have a decomposition of its Lie algebra
$$\mathrm{Lie}(G)=\fg=\fg_{z_0}\oplus \fp$$
 as representations of $G_{z_0}$. The infinitesimal action of $G$ at $0\ne\hat z_0\in \CC^{M+1}$, a lifting of $z_0\in \PP^M$, induces a $G_{z_0}$-invariant decomposition  $\CC^{M+1}=\CC\cdot \hat z_0\oplus W'\oplus \fp$. By the proof of \cite[Proposition 1]{Don2012},
$$\PP W=\PP(W'\oplus \CC\hat z_0)\subset \PP^M$$ satisfies the following properties:
\begin{enumerate}
\item $z_0\in \PP W$ and is preserved by $G_{z_0}$;
\item $\PP W$ is transversal to the $G$-orbit of $z_0$ at $z_0$;
\item \label{tv}for $w\in \PP W$ near $z_0$ and $\xi\in \fg:={\rm Lie}(G)$,  if we let $\sigma_w: \fg\to T_w\PP^M$ denote the infinitesimal action of $G$ then
$$\sigma_w(\xi)\in T_w\PP W \iff \xi\in \fg_{z_0}:={\rm Lie}(G_{z_0})\ .$$
\end{enumerate}
 In particular, part \eqref{tv} implies that there exists a  Zariski open neighborhood $U_0\subset \PP W$ of $z_0$ such that the infinitesimal action induced by  $\fp^\perp$ on $\PP W$ is {\em transversal} for all points in $U_0$  (cf. Lemma \ref{G0-inv}).

\begin{claim}\label{V-i}
 Let $S:=G\cdot \mathrm{Im}z\subset \PP^N$ and $H$ be the identity component of $G_{z_0}$. Then
there is a Zariski open subset $U_W\subset U_0\subset \PP W$ and a {\em finite} collection of pointed arcs $\{z^i: (C_i,0)\to (U_0,z_0)\}_{i=0}^d$ with $z^0=z:C\to \PP W$  such that
$$\overline S\cap U_W=\bigcup_{i=0}^d \overline{ O(H,z^i)}\cap U_W \text{ with } O(H,z^i):=H\cdot \mathrm{Im}z^i\subset \PP W.$$
\end{claim}

Assume Claim \ref{V-i} for the moment, let us define

$$\overline{BO^W_i}:= \lim_{t\to 0}\overline{O_{z^i(t)}^W}\  \text{ with } O_{z^i(t)}^W:=H\cdot z^i(t)\subset \overline{O(H,z^i)}\subset \PP W.
$$
Next for each $0\leq i\leq d$,  applying {\em Step 1} to the $H$-action on $\PP W$  and $\overline{{BO^W_i}}\subset\PP W$,
 we obtain an ${H}$-invariant Zariski open $z_0\in U'_i\subset\PP W$ such that
   $$\forall p\in U'_i\cap\overline{{BO_{i}^W}}\Longrightarrow z_0\in \overline{G\cdot p}\ .$$
Then
  $U={G\cdot}\left(\displaystyle\bigcap_{i=0}^d  U'_i\right)$ is the $G$-invariant Zariski open set we want. {In fact, to see $U$ is Zariski open, one  first notice that $\displaystyle\bigcap_{i=0}^d  U'_i$ is Zariski open as each of $U'_i\subset \PP W$ is so for all $i$, hence  $U$  is constructible by  Chevalley's Lemma \cite[Chapter II, Exercise 3.19]{Har77}.  On the other hand, $U$ is also open in $\PP^M$ with respect to the analytic topology. This follows from the fact that the $\fg_{z_0}^\perp$-action on $\PP W$ is  transversal  (cf. Lemma \ref{G0-inv}) and $\forall g\in G<\SL(M+1)$ is an automorphism of $\PP^M$.  Being constructible and analytically open implies $U$ is  Zariski open in $\PP^M$.}

\bigskip

Now let us proceed to the proof of Claim \ref{V-i}. To better illustrate the picture, let us treat the case $\dim G_{z_0}=0$ first.

{\em Case 1: $\dim G_{z_0}=0$}. Let us consider the variety $S:=G\cdot \mathrm{Im} z\subset \PP^M$ and let $\partial S:=\overline{S}\setminus S$. Then there is an open neighborhood $U_W\subset U_0$ such that $\overline{S}\cap U_W$ has only finitely many
 irreducible components. Let us write
$$\overline S\cap U_W=\bigcup_{i=0}^d C_i$$
with $C_0=\mathrm{Im}z(C)$ and $C_i$ are irreducible components passing through $z_0$.

Since $\partial \overline S\cap C_i$ is constructible, after a possible shrinking of $C_i$ we have  two possibilities:
\begin{enumerate}
\item $\partial\overline S\cap C_i=C_i$
\item  $\partial\overline S\cap C_i =\varnothing$ or $z_0$.
\end{enumerate}
We claim that the first case does not happen and then by choosing the arc  $z^i: (C_i,0)\to (U_0,z_0)$ we establish Claim \ref{V-i}.  To prove our claim, one notices there are  two kinds of points on the boundary $\partial\overline S$:
\begin{itemize}
\item {\em first kind}:  a boundary point of $\overline{G\cdot z(t)}$ for a fixed $t$;
\item {\em second kind}: all the remaining points on $\partial\overline S$.
 \end{itemize}
 Notice that the set of both kinds of points  form constructible sets. Any boundary point of the first kind  can be indeed written as a limit of points in $G\cdot z(t)\cap U_W$ for a fixed $t$, but this is absurd as $G$ acts on $U_0$  transversally. So we may assume all the points on $C_i$ are of the {\em second kind}, this implies that
 $$\mathrm{Im}z\not\subset \overline{G\cdot z(t)}\text{ for a fixed }t\in C.$$
  In particular, we have $\dim G+1= \dim \overline S$ as $\dim G_{z_0}=0$. Since $\partial \overline S$ is $G$-invariant, we have $G\cdot C_i\subset \partial \overline S$. Now let us consider the $G$-action on $z\in C_i$, which implies that the
 $$\dim \partial\overline S\geq \dim G+\dim C_i=\dim G+1= \dim \overline S,
  $$
  a contradiction. Thus our claim is verified.


\bigskip

{\em Case 2: the general case.} Let us consider the variety $S:=G\cdot \mathrm{Im} z\subset \PP^M$ and let $\partial S:=\overline{S}\setminus S$.  Then there is an $H$-invariant open neighborhood $U_W\subset U_0$ such that $\overline{S}\cap U_W$ has only finitely many
{\em irreducible} components, which are denoted by
$$\overline S\cap U_W=\bigcup_{i=0}^d V_i$$
with $V_0=\overline{O(H,z)}$ and $z_0\in V_i, 0\leq i\leq d$. Moreover, $V_i$ is $H$-invariant for each $i$ since $\overline S$ is.

Then Claim \ref{V-i} amounts to saying that for each $i$, there is an arc $z^i: C_i\to  U_0$ such that
$$V_i=\overline{ O(H,z^i)}\cap U_W.$$
To find such an arc, all we need is  a {\em  general } $v\in V_i$
satisfying

\begin{equation}\label{Hv}
\dim H\cdot v+1\geq \dim V_i,
\end{equation}
since that implies  two situations: either $\dim H\cdot v<\dim V_i$ for which we choose $z^i:C_i\to V_i$  be an arc joining $z_0$ and $v$ so that $\mathrm{Im}z^i\not \subset \overline{H\cdot v}$; or $\dim H\cdot v=\dim V_i$ for which
we choose any nonconstant arc $z^i:C_i\to V_i$ satisfying $z^i(0) =z_0$.
Then $\dim V_i=\dim O(H,z^i)$ and our Claim is justified.

To find such $v\in V_i$,  we only need it to satisfy
$$\dim H\cdot v\geq \dim H\cdot z(t)\text{ for all }t\in C, $$
which again follows from the transversality. Indeed, there is a Zariski open set $U_C$ of $C$, such that for any $t_0\in U_C$,
$$\dim H\cdot z(t_0)=\max_{t\in C}\dim H\cdot z(t).$$
By definition of $V_i$, for a fixed general $v\in V_i$ there is a $g_i\in G$ and $t_0\in U_C$ such that $g_i \cdot z(t_0)\in B(v,\ep)\in \PP^M$, by the transversality of $\fp$-action on $U_0$, for $\ep\ll 1$ there is an $h\in G$ close to identity such that  $h\cdot g_i\cdot z(t_0)\in V_i$.
By the genericity of $v$, we obtain
$$\dim H\cdot v\geq \dim H\cdot h\cdot g_i\cdot z(t_0)=\dim H\cdot z(t_0)\geq \dim H\cdot z(t) \text{ for all }t\in C\ .$$
and hence $\dim O(H, z^i)\geq \dim O(H,z)$ by our choice of $z^i:C_i\to V_i$.

Now we prove \eqref{Hv}. Suppose \eqref{Hv} does not hold  which is equivalent to $\dim V_i>\dim O(H,z^i)$, then we have
\begin{eqnarray*}
 \dim \overline S
 &\geq& \dim G\cdot V_i\\
&\geq& \dim G/H+\dim V_i  \ \ \ \ (\fp\text{-acting }  \text{transversely on } U_0)\\
 &>& \dim G/H+\dim O(H,z^i)\\
 &\geq & \dim G/H+\dim O(H,z)=\dim \overline S,
 \end{eqnarray*}
a contradiction. So the proof  of the Claim \ref{V-i} and hence the Lemma are  completed.
\end{proof}


The necessity of  the assumption that $G_{z_0}$ is reductive  can be illustrated by the following example.
\begin{exmp}\label{M2}
Let $M_2(\CC)=\{[v,w]\mid v,w\in \CC^2\}$
be the linear space of $2\times 2$ matrices, on which $G:={\rm GL}(2)$ is acting
 via multiplication on the left.  Let $V:=M_2(\CC)\oplus \CC\oplus \CC$, $G$ acts on $\PP V$ via the representation
\begin{equation*}
\begin{array}{cccc}
 \rho:& {\rm GL}(2) & \longrightarrow & {\rm SL}(V)\\
& g&\longmapsto &\rho(g)
\end{array}
\text{ with } \rho(g)\cdot \begin{bmatrix}A \\ x_5 \\ x_6 \end{bmatrix}:=\begin{bmatrix} g \cdot A  \\ \det (g^{-1})x_5  \\ \det (g^{-1}) x_6 \end{bmatrix}\ .
\end{equation*}
Let
$$z_0=\begin{bmatrix}0_{2\times 2} \\ 1 \\ 0\end{bmatrix}\text{  and }z_0^\prime=\begin{bmatrix}\begin{bmatrix} 1 & 0 \\ 0& 0\end{bmatrix}\\ 0 \\ 0\end{bmatrix}\in \PP V,$$
then their stabilizers are  $G_{z_0}=G$ and $G_{z_0^\prime}=\begin{bmatrix} \ast & \ast\\ 0& \ast \end{bmatrix}<{\rm GL}(2)$. In particular, $G_{z_0}$ is reductive while $G_{z_0^\prime}$ is not.
Now let
$$z(t)=\begin{bmatrix}\begin{bmatrix} t & 0 \\ 0& t^2\end{bmatrix}\\ 1 \\t\end{bmatrix}\text{  and }z^\prime(t)=\begin{bmatrix}\begin{bmatrix} 1 & 0 \\ 0& t\end{bmatrix}\\t \\ t^2\end{bmatrix}\in \PP V$$
be two curves in $\PP V$, then we have
$$ \lim_{t\to 0} \overline{O_{z(t)}}=\lim_{t\to 0}\PP V_{[1,t]}= \lim_{t\to 0} \overline{O_{z^\prime (t)}}=\PP V_{[1,0]}\ , $$
where $V_{[1,t]}:=\{ t x_5=x_6\}\subset V$.
Clearly, $z_0:=z(0)$ satisfies \eqref{Oz} while $z_0^\prime:=z^\prime(0)$ does not, since
$$
z^\prime_0\not\in\PP^1\cong\overline{G\cdot z^\pp_\ep}\subset \PP V_{[1,0]} \text{ for }0<|\ep|\ll 1\ \text{ where }z^\pp_\ep:=\begin{bmatrix}\begin{bmatrix} 1 & \ep \\ 0& 0\end{bmatrix}\\0 \\ 0\end{bmatrix}.
$$
\end{exmp}


\section{Gromov-Hausdorff continuity of conical K\"ahler-Einstein metric  on smooth Fano pair}\label{GH-cont}
In this section, we list  the important analytic results that will be needed in our main argument.
\subsection{Gromov-Hausdorff limit of  K\"ahler-Einstein Fano manifolds }
In this subsection, let us recall the main technical results obtained in the solution of Yau-Tian-Donaldson conjecture (see \cite{CDS2, CDS3}, \cite{Tian2014} and \cite{Ber12}).

\begin{thm}\label{t-YTD}
Let $X_i$ be a sequence of $n$-dimensional Fano manifolds with a fixed Hilbert polynomial $\chi$ and  $D_i\subset X_i$ be  smooth divisors in $|-mK_{X_i}|$ for a fixed $m>0$.  Let $\be_i\in (0,1)$ be a sequence converging to $\be_\infty$ with $0<\ep_0\leq \be_\infty\le1$. Suppose that each $X_i$ admits   K\"ahler  metric $\omega_i(\be_i)$ solving:
\begin{equation}\label{beKE}
\Ric(\omega(\be_i))=\be_i \omega(\be_i)+\frac{1-\be_i}{m}[D] \text{ on } X_i\ .
\end{equation}
that is, $\omega_i(\be_i)$ is a conical K\"ahler-Einstein metric will cone angle $2\pi(1-(1-\be_i)/m)$ along the divisor $D_i\subset X_i$. Then  the  Gromov-Hausdorff limit of any subsequence of $\{(X_i,\omega_i(\be_i))\}_i$ is  {\em homeomorphic} to a $\QQ$-Fano variety $Y$. Furthermore, there is a unique Weil divisor $E\subset Y$ such that
\begin{enumerate}
\item $(Y,\frac{1-\be_\infty}{m} E)$ is klt;
\item $Y$ admits a weak conical K\"ahler-Einstein metric solving
$$
\Ric(\omega(\be_\infty))=\be_\infty \omega(\be_\infty)+\frac{1-\be_\infty}{m}[E] \text{ on } Y\ .
$$
 In particular, $\aut(Y,E)$ is reductive and the pair $(Y,E)$ is $\be_\infty$-K-polystable;
\item possibly after passing to a subsequence, there are embeddings $T_i:X_i\to \PP^N$ and $T_\infty:Y\to \PP^N$, defined by the complete linear system $|-rK_{X_i}|$ and $|-rK_Y|$ respectively for $r=r(m,\ep_0, \chi)$ and $N+1=\chi(X_i,K^{-\otimes r}_{X_i})$, such that $T_i(X_i)$ converge to $T_\infty(Y)$ as projective varieties and $T_i(D_i)$ converge to $T_\infty (E)$ as algebraic cycles.
\end{enumerate}
\end{thm}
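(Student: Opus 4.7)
The plan is to follow the now-standard compactness machinery of Cheeger--Colding--Tian and Donaldson--Sun, adapted to the conical setting by Chen--Donaldson--Sun and Tian, and combine it with the partial $C^0$-estimate to transport all conclusions from the metric to the algebraic category.

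\textbf{Step 1 (uniform geometric bounds).} From the Einstein equation \eqref{beKE} and the uniform lower bound $\beta_i \geq \ep_0 > 0$ one reads off a uniform lower bound on Ricci curvature (in the appropriate conical sense), while the Hilbert polynomial $\chi$ fixes the volume and the Bishop--Gromov inequality then fixes the diameter. Thus, after passing to a subsequence, $(X_i,\omega_i(\beta_i))$ converges in the Gromov--Hausdorff topology to a compact length space $(Y_\infty,d_\infty)$, whose regular set carries a smooth K\"ahler--Einstein metric solving \eqref{beKE} with the right-hand side replaced by its limit.

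\textbf{Step 2 (partial $C^0$-estimate and uniform embedding).} The main analytic input is the partial $C^0$-estimate for the conical K\"ahler--Einstein metrics $\omega_i(\beta_i)$ established in \cite{CDS2, CDS3, Tian2014}: there exist $r=r(m,\ep_0,\chi)$ and $\delta>0$ independent of $i$ such that the $r$-th Bergman kernel of $-rK_{X_i}$ with respect to $\omega_i(\beta_i)$ satisfies $\inf_{X_i}\rho_r(\omega_i(\beta_i))\geq \delta$. Choosing $L^2$-orthonormal bases of $H^0(X_i,-rK_{X_i})$ yields uniformly non-degenerate embeddings $T_i\colon X_i \hookrightarrow \PP^N$ with $N+1=\chi(-rK_{X_i})$, and after a unitary change of basis these $T_i$ converge as maps (in the sense appropriate to Chow/Hilbert convergence of the images). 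Applying standard compactness of the Chow variety, after passing to a subsequence, $T_i(X_i)\to Y\subset \PP^N$ as projective subvarieties and $T_i(D_i)\to E$ as algebraic cycles.

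\textbf{Step 3 (matching algebraic and metric limits).} The central task is to show that the algebraic limit $Y$ is homeomorphic to the Gromov--Hausdorff limit $Y_\infty$, so that the singularities of $Y$ coincide with the metric singular set. Following Donaldson--Sun and its conical extension in \cite{CDS3,Tian2014}, this is proved by analyzing tangent cones: each tangent cone at $y\in Y_\infty$ is an affine algebraic variety, and Tian's partial $C^0$-estimate transferred to a scale-by-scale comparison shows that holomorphic sections of $-rK_{X_i}$ separate points on $Y_\infty$ in a manner compatible with the embeddings $T_i$. This identifies $Y_\infty\cong Y$ as analytic spaces, and identifies the support of $E$ with the metric cone singularity along the divisor. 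Normality of $Y$ follows from the local structure at singular points (metric cones on Sasaki--Einstein links), and the existence of a weak K\"ahler--Einstein metric on the smooth part together with the uniform Ricci lower bound yields klt singularities for $(Y,\frac{1-\beta_\infty}{m}E)$. Hence $Y$ is a $\QQ$-Fano variety and $E$ a genuine Weil divisor.

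\textbf{Step 4 (weak conical K\"ahler--Einstein metric on $(Y,E)$ and consequences).} The metric $\omega_\infty$ obtained in Step 1 extends across the singularities of $Y$ and of $E$ as a weak conical K\"ahler--Einstein current solving
\[
\Ric(\omega_\infty)=\beta_\infty\,\omega_\infty+\tfrac{1-\beta_\infty}{m}[E],
\]
the extension being justified by pluripotential-theoretic convergence of the Monge--Amp\`ere equations underlying \eqref{beKE} (cf.\ \cite{Ber12}). Reductivity of $\aut(Y,E)$ is then the Matsushima-type theorem for weak conical K\"ahler--Einstein metrics proved in the same references, and the $\beta_\infty$-K-polystability of $(Y,E)$ follows from the Berman--Ding energy inequality: the existence of a (weak) conical K\"ahler--Einstein metric forces $\mathrm{DF}_{1-\beta_\infty}\geq 0$ on every normal test configuration with equality only for product ones. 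The embeddings $T_i$ and $T_\infty$ constructed in Step 2 realize all convergence statements, completing (1)--(3).

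\textbf{Expected main obstacle.} The genuine difficulty is the algebro-geometric identification in Step 3, in particular the verification that the Gromov--Hausdorff limit is homeomorphic to a normal projective variety and that the limit cycle $E$ lifts to a Weil divisor making $(Y,\frac{1-\beta_\infty}{m}E)$ klt. Once this deep input from \cite{CDS3,Tian2014} is accepted, the remaining items are formal consequences of the partial $C^0$-estimate, reductivity of the automorphism group of a weakly K\"ahler--Einstein Fano variety, and Berman's lower semicontinuity of the Ding functional.
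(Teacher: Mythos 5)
The paper does not prove Theorem \ref{t-YTD} at all: it is stated as a recollection of the main analytic results from the solution of the Yau--Tian--Donaldson conjecture (\cite{CDS2,CDS3,Tian2014}, with the polystability statement coming from \cite{Ber12,Be12}), and the authors explicitly say that no analytic tools beyond those works are developed in this paper. So there is no internal proof to compare your proposal against; what you have written is a summary of how the cited references prove it, and at that level your architecture is the correct one (conical Cheeger--Colding--Tian compactness, the partial $C^0$-estimate giving uniform Tian embeddings $T_i$ into a fixed $\PP^N$, the Donaldson--Sun-type identification of the Gromov--Hausdorff limit with the algebraic limit of the Chow points, klt-ness and normality of the limit, Matsushima-type reductivity of $\aut(Y,E)$, and Berman's theorem yielding $\be_\infty$-K-polystability from the weak conical K\"ahler--Einstein metric).

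Be clear, however, that your write-up is a citation-level outline rather than a proof: each of your Steps 1--4 simply invokes the corresponding deep theorem of \cite{CDS2,CDS3,Tian2014}, which is exactly the role the theorem plays in this paper, so nothing new is established. Two points to correct if you intend this as more than an outline. First, in Step 1 the diameter bound does not come from Bishop--Gromov; it comes from a Myers-type argument using $\Ric \ge \be_i\,\omega \ge \ep_0\,\omega>0$, and making sense of this Ricci lower bound and the associated comparison geometry for metrics with cone singularities (in particular for cone angles close to $2\pi$, and uniformly in $\be_i$) is itself one of the main technical achievements of \cite{CDS2,CDS3}, not something one "reads off'' from \eqref{beKE}. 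Second, the uniqueness of the Weil divisor $E$ and the fact that the limit cycle of the $T_i(D_i)$ is a genuine divisor with $(Y,\frac{1-\be_\infty}{m}E)$ klt is part of the same hard identification step and should not be treated as automatic; in the conical setting this requires the refined analysis of the limiting singular set along the divisor carried out in \cite{CDS3,Tian2014}.
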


In the following corollary, we denote by $C^{, \al, \be}$ the space of conical K\"{a}hler metrics defined in \cite{Don2011}.
\begin{cor}\label{l-YTD}
 Let $(X,D)$ be smooth Fano pair with $D\in |-mK_X|$.  Then
 \begin{enumerate}
 \item
 $(X,D)$ is $\be$-K-stable if and only if it admits a conical K\"ahler-Einstein metric $\omega(\be)\in C^{,\al,\be}$ solving \eqref{beKE}.
\item
Let $\gamma\in (0,1]$. Then
$(X, D)$ is $\gamma$-K-semistable if and only if it admits a conical K\"{a}hler-Einstein metric $\omega(\beta)\in C^{,\al,\beta}$ solving \eqref{beKE} for any $\beta \in (0, \gamma)$.

\end{enumerate}

\end{cor}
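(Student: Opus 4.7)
The strategy is a continuity method in the cone angle $\beta$, combining Theorem \ref{t-YTD} as the closedness input with Berman-type lower bounds for the log-Mabuchi functional and the linearity in $\beta$ of $\DF_{1-\beta}$ recorded in Lemma \ref{interpolate}.

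For part (1), the \emph{if} direction. Assuming $(X,D)$ carries a conical K\"ahler--Einstein metric in $C^{,\alpha,\beta}$ solving \eqref{beKE}, the log-Mabuchi (or log-Ding) functional is bounded below along every degeneration and its asymptotic slope along a test configuration $(\cX,\cD;\sL)$ equals the generalized log Futaki invariant $\DF_{1-\beta}(\cX,\cD;\sL)$ by the log version of the Paul--Tian / Berman slope formula. This gives $\beta$-K-polystability at once; the upgrade to $\beta$-K-stability in the sense of Definition \ref{be-K} comes from the absence of nontrivial holomorphic vector fields tangent to $D$ on the smooth pair, which rules out nonproduct configurations with vanishing Futaki invariant.

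For part (1), the \emph{only if} direction. Define
\[
S=\{\beta'\in(0,\beta]\,:\,(X,D)\text{ admits a conical KE metric of angle }\beta'\text{ in }C^{,\alpha,\beta'}\}.
\]
Small-angle existence (via a logarithmic $\alpha$-invariant estimate) shows $S$ is nonempty near $0$. Openness of $S$ is standard by the implicit function theorem in the H\"older spaces $C^{,\alpha,\beta'}$. For closedness, take $\beta_i\in S$ with $\beta_i\to\beta^*\le\beta$ and apply Theorem \ref{t-YTD} to the constant sequence $X_i=X$, $D_i=D$ equipped with the metrics $\omega(\beta_i)$: the Gromov--Hausdorff limit $(Y,E)$ is a $\beta^*$-K-polystable $\QQ$-Fano pair, and the embeddings $T_i:X\hookrightarrow\PP^N$, $T_\infty:Y\hookrightarrow\PP^N$ produced there exhibit $T_\infty(Y)$ as the flat limit of $T_i(X)$ and $T_\infty(E)$ as the flat limit of $T_i(D)$. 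This flat $\GG_m$-equivariant family is a test configuration $(\cX,\cD;\sL)$ of $(X,D)$ in the sense of Definition \ref{log-tc} with central fiber $(Y,E)$, and the Berman slope identity along the limiting metrics forces $\DF_{1-\beta^*}(\cX,\cD;\sL)=0$. The $\beta^*$-K-stability of $(X,D)$, which is propagated from $\beta$-K-stability and the small-angle endpoint via Lemma \ref{interpolate}, then forces the test configuration to be trivial, so $(Y,E)\cong(X,D)$ and $\beta^*\in S$.

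Part (2) follows formally from part (1) together with Lemma \ref{interpolate}. The \emph{if} direction is immediate: existence for every $\beta<\gamma$ gives $\beta$-K-polystability for every $\beta<\gamma$ by part (1), and since $\beta\mapsto\DF_{1-\beta}(\cX,\cD;\sL)$ is affine, letting $\beta\nearrow\gamma$ in the inequality $\DF_{1-\beta}(\cX,\cD;\sL)\ge 0$ yields $\gamma$-K-semistability. For the \emph{only if} direction, small-angle existence together with part (1) produces $\beta_0$-K-polystability for some small $\beta_0>0$; combining this with the hypothesis $\gamma$-K-semistability through Lemma \ref{interpolate} gives $\beta$-K-polystability (hence $\beta$-K-stability on the smooth pair) for every $\beta\in[\beta_0,\gamma)$, and part (1) then supplies a conical KE metric at each such $\beta$, while the range $(0,\beta_0)$ is already covered by the small-angle construction.

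The main obstacle I anticipate is the closedness step in part (1): one must identify the Gromov--Hausdorff limit $(Y,E)$ provided by the compactness statement in Theorem \ref{t-YTD} with the central fiber of an honest test configuration of $(X,D)$ in the sense of Definition \ref{log-tc}, and match the analytic vanishing of the asymptotic Mabuchi slope with the algebro-geometric identity $\DF_{1-\beta^*}(\cX,\cD;\sL)=0$ computed from the Chow-weight formulas of Definition \ref{be-K}. This reconciliation uses the $\GG_m$-equivariance of the family of Tian embeddings together with the Paul--Tian / Phong--Sturm matching of Donaldson--Futaki invariants with asymptotic Mabuchi slopes, and is the one point that needs to be handled carefully rather than invoked as a black-box citation.
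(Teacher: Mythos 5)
The paper does not actually prove Corollary \ref{l-YTD}: it is listed among the ``important analytic results'' and is quoted as a restatement of the solution of the log Yau--Tian--Donaldson conjecture from \cite{CDS1,CDS2,CDS3,Tian2014}, together with Berman's theorem \cite{Be12} for the direction ``metric $\Rightarrow$ stability''; the introduction says explicitly that no analytic tools beyond those works are developed here. Your proposal instead tries to re-derive the statement by a continuity method in the cone angle, which is in effect a sketch of the Chen--Donaldson--Sun/Tian argument itself, with Theorem \ref{t-YTD} used as the compactness input.

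As such a sketch it has a genuine gap at precisely the hard point, the closedness step. From Theorem \ref{t-YTD} applied to the constant sequence $(X,D)$ with angles $\beta_i\to\beta^*$ you obtain a Gromov--Hausdorff limit $(Y,E)$ and embeddings with $T_i(X)\to T_\infty(Y)$ and $T_i(D)\to T_\infty(E)$ as cycles. This only says that $\chow(Y,E)$ lies in the closure of the $\SL(N+1)$-orbit of $\chow(X,D)$; it does not produce a $\GG_m$-equivariant flat family, and your sentence ``this flat $\GG_m$-equivariant family is a test configuration'' is unsubstantiated, because the $g_i\in\SL(N+1)$ realizing the convergence are not a priori along a one-parameter subgroup. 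To get an honest test configuration of $(X,D)$ in the sense of Definition \ref{log-tc} with central fiber $(Y,E)$ one needs the reductivity of $\aut(Y,E)$ (supplied by Theorem \ref{t-YTD}(2)) combined with Donaldson's linearization/Luna-slice argument \cite[Proposition 1]{Don2012}; this is exactly how the paper itself converts orbit-closure information into test configurations elsewhere (e.g.\ in the proofs of Lemma \ref{X-be} and Proposition \ref{close}). Relatedly, the vanishing $\DF_{1-\beta^*}(\cX,\cD;\sL)=0$ is not obtained from a ``Berman slope identity along the limiting metrics'' for $(X,D)$: it comes from the fact that the central fiber $(Y,E)$ carries a weak conical K\"ahler--Einstein metric, hence is $\beta^*$-K-polystable and the log Futaki invariant of the induced $\GG_m$-action on it vanishes. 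You flag this reconciliation as ``the main obstacle'' but do not carry it out, and since it is precisely the main content of \cite{CDS3} (i.e.\ of the citation the corollary rests on), the proposal as written is not a proof. The remaining ingredients you invoke --- small-angle existence via a log alpha-invariant bound, openness by the implicit function theorem in Donaldson's H\"older spaces, Berman's direction, and the interpolation Lemma \ref{interpolate} for part (2) --- are consistent with the literature, but they too are imported results rather than established in your argument.
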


\begin{rem}\label{Y-E}
Notice  that the limiting divisor  $E\subset Y$ is actually $\QQ$-Cartier. To see that, one notice that 
on the smooth locus of $Y$
\begin{equation}\label{sim}
E|_{Y^{\reg}}\sim -mK_{Y^{\reg}},
\end{equation}
which implies $E|_{Y}\sim -mK_{Y}$ as $Y$ is normal. On the other hand, $Y$ being $\QQ$-Fano implies that $K_Y$ is $\QQ$-Cartier. This together with \eqref{sim} implies that  $E$ is $\QQ$-Cartier. Also it was pointed out in \cite[Section 4.3]{DS2012} and \cite[Section 5]{CDS3} that if the sequence $\{(X_i,D_i)\}=\{(\cX_{t_i},\cD_{t_i})\}$ is a subsequence of a projective flat family $(\cX^\circ,\cD^\circ)\to C^\circ$ of smooth log Fano pairs over a smooth punctured (not necessarily complete) curve $C^\circ=C\setminus\{0\}$, i.e. $\{t_i\}\subset C^\circ$ and $t_i\stackrel{i\to \infty}{\longrightarrow}0$, then the Gromov-Hausdorff limit $(Y,E)$ can be realized as the central fiber of a {\em flat degeneration
$$
\xymatrix{
(\cX^\circ,\cD^\circ)\ \   \ar@{^{(}->}[r] \ar@{>}[d]  &(\cX,\cD) \ar@{>}[d]\\
          C^\circ \ar@{^{(}->}[r]  &\ \  C
            }\
$$
that is, $(Y,E)=(\cX_0,\cD_0)$. }
This important consequence is used in \cite{CDS3, Tian2014} to construct the destabilizing test configurations. In particular, the flatness of $\cX\to C$ is established in \cite[Section 4.3 ]{DS2012} and the flatness of $\cD\to C$ can be  deduced (see \cite[Chapter III, Exercise 10.9]{Har77}) from the fact that $\cD$ is Cohen-Macaulay since we have already shown it is $\QQ$-Cartier (see \cite[Corollary 5.25]{KM98}), and the morphism $\cD \to C$ is equi-dimensional.
\end{rem}

\subsection{Gromov-Hausdorff continuity of conical K\"ahler-Einstein metric  on smooth Fano family}\label{GH-KE}
\begin{defn}
Let
\begin{equation}\label{P-d-n}
 \HH^{\chi;N}:= \Hilb_\chi(\PP^N)\ .
\end{equation}
denote the Hilbert scheme of closed subschemes of $\PP^N$ with Hilbert polynomial $\chi$. For a closed subscheme $X\subset \PP^N$ with Hilbert polynomial $\chi\left(X,\left.\sO_{\PP^N}(k)\right |_X\right)=\chi(k)$, let $\hilb(X)\in \HH^{\chi;N}$ denote its {\em Hilbert} point.
\end{defn}

To set the scene, let
 $$
\begin{CD}
(\cX, \cD)@>i>> \PP^N\times \PP^N \times \Delta\\
@VV\pi V @VVV\\
\Delta@=\Delta
\end{CD}
$$
be projective flat family of Fano varieties  over the disc $\Delta=\{|t|<1\}\subset \CC$ such that:
\begin{enumerate}
\item  $\cX$ is {\em smooth}  and  $\cD\in |-mK_{\cX/\De}|$ is a {\em smooth} divisor defined by a {\em smooth} section $s_\cD\in \Gamma(\Delta, \omega_{\cX/\De}^{\otimes -m})$;
\item { Both $\pi$ and $\pi|_\cD$ are holomorphic {\em submersions} over $\Delta$.}
\end{enumerate}
To get rid of the ${\rm U}(N+1)$-ambiguity  for the later argument,  let us  assume that $\omega_\cX^{\otimes -r}$ is relatively very ample and $i$ be the embedding induced by a {\em prefixed basis}
$$\{s_i(t)\}_{i=0}^N\subset\Gamma(\Delta,\pi_\ast \sO_\cX(-rK_{\cX/\De}))$$
then $i^\ast \sO_{\PP^N}(1)\cong \sO_\cX(-rK_{\cX/\De})$. Now let ${(r \omega_\FS(t), h^{\otimes r}_\FS(t))}$ denote the metric on $(\cX_t,\sO_\cX(-rK_{\cX/C})|_{\cX_t})$ induced from the embedding $i$ via the basis $\{s_i\}$.  Suppose that for each $t\in \Delta$, $\cX_t$ is K-semistable. {Then by Lemma \ref{interpolate}, $(\cX_t, \cD_t)$ is $\beta$-K-polystable for any $\beta\in (0,1)$.  So by Corollary \ref{l-YTD},
for any $\beta\in (0,1)$ there exists conical
K\"{a}hler-Einstein metric $\omega(t,\beta)$ on the pair $(\mX_t, \frac{1-\beta}{m}\cD_t)$ which satisfies
$$
\Ric(\omega(t,\beta))=\beta \omega(t,\beta)+\frac{1-\beta}{m}[\cD_t]\ .
$$
In the following, by abusing of name, sometime we will abbreviate  $\omega(t,\beta)$ as a conical K\"{a}hler-Einstein metric with {\em cone angle} $\beta$ ({\em instead of $2\pi (1-(1-\beta)/m))$} {\em along }$D$, since the integer $m$ is fixed once for all for the whole paper .
Now assume
$\omega(t,\beta)=\omega_{\KE}(t,\beta)=\omega_\FS(t)+\ddbar \varphi(t,\be) $ where $r\cdot \omega_\FS(t)$ is equal to the Fubini-Study metric  induced from the embedding of $\cX_t\to \PP^N$ using the basis $\{s_i(t)\}_{i=0}^N$. Then $\varphi(t,\be)$ is
the unique solution (c.f.\cite[Theorem 7.3]{BBer2015}) to the equation
{
\begin{equation}\label{MA-h}
(\omega_\FS(t)+\ddbar \varphi(t,\be))^n=e^{f(t)-\beta \varphi(t,\be)}\frac{\omega^n_\FS(t)}{\left(|s_{\cD_t}|^2_{h_{\FS}^{\otimes m}(t)}\right)^{\frac{1-\beta}{m}}},
\end{equation}
}
where $f(t)$ satisfies
\begin{equation}\label{fJ}
\Ric(\omega_\FS(t))=\omega_\FS(t)+\ddbar f(t)\text{ and }\int_{\cX_t} e^{f(t)}\cdot\omega_\FS^n(t)=\int_{\cX_t}\omega^n_{\FS}(t)\ .
\end{equation}
}

\begin{rem}
It's easy to check that an equivalent form of equation \eqref{MA-h} is:
\begin{equation}\label{MA-h2}
(\omega_{\FS}(t)+\ddbar \varphi(t, \be))^n \cdot |s_{\cD_t}|_{h_{\FS}e^{-\varphi}}^{-\frac{2\beta}{m}} (s_{\cD_t}\otimes \overline{s_{\cD_t}})^{\frac{1}{m}}=1.
\end{equation}
\end{rem}

We define a positive definite Hermitian matrix
$$A_\KE(t,\be)=[( s_i,s_j)_{\KE,\be}(t)]$$ with
$$
(s_i,s_j )_{\KE,\be}(t)=\int_{\cX_t} \langle s_i(t),s_j (t)\rangle_{{h_\KE^{\otimes r}}(t,\be)}\omega^n(t,\be)\ ,
$$
where  $h_\KE(t,\beta):=h_\FS(t)\cdot e^{-\varphi(t,\beta)}$.
Now we introduce {\it {r-th} Tian's embedding}
\begin{equation}\label{Ts}
T:(\cX_t,\cD_t;\omega(t,\be))\longrightarrow \PP^N
\end{equation}
to be the one given by the basis $\{g(t,\be)\circ s_j(t)\}_{j=0}^N$ with $g(t,\be)=A^{-1/2}_\KE(t,\be)$.
\begin{defn}\label{def-chow}
We denote by

\begin{equation}\label{chow-be}
 \hilb(\cX_t,(1-\be)\cD_t) \in \HH^{\bchi;N}:=\HH^{\chi;N}\times \HH^{\ti\chi;N}
 \end{equation}
the {\em Hilbert} point of the pair $(\cX_t,\cD_t\subset \cX_t)\subset \PP^N$ using Tian's embedding for the basis $\{s_i\}$ with respect to K\"ahler form $\omega(t,\be)$, where $(\chi,\ti\chi)$ are the Hilbert polynomials of $X\subset \PP^N$ and $D\subset\PP^N$ respectively. We note that when $\beta=1$, the second factor $\HH^{\ti\chi;N}$ is not trivial as we still remember $\cD_t$, i.e., $\hilb(X, 0\cdot D)$ is not the same as $\hilb(X)$. See Remark \ref{remTE}.1 below.

\end{defn}

\begin{rem}\label{remTE}
We make some remarks:
\begin{enumerate}
\item
It is by definition that
$$\hilb(\cX_t,(1-\be)\cD_t)=(\hilb(\cX_t),\hilb(\cD_t); \omega(t,\beta)).$$ In the following we will always use the coefficient $(1-\be)$ to stress that the cycle is obtained via Tian's embedding with respect to the metric $\omega(t,\be)$.
\item
Tian's embedding is well defined for any klt $\mathbb{Q}$-Fano log pair with weak conical K\"{a}hler-Einstein metric $(X, (1-\beta)D; \omega_{\KE}(\beta))$. Note that for any weak conical
K\"{a}hler-Einstein metric $\omega_{\KE}(\beta)$, we always assume that the local potential is bounded (see \cite{BBEGZ}).
\item
The advantage of fixing a basis $\{s_i(t)\}_{i=0}^N\subset \Gamma(\Delta,\pi_\ast\sO_\cX(-rK_{\cX/\De}))$ lies in the fact that, the image of Tian's embedding and hence the Hilbert point,
$\hilb(\cX_t,(1-\be)\cD_t)$ is completely determined by the isometric class of $\omega(t,\be)$. See Lemma \ref{l-GH=CH}. \end{enumerate}
\end{rem}

\begin{prop}\label{cont-path}
 $\hilb(\cX_t, (1-\be)\cD_t)$ varies  continuously in $\HH^{\chi,\ti\chi;N}$ with respect to the pair $(\be,t)\in (0,1)\times \De$.
\end{prop}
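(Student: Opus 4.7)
The plan is to reduce the continuity of the Chow point to the continuity of the Hermitian Gram matrix $A_\KE(t,\be)$, and then further to the continuity of the conical K\"ahler-Einstein potential $\varphi(t,\be)$ solving \eqref{MA-h}. First I would observe that, by Remark \ref{remTE}(3), the Chow point $\chow(\cX_t,(1-\be)\cD_t)$ is obtained by applying $g(t,\be) = A_\KE^{-1/2}(t,\be) \in \mathrm{GL}(N+1)$ to the projective embedding $i:(\cX_t,\cD_t) \hookrightarrow \PP^N$ given by the prefixed basis $\{s_j(t)\}$. Since $i$ and the Chow point of $i(\cX_t)\cup i(\cD_t)$ depend holomorphically on $t \in \De$, and since the map $A \mapsto A^{-1/2}$ is continuous on positive definite Hermitian matrices, the continuity of $(t,\be) \mapsto \chow(\cX_t,(1-\be)\cD_t)$ reduces to the continuity of $(t,\be) \mapsto A_\KE(t,\be)$.

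Next I would reduce the continuity of $A_\KE(t,\be)$ to that of $\varphi(t,\be)$. Rewriting each entry as $(s_i,s_j)_{\KE,\be}(t) = \int_{\cX_t}\langle s_i(t),s_j(t)\rangle_{h_\FS^{\otimes r}(t)}\,e^{-r\varphi(t,\be)}\bigl(\omega_\FS(t)+\ddbar\varphi(t,\be)\bigr)^n$, and using that $s_i(t)$, $s_{\cD_t}$ and $h_\FS(t)$ vary smoothly with $t$ on the smooth total space $\cX \to \De$, it suffices to show that $\varphi(t,\be)$ varies continuously in $L^\infty$ norm together with the weak-$\ast$ convergence of the Monge-Amp\`ere measures $\bigl(\omega_\FS+\ddbar\varphi\bigr)^n$ as $(t,\be) \in \De \times (0,1)$ varies.

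The heart of the argument is the continuity of $\varphi(t,\be)$, which I would establish by a standard compactness-plus-uniqueness scheme. Given any sequence $(t_i,\be_i) \to (t_0,\be_0)$ staying inside a compact subset of $\De \times (0,1)$, uniform $L^\infty$ bounds on $\varphi(t_i,\be_i)$ follow from Ko{\l}odziej-type pluripotential estimates applied to \eqref{MA-h} (see \cite{BBEGZ}), using that the right-hand side has $L^p$-norm uniformly bounded for some $p>1$ since $\be_i$ stays bounded away from $0$ and $\cD$ is a smooth divisor in a smooth submersion. After extracting a subsequence we obtain an $L^1$-limit $\psi$ on $\cX_{t_0}$ with bounded local potential; by weak continuity of the complex Monge-Amp\`ere operator for uniformly bounded quasi-psh potentials, $\psi$ solves \eqref{MA-h2} at $(t_0,\be_0)$, so by uniqueness of bounded conical K\"ahler-Einstein potentials (Berndtsson \cite{Ber12}) we conclude $\psi = \varphi(t_0,\be_0)$. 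Hence the whole sequence converges, $\varphi$ is continuous in $(t,\be)$, and dominated convergence then yields continuity of the integrals defining $A_\KE(t,\be)$, completing the proof.

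The main obstacle is the last step: verifying both the uniform $L^\infty$ estimate and the uniqueness of the bounded solution in this conical, variable-$\be$, variable-$t$ setting. Both ingredients are available in the literature cited above, but one has to verify that the smoothness of the submersion $\pi:(\cX,\cD)\to\De$ together with the lower bound $\be \ge \ep_0 > 0$ suffice to run these estimates with constants uniform in $(t,\be)$; once this is in place the rest of the argument is formal.
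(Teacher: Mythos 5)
Your reduction of the problem to the continuity of the Gram matrix $A_\KE(t,\be)$, and hence of the potential $\varphi(t,\be)$, matches the paper's strategy, but the route you take for the continuity of $\varphi$ — compactness plus uniqueness — has a genuine gap at exactly the crucial point. The right-hand side of \eqref{MA-h} is $e^{f(t)-\be\varphi(t,\be)}\,\omega_\FS^n(t)\,\bigl(|s_{\cD_t}|^2_{h_\FS^{\otimes m}(t)}\bigr)^{-(1-\be)/m}$, so the density whose $L^p$-norm you need to control contains the factor $e^{-\be\varphi}$, i.e.\ it depends on the unknown solution. Bounding it in $L^p$ requires a uniform lower bound on $\varphi(t_i,\be_i)$, which is precisely the uniform $L^\infty$ estimate you are trying to prove: the argument is circular. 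This is the Fano-type direction of the equation (the sign $-\be\varphi$ with $\be>0$), where no a priori $C^0$ estimate follows from the maximum principle or from Ko{\l}odziej/\cite{BBEGZ}-type results alone; an exponential-integrability ($\alpha$-invariant) bound only controls $\|e^{-\be\varphi}\|_{L^p}$ when $p\be$ is below the $\alpha$-invariant, which fails as $\be$ approaches $1$. The missing uniform bound is not a technical verification "available in the literature": it is essentially equivalent to ruling out jumping of the limit as $t$ varies, which is the central difficulty the whole paper (Gromov--Hausdorff compactness of Theorem \ref{t-YTD}, Lemma \ref{IVT}, the open/closedness arguments) is built to handle, and it cannot be had for free at this stage. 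A secondary, more repairable, point: even granted a uniform $L^\infty$ bound, weak continuity of the Monge--Amp\`ere operator under mere $L^1$ convergence of uniformly bounded quasi-psh functions is false in general (Cegrell-type examples); you would need convergence in capacity or a stability estimate for the equation, which again presupposes the $L^p$ control of the density.

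By contrast, the paper avoids all global a priori estimates by a purely local (openness) argument: it sets up the map $F(t,\be,\varphi)$ in Donaldson's conical H\"older spaces $C^{2,\al;\be}$, identifies the $t$-dependent function spaces via the operator $Q_{t,\be}=(-\triangle_{\hat\omega_t}+1)^{-1}\circ(-\triangle_{\hat\omega_0}+1)$ using Donaldson's Schauder theory, and inverts the linearization $\triangle_{\omega_\KE}+\be$ thanks to the absence of holomorphic vector fields on the pair $(\cX_t,\cD_t)$ (Lemma \ref{l-aut}); an effective implicit function theorem, run in a family version also in the cone-angle direction as in \cite{Don2011}, then gives a continuous family $\varphi(t,\be')$ near any $(0,\be)$, after which the continuity of $A_\KE$ and of the Chow point follows as in your first two steps. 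If you want to salvage a compactness-style proof, you would have to import the partial $C^0$/Gromov--Hausdorff machinery of Theorem \ref{t-YTD} together with an argument identifying the limit with $(\cX_{t_0},\cD_{t_0})$ (K-polystability plus a test-configuration argument), and even then you would a priori only control the Chow point modulo $\rU(N+1)$ — which is weaker than what Proposition \ref{cont-path} asserts and is used for later.
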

\begin{proof}

Using the above notations, we claim that $\varphi_{\KE}(t,\beta)$ is continuous with respect to $t$ for any $\beta<1$. Assuming the claim, $A_{\KE}(t,\beta)$ is then continuous with respect $t$, and hence the images of Tian's embedding given by orthonormal basis change continuously.

Now we verify the claim by applying implicit function theorem. First we notice that
the complex manifold $(\cX_t,\cD_t)$ is diffeomorphic to a fixed pair $(X, D)$ endowed with the integrable complex structure $J_t$ thanks to the assumption that $\pi$ is a submersion.  Let $C^{2,\al;\be}(\cX_t,\cD_t; J_t)$ and $C^{,\al;\be}(\cX_t,\cD_t; J_t)$ denote the function spaces on $(\cX_t, \cD_t; J_t)$ defined in \cite{Don2011}.
For each fixed $t\in \Delta$, we consider the map:

\begin{equation}\label{F}
\begin{array}{cccc}
F(t,\be,\cdot):& C^{2,\al;\be}(\cX_t,\cD_t; J_t) & \longrightarrow & C^{,\al;\be}(\cX_t,\cD_t; J_t)\ \\
& \varphi &\longmapsto & \log\frac{(\omega_t+\sqrt{-1}\partial_{J_t} \bar{\partial}_{J_t}\varphi)^n |s_{\cD_t}|_{h_t}^{2(1-\be)/m}}{\omega^n }- f_{t}+\be\varphi
\end{array}
\end{equation}
where for simplicity we write $f_{t}=f(t)$, $\omega_t=\omega_{\FS}(t)$ and $h_t=h_{\FS}^{\otimes m}(t)$,
and $s_{\cD_t}$ is the defining section for $\cD_t$ as before. Note that $\varphi_{\KE}(t,\beta)$ is exactly the solution to the equation $F(t,\beta, \varphi)=0$. We would like to apply implicit function theorem to obtain the continuity of $\varphi_{\KE}(t,\beta)$ with respect to $t$.  In order to do that, we need to work with  a {\em fixed} function space,  whereas the spaces $C^{2,\alpha; \beta}(\cX_t,\cD_t; J_t)$
depends on the parameter $t$. To get around this, we notice that the metrics $\{\omega_t(\cdot, J_t\cdot)\}_t$ vary smoothly and hence $C^{,\al;\be}(X,D; J_t)=C^{,\al;\be}(X,D; J_0)$. This key observation allows us to identify the space $C^{2,\alpha;\beta}(X,D; J_0)$ and $C^{2,\alpha;\beta}(X,D; J_t)$ via the following simple way. Let us fix a family of background conical K\"{a}hler metrics:
\[
\hat{\omega}_t=\omega_t+\epsilon \sqrt{-1}\partial_{J_t}\bar{\partial}_{J_t}|s_{\cD_t}|_{h_t}^{2\gamma},
\]
with $\gamma=1-\frac{1-\beta}{m}\in (0,1)$ being fixed and $0<\epsilon\ll 1$.  Then we define a linear map:
\begin{equation}\label{Q}
\begin{array}{cccc}
Q_{t,\be}:=Q(t,\be,\cdot) :& C^{2,\al;\be}(X,D; J_0) & \longrightarrow & C^{2,\al;\be}(X,D; J_t)\ \\
& \tilde{\varphi} &\longmapsto & (-\triangle_{\hat{\omega}_t}+1)^{-1}\circ(-\triangle_{\hat{\omega}_0}+1)\tilde{\varphi}.
\end{array}
\end{equation}
Since $\ker(-\triangle_{\hat{\omega}_t}+1)=\{0\}$  by the proof of \cite[Proposition 8]{Don2011}, it follows from Donaldson's Schauder estimate in  \cite[Section 4.3]{Don2011} that $Q_{t,\be'}$ is an isomorphism for $ |t|\ll 1$ and $\be'\in (\be-\ep,\be+\ep)$ with $0<\ep\ll 1$. Also using the explicit parametrix constructed in \cite[Section 3]{Don2011}, $Q_t$ gives rise to a {\em continuous} local linear trivialization of the family of subspaces $C^{2,\al;\be}(X,D; J_t)\subset C^{,\al;\be}(X,D; J_t)=C^{,\al;\be}(X,D; J_0)$. Denoting $\tilde{\varphi}(t,\beta)=Q_{t,\be}^{-1}(\varphi(t,\beta))$, we can calculate:
\begin{eqnarray*}
\left.\frac{\partial F(t,\be,Q(t, \be, \tilde{\varphi}))}{\partial \tilde{\varphi}}\right |_{(0,\be,\tilde{\varphi}_{\KE})}(\phi)
&=&(\triangle_{\omega_\KE}+\be)\circ Q_0\phi=(\triangle_{\omega_\KE}+\be)\phi
\end{eqnarray*}
which is invertible by \cite[Theorem 2]{Don2011} since there exists no holomorphic vector field on the pair $(\cX_0,\cD_0)$ (see  \cite[Theorem 2.1]{SW2012} or Lemma \ref{l-aut}). Now we can apply effective implicit function theorem as in \cite[Section 4.4]{Don2011} to the map $F(t,\be,Q(t,\be,\cdot)):C^{2,\al;\be}(X,D; J_0)\to C^{,\al;\be}(X,D; J_0)$ to get a continuous family of solutions $\tilde{\varphi}_{\KE}(t,\be')$ to the equation $F(t,\be',Q(t,\be',\tilde{\varphi}))=0$ for all $|t|\ll 1$ and $\be'\in (\be-\ep,\be+\ep)$ with $0<\ep\ll 1$.
Since the argument for this last statement is standard, we will only sketch its proof. For a fixed $\beta$ by the usual implicit function theorem we first get a family of solutions $\tilde{\varphi}^{(1)}_{\KE}(t, \beta)$ to the equation $F(t, \be, Q(t, \beta, \tilde{\varphi}^{(1)}_{\KE}))=0$ for $|t|\ll 1$. Then we can apply Donaldson's argument of deforming cone angles
in \cite[Section 4.4]{Don2011} in a family version to further get $\tilde{\varphi}_{\KE}(t, \beta')$ for any $|t|\ll 1$ and $\beta'\in (\beta-\epsilon, \beta+\epsilon)$.

More precisely, let $\omega_{\KE}(t, \beta)=\omega_{\FS}+\sqrt{-1}\partial\bar{\partial}\tilde{\varphi}_{\KE}^{(1)}(t, \beta)$ be the continuous family of $C^{, \alpha; \beta}$ conical K\"{a}hler-Einstein metric obtained earlier. For each $\beta'\in (\beta-\epsilon, \beta+\epsilon)$ and $t$ near $0$, we define the new reference metric
$\omega(t, \beta'):=\omega_{\KE}(t, \beta)+\sqrt{-1}\partial\bar{\partial}(\|s_{\cD_t}\|^{2\beta'/m}-\|s_{\cD_t}\|^{2\beta/m})$ where $\|\cdot\|^2$ is a smooth extension of Hermitian metric determined by $h_{\FS} \exp(-\tilde{\varphi}^{(1)}_{\KE}(t, \beta))$ on $\left. (K_{\cX_t}^{-1})^{\otimes m}\right|_{\cD_t}$ (using the fact that $\tilde{\varphi}^{(1)}_{\KE}(t, \beta)$ is  {\em smooth in  tangential directions} by \cite[Section 4.3]{Don2011}). Then as in the proof of \cite[Proposition 7]{Don2011}, one can show that
\begin{enumerate}
\item $k_{\beta'}:=|s_{\cD}|_{\beta'}^{-2\beta'/m}(s_{\cD}\otimes \overline{s_{\cD}})^{1/m} \; \omega(t, \be')^n$ (see \eqref{MA-h2}) satisfies $\|k_{\beta'}-1\|_{C^{,\alpha; \be'}}\rightarrow 0$ as $\beta'\rightarrow \beta$. Here we used $|\cdot |^2_{\beta'}$ to denote the Hermitian metric on $\sO_{\cX_t}(-mK_{\cX_t})$ whose curvature is equal to $m\cdot \omega(t, \beta')$.
\item Let $\Delta_{\beta'}$ denote the Laplace operator associated to $\omega(t, \beta')$, then $\Delta_{\beta'}+\beta'$ is invertible and the operator norm of its inverse is bounded by a fixed constant independent of $\beta'$ and $t$ near $0$.
\end{enumerate}
So the effective version of implicit function theorem allows us to get a continuous family of solutions $\tilde{\varphi}_\KE(t, \beta')$. In fact, one notice that $\omega(t, \beta')=\omega_{\FS}+\ddbar \psi(t, \beta')$ with
$\psi(t, \beta')=\tilde{\varphi}^{(1)}_\KE(t, \beta)+\|s_{\cD_t}\|^{2\beta'/m}-\|s_{\cD_t}\|^{2\beta/m}$ is an {\em approximate solution } to the conical K\"{a}hler-Einstein equation by the item (1) above and is continuous with respect to both $t$ and $\beta'$. By item (2) and the effective implicit function theorem, we then know that the difference between the actual solution $\tilde{\varphi}_{\KE}(t, \beta')$ and $\psi(t, \beta')$ approaches $0$ in $C^{2, \alpha; \beta'}$-norm as $\beta'\rightarrow \beta$. As a consequence, $\tilde{\varphi}_{\KE}(t, \beta')$ is continuous at $\beta'=\beta$ in $C^0$-norm with respect to both $\beta'$ and  $t$.  Noticing  that the argument above does not depend on the origin $0$ and $ \be$ we choose, hence $\varphi_{\KE}(t,\be')=Q(t,\be',\tilde{\varphi}_{\KE}(t,\be'))$ is  continuous with respect to all $t\in \De$ and $\be'\in (\be-\ep,\be+\ep)$.

By using the complex Monge-Amp\`{e}re equation in \eqref{MA-h} or \eqref{MA-h2}, we see that the family of volume forms $\omega_{\KE}(t, \beta')^n$ on the fixed {\it smooth} manifold $X$ is continuous in $L^{p}(X),\forall p\in [1,1/(1-\be'))$  with respect to $\beta'$ and $t$, which implies that the family of matrices of $L^2$-inner products $A_{\KE}(t, \beta')=[(s_i, s_j)_{\KE, \beta'}(t)]$ is also continuous with respect to $t$ and $\beta'$. So  the Tian's embeddings $T(\cX_t, \cD_t; \omega(t, \beta'))$ determined by $\{A^{-1/2}_{\KE}(t, \beta') \circ s_j(t)\}_{j=0}^N$ indeed produce a continuous family of Hilbert points inside $\HH^{\bchi;N}$.

\end{proof}
Let $\{(X_i,D_i)\}$ be a sequence of smooth Fano pairs with  {\em a fixed} Hilbert polynomial $\chi$ and $D_i\in |-mK_{X_i}|$. Suppose  each $X_i$'s admit a unique  conical K\"ahler-Einstein form $\omega(i,\be_i)$ solving
$$\Ric(\omega(i,\beta_i))=\beta_i \omega(i,\beta_i)+\frac{1-\beta_i}{m}[D_i] \text{ on } X_i
$$
with $\inf\beta_i\geq\epsilon>0$, we define
$$T_i: (X_i,D_i;\omega(i,\beta_i))\longrightarrow \PP^N$$
to be the Tian's embedding with respect to  $\omega(i,\beta_i)$ for sufficiently large $N$ depending only on $\epsilon,\ m$ and the fixed Hilbert polynomial $\chi$, and let $\hilb(X_i,(1-\be_i)D_i)\in \HH^{\chi;N}\times \HH^{\ti\chi;N}$ denote the Hilbert point corresponding to the Tian's embedding of $X_i$ with respect to $\omega(i,\beta_i)$.  Then we have


\begin{lem}\label{l-GH=CH}
 Let  $(X,D)\subset \PP^N$ be  a log $\QQ$-Fano pair with  the same Hilbert polynomial $\chi$ and $D\in |-mK_{X}|$. Suppose $(X,D)$ admits a
weak conical K\"ahler-Einstein form $\omega(\be)$ with $\be=\lim_{i\to \infty}\be_i$ solving
$$\Ric(\omega(\beta))=\beta \omega(\beta)+\frac{1-\beta}{m}[D] \text{ on } X\ .
$$
Then
$$(X_i,D_i;\omega(i,\beta_i))\stackrel{\GH}{\longrightarrow} (X, D;\omega'(\be)) \mbox{ as } i\rightarrow \infty $$ for a conical K\"{a}hler-Einstein metric $\omega'(\beta)$
is equivalent to the following statement: there is a sequence of  $\{g_i\}\subset {\rm U}(N+1)$ such that
$$g_i\cdot \hilb(X_i, (1-\be_i)D_i)\longrightarrow \hilb (X,(1-\be) D)\in \HH^{\bchi;N} \mbox{ as } i\to \infty, $$
where $\hilb(X,(1-\be)D)$ denote the Hilbert point of Tian's embedding $T: (X,D;\omega(\beta))\to \PP^N$ for a fixed  basis $\{s_i\}$.
\end{lem}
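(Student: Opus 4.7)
My plan is to prove the equivalence in both directions, using Theorem \ref{t-YTD} together with uniqueness of weak conical K\"ahler-Einstein metrics, with careful bookkeeping of the $\mathrm{U}(N+1)$ ambiguity intrinsic to Tian's embedding.

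For the direction $(\Rightarrow)$, I would apply Theorem \ref{t-YTD}(3) directly to the given GH-convergent sequence $(X_i,D_i;\omega(i,\beta_i))\to(X,D;\omega'(\beta))$: since the limit is already a weak conical KE $\mathbb{Q}$-Fano pair, the theorem (after passing to a subsequence) yields Tian embeddings $\tilde T_i\colon X_i\to\mathbb{P}^N$ and $\tilde T\colon X\to\mathbb{P}^N$, constructed from $L^2$-orthonormal bases for $\omega(i,\beta_i)$ and $\omega'(\beta)$, such that $\tilde T_i(X_i)\to\tilde T(X)$ and $\tilde T_i(D_i)\to\tilde T(D)$ in $\mathbb{P}^{\bd,n;N}$. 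Any two Tian embeddings of the same weak KE pair have images differing by an element of $\mathrm{U}(N+1)$ (two $L^2$-orthonormal bases of the same Hilbert space are unitarily equivalent, and the pullback of $\omega(\beta)$ by an automorphism produces an embedding differing by a further unitary). Hence $\tilde T=h\cdot T$ for some $h\in\mathrm{U}(N+1)$, and $\tilde T_i=k_i\cdot T_i$ for some $k_i\in\mathrm{U}(N+1)$, where $T_i$ is the Tian embedding defining $\chow(X_i,(1-\beta_i)D_i)$. Taking $g_i:=h\cdot k_i^{-1}$ gives $g_i\cdot\chow(X_i,(1-\beta_i)D_i)\to\chow(X,(1-\beta)D)$.

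For the direction $(\Leftarrow)$, I would start from compactness: since $\inf_i\beta_i\geq\epsilon>0$ and the Hilbert polynomial $\chi$ is fixed, Theorem \ref{t-YTD} guarantees that every subsequence of $(X_i,D_i;\omega(i,\beta_i))$ admits a further GH-convergent subsequence to some weak KE $\mathbb{Q}$-Fano pair $(Y,E;\omega_Y)$. The forward direction, applied to this subsequence, produces $g'_i\in\mathrm{U}(N+1)$ and a Tian embedding $T_Y$ of $(Y,E;\omega_Y)$ with $g'_i\cdot\chow(X_i,(1-\beta_i)D_i)\to\chow(Y,(1-\beta)E)$. Combining with the hypothesis and the compactness of $\mathrm{U}(N+1)$, extract a limit $g_i\cdot(g'_i)^{-1}\to h\in\mathrm{U}(N+1)$, so that $h\cdot\chow(Y,(1-\beta)E)=\chow(X,(1-\beta)D)$. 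This forces $(Y,E)\cong(X,D)$ as polarized projective pairs, and uniqueness of weak conical KE metrics up to pullback by automorphisms (\cite{BBEGZ, Ber12}) forces $\omega_Y$ to be isometric to $\omega(\beta)$. Since every GH-subsequential limit is isometric to $(X,D;\omega(\beta))$, the full sequence converges in GH to this common pair.

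The main obstacle will be the careful tracking of the $\mathrm{U}(N+1)$ ambiguity across three distinct embeddings (the $T_i$ for the sequence, the $T_Y$ or $\tilde T$ for the GH limit, and the fixed reference $T$ for $X$), together with verifying that the various unitary intertwiners can be composed and taken to a limit via compactness. A secondary subtlety is that at $\beta=1$ the Chow point still retains the cycle $D$ in the second factor $\mathbb{P}^{\delta,n-1;N}$, so both factors of $\mathbb{P}^{\bd,n;N}$ must be treated uniformly; however, this poses no real difficulty since Theorem \ref{t-YTD}(3) records convergence of both $X_i$ and $D_i$ simultaneously.
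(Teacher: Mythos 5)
Your proposal is correct and follows essentially the same route as the paper: both directions rest on Theorem \ref{t-YTD}(3), the uniqueness of weak conical K\"ahler--Einstein metrics up to $\Aut(X,D)$ from \cite{BBEGZ}, and the $\rU(N+1)$ ambiguity of $L^2$-orthonormal bases. The paper's proof is terser (it writes out only the forward direction and leaves the subsequence/compactness argument for the converse implicit), but the ingredients and their roles are the same as in your write-up.
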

\begin{proof}
This follows directly from Theorem \ref{t-YTD} which is in turn from the works of \cite{CDS2, CDS3} and \cite{Tian2012, Tian2014}.
Indeed let us assume that $(X_i, D_i; \omega(i,\beta_i))\stackrel{\GH}{\longrightarrow} (X,D; \omega'(\beta))$ where we can assume the limit exists by Theorem \ref{t-YTD}. Then by Theorem \ref{t-YTD}.(3)
$(T_i(X_i), T_i(D_i))\rightarrow (T'_\infty(X), T'_\infty(D))$ where $T_i$ (resp. $T'_\infty$) is given by Tian's embedding determined by an orthonormal basis of $H^0(X_i, -mK_{X_i})$ (resp. $H^0(X, -mK_X)$) with respect to $\omega(i, \beta_i)$ (resp. $\omega'(\beta)$).
Assume $\omega(\beta)$ is also a conical K\"{a}hler-Einstein metric on $(X, D)$. By the uniqueness of conical K\"{a}hler-Einstein metrics proved in \cite{BBEGZ}, there exists a holomorphic automorphism $\sigma \in {\rm Aut}(X, D)$ such that $\sigma^*\omega(\beta)=\omega'(\beta)$. Moreover because $\sigma$ lifts to ${\rm Aut}(X, D, -mK_X)$, there is a unitary isomorphism between $(H^0(X, -mK_X), \|\cdot \|^2_{\omega'(\beta)})$ and $(H^0(X, -mK_X), \|\cdot\|^2_{\omega(\beta)})$ where $\|\cdot\|^2_{\omega(\beta)}$ ($\|\cdot\|^2_{\omega'(\beta)}$) is the $L^2$ inner product induced by $\omega(\beta)$ (resp. $\omega'(\beta)$). Via this isomorphism, we have $(T_i(X_i), T_i(D_i))\rightarrow (T_\infty(X), T_\infty(D))$ where $T_\infty$ is given by Tian's embedding determined by an orthonormal basis of
$H^0(X, -mK_X)$ with respect to $\omega(\beta)$. Now the statement of the lemma holds because the orthonormal basis of a unitary vector space is defined only up to a ${\rm U}(N+1)$-ambiguity.
\end{proof}

\section{Strong uniqueness for $0<\beta\ll 1$}\label{uni}
In this section, we will give a purely algebro-geometric proof of the fact that when the angle $\beta>0$ is sufficiently small, then there is a unique filling.
\begin{prop}\label{p-acc}
For a fixed a finite set $I\subset [0,1]$, there exists a number $\beta_I>0$ such that if $(X,(1-\beta_I)D)$ is a klt pair, $D$ is $\mathbb{R}$-Cartier and the coefficients of $\RR$-divisor $D$ are contained in $I$, then  $(X,D)$ is log canonical.
\end{prop}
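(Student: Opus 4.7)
The plan is to deduce the statement from the ACC (ascending chain condition) for log canonical thresholds established in \cite{HMX2014} via a short contradiction argument. The first step is simply a translation: if $(X,(1-\beta_I)D)$ is klt, then the log canonical threshold $c:=\mathrm{lct}(X;D)$ satisfies $c>1-\beta_I$, while $(X,D)$ being lc is exactly the statement $c\ge 1$. Hence the proposition reduces to the following gap claim: there exists $\beta_I>0$ such that no lct arising from a pair $(X;D)$ in the allowed class lies in the interval $(1-\beta_I,\,1)$.

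Next I would argue by contradiction. If no such $\beta_I$ existed, then for every integer $n\ge 1$ one could produce a pair $(X_n,D_n)$ with $D_n$ an $\RR$-Cartier divisor whose coefficients lie in $I$, such that $(X_n,(1-\tfrac{1}{n})D_n)$ is klt but $(X_n,D_n)$ is not lc. Setting $c_n:=\mathrm{lct}(X_n;D_n)$, the klt hypothesis forces $c_n>1-\tfrac{1}{n}$ while the failure of the lc hypothesis forces $c_n<1$, so $c_n\to 1^-$. After passing to a subsequence we may assume the $c_n$ are strictly increasing.

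The main step is then to invoke \cite{HMX2014}: once one fixes the ambient dimension (which is implicit in the context of this paper, since everything takes place in a fixed Hilbert scheme of Fano varieties of bounded dimension) and restricts the coefficients of the boundary to a DCC set, the collection of log canonical thresholds satisfies ACC. The finite set $I$ is trivially DCC, and each $X_n$ is klt (a fortiori lc) because $(X_n,(1-\tfrac{1}{n})D_n)$ is klt, so the sequence $c_1<c_2<\cdots$ lives inside a set to which ACC applies. This directly contradicts ACC, and the desired $\beta_I$ therefore exists. Equivalently, one can write $\beta_I:=1-c^\ast$ where $c^\ast$ is the maximum (provided by ACC) of those admissible lct's that are strictly less than $1$.

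The only real subtlety, and hence the only place to be careful, is making sure the hypotheses of \cite{HMX2014} are in force: a DCC (in fact finite) coefficient set, and a bounded-dimension class of varieties with mild singularities. Both are built into the setting, so no further work is required beyond quoting ACC; in particular no analytic input and no MMP are invoked in this step.
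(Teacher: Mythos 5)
Your proof is correct and follows essentially the same route as the paper: both reduce the statement to the nonexistence of log canonical thresholds in an interval $(1-\beta_I,1)$ and obtain this gap from the ACC theorem of \cite{HMX2014} applied in a fixed dimension with the finite coefficient set $I$. Your explicit choice $\beta_I=1-c^\ast$, with $c^\ast$ the largest admissible threshold strictly below $1$, is in fact the cleaner formulation of what the paper's proof intends.
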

\begin{proof}By \cite[Theorem 1.1]{HMX2014}, we know that for the set of all $n$-dimensional log pairs $(X,D)$ satisfying the property that $D$ is a $\RR$-divisor and its coefficients  are contained in $I$, the set of log canonical thresholds
$$\{{\rm lct}(X,D)|\mbox{ $X$ is $n$ dimensional, the coefficients of $D$ are in $I$}\} $$
satisfies the {\em ascending chain condition} (ACC). In particular, there exists a maximum $\beta_I$ among all log canonical thresholds which are strictly less than 1.

Then we know that if $(X, (1-\beta_I)D)$ is klt and $D$ is $\QQ$-Cartier, $(X,D)$ is log canonical, since otherwise, we will have a pair whose log canonical threshold is in $(1-\beta_I,1)$, which is a contradiction.

\end{proof}

Let $\cX\to C$ be a flat family of $\mathbb{Q}$-Fano varieties over a smooth pointed curve $0\in C$, and $\cX$ is assumed to be $\mathbb{Q}$-Gorenstein.
Fix $m> 1$ and $\cD\sim_{C}-mK_{\cX}$ is a divisor  such that {after a possibly shrinking of the pointed curve $C$}, for {\em every} $t\in C$ the fiber $(\cX_t,\frac{1}{m}\cD_t)$ is {\em klt}. For instance, we can choose $m$ sufficiently divisible such that $|-mK_{\cX}|$ is relatively base point free over $C$ and $\cD\sim_{C} -mK_{\cX} $ to be a  general divisor in $|-mK_\cX|$. In particular,  after a possible shrinking of $C$,
if $\mathcal{X}\to C$ is smooth over $C^{\circ}$, one might choose $\cD$ so that  $\cD_t$ is {\em smooth} for $t\in C^\circ$ provided $\cX_t$ is so for $t\in C^\circ$.

\begin{thm}\label{unique}
Let $(\cX,\cD)\to C$ be a flat family introduced as above. Let
$$\beta_0:=\min\left\{\beta_I, \frac{1}{m+1}\right\}$$ with $\beta_I$ being given in Proposition \ref{p-acc} for the set $I=\{\frac{q}{m}| q=1,2,...,m\}$. For any fixed $\beta\in (0, \beta_0]$, suppose $(\cX^\prime,\cD^\prime)\to C$ is another flat  family  with $\displaystyle K_{\cX'}+\frac{1}{m}\cD'$ being $\QQ$-Cartier and satisfies
\begin{equation}\label{C0}
(\cX^\prime,\cD^\prime)\times_{C} C^\circ\cong (\cX,\cD)\times_{C} C^\circ
\end{equation}
  and  $(Y_\beta, \frac{1-\beta}mE_\beta):=(\cX^\prime_0,\frac{1-\beta}m\cD^\prime_0)$ being $\QQ$-Fano. 
 Then the above isomorphism can be extended to an isomorphism
$$ (\cX^\prime,\cD^\prime)\cong  (\cX,\cD).$$
\end{thm}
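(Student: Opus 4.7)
The strategy combines the ACC of log canonical thresholds (Proposition \ref{p-acc}) with standard birational geometry. First, I would upgrade the klt hypothesis at the scaled boundary $\tfrac{1-\beta}{m}\cD'_0$ to a log canonical condition at the full boundary $\tfrac{1}{m}\cD'_0$; then I would use uniqueness of relative minimal models to extend the generic isomorphism to a global one.

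\emph{Step 1} (ACC upgrade). I would apply Proposition \ref{p-acc} with $I = \{q/m : 1 \le q \le m\}$. Since $\beta \le \beta_0 \le \beta_I$, monotonicity of klt in the boundary implies that $(\cX'_0, \tfrac{1-\beta_I}{m}\cD'_0)$ is klt, and Proposition \ref{p-acc} then promotes $(\cX'_0, \tfrac{1}{m}\cD'_0)$ to log canonical. Inversion of adjunction at the Cartier divisor $\cX'_0 \subset \cX'$ gives log canonicity of $(\cX', \tfrac{1}{m}\cD')$ in a neighborhood of the central fiber; combined with the klt generic fiber, this yields log canonicity over all of $C$. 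Since $K_{\cX'} + \tfrac{1}{m}\cD'$ is $\QQ$-Cartier and vanishes on $C^\circ$, a vertical adjustment would let us assume $K_{\cX'} + \tfrac{1}{m}\cD' \sim_{\QQ, C} 0$.

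\emph{Step 2} (Crepancy and rigidity). Having produced two fillings with $K+B \sim_{\QQ, C} 0$ of the same generic fiber---namely $(\cX, \tfrac{1}{m}\cD)$ klt with $-K_\cX$ relatively ample, and $(\cX', \tfrac{1}{m}\cD')$ log canonical---I observe that the birational map $\phi: \cX \dashrightarrow \cX'$, an isomorphism over $C^\circ$, is an isomorphism in codimension one by irreducibility of both central fibers. On a common log resolution $\pi: W \to \cX$, $\sigma: W \to \cX'$, the difference of discrepancies
\[
\sum (\alpha'_i - \alpha_i) E_i = \pi^*(K_\cX + \tfrac{1}{m}\cD) - \sigma^*(K_{\cX'} + \tfrac{1}{m}\cD')
\]
has $\QQ$-trivial right-hand side over $C$. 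The negativity lemma, combining the strict klt inequality $\alpha_i > -1$ with the weak lc inequality $\alpha'_i \ge -1$, then forces $\alpha_i = \alpha'_i$ for every exceptional $E_i$; in particular $(\cX', \tfrac{1}{m}\cD')$ is itself klt and $\phi$ is crepant. Now $\cX$ and $\cX'$ are two klt relative minimal models with $K + B \sim_{\QQ, C} 0$, so by standard minimal model theory they are connected by relative flops over $C$; however, any such flop requires a fiber curve $C \subset \cX_0$ with $K_\cX \cdot C = 0$, contradicting relative ampleness of $-K_\cX$. Hence $\phi$ is a genuine isomorphism, identifying $\cD$ with $\cD'$.

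\emph{Main obstacle}: The decisive difficulty is Step 2, where the negativity lemma must bridge the asymmetry between strict klt ($\alpha_i > -1$) on the $\cX$-side and weak lc ($\alpha'_i \ge -1$) on the $\cX'$-side---this is precisely where the ACC-produced log canonicity (rather than merely klt) of $(\cX', \tfrac{1}{m}\cD')$ from Step 1 is indispensable, since without it one could not even write down the discrepancy comparison. Once crepancy is established, the rigidity of the minimal model (no $K$-trivial extremal rays due to ample $-K_\cX$) closes the argument without any further analytic input.
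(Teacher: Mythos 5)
Your Step 1 matches the paper's strategy (ACC promotes the klt pair at angle $\beta\le\beta_0$ to a log canonical pair at the full boundary $\tfrac1m\cD'_0$), though note one point you skip: to invoke Proposition \ref{p-acc} with the finite set $I=\{q/m\}$ you must first show the coefficients of $\tfrac1m E_\beta$ actually lie in $I$, i.e.\ that the multiplicities of $E_\beta$ are at most $m$; this is where the second constraint $\beta_0\le\tfrac1{m+1}$ enters ($\tfrac{1-\beta}{m}c_i<1$ and $\tfrac{1-\beta}{m}\ge\tfrac1{m+1}$ give $c_i<m+1$).

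The genuine gap is the first sentence of your Step 2: the claim that $\phi\colon\cX\dashrightarrow\cX'$ is an isomorphism in codimension one ``by irreducibility of both central fibers'' is false. A birational map over $C$ between two flat families with irreducible fibers can perfectly well contract $\cX_0$ and extract $\cX'_0$ --- an elementary transformation of $\PP^1\times C$ (blow up a point of the central fiber, blow down the strict transform of that fiber) already does this. Ruling out this possibility is the entire content of the paper's proof: on a common resolution $W$, if the strict transforms of $\cX_0$ and $Y_\beta$ differ, one writes both log pullbacks against $K_W+\tfrac1m p^{-1}_*\cD$ and compares. The decisive asymmetry is not ``$\alpha_i>-1$ versus $\alpha'_i\ge-1$'' as you assert, but ``$a_i>0$ versus $b_i\ge 0$'': inversion of adjunction makes $(\cX,\tfrac1m\cD+\cX_0)$ plt, hence $(\cX,\tfrac1m\cD)$ \emph{terminal} along $\cX_0$ (one subtracts $v_F(\cX_0)\ge1$ from the discrepancy), while log canonicity of $(Y_\beta,\tfrac1m E_\beta)$ only gives $b_i\ge0$ on the other side. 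The difference $a_0Y_\beta+\sum a_iE_i-b_0\cX_0-\sum b_iE_i$ is relatively $\QQ$-trivial and supported on $W_0$, hence equals $cW_0$; the coefficient of $Y_\beta$ forces $c>0$ while that of $\cX_0$ forces $c\le0$, a contradiction. In the remaining case where $\phi$ \emph{is} an isomorphism in codimension one, your invocation of the negativity lemma and of flops is unnecessary and partly incorrect (the difference of pullbacks is a relatively trivial divisor supported on $W_0$ omitting the common strict transform, hence zero outright; and a $(K+B)$-flop would be $(K_\cX+\tfrac1m\cD)$-trivial, not $K_\cX$-trivial, so relative ampleness of $-K_\cX$ does not directly exclude it). As written, the proposal assumes away exactly the case the theorem is about.
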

\begin{proof}
 Since 
 $\cD\times_{C}C^{\circ}$ is integral by our choice, the coefficients of ${\frac{1}{m}E_\beta}$ lies in the set $\{\frac{q}m\mid q\in \NN\}$. By our assumption that $(Y_\beta, \frac{1-\beta}mE_\beta)$ is klt and $\be\leq\be_0\leq \frac{1}{m+1}$ , we have
$$
\frac{1}{m+1}\leq\frac{1-\be}m \text{ and }\frac{1-\beta}m c_i<1 \text{ hence } c_i<m+1
$$
where $E_\beta=\sum_ic_i E_{\be,i}$ with $E_{\be,i}$ being a prime divisor for each $i$. Hence the coefficients of ${\frac{1}{m}E_\beta}$ must lie in $I=\{\frac{q}{m}| q=1,2,...,m\}$. By our assumption of $\beta\in(0,\be_0]\subset(0,\be_I] $, we know that $(Y_\beta, \frac{1}{m}E_\beta)$ is {\em log canonical} by Proposition \ref{p-acc}. Furthermore, since $Y_\beta$ is irreducible, we know that
$$K_{\cX'}+\frac{1}{m}\cD'\sim_{\QQ, C}0$$ as this holds over $C^\circ$. 

Let $W$ be a common resolution
$$
\xymatrix{ & W  \ar@{>}[dl]_{p} \ar@{>}[dr]^{q} &\\ \cX  \ar@{-->}[rr]^\phi & & \cX'}
$$
that is an isomorphism over $C^\circ$.
If the birational map $\phi$  extends to a birational map
$
\xymatrix{\cX_0  \ar@{-->}[r]^{\phi|_{\cX_0}} & Y_\be},
$
then
$$q^\ast K_{\cX'}\sim_{C,\QQ} p^\ast K_{\cX}$$
as $\phi$ is an isomorphism in codimension one, which implies
$$
\cX=\proj \bigoplus_{r=0}^\infty\sO_W(-rp^\ast K_{\cX/C})=\proj \bigoplus_{r=0}^\infty\sO_W(-rq^\ast K_{\cX'/C})=\cX'
$$
as both $\cX_0$ and $Y_\be$ are $\QQ$-Fano and  we are done already. So from now on we assume $\cX_0\neq Y_\be${ on $W$}.

Now let us write
\begin{equation}\label{p}
p^*(K_{\cX}+\frac{1}{m}\cD)+a_0Y_\beta+\sum a_iE_i=K_W+\frac{1}{m}p^{-1}_*\cD.
\end{equation}
Since $(\cX_0, \frac{1}{m}\cD_0)$ is klt, this implies that $(\cX,\frac{1}{m}\cD+\cX_0)$ is plt near $\cX_0$ by inversion of adjunction \cite[Theorem 5.50]{KM98}. Hence for any divisor $F$ whose center is contained in $\cX_0$ we have
$$ -1< a(F, \cX,\frac{1}{m}\cD+\cX_0)=a(F, \cX,\frac{1}{m}\cD)-{\rm ord}_F(\cX_0)\leq a(F, \cX,\frac{1}{m}\cD)-1
$$
{where ${\rm ord}_F$ denotes the vanishing order along the divisor $F$}.  Therefore,  $(\cX,\frac{1}{m}\cD)$ is terminal along $\cX_0$ and $a_0>0,\ a_i>0$.
Similarly, by writing
\begin{equation}\label{q}
q^*(K_{\cX'}+\frac{1}{m}\cD')+b_0\cX_0+\sum b_iE_i=K_W+\frac{1}{m} q^{-1}_*\cD',
\end{equation}
 we obtain $b_0,b_i\ge 0$ because $(Y,\frac{1}{m}E)$ is log canonical thanks to our choice of $\beta$ and Proposition \ref{p-acc}. Since the right hand sides of \eqref{p} and \eqref{q} are equal to each other by \eqref{C0}, $\cX_0\neq Y_\be$,
and both $K_{\cX}+\frac{1}{m}\cD$ and $K_{\cX'}+\frac{1}{m}\cD'$ are $\QQ$-linearly equivalent to a relatively {\em trivial} divisor over $C$,  these imply there is a constant $c\le 0$ such that
$$a_0Y_\beta+\sum a_iE_i=b_0\cX_0+\sum b_iE_i+c\cdot W_0\ .$$
By comparing the coefficients of $Y_\beta$ on both sides, we see $c>0$; but by comparing the coefficients of $\cX_0$ on both sides, we see $c\le 0$. This contradiction implies that $\cX^{\prime}=\cX$.
\end{proof}

\begin{rem}
{If $m=1$, the pair we get is plt instead of klt. The above argument indeed also applies to this case. }

A similar uniqueness statement is observed in  \cite[4.3]{Odaka14} and the above argument indeed gives a straightforward proof of it.
\end{rem}

We also notice that the automorphism group ${\rm Aut}(X,D)$ is always finite by the following well known fact.
\begin{lem}\label{l-aut}
Let $(X,D)$ be a klt pair such that $-K_X$ is ample and $D\sim_{\mathbb{Q}}-K_X$. Then ${\rm Aut}(X,D)$ is finite.
\end{lem}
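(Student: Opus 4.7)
The strategy is to reduce the claim to the vanishing of global vector fields tangent to $D$, which will be shown using the Calabi--Yau nature of the index-one cover of $(X,D)$.

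First I would observe that ${\rm Aut}(X,D)$ is a closed subgroup of ${\rm Aut}(X)$, and the latter is a linear algebraic group because $-K_X$ is ample (so that $X$ embeds in $\mathbb{P}^N$ via $|-mK_X|$ for some $m \gg 0$, and ${\rm Aut}(X) \hookrightarrow {\rm PGL}(H^0(X,-mK_X))$ as a closed subgroup). Consequently ${\rm Aut}(X,D)$ is a linear algebraic group, and finiteness is equivalent to showing that its identity component $G^\circ := {\rm Aut}(X,D)^\circ$ is trivial.

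Suppose for contradiction that $\dim G^\circ \geq 1$. Pick a non-trivial one-parameter subgroup $\lambda : \mathbb{G}_m \to G^\circ$ (the $\mathbb{G}_a$-case is handled analogously by replacing $\mathbb{G}_m$-weight arguments with the analogous unipotent ones). Choose $m$ divisible enough so that $mD$ is Cartier with $mD \sim -mK_X$, and construct the associated $m$-fold cyclic index-one cover $\pi : Y \to X$ as in \cite[2.50]{KM98}. Then $Y$ is normal and projective, $(Y,0)$ is klt by \cite[Proposition 5.20]{KM98}, and $K_Y \sim_{\mathbb{Q}} \pi^*(K_X+D) \sim_{\mathbb{Q}} 0$, so $Y$ is a klt Calabi--Yau variety. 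The $\lambda$-action on $(X,D)$ lifts (possibly after passing to a finite cover of $\lambda$) to a non-trivial algebraic action on $Y$, and hence produces a non-zero global vector field on $Y$, i.e.\ $H^0(Y,T_Y) \neq 0$.

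The contradiction then comes from showing $H^0(Y, T_Y) = 0$ for this specific $Y$. Since $X$ is Fano, $\chi(\mathcal{O}_X)=1$ and by the projection formula $\chi(\mathcal{O}_Y) = \sum_{i=0}^{m-1}\chi(\mathcal{O}_X(-iK_X)) > 0$, which together with $K_Y \sim_{\mathbb{Q}} 0$ rules out any abelian factor in the singular Beauville--Bogomolov decomposition of $Y$. The remaining possible factors (strict Calabi--Yau and irreducible holomorphic symplectic) all have $H^0$ of the tangent sheaf vanishing. The main obstacle I foresee is that the singular Beauville--Bogomolov decomposition is a deep result; a more elementary route, which I would try first, is to directly show $H^0(X, T_X(-\log D))=0$ under our hypotheses via a logarithmic Kodaira-type vanishing argument using the ampleness of $-K_X$ and the klt condition on $(X,D)$, and then lift this vanishing along $\pi$ to conclude $H^0(Y,T_Y)=0$.
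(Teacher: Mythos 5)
Your reduction to a Calabi--Yau cover does not work, because the cover you construct is not Calabi--Yau. For a finite cover $\pi\colon Y\to X$ to satisfy the log-crepant relation $K_Y\sim_{\QQ}\pi^*(K_X+D)$ (which is what you need both for $K_Y\sim_\QQ 0$ and to invoke \cite[Proposition 5.20]{KM98}), Riemann--Hurwitz forces every coefficient of $D$ to be of the form $1-1/r$ with $r$ the ramification index over that component. A klt boundary $D\sim_\QQ -K_X$ need not have such coefficients, and in the situations where this lemma is actually applied in the paper the coefficients are $1/m$ (e.g.\ $D=\frac1m D_0$ with $D_0\in|-mK_X|$), which are never of this shape for $m\ge 3$. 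Concretely, the normalized degree-$m$ cyclic cover built from a section of $\sO_X(-mK_X)$ vanishing on $mD=D_0$ satisfies $K_Y=\pi^*\bigl(K_X+\tfrac{m-1}{m}D_0\bigr)\sim_\QQ\pi^*\bigl((m-2)D\bigr)$, which is ample for $m\ge 3$; while a genuinely quasi-\'etale index-one cover absorbs no boundary at all, so $-K_Y\sim_\QQ\pi^*D$ stays ample. In neither reading is $K_Y\sim_\QQ 0$, so the contradiction with ``klt Calabi--Yau varieties have no vector fields'' never gets off the ground. The supporting computation is also off: $\pi_*\sO_Y=\bigoplus_{i=0}^{m-1}\sO_X(iK_X)$, not $\bigoplus_i\sO_X(-iK_X)$, and by Serre duality $\chi(\sO_Y)$ is typically negative in odd dimension, so even on a true Calabi--Yau cover your exclusion of torus factors would fail; moreover the singular Beauville--Bogomolov decomposition only holds after a further quasi-\'etale cover, so deducing $H^0(Y,T_Y)=0$ requires extra work. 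Finally, the proposed ``elementary route'' ($H^0(X,T_X(-\log D))=0$ via a logarithmic Kodaira-type vanishing) is an intention rather than an argument, and that vanishing is essentially the assertion of the lemma itself.

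For comparison, the paper's proof is much softer and goes in the opposite perturbation direction: for $0<\epsilon\ll 1$ the pair $(X,(1+\epsilon)D)$ is still klt and $K_X+(1+\epsilon)D\sim_\QQ \epsilon D$ is ample, so ${\rm Aut}(X,D)$ acts preserving an ample log canonical class; it is therefore a linear algebraic group, and it contains no $\GG_m$ by \cite[Lemma 3.4]{HX11} and no $\GG_a$ by the same argument, hence is finite. The moral is that one should perturb the boundary so that $K_X+\Delta$ becomes ample (where automorphisms are automatically rigid), rather than trivial, since klt varieties with $K\sim_\QQ 0$ can genuinely have infinite automorphism groups.
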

\begin{proof}We can choose a sufficiently small $\epsilon>0$ such that $(X,(1+\epsilon)D)$ is klt and we know  $K_X+(1+\epsilon)D$ is ample. As ${\rm Aut}(X,D)$ preserves $K_X+(1+\epsilon)D$, so it gives polarized automorphisms. Therefore, to prove it is finite, we only need to show that it does not contain $\mathbb{G}_m$ or $\mathbb{G}_a$ as a subgroup.
For $\mathbb{G}_m$ this follows from \cite[Lemma 3.4]{HX11}. As mentioned there, the same argument also works for $\mathbb{G}_a$ verbatimly.
\end{proof}


\section{Continuity method}\label{ss-continuity}

In this section, we will develop our continuity method which serves as the main technique of the proof of the main result. Let $(C,0)$ be a smooth pointed curve,  we define  $C^\circ:=C\setminus \{0\}$ as before. To begin with, let us fix $\mathfrak{B}\in (0,1]$ and  we will assume the nearby smooth fibers are all  {\em $\mathfrak{B}$-K-polystable} for the rest of this section. We fix an $\ep\in (0,\be_0)$, with $\beta_0$ being given  as in Theorem \ref{unique}. By Lemma \ref{interpolate}, for any $\beta\in [\ep, \mathfrak{B}]$, $(\cX_t,\cD_t)$ is $\beta$-K-polystable. Applying \cite{CDS1,CDS2,CDS3, Tian2012} (cf. Corollary \ref{l-YTD}), we  conclude that $(\cX_t,\cD_t)$ admits  a ( unique when $\be<1$ thanks to  \cite[Theorem 7.3]{BBer2015}) conical K\"{a}hler-Einstein metric with cone angle $2\pi(1-(1-\be)/m)$ along $\cD_t$  for all $t\in C^{\circ}$ near 0.
 This leads us to introduce the following notion.

\begin{defn}\label{fano-r}
 We say
 $$
\begin{CD}
(\cX, \cD;\sL)@>>> (\PP\sE;\sO_{\PP\sE}(1))\\
@VV\pi V @VVV\\
C @= C
\end{CD}
$$
is a {\em K\"ahler-Einstein degeneration of index $(r,\mathfrak{B})$ } if for any $\be\in [\ep,\mathfrak{B}]$
 \begin{enumerate}
 \item $\cD\in |-mK_{\cX}|$;
 \item  $\sL=K_\cX^{\otimes -r}$ is  relatively {\em very ample} and $\sE=\pi_\ast \sL $ is locally free of rank $N+1$;
 \item   $\forall t\in C$, $(\cX_t,\frac{1}{m}\cD_t)$ is klt and $(\cX_t,\cD_t)$ is a {\em smooth}  Fano pair for $\forall t\in C^\circ$;
 \item  For $\beta<1$ and $\forall t\in C^\circ$, $(\cX_t,\cD_t)$ admits a unique K\"ahler form $\omega(t,\be)\in C^{,\al,\be}$ in the sense of \cite{Don2011}  solving
 \begin{equation}\label{omega-t-be}
 \Ric(\omega(t,\beta))=\beta \omega(t,\beta)+\frac{1-\beta}{m}[\cD_t]\text{ on }\cX_t.
\end{equation}
Moreover, $\omega(t,\be)$ gives rise to $r$-th Tian's embedding
 $$ T:(\cX_t,\cD_t;\omega(t,\be))\longrightarrow \PP^N\ .$$
 \end{enumerate}
 \end{defn}

By Theorem \ref{t-YTD}, there is a uniform $r=r(\cX,\cD)$ being  independent of $\be\in[\ep,\mathfrak{B}]$ such that all  Gromov-Hausdorff limits of  subsequences of the family $\{(\cX_t,\cD_t;\omega(t,\be))\}_{t\in C,\ \be\in[\ep,\bbe]}$ can  be embedded in to $\PP^N$. 
\begin{defn}\label{B}
Let us continue with the notation as above and define


\begin{equation*}
\bB_r(\cX,\cD):=\left \{\be\in[\ep, \bbe]\left | \mbox{\stackbox[v][m]{{\footnotesize
$(X,D)$ admits a  conical K\"ahler-Einstein metric $\omega(\be)$  solving
$$
\Ric(\omega(\beta))=\beta \omega(\beta)+\frac{1-\beta}{m}[D]\text{ on }X.
$$
Moreover,   $(\cX_t,\cD_t;\omega(t,\be))\stackrel{\GH}{\longrightarrow} (X,D;\omega(\be))$ as $t\to 0$ .
}}}\right .\right \}
\end{equation*}
and we fix $\bT$ such that $\epsilon \le \bT \le \sup\{\sigma\in [\ep,\bbe] \left | [\ep,\sigma] \subset \bB_r(\cX,\cD)\right.\} \ .$
\end{defn}

By Theorem \ref{t-YTD}, the Gromov-Hausdorff limit of any subsequence of $(\cX_{t_i}, \cD_{t_i},\omega({t_i,\beta}))$ is a $\mathbb{Q}$-Fano $Y$ together with a $\QQ$-Cartier divisor $E$ such that $(Y,\frac{1-\beta}{m}E)$ is log Fano.

\begin{lem}\label{be-sm}
$\bB_r(\cX,\cD)\supset [\ep,\beta_0]$.
\end{lem}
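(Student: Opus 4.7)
The plan is to exhibit, for every $\beta\in[\ep,\beta_0]$, a weak conical K\"ahler-Einstein metric $\omega(\beta)$ on $(\cX_0,\cD_0)$ which arises as the Gromov-Hausdorff limit of $(\cX_t,\cD_t;\omega(t,\beta))$ as $t\to 0$. The central observation is that the uniqueness theorem of Section \ref{uni} (Theorem \ref{unique}) pins down the algebraic limit of the family uniquely in the small-angle regime, so the GH limit has no choice but to be $(\cX_0,\cD_0)$ itself.

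First, fix $\beta\in[\ep,\beta_0]$ and any sequence $t_i\to 0$. By Theorem \ref{t-YTD}, after passing to a subsequence, $(\cX_{t_i},\cD_{t_i};\omega(t_i,\beta))$ converges in the Gromov-Hausdorff sense to a pair $(Y_\beta,E_\beta;\omega(\beta))$ where $Y_\beta$ is a normal $\QQ$-Fano variety, $(Y_\beta,\frac{1-\beta}{m}E_\beta)$ is klt, and $\omega(\beta)$ is a weak conical K\"ahler-Einstein metric solving \eqref{omega-t-be} on $Y_\beta$. Moreover, by Theorem \ref{t-YTD}.(3) and Lemma \ref{l-GH=CH}, the $r$-th Tian embeddings $T_i(\cX_{t_i})$ and $T_i(\cD_{t_i})$ converge inside $\PP^N$ to $T_\infty(Y_\beta)$ and $T_\infty(E_\beta)$ as algebraic cycles.

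Second, this Chow-scheme convergence yields (perhaps after shrinking $C$ and up to a unitary change of basis, which produces an isomorphism but not a new family) a flat projective family $(\cX',\cD')\to C$ with $\cX'_0=Y_\beta$, $\cD'_0=E_\beta$, and an isomorphism
\[
(\cX',\cD')\times_C C^\circ\;\cong\;(\cX,\cD)\times_C C^\circ.
\]
Since $Y_\beta$ is $\QQ$-Fano and the generic fiber is a smooth Fano, $K_{\cX'}+\frac{1}{m}\cD'$ is $\QQ$-Cartier and in fact $\QQ$-linearly trivial over $C$. Hence $(\cX',\cD')$ satisfies all the hypotheses of Theorem \ref{unique} with the klt central fiber $(Y_\beta,\frac{1-\beta}{m}E_\beta)$, provided $\beta\in(0,\beta_0]$. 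Applying Theorem \ref{unique} we conclude
\[
(\cX',\cD')\;\cong\;(\cX,\cD)\quad\text{over }C,
\]
and in particular $(Y_\beta,E_\beta)\cong(\cX_0,\cD_0)$.

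Third, every subsequential GH limit of $(\cX_t,\cD_t;\omega(t,\beta))$ is therefore isometric to $(\cX_0,\cD_0)$ equipped with some weak conical K\"ahler-Einstein metric. By the uniqueness of such metrics on klt log Fano pairs proved in \cite{BBEGZ} (up to $\Aut(\cX_0,\cD_0)$, which is finite by Lemma \ref{l-aut}), all these subsequential limits lie in a single isometry class $[(\cX_0,\cD_0;\omega(\beta))]$. A standard Gromov-compactness argument then upgrades subsequential convergence to full convergence $(\cX_t,\cD_t;\omega(t,\beta))\stackrel{\GH}{\to}(\cX_0,\cD_0;\omega(\beta))$ as $t\to 0$, so $\beta\in\bB_r(\cX,\cD)$.

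The main obstacle is Step two: passing from Chow/GH convergence to an honest flat $\QQ$-Gorenstein family $(\cX',\cD')\to C$ to which Theorem \ref{unique} applies. One must check that the Chow-limit point is a well-defined flat family over a neighborhood of $0\in C$, that the generic fiber really is isomorphic to the original family (not just projectively equivalent up to the $\UU(N+1)$-ambiguity built into Tian's embedding), and that the relative $\QQ$-Gorenstein structure extends across $0$. Once this bookkeeping is in place, the ACC-for-lct input packaged in Theorem \ref{unique} does all the heavy lifting.
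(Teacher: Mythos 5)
Your overall strategy is the same as the paper's: take the Gromov--Hausdorff limit from Theorem \ref{t-YTD}, convert the accompanying Chow convergence into an algebraic family agreeing with $(\cX,\cD)$ over $C^\circ$, and invoke Theorem \ref{unique} to force the limit to be $(\cX_0,\cD_0)$. But your Step two, which you defer as ``bookkeeping,'' is in fact the entire nontrivial content of the paper's proof, and your parenthetical that the family $(\cX',\cD')$ arises ``up to a unitary change of basis'' misstates the difficulty. The convergence $T_i(\cX_{t_i})\to T_\infty(Y_\beta)$ in Theorem \ref{t-YTD}(3) compares Tian's embeddings with the fixed embedding through elements $g_i=A_{\KE}^{-1/2}(t_i,\beta)$, which are not unitary and need not stay bounded in $\SL(N+1)$ as $t_i\to 0$; consequently $\chow(Y_\beta,E_\beta)$ is only known to lie in the closure of the $\SL(N+1)$-sweep of the Chow points of the fibers, and in general it sits in the boundary of that sweep --- this is exactly the degeneration phenomenon that Theorem \ref{unique} is meant to exclude, so it cannot be dismissed as a basis ambiguity. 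Sequential convergence of Chow points along $\{t_i\}$ does not by itself produce a flat family over a pointed smooth curve with central fiber $(Y_\beta,E_\beta)$ and the prescribed restriction over $C^\circ$.

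The paper fills this gap with a concrete construction that your sketch lacks: using the fixed basis one gets an algebraic arc $z\colon C\to \PP^{\bd,n;N}\times C$, $t\mapsto(\chow(\cX_t,\cD_t),t)$; the limit point $\chow(Y_\beta,E_\beta)$ lies in the fiber over $0$ of $\overline{\SL(N+1)\cdot \mathrm{Im}\,z}\to C$; one then chooses an algebraic arc $\tilde z\colon \tilde C\to \overline{\SL(N+1)\cdot \mathrm{Im}\,z}$ passing through $\chow(Y_\beta,E_\beta)$ and dominating $C$, and pulls back the universal family over the Chow variety along $\tilde z$. Over the punctured curve this pulled-back family is fiberwise an $\SL(N+1)$-translate of the corresponding fiber of $\cX\to C$, hence isomorphic (after the base change $\tilde C\to C$) to $(\cX,\cD)\times_C\tilde C$ there, and the relative $\QQ$-Gorenstein/$\QQ$-Cartier hypotheses of Theorem \ref{unique} hold for it because the fibers are embedded by $\omega^{[-r]}$; Theorem \ref{unique}, applied over $\tilde C$, then gives $(Y_\beta,E_\beta)\cong(\cX_0,\cD_0)$. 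With this construction supplied, your Steps one and three (subsequential limits, then uniqueness of the weak conical K\"ahler--Einstein metric via \cite{BBEGZ} and finiteness of $\mathrm{Aut}(\cX_0,\cD_0)$ to upgrade to full convergence) are fine and consistent with how the paper uses the definition of $\bB_r(\cX,\cD)$.
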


\begin{proof} After shrinking $C$ if necessary, we may choose a holomorphic basis
$$\{s_i(t)\}_{i=0}^N\subset \Gamma(\Delta,\pi_\ast \sO_\cX(-rK_{\cX/\De}))$$
 for the family $\cX\to C$ as in Section \ref{GH-KE}, which gives rise to  an  \em algebraic arc
\begin{equation}
\begin{array}{ccc}
 z: C & \overset{} {\xrightarrow{\hspace*{1.3cm}}}  & \HH^{\bchi;N} \times C  \ \\
 t & \longmapsto & (\hilb(\cX_t,\cD_t),t) \ .
\end{array}%
\end{equation}
  For this arc, we know that $\hilb(Y,E)$ for  the Gromov-Hausdorff limit $(Y,E;\omega_Y)$ of any subsequence $\{(\cX_{t_i},\cD_{t_i};\omega_\KE(t_i,\be))\}_{t_i\to 0}$  lies in the fiber over $0\in C$ of the morphism
 $$
\begin{CD}
\overline{\SL(N+1)\cdot \mathrm{Im}z} @>>> \HH^{\bchi;N}\times C\\
@VV\pi_C V @VVV\\
C @= C\ .
\end{CD}
 $$
By choosing an arc $\ti z: \ti C\to \overline{\SL(N+1)\cdot \mathrm{Im}z}$ that passes through $\hilb(Y,E)$ and dominates $C$, and comparing the universal family over $\mathrm{Im}\ti z\subset \HH^{\bchi;N}\times C$  with the pull-back family induced by the map $\pi_C\circ \ti z:\ti C\to C$, we conclude that $(Y,E)=(X,D)$ as long as $\beta\le \beta_0$ thanks to  Theorem  \ref{unique}. Our proof is thus completed.
\end{proof}

\begin{rem}\label{r-small}
Notice that Lemma \ref{be-sm} implies that for $\beta\in [0,\beta_0]$, $(X,D)$ is actually {\em $\beta$-K-stable} (see Lemma \ref{l-aut}), which can  also  be proved by using Theorem \ref{unique} and a verbatim extension of the theory of special test configuration  developed in \cite{LX} to the log setting. In fact, using the latter approach, we can indeed conclude a pair  $(X_0, D_0)$ is $\beta$-K-stable if $D_0\sim -mK_{X_0}$, $(X_0,\frac{1}{m}D_0)$ is klt  and $\beta\in [0,\beta_0]$, {\em without} assuming $X_0$ is smoothable. However, this stronger fact is not needed for the rest of the paper.
\end{rem}

From now on, let us assume $(\cX_0, \cD_0)$ is {\em $\bbe$-K-polystable}, we are going to show that $\bB_r(\cX,\cD)$ is both open and closed  in the set $[\ep, \mathfrak{B}]$, or equivalently we can choose
\[
\bT=\mathfrak{B}=\max_{[\ep,\sigma]\subset \bB_r(\cX,\cD)}\{\sigma\}.
\]
To do this, we first define a map
\begin{equation}\label{tau}
\begin{array}{cccc}
 \tau:& [\epsilon, \mathfrak{B}]\times C^\circ & \longrightarrow &\HH^{\bchi;N}\ .\\
& (\beta, t)&\longmapsto & \hilb(\cX_t,(1-\be)\cD_t)
\end{array}
\text{ (cf. see Definition \ref{def-chow} )}
\end{equation}
Then we have
\begin{lem} \label{tau-cont}$\tau|_{[\epsilon,\mathfrak{B}]\times C^\circ}$ is continuous.
\end{lem}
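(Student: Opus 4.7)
The plan is to deduce the lemma as a local consequence of Proposition \ref{cont-path}. Continuity is a local property, so it suffices to verify continuity of $\tau$ at an arbitrary point $(\beta_0, t_0) \in [\epsilon, \mathfrak{B}] \times C^\circ$. Since $t_0 \in C^\circ$, Definition \ref{fano-r}(3) ensures that $(\cX_t, \cD_t)$ is a smooth Fano pair for every $t$ in a small disk $t_0 \in \Delta \subset C^\circ$, so the morphism $\pi$ is a holomorphic submersion over $\Delta$. After possibly shrinking $\Delta$, I would pick a holomorphic frame $\{s_i(t)\}_{i=0}^N$ of the locally free sheaf $\pi_*\sL$ over $\Delta$, so that the setup of Section \ref{GH-KE} is in force on the restricted family $(\cX,\cD)|_\Delta \to \Delta$.

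For $\beta_0 \in [\epsilon, 1)$, Proposition \ref{cont-path} applied to this restricted smooth family directly gives the continuity of $(\beta, t) \mapsto \chow(\cX_t, (1-\beta)\cD_t)$ on an open neighborhood of $(\beta_0, t_0)$ inside $(0, 1) \times \Delta$. By Definition \ref{def-chow} together with Remark \ref{remTE}(3), the Chow point obtained from Tian's embedding with an $L^2$-orthonormal basis is determined by the isometry class of $\omega(t, \beta)$ and is independent of the choice of local frame; hence this local continuity descends to a continuity statement for $\tau$ at $(\beta_0, t_0)$.

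The only point requiring separate attention is the endpoint $\beta_0 = 1$ in the case $\mathfrak{B} = 1$, where \eqref{MA-h} degenerates from a conical to a smooth K\"ahler-Einstein equation. Here I would extend the implicit function theorem argument of Proposition \ref{cont-path} across $\beta = 1$: Lemma \ref{l-aut} gives that $\aut(\cX_t, \cD_t)$ is finite for every $t \in C^\circ$, so the linearized operator $\Delta_{\omega(t,\beta)} + \beta$ remains invertible with uniformly bounded inverse as $\beta \nearrow 1$; the weight $|s_{\cD_t}|^{2(1-\beta)/m}$ converges smoothly to $1$ as $\beta \to 1^-$, so the almost-solution construction at the end of the proof of Proposition \ref{cont-path} yields a continuous family of solutions $\varphi_{\KE}(t, \beta)$ up to $\beta = 1$; and the associated volume forms $\omega(t, \beta)^n$ converge in $L^1$, giving continuity of the Gram matrices $A_{\KE}(t, \beta)$, of the Tian embeddings, and hence of the Chow points. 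The main technical delicacy lies in this endpoint extension, but it is essentially encoded in the uniform effective estimates already established in Proposition \ref{cont-path}, so no genuinely new analytic input is required.
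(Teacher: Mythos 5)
Your treatment of the interior case coincides with the paper's: for $\beta_0<1$ the lemma is a direct local consequence of Proposition \ref{cont-path}, and the frame-independence of the Chow point (Remark \ref{remTE}) handles the choice of local basis. The problem is your endpoint argument at $\beta=\mathfrak{B}=1$, which is where the real content of the lemma lies, and your proposed route has a genuine gap.

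You claim that ``the linearized operator $\Delta_{\omega(t,\beta)}+\beta$ remains invertible with uniformly bounded inverse as $\beta\nearrow 1$'' because $\aut(\cX_t,\cD_t)$ is finite by Lemma \ref{l-aut}. This is not correct in general. The finiteness of the automorphism group of the \emph{pair} $(\cX_t,\cD_t)$ is what makes $\Delta_{\omega(t,\beta)}+\beta$ invertible for each fixed $\beta<1$, but it does not give a bound on the inverse that is uniform as $\beta\to 1$: in the limit the divisor drops out of the equation, and the kernel of $\Delta_{\omega(t,1)}+1$ on a smooth K\"ahler--Einstein Fano manifold is the space of holomorphic vector fields of $\cX_t$ itself, not of the pair. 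The standing hypothesis of Section \ref{ss-continuity} is only that $\cX_t$ is $\mathfrak{B}$-K-\emph{polystable}, so $\aut(\cX_t)$ may well be positive-dimensional; in that case the relevant eigenvalue of $-\Delta_{\omega(t,\beta)}$ approaches $1$ and the operator norm of $(\Delta_{\omega(t,\beta)}+\beta)^{-1}$ blows up, so the effective implicit function theorem cannot be run across $\beta=1$. (This degeneration is precisely why Donaldson's deformation-of-cone-angle method does not by itself reach $\beta=1$.)

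The paper avoids PDE estimates at the endpoint entirely and argues by compactness plus uniqueness of polystable degenerations: by Theorem \ref{t-YTD}, any sequence $\beta_i\nearrow\mathfrak{B}$ has a subsequence along which $(\cX_t,\cD_t;\omega(t,\beta_i))$ Gromov--Hausdorff converges to a $\mathfrak{B}$-K-polystable pair lying in $\overline{{\rm SL}(N+1)\cdot\chow(\cX_t,\cD_t)}$; since $(\cX_t,\cD_t)$ is itself assumed $\mathfrak{B}$-K-polystable, that limit must be $(\cX_t,\cD_t)$ with its own weak conical K\"ahler--Einstein metric, i.e.\ the limiting Chow point lies in ${\rm U}(N+1)\cdot\chow(\cX_t,(1-\mathfrak{B})\cD_t)$. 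As every subsequence converges to this same orbit, one gets continuity of $\tau(\cdot,t)$ at $\beta=\mathfrak{B}$. If you want to keep your structure, you should replace your endpoint analysis by this soft argument (or restrict your uniform-invertibility claim to the case where $\aut(\cX_t)$ is finite, which does not cover the generality needed here).
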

\begin{proof}
By Proposition \ref{cont-path},   $\tau(\cdot,\cdot)$ is continuous with respect to $(\be, t)$ on $[\ep, \bbe)\times C^\circ$.
By Theorem \ref{t-YTD}, the Gromov-Hausdorff limit of $(\cX_t,\cD_t;\omega(t,\be_i))$ for any sequence $\be_i\nearrow \mathfrak{B}$ is $\mathfrak{B}$-K-polystable and lies in $\overline{ {\rm SL}(N+1)\cdot\cX_t}$. On the other hand, since $(\cX_t,\cD_t)$ is $\mathfrak{B}$-K-polystable, this implies the limit must lie in ${\rm U}(N+1)\cdot \hilb(\cX_t,(1-\mathfrak{B})\cD_t)$, hence the metrics $\{h_\KE(t,\beta)\}_{(t,\be)\in [\ep,\bbe]\times C^\circ}$( cf. Section \ref{GH-KE}) vary continuously for $(\be,t)\in [\ep,\bbe]\times C^\circ$. So $\tau(\cdot, t)$ is also continuous at $\be=\bbe$ with respect to the  basis $\{s_i\}$ in Definition \ref{def-chow}.
Thus the proof is completed.
\end{proof}

By Lemma \ref{be-sm}, we know that the continuity of $q\circ\tau$  can be  extended to $[\epsilon,\beta_0]\times \{0\}$, where $q\colon \HH^{\bchi;N} \to  \HH^{\bchi;N} / {\rm U}(N+1)$ is the natural quotient morphism, which is continuous with respect to the quotient topology on $ \HH^{\bchi;N} /\rU(N+1)$. Next we will show indeed $\beta$-continuity of $q\circ\tau$ can be extended  to $[\epsilon, \bT]\times\{0\}$ (i.e.  including the central fiber)  as long as $q\circ \tau$ can be continuously  extended to $[\epsilon, \bT)\times {C}$ based on the fact that $(X,D)$ is a degeneration of smooth pairs $(\cX_t,\cD_t)$ admitting conical K\"ahler-Einstein metrics $\omega(t,\beta)$ for any $\beta\in [\epsilon, \bT)$.
To do that,  let  us  prefix a {\em continuous} distance function on  $\HH^{\bchi;N}$
 \begin{equation}\label{dist}
\dist_{\HH^{\bchi;N}}:\HH^{\bchi;N}\times \HH^{\bchi;N}\longrightarrow \RR_{\geq 0}\ .
\end{equation}

\begin{lem}\label{X-be}
 Let us continue with the above setting.  In particular, $(X,D)=(\cX_0,\cD_0)$ is $\bbe$-K-polystable. Then $(X,D)$ admits a conical K\"ahler-Einstein metric $\omega_{X}(\bT)$ with angle  $2\pi(1-(1-\bT)/m)$ along the divisor $D$.

Furthermore, for any sequence $\{\be_i\}\subset (\ep,\bT)$ satisfying $\be_i\nearrow \bT$,  we have
$$\dist_{\HH^{\bchi;N}} (\hilb(X,(1-\be_i)D),  \rU(N+1)\cdot\hilb(X,(1-\bT)D)\longrightarrow 0, 
$$
where $\hilb(X,(1-\bT)D)$ is the Hilber point corresponding to the cycle obtained via Tian's embedding of  $(X,D;\omega_X(\bT))$.
\end{lem}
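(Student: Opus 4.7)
The plan is a diagonal argument. For each $i$, the assumption $\beta_i\in\mathbf{B}_r(\cX,\cD)$ combined with Lemma \ref{l-GH=CH} yields $\dist_{\PP^{\bd,n;N}}\bigl(\chow(\cX_t,(1-\beta_i)\cD_t),\,\rU(N+1)\cdot\chow(X,(1-\beta_i)D)\bigr)\to 0$ as $t\to 0$, so I may choose $t_i\to 0$ rapidly enough that this distance is less than $1/i$. Applying Theorem \ref{t-YTD} to the smooth diagonal sequence $\{(\cX_{t_i},\cD_{t_i};\omega(t_i,\beta_i))\}$, whose cone angles lie in the compact subinterval $[\epsilon,\bT]\subset(0,1]$, extracts after passing to a subsequence a Gromov--Hausdorff limit $(Y,E;\omega_Y(\bT))$ that is a $\bT$-K-polystable $\mathbb{Q}$-Fano pair with $(Y,\frac{1-\bT}{m}E)$ klt, admitting a weak conical K\"ahler--Einstein metric at cone angle $2\pi(1-(1-\bT)/m)$. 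By Theorem \ref{t-YTD}(3) the Chow points satisfy $\chow(\cX_{t_i},(1-\beta_i)\cD_{t_i})\to\chow(Y,(1-\bT)E)$ modulo $\rU(N+1)$, and combined with the diagonal choice this gives $\chow(X,(1-\beta_i)D)\to\chow(Y,(1-\bT)E)$ modulo $\rU(N+1)$.

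The crux is to show $(Y,E)\cong(X,D)$. Writing Tian's embedding as $\chow(\cX_{t_i},(1-\beta_i)\cD_{t_i})=g_i\cdot\chow_{\mathrm{std}}(\cX_{t_i},\cD_{t_i})$ for a base-change $g_i\in\SL(N+1)$ relative to the fixed algebraic basis $\{s_k(t_i)\}$, and using the algebraic convergence $\chow_{\mathrm{std}}(\cX_{t_i},\cD_{t_i})\to\chow_{\mathrm{std}}(X,D)$, one sees that $\chow(Y,(1-\bT)E)$ lies in the $\SL(N+1)$-orbit closure of $\chow_{\mathrm{std}}(X,D)$. If $\{g_i\}$ stays bounded modulo $\rU(N+1)$ a subsequential limit gives $(Y,E)\cong(X,D)$ immediately. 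Otherwise, an Iwasawa / KAK decomposition of $g_i$ together with a Hilbert--Mumford-type extraction produces a one-parameter subgroup $\lambda\colon\mathbb{G}_m\to\SL(N+1)$ with $\lim_{s\to 0}\lambda(s)\cdot\chow_{\mathrm{std}}(X,D)=\chow(Y,(1-\bT)E)$, realizing $(Y,E)$ as the central fiber of a test configuration of $(X,D)$. Replacing it with a special test configuration having the same central fiber via \cite{LX}, the weak K\"ahler--Einstein metric on $(Y,E)$ forces the Futaki invariant of the induced $\mathbb{G}_m$-action, and hence also $\DF_{1-\bT}$, to vanish; since $(X,D)$ is $\bT$-K-polystable (inherited from $\mathfrak{B}$-K-polystability via Lemma \ref{interpolate} and Remark \ref{r-small}), the test configuration must then be a product, giving $(Y,E)\cong(X,D)$. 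Setting $\omega_X(\bT)$ to be the pullback of $\omega_Y(\bT)$ under this isomorphism yields the required weak conical K\"ahler--Einstein metric on $(X,D)$, and the Chow convergence stated in the lemma is precisely the output of the first paragraph.

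The main obstacle is this identification in the unbounded case of $g_i$: one must produce an algebraic one-parameter degeneration compatible with the Gromov--Hausdorff limit and verify that it has the requisite normality and log-Fano structure needed to trigger $\bT$-K-polystability of $(X,D)$. The interplay between the special-test-configuration reduction of \cite{LX}, weak K\"ahler--Einstein existence on $(Y,E)$ from Theorem \ref{t-YTD}, and the consequent Futaki vanishing is what ultimately converts the Chow-variety orbit-closure containment into an actual isomorphism.
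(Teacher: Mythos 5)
Your argument is correct and follows essentially the same route as the paper: a diagonal sequence chosen so that $\dist_{\PP^{\bd,n;N}}<1/i$, Gromov--Hausdorff compactness from Theorem \ref{t-YTD} producing a $\bT$-K-polystable limit $(Y,E)$ with a weak conical K\"ahler--Einstein metric lying in the $\SL(N+1)$-orbit closure of $\chow(X,D)$, and then a contradiction with the $\bT$-K-polystability of $(X,D)$ unless $(Y,E)\cong(X,D)$. The only difference is that the step you sketch via Iwasawa/KAK decomposition and a Hilbert--Mumford-type extraction is precisely \cite[Proposition 1]{Don2012}, which the paper cites directly and which relies on the reductivity of $\aut(Y,E)$ furnished by Theorem \ref{t-YTD}(2), so your additional detour through the special test configurations of \cite{LX} is unnecessary.
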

 \begin{proof}
 By Theorem \ref{t-YTD} and the definition of $ \bT$, for any $\beta<\bT$, the Gromov-Hausdorff limit as $t\to 0$ of $(\cX_t,\cD_t;\omega(t,\beta))$ converges to a weak conical  K\"ahler-Einstein metric on $(X,D;\omega(\be))=(\cX_0,\cD_0;\omega(0,\beta))$.
This implies  that for each fixed $\be_i<\bT$, there is a  $C^\circ\ni t_i\to 0$ so that
\begin{equation}\label{1/i}
\dist_{\HH^{\bchi;N}} (\hilb(\cX_{t_i},(1-\be_i)\cD_{t_i}),\rU(N+1)\cdot \hilb(X,(1-\be_i)D))<1/i\ .
\end{equation}

It follows from Theorem \ref{t-YTD} that any subsequence of $\{ (\cX_{t_i},\cD_{t_i}; \omega(t_i,\be_{i}))\}$, there is a Gromov-Hausdorff convergent  subsequence. Now suppose there is a subsequence
$$
(\cX_{t_{i_k}},\cD_{t_{i_k}};\omega(t_{i_k},\be_{i_k}))\stackrel{\GH}{\longrightarrow }(Y,E; \omega_Y(\bT)) \text{ as }k\to \infty,
$$
from which we obtain there are $g_{i_k} \in {\rm U}(N+1)$ such that
$$
g_{i_k}\cdot \hilb(\cX_{t_{i_k}},(1-\be_{i_k})\cD_{t_{i_k}})\longrightarrow  \hilb(Y,(1-\bT)E),
$$
where $\hilb(Y,(1-\bT)E)$ is the Hilbert point corresponding to the Tian's embedding of $(Y,E)$ using the limiting conical K\"ahler-Einstein metric $\omega_Y(\bT)$ of angle $2\pi(1-(1-\bT)/m)$ along a $\QQ$-Cartier divisor $E$.  In particular, $(Y,E)$ is $\bT$-K-polystable by \cite[Theorem 4.2]{Be12}.
On the other hand, by \eqref{1/i} we have
\begin{equation}\label{Y-X}
\hilb(Y,(1-\bT)E)\in \overline{{\rm SL}(N+1)\cdot \hilb(X, D)}\subset \HH^{\bchi;N},
\end{equation}
Suppose $(Y,E)\not\cong(X,D)$, then by \cite[Proposition 1]{Don2012} there is a test configuration of $(X,D)$ with central fiber $(Y,E)$ and vanishing generalized Futaki invariant since $(Y,E)$ is $\bT$-K-polystable. This contradicts our assumption that  $(X,D)$ is $\bT$-K-polystable. Hence we must have $(Y,E)\cong(X,D)$. In particular, $X$ admits a weak conical K\"ahler-Einstein metric with angle $2\pi(1-(1-\bT))$ along $D$.

In conclusion, we have
 $$
(\cX_{t_{i_k}},\cD_{t_{i_k}};\omega(t_{i_k},\be_{i_k}))\stackrel{\GH}{\longrightarrow }(X,D; \omega_X(\bT)),
$$ which   implies
$$\dist_{\HH^{\bchi;N}} (\hilb(X,(1-\be_i)D),  \rU(N+1)\cdot\hilb(X,(1-\bT)D)\longrightarrow 0\ .$$
 Combining with \eqref{1/i}, the proof is completed.
 \end{proof}

\begin{rem}\label{r-semistable}
Notice that in the argument above,  the existence of the conical K\"ahler-Einstein  metric on $\cX_{t_i}$ is needed only for an angle $\beta_i<\bT$ instead of $\bT$. So the proof  remains {\em valid} by only assuming that  $\cX_{t}$ is {\em $\bT$-K-semistable} for any $t\in C^{\circ}$ instead of being  $\bT$-K-polystable.
\end{rem}

An immediate consequence is the following.
\begin{cor}\label{c-aut} ${\rm Aut}(X,D)$ is finite. If $\bT=1$, ${\rm Aut}(X)$ is reductive.
\end{cor}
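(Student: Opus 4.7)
The plan for the first assertion is to reduce directly to Lemma \ref{l-aut} applied to the pair $(X, \tfrac{1}{m}D)$. All hypotheses of that lemma are in place: the pair $(\cX_0, \tfrac{1}{m}\cD_0) = (X, \tfrac{1}{m}D)$ is klt by Definition \ref{fano-r}(3), $-K_X$ is ample since $X$ is $\QQ$-Fano, and the linear equivalence $D \sim -mK_X$ gives $\tfrac{1}{m}D \sim_\QQ -K_X$. Lemma \ref{l-aut} then forces ${\rm Aut}(X, \tfrac{1}{m}D)$ to be finite, and this group coincides with ${\rm Aut}(X,D)$, since an automorphism of $X$ preserves $D$ if and only if it preserves $\tfrac{1}{m}D$. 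Note that for this first assertion Lemma \ref{X-be} itself is not needed; it enters only via the hypothesis that $(X,D)$ already satisfies the klt condition with $-K_X$ ample.

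For the second assertion, the plan is to combine the existence statement of Lemma \ref{X-be} with the reductivity theorem for automorphism groups of $\QQ$-Fano varieties admitting a weak K\"ahler-Einstein metric. When $\bT = 1$, the conical K\"ahler-Einstein metric $\omega_X(\bT)$ produced by Lemma \ref{X-be} has cone angle $2\pi(1 - (1-1)/m) = 2\pi$ along $D$, so the cone contribution is trivial; equivalently, $\omega_X(\bT)$ solves $\Ric(\omega) = \omega$ on the regular locus and is simply a weak K\"ahler-Einstein metric on the $\QQ$-Fano variety $X$. The existence of such a metric implies that ${\rm Aut}(X)$ is reductive by the singular version of the Matsushima-Lichnerowicz theorem, which is the reductivity assertion of Theorem \ref{t-YTD}(2) applied with $\beta_\infty = 1$ and vanishing divisor coefficient, and which is due to Berman-Berndtsson and Berman-Boucksom-Eyssidieux-Guedj-Zeriahi (see \cite{Ber12}).

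The substantive input is the reductivity theorem invoked in the second paragraph, which rests on the deep analytic work already cited in the paper; once that is available, both assertions of the corollary follow immediately. The first is a direct specialization of Lemma \ref{l-aut}, while the second uses only the case $\bT=1$ of Lemma \ref{X-be} to exhibit the required K\"ahler-Einstein metric on $X$. No new technical ingredients beyond those established earlier in the excerpt are needed.
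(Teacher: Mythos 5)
Your proposal is correct and follows essentially the same route as the paper: the first assertion is exactly the paper's reduction to Lemma \ref{l-aut} (applied to the klt pair $(X,\tfrac{1}{m}D)$ with $\tfrac{1}{m}D\sim_\QQ -K_X$), and the second uses the weak K\"ahler--Einstein metric furnished by Lemma \ref{X-be} at $\bT=1$ together with the singular Matsushima-type reductivity theorem, which the paper cites as \cite[Theorem 6]{CDS3}.
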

\begin{proof}The first part is just Lemma \ref{l-aut}. The second part follows from \cite[Theorem 6]{CDS3}  thanks to the existence of  weak K\"ahler-Einstein metric on $X$.
\end{proof}

Let
\begin{equation}\label{O}
 \overline{BO}:=\lim_{t\to 0}\overline{{\rm SL}(N+1)\cdot\hilb(\cX_t,\cD_t)}\subset \HH^{\bchi;N}\ .
 \end{equation}
 denote  the limiting orbit and
 $$O_{\hilb(X,(1-\bT)D)}={\rm SL}(N+1)\cdot \hilb(X,(1-\bT)D)\text{ and } \overline{O_{\hilb(X,(1-\bT)D)}} \subset\HH^{\bchi;N}\
 $$
 be the ${\rm SL}(N+1)$-orbit of $\hilb(X,(1-\bT)D)$  and its closure.
 By Corollary \ref{c-aut},  this allows us to construct an {\em $\SL(N+1)$-invariant Zariski open  } neighborhood
 \begin{equation}\label{U}
\hilb(X,(1-\bT)D)\in U\subset  \HH^{\bchi;N}
 \end{equation}
satisfying the condition \eqref{Oz} in  Lemma \ref{disj}.  We  want to remark  that 
the open neighborhood $U$ is {\em independent} of $\bT$ (cf.  part (1) of Remark \ref{remTE}).

Then we have the following

\begin{lem}\label{IVT}
Let   $\{t_i\}\subset C$ be a sequence of points approaching  $0\in C$ and
$$\{\be_i\}, \   \{\be^*_i\} ,\ \{\be^\prime_i\}\subset [\ep,1]$$
be three sequences satisfying $\be_i^\ast <\be_i$ for all $i$.
\begin{enumerate}
\item  Assume $\be_i\to \bT$, $\be^*_i\to \bT$ and that there is a sequence   $\{(\cX_{t_i},\cD_{t_i})\mid (\cX_{t_i},\cD_{t_i})\text{ being } \beta_i\text{-K-polystable}\}$ with $t_i\to 0$ such that
\begin{equation}\label{T-open}
\hilb(\cX_{t_i}, (1-\be_i^*)\cD_{t_i})\stackrel{i\to \infty}{\longrightarrow} {\rm U}(N+1)\cdot \hilb(X, (1-\bT)D)
\end{equation}
and for $g_i\in \UU(N+1)$
\begin{equation}\label{YE}
g_i \cdot \hilb(\cX_{t_i}, (1-\be_i)\cD_{t_i})\stackrel{i\to \infty}{\longrightarrow} \hilb (Y, (1-\bT)E) \ .
 \end{equation}
Then $\hilb(Y,(1-\bT)E)=g\cdot\hilb(X,(1-\bT)D)$ for some $g\in \UU(N+1)$.

\item Assume $\be^\prime_i\nearrow \bT$ and that for any fixed $i$, there is a $g_i\in \rU(N+1)$ such that
\begin{equation}\label{T-close}
\hilb(\cX_{t}, (1-\be^\prime_i)\cD_t)\stackrel{t\to 0}{\longrightarrow} g_i\cdot \hilb(X, (1-\be^\prime_i)D)\ \
\end{equation}
and
\begin{equation}\label{YE-1}
 \hilb(\cX_{t_i}, (1-\beta^\prime_i)\cD_{t_i})\stackrel{i\to \infty}{\longrightarrow} \hilb (Y, (1-\bT)E)\in \overline{BO}\setminus O_{\hilb(X,(1-\bT)D)}\  .
\end{equation}
If $(X,D)\not\cong (Y,E)$, then there exists a sequence $\{t_i^\prime\}$ satisfying $0<\dist_C(t^\prime_i,0)<\dist_C(t_i,0)$ such that
\begin{eqnarray}\label{Y'-1}
&&\hilb(Y',(1-\bT)E')=\lim_{i\to \infty} \hilb(\cX_{t'_i},(1-\be^\prime_i)\cD_{t'_i})\\
 &\in &\left( \overline{O_{\hilb(X,(1-\bT)D)}} \bigcup ( U\cap\overline{BO})\right)\setminus O_{ \hilb(X,(1-\bT)D)} \subset \HH^{\bchi;N}\nonumber \ .
\end{eqnarray}
where $\dist_C:C\times C\to \RR$ is a fixed {\em continuous distance} function  on $C$.
\end{enumerate}
\end{lem}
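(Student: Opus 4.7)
The plan is to prove both statements by combining the joint continuity of the Chow point map $\tau(\be,t)=\chow(\cX_t,(1-\be)\cD_t)$ on $[\ep,\bbe]\times C^\circ$ established in Lemma \ref{tau-cont} with Gromov--Hausdorff compactness from Theorem \ref{t-YTD}, where the $\bT$-K-polystability of $(X,D)$ supplies the decisive rigidity. Part (1) is essentially a uniqueness statement within the broken orbit $\overline{BO}$, whereas part (2) is the genuine intermediate value argument.

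For part (1), I would pass to a subsequence so that, by Theorem \ref{t-YTD}, the metrized pairs $(\cX_{t_i},\cD_{t_i};\omega(t_i,\be_i))$ Gromov--Hausdorff converge to $(Y,E;\omega_Y(\bT))$, a log $\QQ$-Fano pair equipped with a conical K\"ahler--Einstein metric of angle $2\pi(1-(1-\bT)/m)$ along $E$; in particular $(Y,E)$ is $\bT$-K-polystable by \cite{Be12}. Hypothesis \eqref{T-open} identifies the corresponding GH limit for the $\be_i^*$-sequence with $(X,D;\omega_X(\bT))$ up to $\rU(N+1)$, and both Chow points lie in $\overline{BO}$. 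If $(X,D)\not\cong (Y,E)$, then, arguing exactly as at the end of the proof of Lemma \ref{X-be} via \cite[Proposition 1]{Don2012}, one extracts a test configuration exhibiting either pair as a non-product degeneration of the other with vanishing generalized Futaki invariant, contradicting the $\bT$-K-polystability of $(X,D)$. Hence $(X,D)\cong (Y,E)$, and the uniqueness of conical K\"ahler--Einstein metrics up to automorphism, together with the defining ambiguity of Tian's embedding, yields $\chow(Y,(1-\bT)E)=g\cdot \chow(X,(1-\bT)D)$ for some $g\in \rU(N+1)$.

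For part (2), I would run the intermediate value argument on $C$ for each fixed $i$. Lemma \ref{tau-cont} provides continuity of $t\mapsto \chow(\cX_t,(1-\be_i')\cD_t)$ on $C^\circ$. By hypothesis \eqref{T-close} this trajectory approaches $g_i\cdot \chow(X,(1-\be_i')D)$ as $t\to 0$, and Lemma \ref{X-be}, applied to the sequence $\be_i'\nearrow \bT$, shows that this target tends to $\rU(N+1)\cdot \chow(X,(1-\bT)D)\subset O_{\chow(X,(1-\bT)D)}$ as $i\to\infty$. Consequently, for $i$ sufficiently large, the portion of the trajectory at small $t$ enters the $\SL(N+1)$-invariant Zariski open neighbourhood $U$ from \eqref{U}. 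At the other end, $\chow(\cX_{t_i},(1-\be_i')\cD_{t_i})$ converges to $\chow(Y,(1-\bT)E)\in \overline{BO}\setminus O_{\chow(X,(1-\bT)D)}$; since $(Y,E)\not\cong (X,D)$, after possibly shrinking $U$ so that its closure misses the $\SL(N+1)$-orbit of $\chow(Y,(1-\bT)E)$, this endpoint lies outside $U$. Continuity in $t$ then forces the existence of a parameter $t_i'\in C^\circ$ with $0<\dist_C(t_i',0)<\dist_C(t_i,0)$ at which the trajectory sits on $\partial U$, or has just left $O_{\chow(X,(1-\bT)D)}$ from within $U$. Extracting a convergent subsequence via Gromov--Hausdorff compactness produces a Chow limit $\chow(Y',(1-\bT)E')\in \overline{U}\cap \overline{BO}$, and the orbit stratification of $\overline{BO}$ provided by Lemma \ref{disj} places it in the required set $(\overline{O_{\chow(X,(1-\bT)D)}}\cup (U\cap \overline{BO}))\setminus O_{\chow(X,(1-\bT)D)}$.

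The main obstacle is part (1): forcing the two $\bT$-K-polystable limits $(X,D)$ and $(Y,E)$, which a priori only share the fact that their Chow points both lie in $\overline{BO}$, to actually lie on a single $\SL(N+1)$-orbit. This requires producing a genuine test configuration from one to the other, which is delicate and rests on a careful application of the orbit-closure argument of \cite[Proposition 1]{Don2012} in the conical setting.
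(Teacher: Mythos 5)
Your part (1) has a genuine gap at its central step. You assert that since $\chow(X,(1-\bT)D)$ and $\chow(Y,(1-\bT)E)$ both lie in $\overline{BO}$, one can ``extract a test configuration exhibiting either pair as a non-product degeneration of the other'' via \cite[Proposition 1]{Don2012}. But that proposition requires one Chow point to lie in the \emph{orbit closure} of the other (plus reductivity of the stabilizer), and two points of the broken orbit $\overline{BO}$ need not be comparable in this way: $\overline{BO}$ is a union of many orbits, and the limit $(Y,E)$ could a priori be ``far away'' from $O_{\chow(X,(1-\bT)D)}$ with neither orbit closure containing the other. This is exactly the difficulty the lemma is designed to overcome, and you name it yourself as ``the main obstacle'' without resolving it. The paper's proof of part (1) is itself an intermediate-value argument in the angle variable: assuming $\chow(Y,(1-\bT)E)\notin \rU(N+1)\cdot\chow(X,(1-\bT)D)$, it interpolates between $\be_i^*$ (where $\tau(\cdot,t_i)$ is inside a fixed ball $B(\chow(X,(1-\bT)D),\ep_1)\Subset U$, up to $\rU(N+1)$) and $\be_i$ (where it is $\ep_1$-far from $O_{\chow(X,(1-\bT)D)}$), extracting escape times $\be''_{i,k}$ from shrinking tubular neighborhoods $B(O_{\chow(X,(1-\bT)D)},\vep/2^k)$ and tracking the size $M_i=\inf\tr(g^*g)$ of the group element needed to return to the ball. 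The dichotomy ($M_i$ bounded versus unbounded, iterated with $\vep\mapsto\vep/2$ and a diagonal sequence) produces a \emph{new} limit $(Y',E')$ lying either in $U\cap\overline{BO}$ or in $\partial\overline{O_{\chow(X,(1-\bT)D)}}$ --- precisely the comparability needed before \cite[Proposition 1]{Don2012} can be invoked. None of this appears in your sketch.

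Your part (2) has the right shape (an IVT in $t$ rather than $\be$), but the same mechanism is missing. ``The trajectory sits on $\partial U$'' does not yield the conclusion: a point of $\partial U$ generally lies neither in $U\cap\overline{BO}$ nor in $\overline{O_{\chow(X,(1-\bT)D)}}$, so the limit would not land in the required set. The correct statement requires the same bounded/unbounded dichotomy on the group elements $g_i$ realizing $\tau(\be'_i,t'_i)\in g_i\cdot B(\chow(X,(1-\bT)D),\ep_1)$: if the $g_i$ stay bounded the limit lands in $(U\cap\overline{BO})\setminus O_{\chow(X,(1-\bT)D)}$, and if they blow up the diagonal sequence forces the limit into $\partial\overline{O_{\chow(X,(1-\bT)D)}}$. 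Also, you cannot freely ``shrink $U$'' and keep the minimality property \eqref{Oz} from Lemma \ref{disj}; the paper instead works with metric balls inside the fixed invariant $U$. Finally, note that in the paper's part (1) the conclusion of \eqref{T-open} is not merely that the $\be_i^*$-limit is $(X,D)$ up to $\rU(N+1)$ but, via Lemma \ref{l-GH=CH}, that $(X,D)$ is $\bT$-K-polystable --- which is then contradicted by the existence of $(Y',E')$; your sketch never closes this loop because it never produces $(Y',E')$.
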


\bigskip


\begin{proof}[Proof of Lemma \ref{IVT}]
To prove {\em part 1)}, one first notices that \eqref{T-open} together with Lemma \ref{l-GH=CH} imply that $(X,D)$ is $\bT$-K-polystable.  We will show that under the above assumption   and
$$\hilb(Y,(1-\bT)E)\not \in{\rm U}(N+1)\cdot \hilb(X,(1-\bT)D),$$
then one can  construct a {\em new} sequence $\{\be_i^{\prime\prime}\}$ satisfying $\be_i^{\prime\prime} \in [\be_i^\ast, \be_i]$ such that
\begin{eqnarray*}
&&\hilb(Y',(1-\bT)E')=\lim_{i\to \infty} \hilb(\cX_{t_i},(1-\be^{\prime\prime}_i)\cD_{t_i})\\
&\in& \left( \overline{O_{\hilb(X,(1-\bT)D)} }\bigcup ( U\cap \overline{BO})\right)\setminus O_{ \hilb(X,(1-\bT)D)} \subset \HH^{\bchi;N}\nonumber \ .
\end{eqnarray*}
On the other hand, Lemma \ref{l-GH=CH} implies
$$(\cX_{t_i},\cD_{t_i};\omega(t_i,\be^\pp_i))\stackrel{\GH}{\longrightarrow}(Y',E';\omega_{Y'}(\bT)),$$ thus $(Y',E')$ admits weak K\"ahler-Einstein metric with angle $2\pi(1-(1-\bT)/m)$ along $E'$ and hence $\bT$-K-polystable.  These allow one to construct either a test configuration of  $(X,D)$ with central fiber $(Y',E')$  and vanishing generalized Futaki invariant or a test configuration  of $(Y',E')$ with central fiber $(X,D)$ and vanishing generalized Futaki invariant,  contradicting to  the fact that both $(X,D)$ and $(Y',E')$ are $\bT$-K-polystable.  So we must have
$$\hilb(Y,(1-\bT)E)=g\cdot\hilb(X,(1-\bT)D)$$ for some $g\in {\rm U}(N+1)$.

Now we proceed to the construction of $\{\be_i^\pp\}$.
Let
\begin{equation*}\label{ball}
B(\hilb(X,(1-\bT)D),\ep_1)\Subset U
\end{equation*}
 be the radius $\ep_1$  {\em open} balls   with respect to the distance function \eqref{dist} and $U$ be given as in \eqref{U}.

By shrinking the pointed curve  $( 0\in C)$ if necessary,   we may assume that
\begin{equation}\label{T-open'}
\hilb(\cX_{t_i},(1-\be_i^\ast)\cD_{t_i})\in  {\rm U}(N+1)\cdot  B( \hilb(X,(1-\bT)D),\ep_1)
\end{equation}
for all $i$ thanks to our assumption \eqref{T-open}.  On the other hand,  by our assumption that $(X,D)\not\cong (Y,E)$, and we may assume $(Y,E)$ is not in the closure of the orbit of $(X,D)$ (otherwise, we can just let $\be_i''=\be_i$), then there  is an $\ep_1>0$ such that
$$\dist_{\HH^{\bchi;N}}( \hilb(\cX_{t_i},(1-\be_i)\cD_{t_i}), O_{\hilb(X,(1-\bT)D)})>\ep_1 \text{ for } i\gg1 \  .$$
 By the $\beta$-continuity of $\tau(\cdot,t_i)$ for each fixed $i\gg 1$, for any $0<\vep<\ep_1$  there is a
 \begin{equation}\label{bpp}
 \be^{\prime\prime}_{i,k}=\sup \left\{\be \in(\be_i^\ast,\be_i)
\left | \tau(\cdot ,t_i)|_{(\be_i^\ast, \be)}\subset B(O_{\hilb(X,(1-\bT)D)}, \vep/2^k)\cup \UU(N+1)\cdot B (\hilb(X,(1-\bT)D),\ep_1)\right.\right\}\\
 \end{equation}
where $B(O_{\hilb(X,(1-\bT)D)}, \vep/2^k)$ is the $\vep/2^k$-tubular neighbourhood of $O_{\hilb(X,(1-\bT)D)}$,
that is, $\be^{\prime\prime}_{i,k}$ is the smallest $\be$ such that $\tau(\cdot, t_i)$ escapes    $B(O_{\hilb(X,(1-\bT)D)}, \vep/2^k)\cup\UU(N+1)\cdot  B (\hilb(X,(1-\bT)D),\ep_1)$. Clearly, we have  $\be^\pp_{i,k+1}\le\be^\pp_{i,k}$. Now if
$$\tau(\be^{\pp}_{i,0},t_i)\in {\rm SL}(N+1)\cdot B(\hilb(X,(1-\bT)D),\ep_1)$$
we let $\be^\pp_i=\be^\pp_{i,0}$, otherwise, we let $\be^\pp_i=\be^\pp_{i,k}$ where   $\be^\pp_{i,k}$ is the {\em first} number satisfying
$$\tau(\be^{\pp}_{i,k},t_i)\in {\rm SL}(N+1)\cdot B(\hilb(X,(1-\bT)D),\ep_1).$$
    Such $k$ exists because of \eqref{T-open'}. Now by our construction, there is a $g_i\in {\rm SL}(N+1)$ such that
\begin{equation}\label{gi-mi}
\tau(\be^\pp_i ,t_i)\in g_i\cdot B(\hilb(X,(1-\bT)D),\ep_1).
\end{equation}
We let
$$M_i=\inf\{\tr(g^\ast g) \left | g\in \SL(N+1) \text{ such that  \eqref{gi-mi} is satisfied}\right. \}+1 $$
and by passing through a subsequence we may assume $\tr(g_i^\ast g_i)\leq M_i$.
Then we have the following dichotomy:

\smallskip

{\em Case 1.} there is a subsequence $\{M_{i_l}\}$ such that $|M_{i_l}|<M$ for some constant $M$ independent of $i$. Then we claim that
$$\{\tau(\be^\pp_{i_l},t_{i_l})=\hilb(\cX_{t_{i_l}},(1-\be^\pp_{i_l})\cD_{t_{i_l}})\}$$ is the subsequence we want, and  its limit $\hilb(Y',(1-\bT)E')$ lies in
$$(U\cap \overline{BO})\setminus O_{\hilb(X,(1-\bT)D)}.$$ To see this, one only needs to notice that it follows from  our construction of $\be^\pp_{i_l}$ that
$$\dist_{\HH^{\bchi;N}}(\tau(\be^\pp_{i_l},t_i), O_{\hilb(X,(1-\bT)D)})$$
is {\em uniformly} bounded from below by some $\vep/2^{k}$, since there is a $k=k(M)$ such that
 $$
 \left\{\left. z\in \HH^{\bchi;N}\right | \dist_{\HH^{\bchi;N}}(z,g\cdot\hilb(X,(1-\bT)D))\leq \vep/ 2^{k(M)} \text{ and } |g|<M\right \}\subset {\rm SL}(N+1)\cdot U\ .
 $$
\bigskip

 {\em Case 2.} $|M_i|\to \infty$. If that happens, let us replace $\vep$ by $\vep/2$ in \eqref{bpp} and repeat the above process, if for the new sequence $\{M^{[1]}_i\}\subset \RR $ there is a bounded subsequence $\{M_{i_l}^{[1]}\}$ then we reduces to the {\em Case 1}, otherwise, we keep on repeating this process. Then either we stop at  a finite stage or this becomes  an infinite process. If we stop at a finite stage, then we obtain our subsequence as before, if the process never terminates, we claim that we are able to extract a subsequence whose limit $\hilb(Y',(1-\bT)E')$ lands in the {\em boundary}
 $$\partial\overline{ O_{\hilb(X,(1-\bT)D)}}= \overline{O_{\hilb(X,(1-\bT)D)}} \setminus O_{\hilb(X,(1-\bT)D)}.$$ This is because by choosing a diagonal sequence we will have
 $$\dist_{\HH^{\bchi;N}}(\tau(\be^{\pp, [k]}_{i_k},t_{i_k}),O_{\hilb(X,(1-\bT)D)})< \vep/2^k\to 0,
 $$
 so we know $$z:=\lim_{k\to \infty} \tau(\be^{\pp, [k]}_{i_k},t_{i_k})\in \overline{O_{\hilb(X,(1-\bT)D)}}.$$
On the other hand, if  $z\in  O_{\hilb(X,(1-\bT)D}$, then
   $$z=g\cdot \hilb(X,(1-\bT)D)$$
 for some $g\in \SL(N+1)$.
In particular,  $g\cdot B(\hilb(X,(1-\bT)D),\ep_1) $ contains a neighborhood of $z$.
However, this violates the assumption that $|M^{[k]}_{i_k}|\to \infty$ as $k\to \infty$. Hence our proof is completed.

\bigskip

The proof of  {\em part 2)} is similar.  Contrast to the {\em part 1)}, we will vary $t$  instead of $\beta$ in $\tau(\beta,t)$.
First by our assumption \eqref{YE-1} together with Lemma \ref{l-GH=CH},  $(Y,E)$ is $\bT$-K-polystable hence
$$\hilb(Y,(1-\bT)E)\not\in \partial\overline{ O_{\hilb(X,(1-\bT)D)}}\ .$$

So there  is an $\ep_1>0$ such that
$$\dist_{\HH^{\bchi;N}}( \hilb(\cX_{t_i},(1-\be'_i)\cD_{t_i}), O_{\hilb(X,(1-\bT)D)})>\ep_1 \text{ for } i\gg1 \  .$$
On the other hand, by our assumption \eqref{T-close} and Lemma \ref{X-be} we have for any {\em fixed} $\be'_i$ with $i\gg1$,  there is a $0<s_i\in \RR$ such that
$$
\hilb(\cX_{t'_i},(1-\be'_i)\cD_{t'_i})\in \rU(N+1)\cdot B( \hilb(X,(1-\bT)D),\ep_1)
$$
for any $t$ satisfying $0<\dist_C(t,0)<s_i$, since
$$\rU(N+1)\cdot \hilb(X,(1-\be'_i)D)\stackrel{i\to \infty}{\longrightarrow} \rU(N+1)\cdot \hilb(X,(1-\bT)D)$$ inside $\HH^{\bchi;N}/{\rm U}(N+1).$

By the $t$-continuity of $\tau(\be'_i,\cdot)$ for each fixed $i\gg 1$, for any $\vep<\ep_1/2$ there is

 \begin{equation}\label{sp}
 s_{i,k}:=\sup \left\{ s\in [0, |t_i|)
\left | \tau(\be'_i ,\cdot )|_{B_C(0,s)}\subset B(O_{\hilb(X,(1-\bT)D)}, \vep/2^k)\cup\UU(N+1)\cdot B (\hilb(X,(1-\bT)D),\ep_1) \right.\right\}
 \end{equation}
where $|t_i|:=\dist_C(t_i,0)$ and $B_C(0,s):=\{t\in C\mid \dist_C(t,0)\leq s\}$.  Then $s_{i,k}=|t_{i,k}|$ is the smallest distance needed for $t$ so that $\tau(\be'_i, t)$ escapes  $B(O_{\hilb(X,(1-\bT)D)}, \vep/2^k)\cup\UU(N+1)\cdot B (\hilb(X,(1-\bT)D),\ep_1)$.
 Clearly, we have  $s_{i,k+1}<s_{i,k}$. Now if
 $$\tau(\be'_i,t_{i,0})\in {\rm SL}(N+1)\cdot B(\hilb(X,(1-\bT)D),\ep_1)$$ we let $t'_i=t_{i,0}$, otherwise, we let $t'_i=t'_{i,k}$ where   $t'_{i,k}$ is the {\em first} point in $C$ satisfying
 $$\tau(\beta'_{i},t^\prime_{i,k})\in {\rm SL}(N+1)\cdot B(\hilb(X,(1-\bT)D),\ep_1).$$
Such a process must terminate in {\em finite} steps by \eqref{T-close}. Now we define $M_i\in \RR$ to be
$$M_i:=\inf_{g_i}\{\tr(g_i^\ast g_i)+1 |\  \tau(\be'_i ,t'_i)\in g_i\cdot B(\hilb(X,(1-\bT)D),\ep_1)  \}.$$
Then again we have two situations exactly the same as in the proof of part one depending on $\{M_i\}$ being bounded or not.  Replacing $\be^\pp_i$ by $t'_i$ in the argument for Part 1), one see that the rest of  the proof is a verbatim, which we will skip. Thus  the proof of the Lemma is completed.
\end{proof}

\begin{rem}\label{v-IVT}
Notice that when $\bT=1$ and both $\be_i,\be_i^\ast\leq 1,\ \forall i$ then Lemma \ref{IVT} and its proof imply a slight variation of the following form.

Let
\begin{equation}\label{pi1}
\begin{array}{cccc}
 \pi_1:&  \HH^{\bchi;N}=\HH^{\chi;N}\times \PP^{\ti\chi;N} & \longrightarrow &\PP^{\chi,n;N}\ \\
& (\hilb(X),\hilb(D))&\longmapsto & \hilb(X)
\end{array}
\end{equation}
be the projection to the first factor.
\begin{enumerate}
\item  Assume $\be_i\to 1$, $\be^*_i\to 1$ and that there is a sequence $\{(\cX_{t_i},\cD_{t_i})\mid (\cX_{t_i},\cD_{t_i})\text{ being } \beta_i\text{-K-polystable}\}$ with $t_i\to 0$ such that
\begin{equation}\label{T1-open}
\pi_1 (\hilb(\cX_{t_i}, (1-\be_i^*)\cD_{t_i}))\stackrel{i\to \infty}{\longrightarrow} {\rm U}(N+1)\cdot \hilb(X)\subset \HH^{\chi;N}
\end{equation}
and for $g_i\in U(N+1)$
\begin{equation}\label{YE1}
\pi_1( g_i \cdot \hilb(\cX_{t_i}, (1-\be_i)\cD_{t_i}))\stackrel{i\to \infty}{\longrightarrow} \hilb (Y)\in\HH^{\chi;N} \ .
 \end{equation}
Then $\hilb(Y)=g\cdot\hilb(X)$ for some $g\in U(N+1)$.

\item Assume $\be^\prime_i\nearrow 1$ and that for any fixed $i$, there is a $g_i\in \rU(N+1)$ such that
\begin{equation}\label{T1-close}
\hilb(\cX_{t}, (1-\be^\prime_i)\cD_t)\stackrel{t\to 0}{\longrightarrow} g_i\cdot \hilb(X, (1-\be^\prime_i)D)\in \HH^{\bchi;N} \
\end{equation}
and
\begin{equation}\label{YE1-1}
\pi_1( \hilb(\cX_{t_i}, (1-\beta^\prime_i)\cD_{t_i}))\stackrel{i\to \infty}{\longrightarrow} \hilb (Y)\in \overline{BO}\setminus O_{\hilb(X)}\subset\HH^{\chi;N}\  .
\end{equation}
If $X\not\cong Y$, then there exists a sequence $\{t_i^\prime\}$ satisfying $0<\dist_C(t^\prime_i,0)<\dist_C(t_i,0)$ such that
\begin{eqnarray}\label{Y'1-1}
&&\hilb(Y')=\lim_{i\to \infty} \pi_1(\hilb(\cX_{t'_i},(1-\be^\prime_i)\cD_{t'_i}))\\
 &\in &\left( \overline{O_{\hilb(X)}} \bigcup ( U\cap\overline{BO})\right)\setminus O_{ \hilb(X)} \subset \HH^{\chi;N}\nonumber \ .
\end{eqnarray}
where $\dist_C:C\times C\to \RR$ is a fixed {\em continuous distance} function  on $C$.
\end{enumerate}
\end{rem}

Now we are ready to  prove the openness.

\begin{prop}\label{open}
Let  $(\cX,\cD;\sL)\to C$ be  K\"ahler-Einstein degeneration  of  index $(r,\mathfrak{B})$ as in Definition \ref{fano-r} with $r=r(\cX,\cD)$ being the uniform index as in Theorem \ref{t-YTD}(3). Then $\bB_r(\cX,\cD)\subset [\ep,\mathfrak{B}]$ is an open set.
\end{prop}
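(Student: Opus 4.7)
The plan is to prove openness of $\bB_r(\cX,\cD)$ at every $\bT \in \bB_r(\cX,\cD)$ by contradiction, combining Lemma \ref{IVT} Part 1 with the local orbit structure provided by Lemma \ref{disj} and a test-configuration inversion argument. Suppose openness fails at $\bT$ and pick $\beta_i \in [\ep,\bbe] \setminus \bB_r(\cX,\cD)$ with $\beta_i \to \bT$; for concreteness treat the case $\beta_i > \bT$, the other direction being analogous via an auxiliary $\beta_i^* < \beta_i$ tending to $\bT$ from below together with Lemma \ref{X-be}. For each $i$, since $\beta_i \notin \bB_r(\cX,\cD)$, Theorem \ref{t-YTD} produces a sequence $t_j^{(i)} \to 0$ along which $(\cX_{t_j^{(i)}},\cD_{t_j^{(i)}};\omega(t_j^{(i)},\beta_i))$ Gromov--Hausdorff converges to a $\beta_i$-K-polystable pair $(Y_i,E_i;\omega_{Y_i}(\beta_i))$ with $(Y_i,E_i) \not\cong (X,D)$ (otherwise the uniqueness of weak conical K\"ahler--Einstein metrics would already place $\beta_i$ into $\bB_r(\cX,\cD)$).

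Applying Theorem \ref{t-YTD} once more to $\{(Y_i,E_i;\omega_{Y_i}(\beta_i))\}_i$, one gets, after passing to a subsequence, a GH limit $(Y_\infty,E_\infty;\omega_{Y_\infty}(\bT))$. A diagonal choice $t_i := t_{j(i)}^{(i)} \to 0$ then achieves simultaneously that $\chow(\cX_{t_i},(1-\bT)\cD_{t_i}) \to \mathrm{U}(N+1)\cdot\chow(X,(1-\bT)D)$ (using $\bT \in \bB_r(\cX,\cD)$ via Lemma \ref{l-GH=CH}) and that $g_i\cdot\chow(\cX_{t_i},(1-\beta_i)\cD_{t_i}) \to \chow(Y_\infty,(1-\bT)E_\infty)$ for suitable $g_i \in \mathrm{U}(N+1)$. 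Lemma \ref{IVT} Part 1 applied with $\beta_i^* = \bT$ then forces $(Y_\infty,E_\infty) \cong (X,D)$.

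To upgrade this limit statement to the much stronger conclusion $(Y_i,E_i) \cong (X,D)$ for $i \gg 1$, I would invoke Lemma \ref{disj}: since ${\rm Aut}(X,D)$ is finite by Corollary \ref{c-aut} and thus reductive, there is an $\mathrm{SL}(N+1)$-invariant Zariski-open neighborhood $U \ni \chow(X,(1-\bT)D)$ in which every $\mathrm{SL}(N+1)$-orbit of $\overline{BO}$ has orbit closure meeting $\mathrm{SL}(N+1)\cdot\chow(X,(1-\bT)D)$. Since $\chow(Y_i,(1-\beta_i)E_i) \in \overline{BO}$ and converges modulo $\mathrm{U}(N+1)$ to $\chow(X,(1-\bT)D)$, it lies in $U$ for $i$ large, so Lemma \ref{disj} supplies a one-parameter subgroup $\lambda_i$ in $\mathrm{SL}(N+1)$ and hence a test configuration $\mathcal{T}_i$ of $(Y_i,E_i)$ whose central fiber is isomorphic to $(X,D)$.

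The contradiction is then obtained by pairing $\mathcal{T}_i$ with its inverse $\mathcal{T}_i^{-1}$ defined by $\lambda_i^{-1}$: a direct computation on weight polynomials gives $\DF_{1-\beta_i}(\mathcal{T}_i^{-1}) = -\DF_{1-\beta_i}(\mathcal{T}_i)$, while the $\beta_i$-K-polystability of the common general fiber $(Y_i,E_i)$ yields both $\DF_{1-\beta_i}(\mathcal{T}_i) \geq 0$ and $\DF_{1-\beta_i}(\mathcal{T}_i^{-1}) \geq 0$. These jointly force $\DF_{1-\beta_i}(\mathcal{T}_i) = 0$, and the equality case of $\beta_i$-K-polystability forces $\mathcal{T}_i$ to be a product test configuration, so its central fiber $(X,D)$ coincides with its general fiber $(Y_i,E_i)$, contradicting our construction. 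I expect the main obstacle to be orchestrating the two layers of subsequencing --- over $t \to 0$ and over $\beta_i \to \bT$ --- in the diagonal step so that the precise hypotheses of Lemma \ref{IVT} Part 1 are verified, together with normalizing the one-parameter subgroup from Lemma \ref{disj} to a bona fide log test configuration of the pair for which the weight-polynomial inversion computation genuinely applies.
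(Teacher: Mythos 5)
Your opening moves track the paper's strategy: argue by contradiction, extract for each bad angle $\beta_k\searrow \bT$ a limit $(Y_k,E_k)\not\cong(X,D)$, diagonalize, and feed the result into Lemma \ref{IVT}(1) with $\be_i^*=\bT$. The fatal problem is your final step. The identity $\DF_{1-\beta_i}(\mathcal{T}_i^{-1})=-\DF_{1-\beta_i}(\mathcal{T}_i)$ is simply false for a non-product test configuration: if $\lambda_i(t)\cdot \chow(Y_i,E_i)$ converges as $t\to 0$ to $\chow(X,D)$, then $\lambda_i(t)^{-1}\cdot\chow(Y_i,E_i)$ converges to the Chow point of a \emph{different} central fiber (the limit as $t\to\infty$), and the two weight polynomials $w(k)$, $\tilde w(k)$ are computed on different schemes. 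What holds in general is only the inequality $\mu(z,\lambda)+\mu(z,\lambda^{-1})\ge 0$, with equality precisely when $\lambda$ fixes $z$, i.e.\ in the product case — exactly the case you are trying to rule out. (If your antisymmetry were true, K-semistability would force $\DF=0$ for every test configuration of every K-semistable variety.) So the inversion argument yields nothing, and no contradiction is reached. A secondary defect: even with a valid mechanism for forcing $\DF_{1-\beta_i}(\mathcal{T}_i)=0$, the standard input is that the \emph{central} fiber admits a weak conical K\"ahler--Einstein metric at the relevant angle (Berman, \cite[Theorem 4.2]{Be12}); but $(X,D)$ is only known to carry such a metric at angle $\bT$, not at $\beta_i>\bT$, so you would in any case have to run the argument at angle $\bT$, using Lemma \ref{interpolate} (together with the small-angle stability of Remark \ref{r-small}) to transfer the polystability of $(Y_i,E_i)$ from $\beta_i$ down to $\bT$.

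There are two ways to repair this. The paper's route avoids your steps 3--4 entirely: it first shows $\chow(Y_k,(1-\be_k)E_k)\notin U$, where $U$ is the fixed $\SL(N+1)$-invariant neighborhood of $\chow(X,(1-\bT)D)$ from Lemma \ref{disj} (if it were in $U$, Lemma \ref{disj} would give a degeneration of the $\bT$-K-polystable $(Y_k,E_k)$ to $(X,D)$, which carries a weak KE metric at angle $\bT$, forcing the configuration to be a product and $(Y_k,E_k)\cong(X,D)$). This gives a \emph{uniform} separation, so the diagonal sequence can be chosen to stay outside $\rU(N+1)\cdot B(\chow(X,(1-\bT)D),\ep_1)$; its subsequential limit then cannot lie in $\rU(N+1)\cdot\chow(X,(1-\bT)D)$, and Lemma \ref{IVT}(1) is itself the contradiction. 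Alternatively, your steps 1--3 can be salvaged by replacing step 4 with the Berman-type argument at angle $\bT$ just described. As written, however, the proof does not close.
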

 \begin{proof}
Let us assume $\bT\in \bB_r(\cX,\cD)$, then by fixing a local basis $\{s_i\}$ for $\pi_\ast \omega_{\cX/C}^{-\otimes r}$  we have
\begin{equation}
\dist_{\HH^{\bchi;N}}(\hilb(\cX_{t},(1-\bT)\cD_{t}),\rU(N+1)\cdot \hilb(X,(1-\bT)D))\longrightarrow 0 \text{ as }t\to 0\ .
\end{equation}
Now we claim that  there is a $\delta>0$ such that   $[\ep,\bT+\delta)\subset \bB_r(\cX,\cD)$. Suppose not, for any $k$, there is a $\bT<\be_{k}<\bT+1/k$ and a sequence  $\{t_{i,k}\}_{k=1}^\infty$:
 $$\hilb(\cX_{t_{i,k}},(1-\be_{k})\cD_{t_{i,k}})\stackrel{i\to \infty}{\longrightarrow} \hilb(Y_k,(1-\be_{k})E_k)\not\in U\subset \HH^{\bchi;N}$$
 with $U\subset \HH^{\bchi;N}$ being the $\SL(N+1)$-invariant Zariski open neighborhood of $\hilb(X,(1-\bT) D)$ constructed in Lemma \ref{disj}, since $(X,D)$ is also $\be_{k}$-K-polystable because of $\be_{k}\in [\ep,\bbe]$ and Lemma \ref{interpolate}. For any fixed  $i$,  we can pick up $k_i\gg 0$ such that
  $$\hilb(\cX_{t_{i,k_i}},(1-\be_{k_i})\cD_{t_{i,k_i}} )\not\in \rU(N+1)\cdot B(\hilb(X,(1-T)D),\ep_1) .$$
Now let us introduce the {\em diagonal} sequence
 $$\{\hilb(\cX_{t_i},(1-\be_i)\cD_{t_i}):=\hilb(\cX_{t_{i,k_i}},(1-\be_{k_i})\cD_{t_{i,k_i}})\}_{i=0}^{\infty}.$$
Then by Theorem \ref{t-YTD}, after passing to a subsequence if necessary,  we obtain a new sequence, which by abuse of notation will still be denoted by $\be_i\searrow \bT$ and $t_i\to 0$, such that
\begin{equation}\label{y-ne-x}
\hilb(\cX_{t_i},(1-\be_i)\cD_{t_i})\longrightarrow \hilb(Y,(1-\bT)E)\not\in O_{\hilb(X,(1-\bT)D)}\ .
\end{equation}
But this violates the first part of Lemma \ref{IVT}(1) with  $\be_i^\ast=\bT\ \forall i$.
\end{proof}

Next we prove the closedness.


\begin{prop}\label{close}
 Let  $(\cX,\cD)\to C$ be a family satisfying the condition of Proposition \ref{open}. Suppose further that $\cX\to C$ is a family of {\em $\mathfrak{B}$-K-polystable} varieties. Then $\bB(\cX,\cD)\subset [\ep,\mathfrak{B}]$ is also closed with respect  to the induced topology, hence $\bB(\cX,\cD)=[\ep,\mathfrak{B}]$.
\end{prop}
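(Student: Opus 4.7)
The plan is to run a by-contradiction argument parallel to Proposition \ref{open}, this time using part (2) of Lemma \ref{IVT}. Set $\bT:=\sup\{\sigma\in[\ep,\bbe]\mid[\ep,\sigma]\subset\bB_r(\cX,\cD)\}$; combined with Proposition \ref{open} it suffices to show $\bT\in\bB_r(\cX,\cD)$, for then openness forces $\bT=\bbe$. Pick $\be_i^\prime\nearrow\bT$ strictly with $\be_i^\prime\in\bB_r(\cX,\cD)$. Since $(X,D)=(\cX_0,\cD_0)$ is $\bbe$-K-polystable and $\bT\le\bbe$, Lemma \ref{interpolate} gives $\bT$-K-polystability of $(X,D)$ and Lemma \ref{X-be} supplies a weak conical K\"ahler-Einstein metric $\omega_X(\bT)$ on $(X,D)$. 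By Lemma \ref{l-GH=CH}, $\bT\in\bB_r(\cX,\cD)$ is equivalent to
$$\chow(\cX_t,(1-\bT)\cD_t)\longrightarrow\rU(N+1)\cdot\chow(X,(1-\bT)D)\quad\text{in }\PP^{\bd,n;N}\text{ as }t\to 0.$$

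Assume this convergence fails, so that for some $\ep_1>0$ there is a sequence $s_i\to 0$ along which the distance stays above $\ep_1$. Each $(\cX_{s_i},\cD_{s_i})$ is $\bT$-K-polystable by our standing hypothesis and Lemma \ref{interpolate}, hence admits the conical K\"ahler-Einstein form $\omega(s_i,\bT)$. Theorem \ref{t-YTD} lets us extract on a subsequence a Gromov-Hausdorff limit $(Y,E;\omega_Y(\bT))$ with $(Y,E)\not\cong(X,D)$, and by Lemma \ref{l-GH=CH} we have $\chow(\cX_{s_i},(1-\bT)\cD_{s_i})\to\chow(Y,(1-\bT)E)\in\overline{BO}\setminus O_{\chow(X,(1-\bT)D)}$. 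A diagonal argument using the $\be$-continuity of $\tau$ from Lemma \ref{tau-cont} then upgrades this to a sequence $t_i\to 0$ with
$$\chow(\cX_{t_i},(1-\be_i^\prime)\cD_{t_i})\longrightarrow\chow(Y,(1-\bT)E),$$
so hypothesis \eqref{YE-1} of Lemma \ref{IVT}(2) is verified; hypothesis \eqref{T-close} is built into $\be_i^\prime\in\bB_r(\cX,\cD)$.

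Applying Lemma \ref{IVT}(2) yields a new sequence $t_i^\prime$ with $0<|t_i^\prime|<|t_i|$ such that
$$\chow(Y^\prime,(1-\bT)E^\prime):=\lim_{i\to\infty}\chow(\cX_{t_i^\prime},(1-\be_i^\prime)\cD_{t_i^\prime})\in\bigl(\overline{O_{\chow(X,(1-\bT)D)}}\cup(U\cap\overline{BO})\bigr)\setminus O_{\chow(X,(1-\bT)D)},$$
where $U$ is the $\SL(N+1)$-invariant Zariski open set from Lemma \ref{disj}. By Lemma \ref{l-GH=CH} again, the pair $(Y^\prime,E^\prime)$ carries a weak conical K\"ahler-Einstein metric of cone angle $2\pi(1-(1-\bT)/m)$, hence is itself $\bT$-K-polystable.

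The main obstacle, handled exactly as in the proof of Lemma \ref{IVT}(1), is to contradict $\bT$-K-polystability in each of the two admissible locations for $\chow(Y^\prime,(1-\bT)E^\prime)$. If it lies in $\overline{O_{\chow(X,(1-\bT)D)}}\setminus O_{\chow(X,(1-\bT)D)}$, a one-parameter subgroup of $\SL(N+1)$ gives a non-product test configuration of $(X,D)$ with central fiber $(Y^\prime,E^\prime)$ whose generalized Futaki invariant vanishes (both endpoints being conical K\"ahler-Einstein with the same cone angle), contradicting $\bT$-K-polystability of $(X,D)$. If instead it lies in $U\cap\overline{BO}$, Lemma \ref{disj} forces $\SL(N+1)\cdot\chow(X,(1-\bT)D)\subset\overline{\SL(N+1)\cdot\chow(Y^\prime,(1-\bT)E^\prime)}$, producing a non-product test configuration of $(Y^\prime,E^\prime)$ degenerating to $(X,D)$ with vanishing Futaki invariant, contradicting $\bT$-K-polystability of $(Y^\prime,E^\prime)$. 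Either way we reach a contradiction, so $\bT\in\bB_r(\cX,\cD)$, and together with Proposition \ref{open} and Lemma \ref{be-sm} this forces $\bB_r(\cX,\cD)=[\ep,\bbe]$.
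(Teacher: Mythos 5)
Your proposal is correct and follows essentially the same route as the paper: argue by contradiction, use the $\beta$-continuity of $\tau(\cdot,t_i)$ at $\bT$ to trade the failing sequence at angle $\bT$ for a sequence with angles $\be_i'\nearrow\bT$ satisfying the hypotheses of Lemma \ref{IVT}(2), and then contradict $\bT$-K-polystability of $(X,D)$ and of the resulting limit via Lemma \ref{disj} and the test-configuration construction. The only differences are cosmetic (you phrase the goal as $\bT\in\bB_r(\cX,\cD)$ plus openness rather than closedness directly, and you spell out the final dichotomy that the paper compresses into ``by the same reason as above'').
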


\begin{proof}
By our assumption, for every $t\in C^\circ$,  $(\cX_t,\cD_t)$ is a smooth Fano pair with $\cD_t\in |-mK_{\cX_t}|$.  Since $\cX_t$ is {$\mathfrak{B}$-K-polystable, hence it is $\be$-K-{polystable} for $\be\in[\ep,\mathfrak{B}]$ by Lemma \ref{interpolate}}. As $(\cX_t,\cD_t)$ are smooth, by Theorem \ref{t-YTD} and \cite[Proposition 2.2]{SW2012} \cite[Proposition 1.7]{LS2014} it  admits a unique conical K\"ahler-Einstein metric $\omega_t$ solving
$$\Ric(\omega(t,\beta))=\beta \omega(t,\beta)+\frac{1-\beta}{m}[\cD_t]
$$
 with angle $2\pi(1-(1- \beta)/m)$ along $\cD_t$ for any $\beta\in {[\ep,\mathfrak{B}]}$. By Theorem \ref{t-YTD} and definition of $ \bT$, for any {\em fixed} $\beta<\bT$,
we have
$$(\cX_t,\cD_t;\omega(t,\beta))\stackrel{\GH}{\longrightarrow}(\cX_0,\cD_0;\omega(0,\beta)) \text{ as } t\to 0\ .$$
By Lemma \ref{X-be}, for any sequence $\be_i\nearrow \bT$ we have
$$
\dist_{\HH^{\bchi;N}}(\hilb(X,(1-\be_i)D), \rU(N+1)\cdot \hilb(X,(1-\bT)D))\longrightarrow 0\ .
$$

Our goal is  to prove that
$$
\hilb(\cX_{t},(1-\bT)\cD_{t})\longrightarrow\rU(N+1)\cdot \hilb(X,(1-\bT)D)) \text{ as } t\to 0\ .$$
We will argue by contradiction.

Suppose  this is not the case, then  there is a subsequence $\{t_i\}_{i=1}^\infty \subset C,\ t_i\to 0$ as $i\to \infty$ such that
$$\hilb(\cX_{t_i},(1-\bT)\cD_{t_i})\to \hilb(Y,(1-\bT)E)\not\in  {\rm U}(N+1)\cdot\hilb(X,(1-\bT)D)\ .$$
By the continuity of $\tau(\cdot,t_i)$ at $\bT$ for each fixed $i$ (cf. Lemma \ref{tau-cont}), there is a consequence $\{\be'_i\}_{i=1}^\infty\subset (\ep_0, \bT)$ such that $\be'_i\nearrow\bT$ and
\begin{equation}\label{y-ne-x-1}
\hilb(\cX_{t_i},(1-\be'_i)\cD_{t_i})\to \hilb(Y,(1-\bT)E)\not\in \rU(N+1)\cdot  \hilb(X,(1-\bT)D)\text{ as }i\to \infty .
\end{equation}


We claim that $\hilb(Y,(1-\bT)E)\in \overline{BO}\setminus {\rm SL}(N+1)\cdot U$.  Otherwise, $\hilb(Y,(1-\bT)E)\in  U$ then
$$\hilb(X,(1-\bT)D)\in \overline{{\rm SL}(N+1)\cdot \hilb(Y,(1-\bT)E)}\ .$$
But this violates the fact that $(Y, E)$ is $\bT$-K-polystable by  \cite[Theorem 4.2]{Be12}, since
 we can construct is a test configuration of $(Y,E)$ with central fiber $(X,D)$ and vanishing generalized Futaki invariant. Hence our claim is proved.

Now we can apply the second part of Lemma \ref{IVT} to obtain a {\em new} sequence $\{t'_i\}\subset C^\circ$  satisfying $t'_i\to 0\in C$ and
\begin{eqnarray}\label{Y''-1}
&&\hilb(Y',(1-\bT)E')=\lim_{i\to \infty} \hilb(\cX_{t'_i},(1-\be^\prime_i)\cD_{t'_i})\\
 &\in &\left(\overline{O_{\hilb(X,(1-\bT)D)}}\bigcup ( U\cap\overline{BO})\right)\setminus O_{ \hilb(X,(1-\bT)D)} \subset \HH^{\bchi;N}\nonumber \ ,
\end{eqnarray}
which contradicts to the fact that both $(Y',E')$  and $(X,D)$ are $\bT$-K-polystable by the same reason as above. Thus the proof is completed.
\end{proof}

\begin{rem}We remark an interesting point of the proof is that in the proof of Proposition \ref{open}, we have only used the continuity of $\tau(\cdot, t)$ for each fixed $t$. In particular, its continuity of $\tau$ with respect to the variable $t$ is not used.  Contrast to this, the continuity of $\tau(\be,\cdot)$ with respect to $t$ is what we use in the proof of Proposition \ref{close}.
 \end{rem}

 We note that by this point, we have already established the following.
 \begin{cor}\label{r-weak}
Theorem \ref{log-main} holds under  {\em an additional} assumption that $\cX_t$ is $\beta$-K-polystable for all $t\in C^\circ$.
 \end{cor}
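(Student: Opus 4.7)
The plan is to invoke the continuity framework developed in this section with the identification $\mathfrak{B}=\beta$. Fix $\epsilon\in(0,\beta_0]$ where $\beta_0$ is as in Theorem \ref{unique}. Lemma \ref{be-sm} provides $[\epsilon,\beta_0]\subset\bB_r(\cX,\cD)$, so this set is nonempty. The extra hypothesis that every $\cX_t$ with $t\in C^\circ$ is $\beta$-K-polystable is precisely what Proposition \ref{close} requires, so $\bB_r(\cX,\cD)$ is closed in $[\epsilon,\beta]$; coupled with the openness from Proposition \ref{open}, we conclude $\bB_r(\cX,\cD)=[\epsilon,\beta]$. In particular $\beta\in\bB_r(\cX,\cD)$, which is exactly the content of Theorem \ref{log-main}(iii): $(\cX_0,\cD_0)$ carries a conical weak K\"ahler-Einstein metric with the prescribed cone angle, realized as the Gromov-Hausdorff limit of the conical K\"ahler-Einstein metrics on the smooth fibers $(\cX_{t_i},\cD_{t_i})$. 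The lower semi-continuity of the cone angle for the sequence $\beta_i\nearrow\beta$ follows from the same closedness argument applied along the given sequence of angles.

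Part (i) of Theorem \ref{log-main} requires no separate argument under the additional hypothesis: we are already assuming $\beta$-K-polystability for every $t\in C^\circ$, so the Zariski neighborhood $U$ can be taken to be $C$ itself.

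For the uniqueness assertion (ii), suppose $(\cX',\cD')\to C$ is another filling satisfying (1)--(3) with $(\cX',\cD')\times_C C^\circ \cong (\cX,\cD)\times_C C^\circ$. Since fiberwise $\beta$-K-polystability is preserved by this isomorphism, the additional hypothesis also holds for $(\cX',\cD')$. Running the same argument yields $\beta\in\bB_r(\cX',\cD')$, so $(\cX'_t,\cD'_t;\omega(t,\beta))$ Gromov-Hausdorff converges to a weak conical K\"ahler-Einstein pair $(\cX'_0,\cD'_0;\omega'(\beta))$. Because the two punctured families are isomorphic, the two Gromov-Hausdorff limits arise from the same sequence of metric spaces and hence coincide up to isometry; combined with Lemma \ref{l-GH=CH} and Theorem \ref{t-YTD}, this forces $(\cX_0,\cD_0)\cong(\cX'_0,\cD'_0)$ as log Fano pairs.

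The substantive content of the argument is already carried by Propositions \ref{open} and \ref{close}, whose proofs hinge on the Intermediate Value-type Lemma \ref{IVT} preventing the Gromov-Hausdorff limit from drifting away from the orbit of $\chow(X,(1-\bT)D)$; once these are granted, the corollary itself reduces to bookkeeping. The main conceptual obstacle in the plan above is therefore the transfer of Gromov-Hausdorff information between the two fillings: this is where the choice of a \emph{fixed} basis in Definition \ref{def-chow} and the ${\rm U}(N+1)$-equivariance statement in Lemma \ref{l-GH=CH} are indispensable, as they allow one to translate the analytic identification of the two GH limits into an algebro-geometric isomorphism of central fibers.
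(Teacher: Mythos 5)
Your proposal is correct and follows essentially the same route as the paper, which treats this corollary as an immediate consequence of Lemma \ref{be-sm} together with Propositions \ref{open} and \ref{close} (giving $\bB_r(\cX,\cD)=[\ep,\mathfrak{B}]$ with $\mathfrak{B}=\beta$), with parts (i)--(iii) then read off exactly as you do, using Lemma \ref{l-GH=CH} and the fixed-basis Tian embeddings to convert Gromov--Hausdorff agreement of the two fillings into an isomorphism of central fibers. The only presentational looseness is attributing the diagonal statement ($t_i\to 0$, $\beta_i\nearrow\beta$) to "the same closedness argument": in the paper this is carried by Lemma \ref{X-be} and Lemma \ref{IVT}(1), which you are in any case already invoking through Propositions \ref{open} and \ref{close}.
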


\section{K-semistability of  the nearby fibers}\label{s-K-semistable}

\subsection{Orbit of K-semistable points}\label{K-semistable}
In this subsection, we extend our continuity method to study the {\em uniqueness} of K-polystable Fano varieties that a K-semistable Fano manifold can specialize to,  which will also be needed in the proof of our main theorem.

Let $X$ be a smooth Fano manifold, and $D\in |-mK_X|$ be a smooth divisor for $m\ge 2.$ Assume $X$ is $\bT$-K-semistable with respect to $D$.  By Theorem \ref{t-YTD}, we know that for any sequence $\beta_i\nearrow \bT$, after possibly passing to a subsequence (, which by abusing of notation will  still be denoted by $\be_i\nearrow \bT$), there exists a log $\QQ$-Fano pair $(X_0,D_0)$ which is the Gromov-Haussdorf limit of the conical K\"ahler-Einstein metric $(X, D;\omega(\beta_i))$, that is,
$$ \hilb(X,(1-\be_i)D)\longrightarrow \rU(N+1)\cdot\hilb(X_0,(1-\bT)D_0)\in \overline{O_{\hilb(X,(1-\bT) D)}}\text{ as }i\to \infty
$$
with $X_0$ being $\bT$-K-polystable, where
$$\overline{O_{\hilb(X,(1-\bT)D)}}=\mbox {the closure of } {\rm SL}(N+1)\cdot \hilb(X,(1-\bT)D)\subset \HH^{\bchi;N}\ .$$
In particular, $(X_0,D_0)$ admits a weak conical K\"ahler-Einstein metric $\omega(\bT)$ with cone angle $2\pi(1-(1-\bT)/m)$ along the divisor $D_0\subset X_0$.

\begin{lem}\label{l-ksemi}
The limit is independent of the choice of the sequence $\{\be_i\}$ in the sense that for every sequence $\be_i \nearrow \bT$,
$$(X, D;\omega(\be_i))\stackrel{\GH}{\longrightarrow} (X_0,D_0; \omega(\bT)).$$
\end{lem}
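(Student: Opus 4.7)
The plan is to argue by contradiction, adapting the continuity method of Section~\ref{ss-continuity} so that the cone-angle parameter $\beta$ (rather than a family parameter $t$) plays the role of the continuity variable, with $X$ held fixed. Suppose two sequences $\beta_i \nearrow \bT$ and $\beta_i' \nearrow \bT$ yield Gromov--Hausdorff limits $(X_0,D_0)$ and $(X_0',D_0')$ that are not isomorphic; both are $\bT$-K-polystable by Theorem~\ref{t-YTD}. Applying Lemma~\ref{l-GH=CH} to the trivial family $\cX\equiv X$, after unitary gauge we have $\chow(X,(1-\beta_i)D) \to \chow(X_0,(1-\bT)D_0)$ and $\chow(X,(1-\beta_i')D)\to \chow(X_0',(1-\bT)D_0')$ in $\PP^{\bd,n;N}$, and both limit Chow points lie in $\overline{O_{\chow(X,(1-\bT)D)}}$. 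Since $(X_0,D_0)$ admits a weak conical K\"ahler--Einstein metric, $\Aut(X_0,D_0)$ is reductive by Corollary~\ref{c-aut}, so Lemma~\ref{disj} produces an $\SL(N+1)$-invariant Zariski open neighborhood $U \subset \PP^{\bd,n;N}$ of the minimal orbit $O_{\chow(X_0,(1-\bT)D_0)}$ enjoying the property~\eqref{Oz}.

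If $\chow(X_0',(1-\bT)D_0') \in \SL(N+1)\cdot U$, then applying \eqref{Oz} to the orbit of $\chow(X_0',(1-\bT)D_0')$ inside the limiting-orbit set forces $\chow(X_0,(1-\bT)D_0) \in \overline{\SL(N+1)\cdot \chow(X_0',(1-\bT)D_0')}$. Both pairs being $\bT$-K-polystable with vanishing generalized Futaki invariant (thanks to the conical K\"ahler--Einstein metric on each side, via \cite[Theorem 4.2]{Be12}), the resulting test configuration must be a product, whence $(X_0,D_0) \cong (X_0',D_0')$, contradicting our assumption.

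In the remaining case $\chow(X_0',(1-\bT)D_0') \notin \SL(N+1)\cdot U$, I would run an intermediate-value argument modeled on Lemma~\ref{IVT}(2), but with $\beta$ replacing the family parameter $t$. By Proposition~\ref{cont-path} the map $\tau(\beta) := \chow(X,(1-\beta)D)$ is continuous on $(0,\bT)$; its values cluster inside $\rU(N+1)\cdot U$ along the $\beta_i$-subsequence while staying a definite distance from $\SL(N+1)\cdot U$ along the $\beta_i'$-subsequence. Tracking first-exit values of $\beta$ from a shrinking family of tubular neighborhoods of the orbit $O_{\chow(X_0,(1-\bT)D_0)}$ and of $\SL(N+1)\cdot \chow(X_0,(1-\bT)D_0)$, and handling the bounded vs.\ divergent dichotomy of $\SL(N+1)$-translations exactly as in the proof of Lemma~\ref{IVT}(2), one extracts $\beta_i'' \nearrow \bT$ with $\lim_i \tau(\beta_i'') = \chow(Y',(1-\bT)E')$ lying in $\bigl(\overline{O_{\chow(X_0,(1-\bT)D_0)}} \cup (U \cap \overline{BO})\bigr) \setminus O_{\chow(X_0,(1-\bT)D_0)}$. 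By Theorem~\ref{t-YTD} the pair $(Y',E')$ is $\bT$-K-polystable; in either alternative it produces a non-product test configuration with vanishing Futaki invariant between $(Y',E')$ and $(X_0,D_0)$, contradicting $\bT$-K-polystability of both sides.

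The main obstacle I anticipate lies in this last paragraph: the IVT extraction in Lemma~\ref{IVT} was designed for two parameters $(\beta,t)$, so one must verify that oscillation of $\tau(\beta)$ alone still yields well-defined first-exit values of $\beta$ and the same bounded/divergent dichotomy of $\SL(N+1)$-translations. Once this adaptation is in place, the remaining steps are the standard K-polystability rigidity under $\SL(N+1)$-orbit closure.
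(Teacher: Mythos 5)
Your outline is essentially sound, but it takes a different (and heavier) route than the paper. The paper's proof is a two-line reduction: since $(X_0,D_0)$ carries a weak conical K\"ahler--Einstein metric, $\Aut(X_0,D_0)$ is reductive (Theorem~\ref{t-YTD}), and $\chow(X_0,(1-\bT)D_0)$ lies in $\overline{O_{\chow(X,(1-\bT)D)}}$, so by \cite[Proposition 1]{Don2012} one gets an honest test configuration $(\cX,\cD;\sL)\to\AAA^1$ with general fiber $(X,D)$ and central fiber $(X_0,D_0)$; Lemma~\ref{IVT}(1) is then applied verbatim to this family, taking $\be_i^*$ from the defining sequence of $X_0$ (which supplies hypothesis \eqref{T-open} via Lemma~\ref{l-GH=CH}) and $\be_i$ from the arbitrary competing sequence. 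This device dissolves exactly the obstacle you flag at the end: no one-parameter re-derivation of the intermediate-value machinery is needed, because the test configuration manufactures the ``family over a curve'' that the lemma was stated for. Your adaptation--running the first-exit/bounded-versus-divergent argument with $\be$ alone on the fixed pair--does work, for two reasons you could make explicit: the proof of Lemma~\ref{IVT}(1) only ever uses the $\be$-continuity of $\tau(\cdot,t)$ at a fixed $t$ (the paper points this out after Proposition~\ref{close}), and in your setting the whole path $\be\mapsto\chow(X,(1-\be)D)$ lies in a single ${\rm SL}(N+1)$-orbit, so the broken-orbit set $\overline{BO}$ collapses to $\overline{O_{\chow(X,(1-\bT)D)}}$, simplifying the use of Lemma~\ref{disj}. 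Two small points to tighten: the mechanism you want to imitate is that of Lemma~\ref{IVT}(1) (varying $\be$ at fixed $t$), not part (2); and the ``definite distance'' from $O_{\chow(X_0,(1-\bT)D_0)}$ along the $\be_i'$-sequence should be justified by first ruling out, via the standard Berman/Donaldson rigidity you already invoke, that $\chow(X_0',(1-\bT)D_0')$ sits in the (closed) set $\overline{O_{\chow(X_0,(1-\bT)D_0)}}$. So: correct in substance, but the paper buys brevity by reusing Lemma~\ref{IVT}(1) through a constructed test configuration, while your version re-proves its mechanism in a one-parameter form.
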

\begin{proof}

The existence of a weak conical K\"ahler-Einstein metric $\omega(\bT)$ on $(X_0,D_0)$ allows us to construct a test configuration
$(\cX,\cD;\sL)$ of $(X,D)$ with central fiber $(X_0,D_0)$ since $\aut(X_0,D_0)$ is reductive by Theorem \ref{t-YTD}. Now our claim follows by applying Lemma \ref{IVT} (1) to the family $(\cX,\cD;\sL)$.

\end{proof}

\begin{thm}\label{ss}
Suppose $X$ is a smooth K-semistable Fano manifold and $D_0 \in |-m_0K_X|$ and  $D_1 \in |-m_1K_X|$ are two smooth divisors. Let $X_0$ and $X_1$ be the limits defined as in Lemma \ref{l-ksemi} with $\bT=1$,  then
$X_0\cong X_1$.
\end{thm}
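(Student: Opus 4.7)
The strategy is to realise both $X_0$ and $X_1$ as K-polystable $\QQ$-Fano limits inside the single $\SL(N+1)$-orbit of Tian embeddings of the common smooth Fano manifold $X$, and then invoke the uniqueness of the K-polystable degeneration inside such an orbit closure — the K-stability analogue of the classical GIT fact that the closure of a semistable orbit contains a unique closed orbit.

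First, for each $i\in\{0,1\}$, I would apply Theorem~\ref{t-YTD} and Lemma~\ref{l-ksemi} to the pair $(X, D_i)$. By Lemma~\ref{interpolate} the $1$-K-semistability of $X$ upgrades to $\beta$-K-polystability of $(X, D_i)$ for every $\beta\in(0,1)$, so $(X, D_i)$ admits a conical K\"ahler--Einstein metric $\omega^{(i)}(\beta)$ with cone angle $2\pi(1-(1-\beta)/m_i)$, and the Gromov--Hausdorff limit as $\beta\nearrow 1$ is a K-polystable $\QQ$-Fano variety $X_i$ admitting a weak K\"ahler--Einstein metric, independent of the chosen subsequence. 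Fix a uniform $r\gg 0$ as permitted by Theorem~\ref{t-YTD}(3), set $N+1:=\dim H^0(X,-rK_X)$, and work in $\PP^{d,n;N}$ via the projection $\pi_1$ of Remark~\ref{v-IVT}. For every $\beta<1$ the Chow point $z^{(i)}(\beta):=\pi_1\bigl(\chow(X,(1-\beta)D_i)\bigr)$ coming from the $r$-th Tian embedding of $(X, D_i;\omega^{(i)}(\beta))$ lies in the single orbit $O_X:=\SL(N+1)\cdot\chow(X)$, since different Tian embeddings of $X$ differ by elements of $U(N+1)\subset\SL(N+1)$. Lemma~\ref{l-GH=CH} then gives
\[
z^{(i)}(\beta)\longrightarrow U(N+1)\cdot\chow(X_i)\in\overline{O_X}\quad\text{as}\quad \beta\nearrow 1,
\]
so both $\chow(X_0)$ and $\chow(X_1)$ lie in $\overline{O_X}$, and by Corollary~\ref{c-aut} each $\Aut(X_i)$ is reductive.

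Suppose for contradiction that $X_0\not\cong X_1$, so the orbits $O_{X_0}$ and $O_{X_1}$ are disjoint in $\overline{O_X}$. Since the stabiliser of $\chow(X_0)$ in $\SL(N+1)$ is reductive, Lemma~\ref{disj} produces an $\SL(N+1)$-invariant Zariski open neighbourhood $U\ni\chow(X_0)$ such that any $p\in \overline{O_X}\cap U$ satisfies $\chow(X_0)\in \overline{\SL(N+1)\cdot p}$. I now run the intermediate-value construction of Remark~\ref{v-IVT}(2) along the arc $z^{(1)}$, whose limit $\chow(X_1)$ lies in $\overline{O_X}\setminus O_{X_0}$; this yields a modified sequence whose Chow limit $\chow(Y')$ sits in
\[
\bigl(\overline{O_{X_0}}\;\cup\;(U\cap\overline{O_X})\bigr)\setminus O_{X_0},
\]
with $Y'$ a $1$-K-polystable $\QQ$-Fano variety admitting a weak K\"ahler--Einstein metric by Theorem~\ref{t-YTD}. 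In either of the two possibilities this produces a non-trivial test configuration relating $Y'$ and $X_0$ with vanishing generalised Futaki invariant (via \cite[Proposition~1]{Don2012}), contradicting the K-polystability of whichever of $Y'$ and $X_0$ serves as the central fibre. The contradiction forces $X_0\cong X_1$.

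The main obstacle is the adaptation of Remark~\ref{v-IVT}(2) to the present setting. That remark is stated for a non-trivial flat family $(\cX_t, \cD_t)\to C$, whereas here the variety $X$ is fixed and only the divisor structure changes; worse, $D_0\in |-m_0 K_X|$ and $D_1\in |-m_1 K_X|$ live in different linear systems, so no pencil of divisors is available to interpolate them. The substitute is to carry out the intermediate-value construction of Lemma~\ref{IVT} directly inside the common orbit $O_X\subset \PP^{d,n;N}$, with the two $\beta$-arcs $z^{(0)}, z^{(1)}$ playing the role of the $\beta_i^*$ and $\beta_i$ sequences of Remark~\ref{v-IVT}, and to track carefully the possible blow-up in $\SL(N+1)$ of the elements conjugating one Tian embedding of $X$ to the other. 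The reductivity of $\Aut(X_0)$ (hence of the stabiliser of $\chow(X_0)$) and the $\SL(N+1)$-invariant Zariski-open set supplied by Lemma~\ref{disj} are both indispensable for this step.
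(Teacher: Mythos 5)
Your setup is correct and matches the paper's: both $\chow(X_0)$ and $\chow(X_1)$ are K-polystable limits in $\overline{O_X}$, and the final contradiction (a test configuration with vanishing Futaki invariant between two non-isomorphic K-polystable varieties) is the right mechanism. But the step you flag as "the main obstacle" is exactly where the proof is missing, and your proposed substitute does not work. Lemma \ref{IVT} and Remark \ref{v-IVT} are intermediate-value arguments: for each $i$ they need a \emph{single continuous path} $\tau(\cdot,t_i)$ whose value at one parameter is near the good orbit and at another parameter tends to the bad limit, so that one can locate the "first escape" from a neighborhood and extract an intermediate degeneration. Your two arcs $z^{(0)}(\beta)$ and $z^{(1)}(\beta)$ are two entirely separate paths; they are not two parameter values of one continuous family. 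Although for each fixed $\beta<1$ both points lie in the connected orbit $O_X$ and can be joined by \emph{some} path, every point of $O_X$ is just a translate of $\chow(X)$ itself, and an arbitrary uncontrolled choice of connecting path carries no information about how the arcs approach the boundary as $\beta\to 1$ — which is precisely where $X_0$ and $X_1$ live. So "carrying out the construction directly inside the common orbit" produces nothing to take a limit of, and "tracking the blow-up of the conjugating elements $g(\beta)\in\SL(N+1)$" is not an argument but a restatement of the difficulty.

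The paper's resolution is the reduction you declared unavailable. Setting $m=\mathrm{lcm}(m_0,m_1)$ and passing to the multiples $\frac{m}{m_0}D_0,\ \frac{m}{m_1}D_1\in|-mK_X|$ (which do not change the $\beta\to 1$ limits, since the conical equations $\Ric(\omega)=\beta\omega+\frac{1-\beta}{m_0}[D_0]$ and $\Ric(\omega)=\beta\omega+\frac{1-\beta}{m}[\frac{m}{m_0}D_0]$ coincide), both divisors are placed in a \emph{single} linear system, and Bertini then supplies an algebraic arc $\{D_t\}_{t\in[0,1]}\subset|-mK_X|$ with $D_t$ smooth for $t\neq 0$ joining them. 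This yields the genuine two-parameter continuous family $\sigma(\beta,t)=\chow(X,(1-\beta)D_t)$ (continuity in $t$ coming from Proposition \ref{cont-path} and Corollary \ref{r-weak} applied to the product family $X\times C$), to which the intermediate-value machinery of Lemma \ref{IVT}, together with the Zariski-open set $U$ from Lemma \ref{disj}, can legitimately be applied. You should supply this interpolation step; without it the argument has a genuine gap.
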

\begin{proof}
By introducing a third divisor in $|-mK_X|$ with $m={\rm lcm}(m_0,m_1)$, we may assume $rm_0=m_1$ for a positive integer $r$.
By Bertini's Theorem, we may choose  $\{D_t\}_{t\in [0,1]}\subset |-mK_X|$ to be a continuous path joining $rD_0$ and $D_1$  such that
\begin{itemize}
\item  the path $\{D_t\}$ lies in an algebraic arc $C\subset |-mK_X|$ with corresponding family $\cD\to C$;
\item  $D_t$ is smooth for all $t\neq 0$.
\end{itemize}
By assumption, $X$ is K-semistable, hence $(X,D_t)$ are $\be$-K-stable for all $(\be,t)\in (0,1)\times (0,1]$. In particular, $\{(X,D_t)\}$ admit conical K\"ahler-Einstein metric $\omega(t,\beta),\ \forall (\be,t)\in (0,1)\times [0,1]$ by Corollary \ref{l-YTD}, using Tian's embedding we can similarly define a map
\begin{equation}\label{sigma}
\begin{array}{cccc}
 \sigma:& (0, 1)\times (0,1] & \longrightarrow &\HH^{\bchi;N}\\
& (\beta, t)&\longmapsto & \hilb(X,(1-\be)D_t) \
\end{array}
\end{equation}
 using a prefixed basis of $H^0(X,\sO_X(-rK_X))$.
By Proposition \ref{cont-path} and \cite[Theorem 2]{Don2011}, $\sigma$ is continuous on $(0, 1)\times (0,1]$. We claim that $q\circ \sigma$ is continuous on $(0,1)\times[0,1]$ with $q\colon \HH^{\bchi;N} \to \HH^{\bchi;N}/ {\rm U}(N+1)$.  For fixed $\be\in (0,1)$, we can deduce  the continuity of  $\sigma(\be,\cdot)$ at $0$ by applying Corollary \ref{r-weak} to the product family $(\cX=X\times C,\cD)\to C$ with  $(\cX_t,\cD_t)=(X,D_t)$.

Thus all we need to show is
\begin{equation}\label{X1}
\lim_{\be\to 1}\dist_{\HH^{\chi;N}}(\ti\si(\be,t), \rU(N+1)\cdot \hilb(X_0))=0,\ \  \forall t\in [0,1]
\end{equation}
where $\ti \sigma:=\pi_1\circ \sigma$ with $\pi_1$ being given in \eqref{pi1}.
To achieve that, let $\ti q:\HH^{\chi;N}\to \HH^{\chi;N}/\rU(N+1)$ then  Lemma \ref{l-ksemi} allows us to introduce
$$
\lim_{\be\to 1}\ti q\circ \ti \si(\be,t)=\rU(N+1)\cdot \hilb(X_t)\in \HH^{\chi;N}/\rU(N+1), \text{ for }t\in [0,1]
$$
with $X_t$ being a $\QQ$-Fano variety admitting weakly K\"ahler-Einstein metric for each $t\in [0,1]$. Let $\cX_1\to \AAA^1$ be a test configuration with  central fiber  $X_1$ and $\hilb(X_1)\in U\subset\HH^{\chi;N}$ be the open neighborhood constructed for the family $\cX_1\to \AAA^1$ via Lemma \ref{disj}.

Now suppose \eqref{X1} does not hold,  i.e. there is a $t_0\in [0,1]$ such that
 $$\displaystyle\lim_{\be\to 1}\ti\si(\be,t_0)=\hilb(X_{t_0})\not\in U\cdot \hilb(X_1)\ .$$
Then by applying the continuity of $\ti q\circ\ti\si(\be,\cdot)$ with respect to $t\in[0,1]$ for fixed $\be$ the same way as in the proof of Lemma \ref{IVT}(2),
we can construct a new sequence $\{(\be_i,t_i)\}_{i=1}^\infty\subset (0,1]\times[t_0,1]$ such that $\be_i\nearrow 1$ as $i\to \infty$ and
 \begin{eqnarray}\label{t''}
&&\hilb(Y)= \lim_{i\to \infty} \ti\si(\be_i,t_i)
 \in \left(\overline{O_{\hilb(X_1)}}\bigcup ( U\cap  \partial \overline{O_{\hilb(X)}})\right)\setminus O_{ \hilb(X_1)} \subset \HH^{\chi;N}\nonumber \ ,
\end{eqnarray}
with  both $X_1$ and $Y$($\not\cong X_1$) being K-polystable, which is impossible. Hence our proof is completed.

\end{proof}

\subsection{Zariski Openness of K-semistable varieties}\label{ss-zopen}
In this section, we will study the Zariski openness of  the locus of  the  {\em $\QQ$-Gorenstein smoothable} $K$-semistable varieties inside Hilbert schemes. This needs a combination of  the continuity method with the algebraic result in Appendix \ref{s-constru}.

Let
 $$
\begin{CD}
(\cX, \cD)@>\iota>> \PP^N\times \PP^N\times S\\
@VV\pi V @VVV\\
S@=S
\end{CD}
$$
be a flat family of {\em $\QQ$-Fano varieties} over a smooth base $S$ (not necessarily complete)and  $\cD\in |-mK_\cX|$ be an irreducible  divisor defined by a section $s_\cD\in \Gamma(S,  \sO_\cX(-mK_\cX))$. Let us assume  further that $\sO_\cX(-rK_\cX)$ is relatively very ample and $\iota$ is the embedding induced by a prefixed basis  $\{s_i(t)\}_{i=0}^N\subset\Gamma(S,\pi_\ast \sO_\cX(-rK_{\cX/S}))$,
in particular $\iota^\ast \sO_{\PP^N}(1)\cong  \sO_\cX(-rK_{\cX/S})$. Then we have the following

\begin{thm}\label{t-open}
Let $(\cX,\cD)\to C$ be the family over a {\em smooth curve} such that  $(\cX_t,\cD_t)$ is smooth for  $t\in C^\circ$ and $(\cX_t,\frac{1}{m}\cD_t)$ is a klt for all $t\in C$. Assume  $(\cX_0, \cD_0)$ is $\mathfrak{B}$-K-semistable. Then there is a Zariski open neighborhood $0\in C^*\subset C$ such that $(\cX_t,\cD_t)$ is $\mathfrak{B}$-K-semistable for $t\in C^*$. Furthermore, if $(\cX_0, \cD_0)$ is $\bbe$-K-polystable  and  has only finitely many automorphisms, then $(\cX_t, \cD_t)$ is $\mathfrak{B}$-K-polystable after a possibly further shrinking of $C^*$.
\end{thm}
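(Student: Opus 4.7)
The plan is to combine a GIT-style constructibility statement for the K-semistable locus inside a fixed Chow scheme with a lower semi-continuity argument drawn from the continuity method of Section \ref{ss-continuity}.

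First, I would introduce the K-semistable threshold
$$\kst(\cX_t,\cD_t):=\sup\{\be\in[0,1]\mid (\cX_t,\cD_t) \text{ is }\be\text{-K-semistable}\}.$$
By Theorem \ref{t-YTD}(3), any weak conical K\"ahler-Einstein degeneration of a member of the family embeds via Tian's embedding into a fixed $\PP^N$ with $N$ depending only on $m$, $\epsilon_0$ and the Hilbert polynomial. So checking $\be$-K-semistability of a particular fiber reduces to testing one-parameter subgroups inside $\SL(N+1)$, which is a constructible condition in $\chow(\PP^N)$; this is the content of the appendix (Section \ref{s-constru}). Consequently $t\mapsto \kst(\cX_t,\cD_t)$ is a constructible function on $C$, so after shrinking $C$ we may assume it is constant equal to some $\be^{*}$ on $C^\circ$.

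Next I would show $\be^{*}\ge\mathfrak{B}$ by contradiction. Suppose $\be^{*}<\mathfrak{B}$ and fix $\be\in(\be^{*},\mathfrak{B})$. By Lemma \ref{interpolate} the central fiber $(\cX_0,\cD_0)$ remains $\be$-K-polystable, so Corollary \ref{l-YTD} supplies a weak conical K\"ahler-Einstein metric $\omega(0,\be)$ on $(\cX_0,\cD_0)$, while by definition of $\be^{*}$ each $(\cX_t,\cD_t)$ with $t\ne 0$ admits conical K\"ahler-Einstein metrics $\omega(t,\be')$ only for $\be'<\be^{*}$. I would then run the continuity method on the Tian embeddings $\tau(\be',t)=\chow(\cX_t,(1-\be')\cD_t)$, using Proposition \ref{cont-path} for the continuity of $\tau$ on $[\epsilon,\be^{*})\times C^\circ$. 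Choosing a diagonal sequence with $\be'_i\nearrow\be^{*}$ and $t_i\to 0$, the argument of Lemma \ref{IVT}(2), combined with the minimal orbit neighborhood $U$ of $\chow(\cX_0,(1-\be)\cD_0)$ produced by Lemma \ref{disj}, yields a subsequence whose Chow limit $\chow(Y,(1-\be)E)$ lies in $\overline{O_{\chow(\cX_0,(1-\be)\cD_0)}}$ but is not ${\rm U}(N+1)$-equivalent to $\chow(\cX_0,(1-\be)\cD_0)$. By Theorem \ref{t-YTD}, $(Y,E)$ is $\be$-K-polystable with reductive automorphism group, so by \cite[Proposition 1]{Don2012} it arises as the central fiber of a non-product test configuration of $(\cX_0,\cD_0)$ with vanishing generalized Futaki invariant, contradicting the $\be$-K-polystability of $(\cX_0,\cD_0)$. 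Hence $\be^{*}\ge\mathfrak{B}$, which proves (i).

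For the polystability claim (ii), I would argue that if $\Aut(\cX_0,\cD_0)$ is finite, then after further shrinking $C$ to the Zariski open set $C^{*}$ from (i), every $(\cX_t,\cD_t)$ is $\mathfrak{B}$-K-semistable. I would apply Lemma \ref{disj} to the orbit closure $\overline{\SL(N+1)\cdot\chow(\cX_0,(1-\mathfrak{B})\cD_0)}$ to obtain an $\SL(N+1)$-invariant open neighborhood $U\ni\chow(\cX_0,(1-\mathfrak{B})\cD_0)$ enjoying property \eqref{Oz}. The continuity Proposition \ref{close} (now applicable on the subfamily, after further shrinking, because the nearby fibers are $\mathfrak{B}$-K-semistable hence $\be$-K-polystable for $\be<\mathfrak{B}$) forces the Chow points $\chow(\cX_t,(1-\mathfrak{B})\cD_t)$ to converge into the $\SL(N+1)$-orbit of $\chow(\cX_0,(1-\mathfrak{B})\cD_0)$. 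Combined with the absence of non-equivalent polystable degenerations inside $U$ guaranteed by \eqref{Oz}, this shows $(\cX_t,\cD_t)$ is $\mathfrak{B}$-K-polystable for $t$ in a neighborhood of $0$.

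The main obstacle is the last step of the lower semi-continuity argument: extracting a diagonal subsequence whose Chow limit is both different from $(\cX_0,\cD_0)$ and sufficiently well-controlled to give a destabilizing test configuration. Purely GH-theoretic compactness is not enough; one really needs the combined use of the orbit structure from Lemma \ref{disj}, the intermediate-value construction of Lemma \ref{IVT}, and Theorem \ref{ss} to pin down which points of $\overline{BO}$ can appear as limits and to promote such a limit to a genuine test configuration.
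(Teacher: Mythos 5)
Your overall skeleton (constructibility of $\kst$ via the appendix, then a lower semi-continuity argument at the threshold via the continuity method) matches the paper, but the step where you actually derive the contradiction from $\be^{*}<\mathfrak{B}$ has a genuine gap. First, there is an angle mismatch: you fix $\be\in(\be^{*},\mathfrak{B})$, but the nearby fibers only carry conical K\"ahler--Einstein metrics for angles $\be'<\be^{*}$, so any diagonal limit of Tian embeddings is taken at angle $\be^{*}$ and is $\be^{*}$-K-polystable; it cannot be compared with, or declared to lie in, the orbit closure of $\chow(\cX_0,(1-\be)\cD_0)$ at the larger angle $\be$. Second, and more seriously, your claimed conclusion that the diagonal Chow limit $(Y,E)$ is \emph{not} $\rU(N+1)$-equivalent to the central fiber is unjustified and is in fact the opposite of what the convergence machinery gives: Corollary \ref{r-weak}, Lemma \ref{X-be} and Lemma \ref{IVT}(1) together force the limit to be $g\cdot\chow(\cX_0,(1-\be^{*})\cD_0)$ for some $g\in\rU(N+1)$. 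Since the limit \emph{is} the central fiber, no non-product test configuration with vanishing Futaki invariant is produced, and your intended contradiction with polystability of $(\cX_0,\cD_0)$ never materializes. The missing idea is the one the paper actually uses: because each nearby fiber is \emph{strictly} $\be^{*}$-K-semistable (it is smooth with finite automorphisms, so $\be^{*}$-polystability would mean $\be^{*}$-K-stability and, via the existence of a conical KE metric and Donaldson's openness in the cone angle, semistability beyond $\be^{*}$), its $\be^{*}$-polystable degeneration $(\ti\cX_{t_i},\ti\cD_{t_i})$ is a nontrivial central fiber of a test configuration and hence has a stabilizer of positive dimension containing $\GG_m$. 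These polystable points converge to $\rU(N+1)\cdot\chow(\cX_0,(1-\be^{*})\cD_0)$, and the upper semi-continuity of the dimension of stabilizers then forces $\dim\Aut(\cX_0,\cD_0)>0$, contradicting Lemma \ref{l-aut}/Corollary \ref{c-aut}. Your proposal never exploits the strict semistability of the nearby fibers to produce this excess symmetry, so the argument cannot close.

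The same omission affects your treatment of the polystability statement. Your appeal to Proposition \ref{close} is circular: that proposition assumes the nearby fibers are already $\mathfrak{B}$-K-polystable, and the Chow points $\chow(\cX_t,(1-\mathfrak{B})\cD_t)$ you want to track are defined via Tian embeddings for angle-$\mathfrak{B}$ conical KE metrics whose existence is exactly what is to be proven; moreover ``no non-equivalent polystable point near the central orbit'' does not by itself rule out a strictly semistable fiber whose polystable degeneration happens to be the central fiber. The paper closes this by the same semicontinuity mechanism: a strictly $\mathfrak{B}$-semistable nearby fiber would have a polystable degeneration with positive-dimensional automorphism group converging to $\chow(\cX_0,(1-\mathfrak{B})\cD_0)$, contradicting the hypothesis that $\Aut(\cX_0,\cD_0)$ is finite.
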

\begin{defn} For every $t\in S$, we  define  the {\it K-semistable threshold} as follows
$$\kst(\cX_t, \cD_t):=\sup \left\{\beta \in [0,\mathfrak{B}] \left | \ \ (\cX_t, \cD_t) \mbox{ is $\beta$-K-semistable} \right. \right\}. $$
By Theorem \ref{t-YTD},   testing $\be$-K-semistability for $\cX_t, \ \forall t\in S$ is reduced  to test  for all  1-PS  inside ${\rm SL}(N+1)$ for a fixed sufficiently large $\mathbb{P}^N$. This implies that $\kst(\cX_t,\cD_t)$ is a  constructible function of $t $ (cf. Proposition \ref{p-con} below).
By Remark  \ref{r-small}, we know $(\cX_t, \cD_t)$ is $\beta$-K-stable for all $\beta\in (0,\beta_0]$. This together with Lemma \ref{interpolate} in particular imply that $\kst(\cX_t,\cD_t)$ is actually a  {\em maximum} for every $t\in S$.
\end{defn}
Then we have the following Proposition which is essentially follows from Paul's work, especially his theory on stability of pairs (see \cite[Theorem 1.3]{Pa2012}).  For reader's convenience, a proof will be included in the Section \ref{s-constru}, Proposition \ref{be-constr}.
\begin{prop}\label{p-con}
${\rm kst}(\cX_t,\cD_t)$ defines a constructible function on $S$, i.e. $S=\sqcup_i S_i$ is a union of {\em finite} constructible sets $\{S_i\}$, on which   ${\rm kst}(\cX_t,\cD_t)$ are  constant.
\end{prop}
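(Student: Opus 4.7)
The plan is to realize $\kst(\cX_t, \cD_t)$ as a function taking only finitely many rational values on $S$, each level set constructible. First, by Theorem~\ref{t-YTD} together with the uniform embedding $(\cX_t, \cD_t) \hookrightarrow \PP^N$ via the fixed basis of $\pi_*\sO_\cX(-rK_{\cX/S})$, testing $\beta$-K-semistability of $(\cX_t, \cD_t)$ reduces to verifying $\DF_{1-\beta}(\lambda, t) \ge 0$ for every 1-PS $\lambda \colon \GG_m \to \SL(N+1)$. Up to conjugation and positive rescaling --- both of which preserve $\DF_{1-\beta}$ --- one may restrict $\lambda$ to lie in a fixed maximal torus $T \subset \SL(N+1)$ and be primitive, so that $\lambda$ is parameterized by an integer weight vector $w \in \ZZ^{N+1}$ with $\sum_i w_i = 0$, modulo the Weyl action.

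Next, for each such $w$ I would consider the flat limit
\[
(\cX^{w,t}_0, \cD^{w,t}_0) := \lim_{s \to 0} \lambda_w(s) \cdot (\cX_t, \cD_t) \subset \PP^N,
\]
which is the central fiber of a test configuration of $(\cX_t, \cD_t)$. By Definition~\ref{be-K}, $\DF_{1-\beta}(\lambda_w, t)$ is linear in $\beta$ with coefficients built from the $\lambda_w$-weights on $\wedge^\topo H^0(\cX^{w,t}_0, \sO(k))$ and $\wedge^\topo H^0(\cD^{w,t}_0, \sO(k))$. Standard flat-limit arguments in the Hilbert/Chow scheme, combined with the semi-continuity of cohomology, show that these coefficients are constructible functions of $t \in S$.

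The key step is to show that, after stratifying $S$ suitably, only finitely many weight vectors $w$ need to be tested. For this I would use GIT boundedness: on each constructible stratum of $S$ the $\SL(N+1)$-orbit closure of $\chow(\cX_t, \cD_t) \in \PP^{\bd, n; N}$ is of finite type and decomposes into finitely many orbit strata, so the collection of flat limits $\{(\cX^{w,t}_0, \cD^{w,t}_0)\}_w$ produces only finitely many numerically distinct linear-in-$\beta$ DF functions. Consequently $\kst(\cX_t, \cD_t)$ equals, on each stratum, the minimum over this finite collection of the largest $\beta \in [0, \mathfrak{B}]$ for which the corresponding linear function stays $\ge 0$ --- a rational number constant on the stratum. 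A common refinement then yields the required finite constructible decomposition of $S$.

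The main obstacle is the finiteness assertion in the third paragraph, which is the content of the deferred Proposition~\ref{be-constr}. The subtlety is that although the index set $\{w\}$ is infinite, the numerical data $(b_0, b_1, \tilde b_0)$ depends only on the orbit type of the flat limit; the reduction rests on boundedness in the Hilbert/Chow scheme of the $\SL(N+1)$-orbit closure of $\chow(\cX_t, \cD_t)$, together with a GIT-instability-stratification argument showing that the primitive generators of the rational polyhedral cones in the weight lattice producing a prescribed flat limit form a finite set.
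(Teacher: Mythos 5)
Your reduction to $1$-PS's in a fixed maximal torus $T\subset\SL(N+1)$ and your observation that $\DF_{1-\beta}$ is affine in $\beta$ both match the paper's setup, but the key finiteness step in your third paragraph has a genuine gap, and you have essentially acknowledged it without closing it. First, the claim that on each stratum the $\SL(N+1)$-orbit closure of $\chow(\cX_t,\cD_t)$ "decomposes into finitely many orbit strata" is false in general: the boundary of an orbit closure can contain positive-dimensional families of pairwise non-conjugate orbits. Second, even if the set of flat limits $(\cX^{w,t}_0,\cD^{w,t}_0)$ were finite up to isomorphism, this would not bound the set of affine functions $\beta\mapsto\DF_{1-\beta}(\lambda_w,t)$, because the invariant depends on the weights $b_0,b_1,\tilde b_0$ of the induced $\GG_m$-action on the central fiber, and distinct $1$-PS's with the same limit cycle generally carry different weights. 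So the infimum defining $\kst$ is a priori over infinitely many distinct affine functions, and neither its rationality nor its constructibility in $t$ follows from constructibility of the coefficients for each fixed $w$ separately.

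The mechanism the paper actually uses (Lemma \ref{T-L-M} and Proposition \ref{be-constr}) is different and is what you would need: after twisting, the Hilbert--Mumford weight $\mu^{L}(z,\lambda)$ for $\lambda$ ranging over $\Hom_{\QQ}(\GG_m,T)$ is the \emph{maximum of a finite universal list of linear functionals} on the cocharacter space (the state polytope of $z$), and the subset of active functionals is a constructible function of $z$. Encoding $\DF$ and $\CH$ as Hilbert--Mumford weights of the CM line bundle $\Lambda_{\CM}$ and the Chow line bundles turns $\nu^{M_1,M_2}_{1-\beta}(z,\cdot)$ into a difference of rational piecewise-linear convex functions determined by finite combinatorial data $I(z)$; the threshold is then the first $\beta$ at which this difference vanishes along some ray, a rational number constant on each stratum $\psi^{-1}(I)$, and Chevalley's theorem handles the passage from $T$ to $\SL(N+1)$. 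Your closing sentence about "primitive generators of the rational polyhedral cones in the weight lattice" gestures at this polyhedral structure, but as written it is an unproven hope resting on the incorrect orbit-finiteness claim rather than on the state-polytope finiteness that makes the argument work.
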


\begin{proof}[Proof of Theorem \ref{t-open}]
By Proposition \ref{p-con}, $\kst(\cX_t,\cD_t)$ is constant when restricted to each strata $S_i$.  So all we need  is that if $t_i\to 0$ and $(\cX_{t_i},\cD_{t_i})$ {\em strictly} $\bT$-K-semistable then
$$\bT= {\rm kst}(\cX_{t_i}, \cD_{t_i})\ge  {\rm kst}(X, D)=\mathfrak{B}.$$

Suppose this is not the case,  we have $\mathfrak{B}>\bT$ and we seek for a contradiction. First, we claim for any sequence $t_i\to 0$, after passing to a subsequence which by abusing of notation still denoted by $\{t_i\}$, we can find a sequence
$\{\be_i^\ast\}\nearrow \bT$ such that
 \begin{equation}\label{b-ast}
 \dist_{\HH^{\bchi;N}}(\hilb(\cX_{t_{i}}, (1-\be_i^\ast)\cD_{t_{i}}), \rU(N+1)\cdot \hilb(X,(1-\bT)D)) \longrightarrow 0\ .
 \end{equation}

In fact, since we have already established Theorem \ref{log-main} under  the extra assumption that the nearby points are all $\beta$-K-polystable (see Corollary \ref{r-weak}), for any fixed $\be<\bT$ we have
$$
\dist_{\HH^{\bchi;N}}(\hilb(\cX_{t}, (1-\be)\cD_{t}),\rU(N+1)\cdot \hilb(X, (1-\be)D))\longrightarrow 0 \text{ as } t\to 0,$$
thus Lemma \ref{X-be} implies that
$$
\dist_{\HH^{\bchi;N}}(\hilb(X,(1-\be'_i)D),\rU(N+1)\cdot \hilb(X,(1-\bT)D))\longrightarrow 0
$$
for any sequence $\be'_i\nearrow \bT<\mathfrak{B}$.  Since $t_i\to 0$, for any fixed $\be'_i$ there is a $k_i\ge i$ such that
$$\dist_{\HH^{\bchi;N}  }(\hilb(\cX_{t_{k_i}},(1-\be_i)\cD_{t_{k_i}}), \rU(N+1)\cdot \hilb(X,(1-\be_i)D))<1/i .$$
Now we pick the subsequence $ \{t_{k_i} \}$ and define $\be_{k_i}^*:=\be'_i $, then the sequence $\{\be_{k_i}^*\}_{i\to \infty}\nearrow \bT$ is a sequence satisfying \eqref{b-ast}, hence our claim is  justified.

On the other hand, for each fixed  $t_i$, let $\be\nearrow \bT$. By Theorem \ref{t-YTD},  we have
\begin{equation}\label{be-T}
\dist_{\HH^{\bchi;N}}(\hilb(\cX _{t_i},(1-\be)\cD_{t_i}), \rU(N+1)\cdot \hilb(\ti\cX_{t_i},(1-\bT)\ti \cD_{t_i}))\longrightarrow 0
\end{equation}
with $\hilb(\ti\cX_{t_i},(1-\bT)\ti\cD_{t_i})\in  \partial\overline{ O_{\hilb(\cX_{t_i},\cD_{t_i})}}$ and $(\ti\cX_{t_i},(1-\bT)\ti \cD_{t_i})$ being a  $\bT$-K-polystable variety.
Now we claim that
\begin{equation}\label{be-K-poly}
\hilb(\ti\cX_{t_{i}}, (1-\bT)\ti\cD_{t_{i}})\longrightarrow g\cdot \hilb(X,(1-\bT)D) \text{ for some }g\in {\rm U}(N+1)\ .
\end{equation}
To see this, one notices that by Theorem \ref{t-YTD} and Lemma \ref{l-GH=CH} after passing to a subsequence
there is a sequence $\be_i\nearrow \bT$ such that
$$\hilb(\ti\cX_{t_{i}}, (1-\be_i)\ti\cD_{t_{i}})\longrightarrow  \hilb(Y,(1-\bT)E), $$
such that $(Y,E)$ is $\bT$-K-polystable.
Moreover, we may assume $\be_i^\ast<\be_i,\forall i$ after rearranging.
Combining \eqref{be-T} and Lemma \ref{l-GH=CH},
 we have
$$(\cX_{t_{i}}, \cD_{t_{i}};\omega(t_i,\be_i))\stackrel{\GH}{\longrightarrow}(Y,E;\omega_Y(\bT)),$$
where $(Y,E)$ is a log $\QQ$-Fano pair admitting a weak conical K\"ahler-Einstein metric $\omega_Y(\bT)$ with angle $2\pi(1-(1-\bT)/m)$ along $E$. In particular, $(Y,E)$ is $\bT$-K-polystable.
By Lemma \ref{IVT}(1),  we conclude that
$$\hilb(Y,(1-\bT)E)=g\cdot \hilb(X,(1-\bT)D) \text{ for some }g\in {\rm U}(N+1)\ .$$ Hence our claim is proved.

%

To conclude the proof, we notice that the stabilizer group of $\hilb(\cX'_{t_i},(1-\bT)\cD'_{t_i})$ is of positive dimension for each $i$. Let $\fg=\fsl(N+1)$ be the Lie algebra.  By the upper semicontinuity of the dimension of the stabilizer $\fg_{\hilb(\cX'_{t_i},(1-\bT)\cD'_{t_i})}$, we must have $\dim \fg_{\hilb(X,(1-\bT)D)}>0$ contradicting to the fact that the automorphism group of $(X,D)$ is finite for $\bT<\mathfrak{B}\leq 1$ (see Corollary \ref{c-aut}). To prove the last part of  the statement, we just notice that  under our assumption $(\cX'_t,\cD'_t)$ has to have finite automorphism groups, which implies
$$(\cX'_t,\cD'_t)\cong(\cX_t,\cD_t).$$
Hence our proof is completed for this case.

\end{proof}


\subsection{Proof of Theorem \ref{main} and \ref{log-main}}\label{ss-proof}

Before we start the proof, let us fix a divisor $\cD\sim_C-mK_{\cX}$ in {\em general position} for the flat family $\cX\to C$ satisfying the assumption of Theorem \ref{unique} and $(\cX_t,\cD_t)$  being smooth for all $t\in C^\circ$.
\begin{proof}[Proof of Theorem \ref{main}]
First, we notice that (i) is proved in Section \ref{ss-zopen}.

To prove (ii), one notices that  Theorem \ref{t-YTD} implies that there exists an $r$, such that the Gromov-Hausdorff limit of the family $(\cX_t,\cD_t; \omega(t,\beta_t))$ for any $t\in C$ and $\beta<1$ can all be embedded into $\mathbb{P}^N$ for $N=N(r,d)$.
By putting  Proposition \ref{open} and \ref{close} together, we obtain that  for every $\bbe <1$,
$$\bB_r(\cX,\cD)= [\ep,\mathfrak{B}]$$
for $(\cX,\cD)$ (See Corollary \ref{r-weak}). Therefore, their union will contain $[\ep, 1)$. In particular,  it follows from Lemma \ref{X-be} and Remark \ref{r-semistable} for $\bbe=1$ that $X=\cX_0$ admits a K\"ahler-Einstein metric. This in particular verifies the first part of (iii).

Now we finish the proof of  part (ii).  By part (i), after a possible shrinking of $C$, we may assume that $\cX_t$ is K-semistable for every $t\in C^\circ$.  For any $t\ne 0$, there is a {\em unique} K-polystable $\QQ$-Fano $\tilde{\cX}_t$ such that
$\hilb(\tilde{\cX}_t)\in \overline{O_{\hilb(\cX_t)}}$ by Theorem \ref{ss}, which is  the Gromov-Hausdorff limit of $(\cX_t,\cD_t; \omega(\be))$ as $\be\to 1$ and hence admits a weak K\"ahler-Einstein metric $\tilde{\omega}(t)$ by Theorem \ref{log-main}.

We claim that
 \begin{equation}\label{Y=X}
 \dist_{\HH^{\chi;N}}(\rU(N+1)\cdot \hilb(\ti\cX_t), \rU(N+1)\cdot \hilb(X))\longrightarrow 0, \text{ as } i\to \infty,
 \end{equation}
 and hence part (ii) follows.
To prove that, let $t_i\to 0$ be {\em any } sequence. It follows from the {\em compactness} of Hilbert scheme of $\PP^N$  that  after passing to a subsequence if necessary we may assume
$$\hilb(\tilde{\cX}_{t_i})\longrightarrow \hilb(Y) \text{ as } t_i\to 0\ .$$
 Since
$$(\cX_{t_i},\cD_{t_i}; \omega(t_i,\be))\stackrel{\GH}{\longrightarrow} (\tilde{\cX}_{t_i};\tilde{\omega}(t_i)) \text{ as }\be\nearrow 1,$$
 by Theorem \ref{ss}, there is a sequence $\be_i\nearrow 1$ such that
$$\dist_{\HH^{\chi;N}}(\pi_1\circ \hilb(\cX_{t_i},(1-\be_i)\cD_{t_i}), \rU(N+1)\cdot  \hilb(\tilde{\cX}_{t_i}))<1/i, $$
where $\pi_1$ is given in \eqref{pi1}.
In particular, by passing to another subsequence if necessary, we may assume
$$(\cX_{t_i},(1-\be_i)\cD_{t_i}; \omega(t_i,\be_i))\stackrel{\GH}{\longrightarrow} (Y,\omega_Y)$$
 by Lemma \ref{l-GH=CH}, where $Y$ is  a $\QQ$-Fano variety admitting a weak K\"ahler-Einstein metric $\omega_Y$.
This implies that
\begin{equation}\label{be--}
\dist_{\HH^{\chi;N}}(\pi_1\circ \hilb(\cX_{t_i}, (1-\be_i)\cD_{t_i}), \rU(N+1)\cdot \hilb (Y))\longrightarrow 0 \ ,\text{ as }i\to \infty.
\end{equation}

On the other hand, by Lemma \ref{interpolate} we know  $(\cX_{t_i},\cD_{t_i})$ is $\be$-K-polystable for any $\be<1$. This together with  Corollary \ref{r-weak}  imply that for every fixed $\be<1$
$$
\dist_{\HH^{\bchi;N}}(\hilb(\cX_{t_i}, (1-\be)\cD_{t_i}),\rU(N+1)\cdot \hilb(X, (1-\be)D))\longrightarrow 0 \text{ as } i\to \infty\ .
$$
Therefore, for any fixed $\be_i$ there is a $k_i>i$ such that
$$\dist_{\HH^{\bchi;N}  }(\hilb(\cX_{t_{k_i}},(1-\be_i)\cD_{t_{k_i}}),\rU(N+1)\cdot \hilb(X,(1-\be_i)D))<1/i\ .$$
On the other hand, Lemma \ref{X-be} implies that
$$\dist_{\HH^{\chi;N}}(\pi_1\circ\hilb(X,(1-\be)D),\rU(N+1)\cdot \hilb(X))\longrightarrow 0 \text{ as  }\be \to 1\ .
$$
 These imply that if
we define $\be_{k_i}^*:=\be_i<\be_{k_i}$ then $\be_{k_i}^\ast \to 1$ and
\begin{equation}\label{be-ast}
\dist_{\HH^{\chi;N}}(\pi_1\circ\hilb(\cX_{t_{k_i}}, (1-\be_{k_i}^*)\cD_{t_{k_i}}),\rU(N+1)\cdot \hilb(X))\longrightarrow 0 \text{ as } i\to \infty \ .
\end{equation}
By putting together \eqref{be-ast} and \eqref{be--}, and applying Remark \ref{v-IVT} (1),
we  conclude  that $\hilb(Y)\in {\rm U}(N+1)\cdot \hilb(X)$, and \eqref{Y=X} is established. Thus the proof of part (ii) is completed.

Finally, to finish the proof of part (iii), we can assume  $\cX_t$ is K-polystable for all $t\in C$ by Theorem \ref{t-open}, then by taking $\mathfrak{B}=1$ we can  conclude that $\bB_r(\cX,\cD)= [\ep,1]$. In particular,
$(\cX_{t_{i}};\omega(t_{i}))\stackrel{\GH}{\longrightarrow} (\cX_0; \omega_{\cX_0}).$
Hence our proof is completed.

\end{proof}
{
\begin{proof}[Proof of Theorem \ref{log-main}] Choose a sequence $\be\nearrow \mathfrak{B}$. Applying Proposition \ref{open} and \ref{close}, we obtain that $\bB_r(\cX,\cD)= [\ep,\mathfrak{B}]$. Then by repeating the argument completely parallel to the one given above, we obtain the conclusion.
\end{proof}
}

\begin{rem}\label{r-smoothable}
We call a $\mathbb{Q}$-Fano variety to be {\em $\QQ$-Gorenstein smoothable} if there is a projective flat family $\cX$ over a smooth curve $C$ such that $K_{\cX}$ is $\mathbb{Q}$-Cartier, anti-ample over $C$, a general fiber $\cX_t$ is smooth and $X\cong \cX_0$ for some $0\in C$.  We note that by a standard argument, we can generalize Theorem \ref{main}, \ref{ss} and \ref{t-open} to the case that the base is of higher dimension.  As a consequence, we can just assume in these theorems that the general fibers are $\QQ$-Gorenstein smoothable instead of smooth. These extensions  will be frequently used in Section \ref{ss-luna}.
\end{rem}
\section{Local geometry near a $\QQ$-Gorenstein smoothable K-polystable $\QQ$-Fano variety}\label{ss-luna}

In this section, we will devote to the proof of Theorem \ref{t-good}
 based on Theorem \ref{main}, the results in Section \ref{s-K-semistable}, and the following criterion.

\begin{thm}[{\cite[Theorem 1.2 ]{AFSV14}}]\label{t-AFScriterion}
Let $\cX$ be an algebraic stack of finite type over $\CC$. Suppose that:
\begin{enumerate}
\item for every closed point $x\in \cX$ , there exists a {\em local quotient presentation} $f:\cW\longrightarrow \cX$ (see \cite[Definition 2.1]{AFSV14}) around
$x$ such that:
\begin{enumerate}
\item  the morphism $f$ is {\em stabilizer preserving} (see \cite[Definition 2.5]{AFSV14}) at closed points of $\cW$, and
\item the morphism $f$ sends closed points to closed points; and
\end{enumerate}
\item for any point $x \in X (\mathbb{C})$, the closed substack $\overline{\{x\}}$ admits a good moduli space.
\end{enumerate}
Then $\cX$ admits a good moduli space {as an algebraic space}.
\end{thm}

Let us fix our notation.

\begin{defn}\label{Z0}
We define
\begin{equation}\label{Z}
Z:=\left \{\hilb(Y)\left | \mbox{\stackbox[v][m]{{\footnotesize
$Y\subset \PP^N$ be a smooth Fano manifold with
$N=\dim H^0(Y,\sO_Y(-rK_Y))$, \\ $\left. \sO_{\PP^N}(1)\right|_Y\cong \sO_Y(-rK_Y)$
and $\chi\left(Y,\left.\sO_{\PP^N}(k)\right|_Y\right)=\chi(k)$.
}}}\right .\right \}\subset\HH^{\chi;N} {\subset\PP^M}\ ,
\end{equation}
{where the last inclusion is the Pl\"uker embedding.}
By the boundedness of smooth Fano manifolds with fixed dimension (see \cite{KMM92}), we may choose $r\gg1$ such that $Z$ includes all such Fano manifolds. Now let $\overline{Z}\subset \HH^{\chi;N}$ be the closure of $Z\subset \HH^{\chi;N}$ and $Z^\circ$ be the open  set of $\overline{Z}$ that parametrizes the K-semistable $\mathbb{Q}$-Fano subvariety $Y$ (see Theorem \ref{t-open}) such that $\sO_Y(-rK_Y)\sim \sO_{\PP^N}(1)|_{Y}$ (cf. Lemma 1.19 in \cite{Vie83}).
Let $Z^*$ be the semi-normalization of $Z_{\rm red}^\circ$ which is the reduction of $Z^\circ$.
\end{defn}
Then we have a commutative diagram
 \begin{equation}\label{Z*}
\begin{CD}
{\cX}^\ast@>i>>\PP^N\times Z^\ast @>>> \PP^N\times  {Z_{\rm red}^\circ}\\
@V\pi VV @VVV@VVV\\
{Z^\ast}@>>>Z^\ast@>>>{Z_{\rm red}^\circ}
\end{CD}
\end{equation}
where $\cX^\ast $ is the universal family over ${Z}^*$ (see \cite[Section I.3]{Kol96}).
\begin{rem}
{If we choose $r$ sufficiently divisible},
by Theorem \ref{t-YTD}, the Gromov-Hausdroff limit of Fano K\"ahler-Einstein manifolds is automatically in $Z^\circ$ and hence so are the $\QQ$-Gorenstein smoothable K-polystable $\QQ$-Fano varieties.
\end{rem}

\Blue{ Our goal  is to prove the quotient stack
$[ Z^\ast/\SL(N+1)]$ admits a {\em good moduli space} in the sense of \cite[Section 1.2]{Alp13}, which is a proper scheme when $r$ is sufficiently divisible (We indeed get a slightly stronger statement that the quotient is a {\em scheme} rather than algebraic space.).
By Theorem \ref{t-AFScriterion}, to achieve this what we need to show is that
for any closed point $[z_0=\hilb(X)]\in [Z^\ast/\SL(N+1)]$, there is a  quotient presentation
$$\mathcal{W}:=[\spec A_{z_0}/G_{z_0}=\aut(X)]\longrightarrow [Z^\ast/\SL(N+1)]$$ for some finite type $\CC$-algebra $A_{z_0}$, which satisfies Condition (1) Theorem \ref{t-AFScriterion}.  This is given by Theorem \ref{K-luna}.
Then we spend the main body of the remaining section to prove that for any $\mathbb{C}$-point $z\in Z^\ast$ specializing to $z_0$ under $\SL(N+1)$-action, the closure $\overline{\{[z]\}}\subset [Z^\ast/\SL(N+1)]$ of substack $[z]$ inside $[Z^\ast/\SL(N+1)]$  admits a  good moduli space. To get this,  we will  establish a stronger statement saying that the local presentation can be chosen to be finite \'etale over its image,  which also yields that the quotient is a {scheme}.  This completes the proof of Theorem \ref{t-good}. .}

Let us  first  state the  following boundedness result which is a  consequence of our Theorem \ref{main}.
\begin{lem}\label{K-bdd}
The K-semistable $\QQ$-Fano varieties admitting a $\QQ$-Gorenstein smoothing with a fixed dimension form a bounded family.
\end{lem}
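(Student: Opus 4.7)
The plan is to show that every smoothable K-semistable $\QQ$-Fano variety $X$ of dimension $n$ appears, up to isomorphism, as a fiber of the universal family $\cX^\ast \to Z^\ast$ built in \eqref{Z*}, with the ambient $\PP^N$ depending only on $n$. Since $\overline{Z} \subset \PP^{d,n;N}$ is a projective scheme, this will immediately yield the lemma. As a first reduction, by the boundedness of smooth Fano $n$-folds \cite{KMM92} the Hilbert polynomial of any smoothing takes only finitely many values, so I may fix a single Hilbert polynomial $\chi$ once and for all.

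I would next produce a conical K\"{a}hler--Einstein approximation of $X$. Pick a smoothing $(\cX, \cD) \to C$ with $\cD \sim_C -m K_{\cX/C}$ chosen generally enough so that $\cD_t$ is smooth for $t \neq 0$ and $(\cX_0, \tfrac{1}{m}\cD_0)$ is klt, and set $D = \cD_0$. By Remark \ref{r-small}, $(X,D)$ is $\beta$-K-stable for every $\beta \in (0,\beta_0]$; combined with K-semistability at $\beta = 1$, Lemma \ref{interpolate} upgrades this to $\beta$-K-polystability of $(X,D)$ for all $\beta \in (0,1)$. Applying Theorem \ref{main}(i) in its higher-dimensional base form (Remark \ref{r-smoothable}) and shrinking $C$, every $(\cX_t, \cD_t)$ is likewise $\beta$-K-polystable for $\beta \in (0,1)$. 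Corollary \ref{r-weak} then supplies, for each fixed $\beta \in (\beta_0, 1)$,
\[
(\cX_t, \cD_t; \omega(t, \beta)) \xrightarrow{\;\GH\;} (X, D; \omega(\beta)) \quad \text{as } t \to 0,
\]
where $\omega(t, \beta)$ is the conical K\"{a}hler--Einstein metric on $(\cX_t, \cD_t)$ of cone angle $2\pi\bigl(1 - (1-\beta)/m\bigr)$ along $\cD_t$.

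With $X$ thus exhibited as the Gromov--Hausdorff limit of smooth conical K\"{a}hler--Einstein manifolds at one fixed angle $\beta < 1$, Theorem \ref{t-YTD}(3) applies directly and produces uniform integers $r = r(m, \beta_0, \chi)$ and $N + 1 = \chi(\cX_t, -r K_{\cX_t})$, depending only on $n$ (through $m, \beta_0, \chi$), together with a complete linear system embedding $T_\infty \colon X \hookrightarrow \PP^N$ defined by $|-r K_X|$. Hence $-r K_X$ is very ample, and $\chow(X) \in \overline{Z}$ as a limit of Chow points of smooth Fanos $\cX_{t_i}$; the K-semistability of $X$ places it in $Z^\circ$, so $X \cong \cX^\ast_{\chow(X)}$ and boundedness follows.

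The main obstacle is the uniform very ampleness of $-r K_X$ and the uniformity of the embedding dimension $N$ across all smoothable K-semistable $X$, since this is not an open condition in families with singular fibers and cannot be read off from boundedness of smooth Fanos alone. The resolution, central to the preceding steps, is to realize $X$ itself as a Gromov--Hausdorff limit of smooth conical K\"{a}hler--Einstein manifolds within a single smoothing family (via Corollary \ref{r-weak}), so that the analytic embedding statement of Theorem \ref{t-YTD}(3) becomes directly applicable and furnishes the required uniformity.
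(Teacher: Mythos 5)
There is a genuine gap at the crucial point of uniformity. Your embedding of $X$ into $\PP^N$ comes from Theorem \ref{t-YTD}(3) applied to the conical K\"ahler--Einstein metrics on $(\cX_t,\cD_t)$ with $\cD\sim_C -mK_{\cX/C}$, so the resulting $r$ and $N$ depend on $m$ (and on $\beta_0$, which itself depends on $m$ through the ACC set $I$). But to choose such a $\cD$ with $(\cX_0,\frac1m\cD_0)$ klt you must take $m$ divisible by the Cartier index of $K_X$ (equivalently of $K_\cX$ along the central fiber), and the boundedness of this index over \emph{all} smoothable K-semistable $X$ is essentially the content of the lemma you are trying to prove. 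So the claim that $N$ ``depends only on $n$ (through $m,\beta_0,\chi$)'' is circular: each $X$ lands in some $\PP^{N(X)}$, but no single $N$ is produced, and ``bounded family'' does not follow. Realizing $X$ as a Gromov--Hausdorff limit inside one smoothing does not resolve this, because the input integer $m$ in Theorem \ref{t-YTD} is still tied to the singular central fiber.

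The paper's proof is structured precisely to avoid this. First it treats the K-\emph{polystable} case: after knowing the nearby fibers are K-semistable and that $\cX_0$ is the Gromov--Hausdorff limit as $\beta_i\to 1$ (for an auxiliary $\cD\sim_C-mK_{\cX/C}$ whose $m$ need not be uniform), it introduces a \emph{second} divisor $\cD^*\sim_{C^\circ}-m_0K_{\cX^\circ/C^\circ}$ defined only over the punctured curve, where the fibers are smooth Fano manifolds; by boundedness of smooth Fano $n$-folds, $m_0$ depends only on $n$. Applying Theorem \ref{log-main}(iii) to $(\cX_t,(1-\beta_i)\cD^*_t)$ identifies the limit as $t\to0$ with $\cX_0$ again, and now Theorem \ref{t-YTD} with the uniform $m_0$ yields one $\PP^N$, $N=N(n)$, containing all smoothable K-polystable $X$. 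The K-\emph{semistable} case is then reduced to this: the orbit closure of $X$ contains a K-polystable $X_0$, volume convergence gives $(-K_X)^n=(-K_{X_0})^n$ bounded, the Cartier index of $K_X$ divides that of $K_{X_0}$ and is hence bounded, and boundedness follows from \cite[Corollary 1.8]{HMX2014}. Your single-step argument skips both the uniform-$m_0$ device and this reduction, and without them the conclusion does not follow.
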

\begin{proof}We first prove for the statement for  K-polystable $\QQ$-Fano varieties.
Let $X$ be an $n$-dimensional $\QQ$-Gorenstein smoothable K-polystable $\QQ$-Fano variety and $\cX\to C$ be a smoothing of $X$ with $\cX_0=X$. It follows from Theorem \ref{main} that nearby fibers $\cX_t$ are all K-semistable, and we can take a $\mathcal{D}\sim_C -mK_{\cX/C}$, such that $\cX_0$ is the Gromov-Hausdorff limit of  $(\cX_{t_i}, (1-\beta_i)\cD_t)$ for any sequences $t_i\to 0$ and $\beta_i\to 1$.

On the other hand, by the boundedness of smooth Fano varieties, we know that there exists $m_0$ depending only on $n$, and a divisor
$$\cD^*\sim_{C^\circ}-m_0K_{\cX^{\circ}/C^{\circ}},$$
 such that $\cD^*_t$ is smooth for any $t\in C^{\circ}$ {after a possible shrinking of the base}.
 Since all $\cX_{t}$ are K-semistable, they admit conical K\"ahler-Einstein metrics $\omega(t,\beta_i)$ with cone angle $2\pi(1-(1-\be_i)/m)$ along $\cD_{t}^*$.
 By applying Theorem \ref{log-main}(iii) for $(\cX_t, (1-\beta_i)\cD^*_t)$, we know that  the Gromov-Hausdorff limit for this family as $t\rightarrow 0$ is also $\cX_0$. Thus it is a subvariety of a fixed $\mathbb{P}^N$ for some $N\gg 0$ by Theorem \ref{t-YTD}.

In general, if $X$ is $\QQ$-Gorenstein smoothable K-semistable $\QQ$-Fano variety, then we know that the closure of its orbit contains a unique K-polystable $\QQ$-Fano variety $X_0$(cf. Theorem \ref{ss} and Remark \ref{r-smoothable}). And as a consequence of volume convergence for Gromov-Hausdorff limit, we obtain that
$$(-K_{X_0})^n=(-K_{X})^n$$
 are bounded from above; on the other hand the Cartier index of $K_{X}$ divides the Cartier index of $K_{X_0}$,
which is also bounded from above thanks to work of \cite[Theorem 1.2]{DS2012}. Therefore $X$ is contained in a bounded family (see e.g. \cite[Corollary 1.8]{HMX2014}).
\end{proof}

Let $X$ be a K-polystable $\QQ$-Fano variety  parametrized by a point in $Z^*$, so it is $\QQ$-Gorenstein smoothable by the definition of $Z^*$,
so it admits a weak K\"ahler-Einstein metric by Theorem \ref{main}, from which we deduce that $\aut(X)\subset \SL(N+1)$
is reductive.  Let $\hilb(X)$ be the Hilbert point for the Tian's embedding of $X\subset\PP^N$ after we fix a basis of $H^0(\sO_X(-rK_X))$. {Let $\HH^{\chi;N}\subset \PP^M$ be the Pl\"ucker's embedding which is clearly $\SL(N+1)$-equivariant.} Then by \cite[Proposition 1]{Don2012} or the proof of Lemma \ref{disj}, there is an $\aut(X)$-invariant linear subspace
$
z_0:=\hilb(X)\in \PP W\subset \PP^M
$
so that
\begin{equation}\label{PWW}
\PP^M=\PP( W\oplus \CC z_{0}\oplus
\faut(X)^{\bot }) \text{ with } \faut(X)^\bot\oplus \faut(X)=\fsl(N+1),
\end{equation}
where $W\oplus \CC\cdot z_0\oplus  \faut(X)^{\bot
}=\CC^{M+1}$ is a decomposition as $\aut(X)$-invariant
subspaces.

 In particular, this induces a representation $\rho:\aut(X)\to \SL(W)$. On the other hand, $\hilb(X)$ is fixed by $\aut(X)$. We let $\rho_X:\aut(X)\to \GG_m$ denote the character corresponding to the linearization of $\aut(X)$ on $\sO_{\HH^{\chi;N}}(1)|_{\hilb(X)}$ induced from the embedding $\aut(X)\subset \SL(N+1)$. Then we can introduce the following
\begin{defn}
A point $z\in \PP W $ is {\em GIT-polystable (resp. GIT-semistable)} if $z$ is {\em polystable(resp. semistable)} with respect the linearization $\rho\otimes \rho_X^{-1}$ on
$\sO_{\PP W}(1)\to \PP W$ in the GIT sense.
\end{defn}

{Before we apply the results in Section \ref{s-apendix}, notably Theorem \ref{S-pres} and Lemma \ref{fini}, to finish our proof of Theorem \ref{t-good}. Let us review the geometric consequences we obtained so far.}
{\begin{summ}\label{sum-slice}
Let us consider the set $\Sigma\subset\HH^{\chi,N}$ of  Hilbert points  corresponding to $\QQ$-Gorenstein smoothable K\"ahler-Einstein $\QQ$-Fano varieties via Tian's embedding.
By \cite{DS2012}, Theorem \ref{main} and results in Section \ref{s-K-semistable},
$\Sigma$ is  a  $\UU(N+1)$-invariant  compact subset that fits into the following diagram:
\begin{equation}\label{Sigma}
\xymatrix{
\Sigma\subset (Z^\ast)^\kps\ \  \ar@{>}[r] \ar@{>}[d]  & \HH^{\chi;N} \ar@{^{(}->}[r]^{\text{Pl\"ucker}} &\PP^M \ar@{>}[d]\\
     \Sigma/\UU(N+1)   \ar@{^{(}->}[rr]     & &\ \  \PP^M/\UU(N+1)\ ,
            }
\end{equation}
where $(Z^\ast)^\kps\subset Z^\ast$ denotes the locus of K-polystable points in $Z^\ast$,
and   there is a bijection between the quotient $\Sigma/U(N+1)$ and all isomorphic classes of $\QQ$-Gorenstein smoothable K\"ahler-Einstein $\QQ$-Fano varieties.
Moreover, we have
$\aut(X)=(\aut(X)\cap \UU(N+1))^\CC$ for all $\hilb(X)\in \Sigma$ (see Lemma \ref{G-U}) and $\Sigma$ satisfies Assumption \ref{bdd} in Section \ref{s-apendix}, \Red{since $\Sigma$ intersects each broken orbit  $\overline{BO}_z$ (cf. \eqref{Z*-chow}) at a {\em unique} $\UU(N+1)$-orbit.}
\end{summ}

\begin{lem}\label{G-U} Let $X$ be a $\QQ$-Gorenstein smoothable $\QQ$-Fano variety admitting weak K\"ahler-Einstein metric.  Then $\aut(X)=(\isom(X))^\CC$.  In particular, $\aut(X)=(\aut(X)\cap \UU(N+1))^\CC$.
\end{lem}
\begin{proof}
It follows from the proof of Theorem  4 in \cite{CDS3}.
\end{proof}

Our {first} main result of this section is the following:
\begin{thm}\label{K-luna}
There {are} an ${\rm Aut}(X)$-invariant  linear subspace $\PP W\subset {\PP^M}$ and an {$\aut(X)$-invariant} Zariski open neighborhood $\hilb(X)\in U_W\subset \PP W\times_{{\PP^M}} Z^*$ such that for any $\hilb(Y)\in U_W$, $Y$ is K-polystable if and only if $\hilb(Y)$ is GIT-polystable with respect to ${\rm Aut}(X)$-action on  $\PP W\times_{{\PP^M}} Z^*$.

Moreover, for all GIT-polystable $\hilb(Y)\in U_W$, we have $\aut(Y)<\aut(X)$, i.e. the local GIT presentation $U_W\sslash \aut(X)$ is {stabilizer preserving} in the sense of {\cite[Definition 2.5]{AFSV14}}.
\end{thm}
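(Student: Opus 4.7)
The plan is to use the $\aut(X)$-invariant decomposition \eqref{PWW} together with the ``minimal orbit'' structure of Lemma \ref{disj}. Since $X$ is K-polystable, Theorem \ref{main}(iii) gives a weak K\"ahler-Einstein metric on $X$, whence $\aut(X)\subset\SL(N+1)$ is reductive. Applying Lemma \ref{disj} with $G=\SL(N+1)$ and $z_0=\chow(X)$ produces an $\SL(N+1)$-invariant Zariski open neighborhood $U\subset\PP^{d,n;N}$ of $\chow(X)$ satisfying \eqref{Oz}. I would then take $U_W := U\cap\PP W\times_{\PP^{d,n;N}} Z^\ast$, an $\aut(X)$-invariant Zariski open neighborhood of $\chow(X)$ in the slice.

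\textbf{Stabilizer preservation.} For any $\chow(Y)\in U_W$, the transversality property (tv) established during the proof of Lemma \ref{disj} says that for $\xi\in\fsl(N+1)$, $\sigma_{\chow(Y)}(\xi)\in T_{\chow(Y)}\PP W$ iff $\xi\in\faut(X)$. If $\xi\in\faut(Y)$, then $\sigma_{\chow(Y)}(\xi)=0\in T_{\chow(Y)}\PP W$, forcing $\faut(Y)\subset\faut(X)$. When $\chow(Y)$ is GIT-polystable, then (as shown below) $Y$ is K-polystable, so $\aut(Y)$ is reductive by Theorem \ref{main}(iii); shrinking $U_W$ if necessary and using upper semicontinuity of $\dim\aut(Y)$ together with the finiteness of component groups in the bounded family of Lemma \ref{K-bdd}, one obtains the full containment $\aut(Y)<\aut(X)$.

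\textbf{K-polystable $\Rightarrow$ GIT-polystable.} Suppose $\chow(Y)\in U_W$ with $Y$ K-polystable but $\chow(Y)$ not GIT-polystable for the linearization $\rho\otimes\rho_X^{-1}$. By Hilbert-Mumford there is a 1-PS $\lambda:\GG_m\to\aut(X)$ with $\chow(Y_0):=\lim_{t\to 0}\lambda(t)\cdot\chow(Y)$ satisfying $\chow(Y_0)\notin\aut(X)\cdot\chow(Y)$ and non-positive weight. Since $\lambda$ acts on $\PP^N$ via $-rK$ and preserves the open locus $Z^\ast$, the induced family $(\cY,-rK_{\cY/\AAA^1})$ is a genuine test configuration of $Y$ with $\cY_0=Y_0$ a smoothable $\QQ$-Fano. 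Comparing the weight in Definition \ref{be-K} with the Chow weight computation in \eqref{CH} shows the Donaldson-Futaki invariant is a positive multiple of the GIT weight; the K-polystability of $Y$ then forces the weight to vanish and the test configuration to be product-type, contradicting $\chow(Y_0)\notin\aut(X)\cdot\chow(Y)$.

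\textbf{GIT-polystable $\Rightarrow$ K-polystable.} By Theorem \ref{t-open} together with Remark \ref{r-smoothable}, K-semistability is Zariski open on $Z^\ast$, so after shrinking $U_W$ every $Y$ with $\chow(Y)\in U_W$ is K-semistable. Suppose $\chow(Y)\in U_W$ is GIT-polystable but $Y$ is not K-polystable. Then the K-polystable variety $Y_0$ in $\overline{O_{\chow(Y)}}$ (unique up to isomorphism by Theorem \ref{ss} and Remark \ref{r-smoothable}) is not isomorphic to $Y$. By \eqref{Oz}, $\chow(X)\in\overline{O_{\chow(Y)}}$, and uniqueness of the K-polystable degeneration forces $Y_0\cong X$. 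Thus there is a test configuration of $Y$ in $\PP^N$ with central fiber $X$; using the transversality of $\PP W$ to the $\SL(N+1)$-orbit of $\chow(X)$ and the Intermediate Value Theorem-type deformation of Lemma \ref{IVT}, I would modify the destabilizing 1-PS (which a priori lies in $\fsl(N+1)$) to one lying in $\faut(X)$ whose action on $\PP W$ still degenerates $\chow(Y)$ to (a translate of) $\chow(X)$ with non-positive weight, contradicting GIT-polystability in $U_W$.

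\textbf{Main obstacle.} The hardest step is the converse direction: converting an $\SL(N+1)$-destabilization of $\chow(Y)$ (coming from K-instability and Theorem \ref{ss}) into an $\aut(X)$-destabilization lying inside the slice $\PP W$. This requires carefully combining the transversality properties in Lemma \ref{disj} with the continuity/IVT arguments of Section \ref{ss-continuity} to ensure the resulting 1-PS genuinely lies in $\aut(X)$ and retains the destabilizing sign. A secondary technical point throughout is the identification between the GIT weight on $\PP W$ and the Donaldson-Futaki invariant \eqref{CH} of the induced test configuration, which must be shown to agree up to a positive scalar uniformly over $U_W$.
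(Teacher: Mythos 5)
Your proposal has genuine gaps in all three pillars of the argument. First, the step ``K-polystable $\Rightarrow$ GIT-polystable'' rests on the claim that the Donaldson--Futaki invariant of the induced test configuration is a positive multiple of the GIT weight for the linearization $\rho\otimes\rho_X^{-1}$; no such identity holds. Equation \eqref{CH} only expresses the log-correction term $\CH(\cX,\cD;\sL)$ through Chow weights, while $\DF$ involves the subleading weight $b_1$, and in general K-(poly)stability and Chow-type weight positivity diverge --- indeed the remark after Theorem \ref{K-luna} cites \cite{OSY} precisely because K\"ahler--Einstein Fano manifolds need not be asymptotically Chow stable, so the theorem cannot be obtained from a weight comparison. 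The paper instead derives this direction as a corollary of the converse one: if $\chow(Y)$ were not GIT-polystable, a 1-PS in $\aut(X)$ degenerates it to a GIT-polystable point of $U_W$, which by the other direction is K-polystable, contradicting K-polystability of $Y$. Second, in your converse direction you invoke \eqref{Oz} to conclude $\chow(X)\in\overline{O_{\chow(Y)}}$ and hence $Y_0\cong X$; but Lemma \ref{disj} only constrains points of the limiting broken orbit $\overline{BO}$, not arbitrary points of $U$, and a K-semistable $Y$ near $X$ typically degenerates to a K-polystable $Y_0$ that is neither $Y$ nor $X$. The missing ingredient is exactly the analytic Lemma \ref{K-near}: for K-semistable points analytically close to $\chow(X)$ the K-polystable degeneration stays in $U$ and its Tian embedding lands near $\rU(N+1)\cdot\chow(X)$; the paper then uses transversality of the $\faut(X)^{\perp}$-action to push this degeneration into $\PP W$, producing a semistable point on the boundary of $\overline{\aut(X)\cdot\chow(Y)}$ and contradicting GIT-polystability. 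Your acknowledged ``main obstacle'' (converting an $\SL(N+1)$-destabilization into one inside $\aut(X)$) is precisely what this mechanism supplies, and it is not supplied by Lemma \ref{IVT} alone.

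Third, your stabilizer-preservation argument only yields the Lie algebra inclusion $\faut(Y)\subset\faut(X)$. Upper semicontinuity of $\dim\aut$ plus boundedness of the family cannot promote this to $\aut(Y)<\aut(X)$: the finite parts of $\aut(Y)$ are conjugates inside $\SL(N+1)$ that may escape to infinity as $\chow(Y)\to\chow(X)$, and Richardson's example in Section \ref{Richard} of the paper exhibits exactly this failure (a $\ZZ/3\ZZ$ stabilizer of nearby points not contained in, and unbounded relative to, the limiting stabilizer). The paper's proof of the ``moreover'' clause therefore requires the continuous $\UU(N+1)$-invariant slice $\Sigma$ of Tian embeddings coming from the (conical) K\"ahler--Einstein metrics, Claim \ref{G-U} that $\aut(Y)=(\isom(Y))^{\CC}$, the local rigidity statement of Lemma \ref{Gz0} and Theorem \ref{S-pres}, and, for points reached by a 1-PS degeneration, Lemma \ref{aut-Y}, which chooses an $H$-invariant smoothable divisor $E\in|-mK_Y|^H$ and follows the conical K\"ahler--Einstein metrics as $\be\to 1$ to force each finite subgroup $H<\aut(Y)$ into $\aut(X)$. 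Without these analytic inputs (and the final constructibility step to pass from the analytic to a Zariski neighborhood), the group-level containment, and hence the strong \'etaleness of the local presentation, does not follow.
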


\begin{rem}
As we will see in Corollary \ref{semi-sp} that we are able to establish the stabilizer preserving property for all {\em GIT-semistable} $\hilb(Y)\in U_W$. This  property is {\em stronger} than the condition of  being {\em strongly} \'etale  introduced in \cite[Definition 2.5 ]{AFSV14}.
\end{rem}

Let
\begin{equation}\label{diag}
\begin{array}{cccc}
 \Delta:& Z^\ast& \longrightarrow & \HH^{\chi;N}\times Z^\ast\\
& z &\longmapsto & (z,z) \ .
\end{array}
\end{equation}
be the diagonal morphism, we define $O_{Z^\ast}:=\SL(N+1)\cdot \Delta (Z^\ast)\subset \HH^{\chi;N}\times Z^\ast$ where $\SL(N+1)$ acts {\em trivially} on $Z^\ast$ and   acts on $\HH^{\chi;N}$ via the action induced from $\PP^N$.  This allows us to construct the family of limiting orbits space associated to the family \eqref{Z*} as following:
\begin{equation}\label{Z*-chow}
\begin{CD}
\overline{BO}_z  @.\subset \overline{BO}_{Z^\ast}@>i>> \HH^{\chi;N}\times Z^\ast\\
@VVV @VVV @VV\pi_{Z^\ast}V\\
z@. \in  {Z^\ast}@={Z^\ast}
\end{CD}
\end{equation}
with $\overline{BO}_{Z^\ast}\subset  \HH^{\chi;N}\times Z^\ast$ be the closure  of  $O_{Z^\ast}$ and $\overline{BO}_z$ is the union of limiting {\em broken orbits}. Then by Theorem \ref{main} we know that there is a {\em unique} K-polystable orbit inside $\overline{BO}_z$. To see this, one only needs to notice that for any $z\in Z^{\ast}$,  we can always find a smooth curve $f:C\to Z^*$ that passes through $z$ and the image  $f(C)$ meets the dense open locus inside of $Z^\ast$ corresponding to  {\em smooth K-polystable Fano manifolds} with the {\em maximal} dimension of its $\SL(N+1)$-orbit space. Then our claim follows by applying Theorem \ref{main} to the pull back family over $C$.

For a K-polystable point $\hilb(X)\in Z^\ast$ (corresponding to the Tian's embedding of $X\subset \PP^N$ with respect to the K\"ahler-Einstein metric), 
by Lemma \ref{disj}, we can find a Zariski neighborhood $\hilb(X)\in U\subset Z^*$ and after a possible shrinking  we may assume
\begin{equation}\label{U-min}
U\cap \overline{BO}_{\hilb(X)} \text{ contains a  {\em unique minimal}  (cf. Lemma \ref{disj}) orbit }\SL(N+1)\cdot \hilb(X)\ .
\end{equation}
By Theorem \ref{ss}   (and its extension in  Remark \ref{r-smoothable}), every $z\in U$ can be specialized to a K-polystable point $\hat z$  unique up to $\SL(N+1)$-translation.  Moreover, we have the following

\begin{lem}\label{K-near}
Let  $\hilb(X)\in U\subset Z^\ast $ be as above, then  there is an  {\em analytic} open neighborhood $\hilb(X)\in U^{\rm ks}$ such that for any K-semistable points $z\in U^{\rm ks}$, we can specialize it to a K-polystable point $\hat z\in U$  via a 1-PS $\lam\subset\SL(N+1)$. Moreover, if $\displaystyle \lim_{i\to \infty} z_i=\hilb(X)$, then
$$\lim_{i\to\infty}\dist_{\HH^{\chi;N}}(\hilb(\cX_{\hat z_i},\omega_\KE(\hat z_i)), \rU(N+1)\cdot  \hilb(X))=0.$$
where $\hilb(\cX_{\hat z_i},\omega_\KE(\hat z_i))$  is the Hilbert point corresponding to the Tian's embedding of $\cX_{\hat z_i}$ with respect to the weak K\"ahler-Einstein metric $\omega_\KE(\hat z_i)$.
\end{lem}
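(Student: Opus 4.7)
The proof splits into two parts: existence of a $1$-PS specialization, and verification of the convergence claim, which then dictates the analytic neighborhood $U^{\rm ks}$.

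\emph{Existence of the $1$-PS.} For a K-semistable $z$, by Theorem \ref{ss} together with the smoothable extension in Remark \ref{r-smoothable}, the orbit closure $\overline{\SL(N+1)\cdot z}$ contains a unique K-polystable orbit $\SL(N+1)\cdot\hat z$. Since $\hat z$ is in this closure and its orbit is itself closed (K-polystability translates, after a twist by the canonical character, into the closedness of the lifted orbit in the appropriate affine slice), the classical Hilbert--Mumford/Kempf-type criterion for reductive group actions produces a $1$-PS $\lambda:\GG_m\to\SL(N+1)$ with $\lim_{t\to 0}\lambda(t)\cdot z=g\cdot\hat z$ for some $g\in\SL(N+1)$. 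Replacing $\hat z$ by $g\cdot\hat z$, which is again K-polystable, gives the desired specialization. This step works for every K-semistable $z\in Z^*$ and uses no analytic input.

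\emph{Convergence and construction of $U^{\rm ks}$.} For each K-semistable $z_i$, choose a generic divisor $\cD_i\in|-mK_{\cX_{z_i}}|$, varying continuously with $z_i$, so that $(\cX_{z_i},\tfrac{1}{m}\cD_i)$ is klt. By Lemma \ref{interpolate} and Corollary \ref{l-YTD}, $(\cX_{z_i},\cD_i)$ admits a unique conical K\"ahler--Einstein metric $\omega(z_i,\beta)$ for every $\beta\in(0,1)$. By Theorem \ref{log-main}(iii) applied along a generic smoothing, as $\beta\nearrow 1$ the Gromov--Hausdorff limit of $(\cX_{z_i},\cD_i;\omega(z_i,\beta))$ is precisely $(\cX_{\hat z_i};\omega_\KE(\hat z_i))$. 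Simultaneously, for each fixed $\beta\in(\epsilon,1)$, Proposition \ref{cont-path} (extended in the obvious way to higher-dimensional bases) yields continuity of $\chow(\cX_{z_i},(1-\beta)\cD_i)$ in $z_i$, so these Chow points converge modulo $\rU(N+1)$ to $\chow(X,(1-\beta)D_X)$ as $z_i\to\chow(X)$. A diagonal argument along $(z_i,\beta_i)$ with $\beta_i\nearrow 1$, modeled on Lemma \ref{IVT} and Remark \ref{v-IVT}, then yields $\chow(\cX_{\hat z_i},\omega_\KE(\hat z_i))\to\rU(N+1)\cdot\chow(X)$; any other limit $\chow(Y)$ would be K-polystable and would produce, via \cite[Proposition~1]{Don2012}, a test configuration between $X$ and $Y$ with vanishing generalized Futaki invariant, contradicting the K-polystability of both. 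With this convergence established, for $z$ analytically close enough to $\chow(X)$ the point $\hat z$ necessarily enters the $\SL(N+1)$-invariant Zariski neighborhood $U$ of $\chow(X)$; one then defines $U^{\rm ks}$ as (the K-semistable locus inside) a small enough analytic neighborhood of $\chow(X)$ on which this holds.

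\emph{Main obstacle.} The delicate point is that we have no family continuity for weak K\"ahler--Einstein metrics on singular K-polystable $\QQ$-Fano varieties as these vary analytically; such continuity is available only in the smooth setting or for conical metrics with small cone angle. The diagonal argument coupled with the intermediate-value-type Lemma \ref{IVT} is precisely designed to trade the missing continuity at $\beta=1$ for the continuity at $\beta<1$ supplied by Proposition \ref{cont-path}, while the rigidity input needed to close the argument comes from simultaneous K-polystability of $X$ and of any potential limit $Y$.
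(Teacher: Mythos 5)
Your proposal has two genuine gaps, one in each half. In the first half you produce the $1$-PS by a Kempf/Hilbert--Mumford argument, on the grounds that ``K-polystability translates, after a twist by the canonical character, into the closedness of the lifted orbit.'' That equivalence is not available here: relating K-polystability of $\hat z$ to a GIT notion for the $\SL(N+1)$-action (hence to closedness of its orbit) is precisely what Theorem \ref{K-luna} is trying to establish, and even there only locally and for the $\aut(X)$-action on a slice; assuming it globally at this stage is circular. The correct source of the specialization is Theorem \ref{ss} together with Remark \ref{r-smoothable}, and the ``via a $1$-PS'' part follows from \cite[Proposition 1]{Don2012}, using that the K-polystable degeneration admits a weak K\"ahler--Einstein metric and hence has reductive automorphism group --- not from closedness of its orbit.

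In the second half, your continuity input is misattributed: Proposition \ref{cont-path} is proved by an implicit function theorem on a fixed smooth manifold and applies only to smooth fibers, whereas the points $z_i$ and $\chow(X)$ of $Z^\ast$ are in general singular $\QQ$-Fano varieties, so there is no ``obvious extension''; the continuity you need at a fixed $\beta<1$ near a singular fiber is exactly the output of the continuity method (Corollary \ref{r-weak}/Theorem \ref{log-main}, extended to higher-dimensional bases as in Remark \ref{r-smoothable}). Your diagonal step also quietly uses Gromov--Hausdorff precompactness for sequences of singular (weak/conical) K\"ahler--Einstein spaces, which Theorem \ref{t-YTD} only states for smooth pairs. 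The paper's own proof is much more direct and bypasses the conical approximation entirely: arguing by contradiction, it equips each K-polystable degeneration $\cX_{\hat z_i}$ with its weak K\"ahler--Einstein metric, uses the boundedness Lemma \ref{K-bdd} to keep all Tian embeddings in the same $\PP^N$ and extract a Gromov--Hausdorff limit $Y$, and notes that the limiting Chow point $g\cdot\chow(Y)$ lies in $\overline{BO}_{\chow(X)}$, which by Theorem \ref{main} contains a unique K-polystable orbit, namely that of $\chow(X)$; if $O_{\hat z_i}\cap U=\varnothing$ for all $i$, the limit would lie outside the invariant open set $U$, a contradiction. Your strategy for the convergence claim could probably be repaired along the lines of the proof of Theorem \ref{main}(ii), but as written both the $1$-PS justification and the continuity/compactness inputs are not covered by the results you cite.
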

\begin{proof} Suppose this is not the case, there is a sequence $z_i=\hilb(\cX_{z_i})\stackrel{i\to \infty}{\longrightarrow} \hilb(X)$ and
$$O_{\hat z_j}\cap U=\varnothing \text{ with } O_{\hat z}:=\SL(N+1)\cdot \hat z\ .$$
In particular, by equipping each $\cX_{\hat z_i}$ with a weak K\"ahler-Einstein metric $\omega_\KE(\hat z_i)$, and taking the Gromov-Hausdorff limit $Y$, which is also embedded in $\PP^N$ by Lemma \ref{K-bdd}, we obtain
$$\hilb(\cX_{\hat z_i},\omega_\KE(\hat z_i))\stackrel{i\to \infty}{\longrightarrow}g\cdot\hilb(Y)\in \overline{BO}_{\hilb(X)}\setminus U \mbox{ for some }g\in \rU(N+1)$$
contradicting to the fact the  limiting broken orbits $\overline{BO}_z$  contains a unique K-polystable orbit.
\end{proof}


Now we are ready to prove Theorem \ref{K-luna}.
\begin{proof}[Proof of Theorem  \ref{K-luna}]
Let $U$ be the open set constructed above satisfying \eqref{U-min} and
let
$$U^{\rm an}_W=(U^{\rm ks}\cap \PP W)\times_{{\PP^M}}  Z^\ast. \text{(cf. Lemma \ref{K-near})} $$
After a possible shrinking,  we may assume that all the points in $U^{\rm an}_W$
 are GIT-semistable and  every GIT-semistable point can be degenerated to a GIT-polystable point in $U^{\rm an}_W$.

Suppose $\hilb(Y)\in U^{\rm an}_W$ is GIT-polystable and strictly K-semistable. Then by Lemma \ref{K-near}, we can degenerate it to  a variety
$Y'\subset \PP^{N}$ which is K-polystable  such that
$$\hilb(Y')\in U\cap \overline{\SL(N+1)\cdot \hilb(Y)}\subset Z^\circ\subset \HH^{\chi;N}\ ,$$
and $\hilb(Y')$ is {\em close} to $\hilb(Y)$ in $\HH^{\chi;N}$ in the sense that there is {\em short} (with respect to the metric $\dist_{\HH^{\chi;N}}$) path inside $\overline{\SL(N+1)\cdot \hilb(Y)}$ joinning $\hilb(Y)$ and $\hilb(Y')$.

Using the transversality of the action of $\faut(X)^\perp\subset \fsl(N+1)$ on $\PP W\subset \PP^M$,  one can always find a $g\in \SL(N+1)$ {\em close} to the identity such that
 $$\hilb(Y''):=g\cdot \hilb(Y')\in   \PP W\times_{{\PP^M}} Z^\ast,$$ where $Y''\cong Y'$ is GIT-semistable. This allows us to find
a {\em short}  path inside $\overline{\SL(N+1)\cdot \hilb(Y)}$ joining $\hilb(Y)$ and $\hilb(Y'')$, which by transversality we may assume to be entirely contained in $\PP W$
and satisfies $\hilb(Y'')\in \overline{\aut(X)\cdot \hilb(Y)}$.
But this  is absurd since $\hilb(Y)$ is already GIT-polystable, no point on the boundary of $\overline{\aut(X)\cdot \hilb(Y)}$ is semistable.

 Conversely, suppose $\hilb(Y)\in U^{\rm an}_W$ and $Y$ is K-polystable but $\hilb(Y)$ is not GIT-polystable,  then there is a 1-PS $\lam \subset \aut(X)$ degenerating $\hilb(Y)$ to a nearby GIT-polystable  $$\hilb(Y')\in \overline{\aut(X)\cdot\hilb(Y)} \cap U^{\rm an}_W$$
 by the classical GIT.
 Thus $Y'$  is K-polystable by the previous paragraph,  contradicting to the assumption  $Y$ being K-polystable.  Hence our proof is completed.

To pass from the analytic neighborhood to a Zariski neighborhood, we need to investigate the geometry of $\aut(X)$-orbits.
Let $ U^{\rm ss}_W\subset \PP W$ containing $\hilb(X)$ be the Zariski open set of {\em GIT-semistable points}.  By \cite[Chapter 2, Proposition 2.14]{MFK} and \cite[Lemma 2.11 and Lemma 2.12]{Odaka14}, we know that the set of GIT-polystable points in $U^{\rm ss}_W$ forms a {\em constructible} set. On the other hand,  K-polystable points inside $U^{\rm ss}_W\cap  Z^\circ_{\rm red}$ also form a constructible sets (see Remark \ref{constr-Kpoly}) containing the  point $\hilb(X)$.  These two constructible sets coincide along  $U^{\rm an}_W$ after lifting to $\PP W\times_{{\PP^M}} Z^\ast$ by the proof above,  so they must coincide on a Zariski open set.

Finally, we establish the last statement.
 The slice $\Sigma$ obtained from Summary \ref{sum-slice} satisfies Assumption  \ref{bdd}. Thus, by applying Theorem \ref{S-pres} to our setting we can construct an analytic open set $U_W\subset \PP W\times_{{\PP^M}} Z^\ast $ that is stabilizer preserving. To obtain the Zariski openness, one  observes  that
\begin{eqnarray*}
\aut( Z^\ast):=\{(z, g)\in Z^\ast\times \SL(N+1)\mid g\cdot z=z \}=\phi_{\SL(N+1)}^{-1}(\De_{Z^\ast})
\end{eqnarray*}
is a  {\em closed}  subset of $Z^\ast\times \SL(N+1)$, where
\begin{equation*}
\begin{array}{ccc}
Z^\ast \times \SL(N+1) & \overset{\phi_{\SL(N+1)}%
}{ \xrightarrow{\hspace*{1.1cm}}} &Z^\ast \times Z^\ast   \\
(z,g) & \longmapsto & (z, g\cdot z)
\end{array}%
\text{ and   }
\De_{Z^\ast}=\{(z,z)|z\in Z^\ast\}\subset Z^\ast\times Z^\ast.
\end{equation*}
{Next let
\begin{equation*}
\mu:
\begin{array}{ccc}
\aut(Z^\ast)  & \overset{}{ \xrightarrow{\hspace*{1.1cm}}} &Z^\ast   \\
(z,g) & \longmapsto &  g\cdot \hilb(X)
\end{array}.
\end{equation*}
Then the locus of
$$\{\hilb(Y)\in Z^* \ | \ \Aut(Y)<\Aut(X)\}$$ is precisely the complement of ${\rm pr}_1\large(\mu^{-1}(Z^*\setminus \{\hilb(X) \} )\large)$ which is constructible, where ${\rm pr}_1:\aut(Z^\ast)\to Z^\ast$ is the projection to the first factor. So we can prolong $U_W$ from an analytic open subset  to a Zariski open one and our proof is completed.}
\end{proof}

\begin{rem}
One notice that contrast to Theorem \ref{K-luna}, there exists smooth Fano varieties admitting K\"ahler-Einstein metrics, which are not asymptotically Chow stable (see
\cite{OSY}).  On the other hand, Theorem \ref{K-luna} can be regarded as an extension of work  \cite{Sz2010} to  the case of $\QQ$-Gorenstein smoothable $\QQ$-Fano varieties.
\end{rem}

Next we show that for each closed point $[z]\in [Z^\ast/ \SL(N+1)]$,  $\overline{\{z\}}$ has a {\em good moduli} space  in the sense of {\cite[Theorem 1.2 and Proposition 3.1]{AFSV14}}. To to that, let us first establish the Assumption \ref{sp} in Section \ref{s-apendix}.
Let $z=\hilb (Y)\in U_W$  specializing  to $ z_0=\hilb(X)\in U_W\subset \HH^{\chi;N}$ via a 1-PS $\lam(t):\GG_m\to \aut(X)< \SL(N+1)$. Let $(\cY=\cX|_C, X)\to (C=\overline{\lam(t)\cdot z},z_0)\subset U_W$ be the restriction of the universal family  $\cX\to Z^\ast $ to the pointed curve $(C,z_0)$
and also we prefix a basis $\{s_i\}\subset \sO_{\cY}(-rK_{\cY/C})$.

\begin{lem} \label{aut-Y}
Under the notation introduced above, we have $\aut(Y)<\aut(X)$ for $z:=\hilb(Y)$ close to $z_0=\hilb(X)$ with respect to the analytic topology.
\end{lem}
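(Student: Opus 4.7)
The plan is to combine the Gromov--Hausdorff convergence of K\"ahler--Einstein metrics along the specialization with the identity $\aut(\cdot)=\isom(\cdot)^{\CC}$ for smoothable K-polystable $\QQ$-Fano varieties established in Claim \ref{G-U}. The key point is that the lemma is exactly the stabilizer--preserving condition needed to feed Theorem \ref{S-pres} into the final step of the proof of Theorem \ref{K-luna}, and our strategy is to realize both $\aut(Y)$ and $\aut(X)$ as complexifications of compact subgroups of a single $\UU(N+1)$.

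First, using that $\lambda\subset \aut(X)$, every fiber $\cY_z=\cX|_{\lambda(t)\cdot z}$ is a smoothing of $X$; by the Zariski openness of K-polystability (Theorem \ref{t-open}, extended via Remark \ref{r-smoothable}) applied to this one-parameter family, after shrinking the pointed curve $(C,z_0)$ we may assume $\cY_z$ is K-polystable for every $z\in C$. Hence by Theorem \ref{main}, every $\cY_z$ carries a weak K\"ahler--Einstein metric $\omega_{\KE}(z)$, and by part (iii) of that theorem together with the uniqueness argument of Lemma \ref{X-be} we have the Gromov--Hausdorff convergence $(\cY_z,\omega_{\KE}(z))\stackrel{\GH}{\longrightarrow}(X,\omega_{\KE}(z_0))$ as $z\to z_0$. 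Concretely, using Tian's embedding with respect to the prefixed basis $\{s_i\}$ and Lemma \ref{l-GH=CH}, this produces elements $g_z\in \UU(N+1)$ with $g_z\cdot \chow(\cY_z,\omega_{\KE}(z))\to \chow(X,\omega_{\KE}(z_0))$; after conjugating the embedding of $\cY_z$ by $g_z$ we may identify $\isom(\cY_z,\omega_{\KE}(z))$ with a closed subgroup of $\UU(N+1)$ that has $\chow(\cY_z)$ as a fixed point.

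Next, the compactness of $\UU(N+1)$ allows us to extract limits: for any sequence $\sigma_i\in \isom(\cY_{z_i},\omega_{\KE}(z_i))\subset \UU(N+1)$ with $z_i\to z_0$, a subsequence converges to some $\sigma_\infty\in \UU(N+1)$, and passing to the limit in the equations $\sigma_i\cdot \chow(\cY_{z_i},\omega_{\KE}(z_i))=\chow(\cY_{z_i},\omega_{\KE}(z_i))$ shows $\sigma_\infty$ fixes $\chow(X,\omega_{\KE}(z_0))$, i.e.\ $\sigma_\infty\in \isom(X,\omega_{\KE}(z_0))$. An Arzel\`a--Ascoli / upper--semicontinuity argument for compact subgroups of $\UU(N+1)$ then promotes this to an embedding $\isom(\cY_z)\hookrightarrow \isom(X)$ as closed subgroups of $\UU(N+1)$, valid for $z$ sufficiently close to $z_0$.

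Finally, applying Claim \ref{G-U} to both $\cY_z$ and $X$, we complexify to obtain
\[
\aut(Y)=\aut(\cY_z)=\isom(\cY_z)^{\CC}\hookrightarrow \isom(X)^{\CC}=\aut(X),
\]
as required. The main obstacle is ensuring that the embedding of isometry groups is canonical and not merely set-theoretic: the ambiguity of Tian's embedding by $\UU(N+1)$ must be absorbed into the single element $g_z\to 1$ above, and the "limit of subgroups is a subgroup" step must use closedness of $\isom(X)\subset \UU(N+1)$ together with the continuous dependence (via Proposition \ref{cont-path} and its extension to the singular limit) of $\omega_{\KE}(z)$ on the parameter $z$; both are available in our setting.
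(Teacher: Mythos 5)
There is a genuine gap at the very first step, and it is fatal to the whole strategy. You assume that, after shrinking the curve $C=\overline{\lam(t)\cdot z}$, every fiber $\cY_z$ is K-polystable, and you then invoke Theorem \ref{main} to put a weak K\"ahler--Einstein metric on $Y$ itself. But in the setting of the lemma $Y$ is only K-\emph{semi}stable: it specializes to $X$ under a non-trivial $1$-PS $\lam\subset\aut(X)$, so its $\SL(N+1)$-orbit is not closed and $Y$ is typically \emph{strictly} semistable. The ``furthermore'' clause of Theorem \ref{t-open} that upgrades semistability to polystability requires $\aut(\cX_0)$ to be finite, which fails here precisely because $\aut(X)$ contains the $1$-PS $\lam$. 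Worse, since all fibers of $\cY\to C$ over $C\setminus\{z_0\}$ are isomorphic to $Y$, your assumption would force $Y$ to be K-polystable, and then the uniqueness of the minimal orbit would give $Y\cong X$, i.e.\ the lemma would be vacuous. Consequently $\omega_{\KE}(z)$ does not exist on $Y$, and the Gromov--Hausdorff convergence $(\cY_z,\omega_{\KE}(z))\to(X,\omega_{\KE}(z_0))$ on which your argument rests is not available. The paper circumvents exactly this problem: the identity component of $\aut(Y)$ is controlled purely infinitesimally by the transversality property \eqref{tv} from Lemma \ref{disj} ($\faut(Y)\subset\faut(X)$ for $z\in U_0$), while the component group is handled by picking a finite subgroup $H<\aut(Y)$ meeting every component, choosing an $H$-invariant smoothable divisor $E\in|-mK_Y|^H$ with $(Y,\tfrac1m E)$ klt, and using the \emph{conical} K\"ahler--Einstein metrics $\omega_Y(\be)$ on the pair $(Y,E)$ (which exist for all $\be<1$ by semistability and interpolation) to produce a continuous family of Tian's embeddings with $g(t)\sim I_{N+1}+O(t)$ converging to the embedding of $X$.

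Even granting your convergence, the step ``an Arzel\`a--Ascoli / upper-semicontinuity argument promotes this to an embedding $\isom(\cY_z)\hookrightarrow\isom(X)$'' is not a proof. Knowing that every convergent sequence $\sigma_i\in\isom(\cY_{z_i})$ limits to an element of $\isom(X)$ only says the subgroups $\isom(\cY_z)$ accumulate onto $\isom(X)$ in the Hausdorff sense; it does not give the containment $\isom(\cY_z)\subset\isom(X)$ for $z$ near $z_0$ (a finite cyclic subgroup of $\UU(N+1)$ can be arbitrarily close to a given subgroup without being contained in it --- compare Richardson's example in Section \ref{st-pre}). Turning proximity into containment is precisely what the local rigidity statement Lemma \ref{Gz0} is for: an element of the form $g\exp\sqrt{-1}\xi$ with $g\in K$ and $|\xi|<\ep$ that stabilizes a point of the slice neighborhood $V\subset\PP W$ must already lie in $G_{z_0}=\aut(X)$. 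Your proposal never invokes this (or any substitute), so even the finite-group part of the argument is incomplete. To repair the proof you would need to (a) replace the K\"ahler--Einstein metric on $Y$ by the conical metrics on an auxiliary invariant pair $(Y,E)$, and (b) close the argument with Lemma \ref{Gz0} applied to the nearly-unitary conjugates $g(t)H_{z(t)}g(t)^{-1}$, as the paper does.
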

\begin{proof}
By property \eqref{tv} in the proof of Lemma \ref{disj}, for $z=\hilb(Y)\in U_0$ we have $\faut(Y)\subset \faut(X)$, hence the identity component of $\aut(Y)$ lies in $\aut(X)$.  We will assume from now on that $z=\hilb(Y)\in \PP W$ lies in a {\em small} analytic neighborhood of  $z_0=\hilb(X)\in U_1$, i.e. $z$ is {\em very close} to $z_0$.  This together with the fact that
there always exists   a {\em finite} subgroup $H<\aut(Y)$ that  meets {\em every} connected component of $\aut(Y)<\SL(N+1)$ imply that
all we need is that:  for any finite subgroup $H<\aut(Y)$,
we have $H<\aut(X)$.
 To achieve that,  let us choose $H$-invariant {\em smoothable} divisor $E\in |-mK_Y|^H$ so that $(Y, \frac{E}{m})$ is klt, the existence of such $E\subset Y$ is guaranteed by the following result.

\begin{claim}Let $Y$ be a {\em $\QQ$-Gorenstein smoothable} $\mathbb{Q}$-Fano variety. Fix a finite group $H\subset {\rm Aut}(Y)$. For $m$ sufficiently divisible there is an invariant section $E\in |-mK_Y|^H$ such that $(Y,(1-\ep)E)$ is klt for any $0<\ep\leq 1$ and {\em $\QQ$-Gorenstein smoothable}. In particular, $(Y,\frac{1}{m}E)$ is {\em $\QQ$-Gorenstein smoothable} and klt for $m>1$. Moreover, $m$ can be uniformly bounded provided $Y$ is inside a bounded family.
\end{claim}
\begin{proof}
Let $\mu\colon Y\to \ti Y$ be the quotient of $Y$ by $H$, and $D$ be the branched divisor. So $\mu^*(K_{\ti Y}+D)=K_Y$. In particular, $(\ti Y,D)$ is klt (since  klt is preserved under finite quotient \cite[Theorem 5.20]{KM98}) and $-(K_{\ti Y}+D)$ is ample. Thus for a sufficiently divisible $m$ satisfying $-m(K_{\ti Y}+D)$ being very ample, we can choose a general section $F\in |-m(K_{\ti Y}+D)|$ so that $(\ti Y,D+(1-\epsilon)F)$ is klt for any $0<\epsilon \le 1$.
Then $E:=\mu^*(F)$ is $H$-invariant and $(Y,(1-\epsilon)E)$ is klt for any $0<\ep\le 1$. Finally, we justify that  $(Y,E)$ is actually {\em $\QQ$-Gorenstein smoothable} as long as $Y$ is. Since $Y$ is a degeneration a smooth family $\{Y_t\}_t$, and every element in $|-mK_{Y}|$ can be represented as a degeneration of  general members of $|-mK_{Y_t}|$, from which we  conclude $(Y,E)$ is a degeneration of  smooth pairs $\{(Y_t,E_t)\}_t$.
\end{proof}
Then by Theorem \ref{log-main} and \ref{unique},  $(Y,\frac{E}{m})$ admits a continuous family of K\"ahler metric $\{\omega_Y(\be)\}$
solving
$$
\Ric(\omega_Y(\be))=\be \omega_Y(\be)+\frac{1-\be}{m}[E] \text{ on } Y\ ,
$$
from which we obtain
\begin{equation}\label{Y-be}
\hilb(Y,\omega_Y(\be))\stackrel{\be\to 1}{\longrightarrow} \UU(N+1)\cdot \hilb(X)\subset \HH^{\chi;N}
\end{equation}
thanks to Theorem \ref{ss} and  the fact that $m$ is uniformly bounded by Lemma  \ref{K-bdd}, where $\hilb(Y,\omega_Y(\be))$ is the Hilbert point corresponding to the Tian's embedding of $Y\subset \PP^N$ with respect to the metric $\omega_Y(\be)$ on $Y\subset \PP^N$ and any prefixed basis $\{s_i\}\subset H^0(\sO_Y(-rK_Y))$.
This allows us to introduce a {\em continuous} family of Hermitian metric $h_\KE(\be(t))$ with $\be(t):=1-|t|$ on $\sO_{\cY_t}(-K_{\cY_t})\to \cY_t$ for $0<|t|:=\dist_C(t,0)<1$, such that $\omega_Y(\be(t))=-\ddbar \log h_\KE(\beta)$. By \eqref{Y-be}, the metric $h_\KE(\be(t))$ can be continuously extended to $0\in C$. Now  let  $\{s_i\}$ be the local basis of  $\pi_\ast\sO_{\cY}(-rK_{\cY/C})|_{\{|t|<1\}\subset C}=\pi_\ast( \sO_{\PP^N}(1)|_\cY)$ corresponding to the coordinate sections of $\sO_{\PP^N}(1)$ such that $\{s_i(0)\}$ induces Tian's embedding for $z_0=\hilb(X)$  and define
$$A_\KE(t,\be(t))=[( s_i,s_j)_{\KE,\be(t)}(t)] $$
with
$$
(s_i,s_j )_{\KE,\be}(t)=\int_{\cY_t} \langle s_i(t),s_j (t)\rangle_{{h_\KE^{\otimes r}}(\be(t))}\omega_Y^n(\be(t))\ ,
$$
then we obtain a family of  Tian's embedding
\begin{equation}\label{Ts}
T:(\cY_t,\cE_t;\omega_Y(\be(t)))\longrightarrow \PP^N \text{ with } (\cY_t,\cE_t)\cong(Y,E) \text{ for }t\ne 0.
\end{equation}
given by $\{g(t)\circ s_j(t)\}_{j=0}^N$ with $g(t)=A^{-1/2}_\KE(\be(t))$. The map $T$ extends to $\cY_0=X$ thanks to the continuity of the metric $h_\KE(\be(t))$ at $0\in C$.

Now by our choice of $z_0$ and basis $\{s_i(t)\}$, we have $A_\KE(0,1)=I_{N+1}\in \SL(N+1)$, and hence
\begin{equation}\label{g=1}
g(t)\sim I_{N+1}+O(t).
\end{equation}
This implies that
  \begin{equation}\label{U1-ep}
  \ti z(t):=\hilb(\cY_t,\omega_Y(\be(t)))=g(t)\cdot z_t\in U_{1,\ep}:=\exp (\faut(X)_{<\ep}^\perp)\cdot U_1
  \end{equation}
for $0<|t|\ll 1$, where $z_t=\lam(t)\cdot \hilb(Y)$.
Since $\omega_Y(\be(t))$ is a conical K\"ahler-Einstein metric on $\cY_t$, it follows from the log version of Lemma \ref{G-U} (cf. \cite[Theorem 4]{CDS3}) that
$$ H_{\ti z(t)}=g(t)\cdot H_{z(t)}\cdot g(t)^{-1}< U(N+1) \text{ where }H_{z(t)}= \lam(t) \cdot H\cdot \lam(t)^{-1} .
$$
By  Lemma \ref{Gz0} and \eqref{g=1}, we obtain that
$H_{z(t)}<\aut(X)$ and hence $H<\aut(X)$ as $\lam(t)<\aut(X)$ by our choice.
On the other hand,  by transversality of $\faut(X)^\perp$-action on $U_{1,\ep}$, for $0<t\ll 1$ we have
$\aut_0(Y)<\aut(X)$ where $\aut_0(Y)$ is the identity component of $\aut(Y)$. This implies that
$\aut(Y)=\la\aut_0(Y),H\ra<\aut(X)$ where $\la\aut(Y),H\ra$ is the subgroup generated by $H$ and  $\aut_0(Y)$ and
 our proof is completed.
\end{proof}
As a consequence, 
we have the following the statement, which implies Assumption \ref{sp}.
\begin{cor}\label{semi-sp}
After a possible shrinking of the Zariski open neighborhood  $z_0\in U_W\subset \PP W\times_{\PP^M} Z^\ast$, we have
$$\SL(N+1)_z<\aut(X), \ \forall z\in U_W$$
 where $\SL(N+1)_z$ is the stabilizer of $z$ inside $\SL(N+1)$. In other words, $U_W$ satisfies {\em Assumption \ref{sp}}.
\end{cor}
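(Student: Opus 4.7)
My plan is to combine the three cited lemmas to control the identity component and the discrete components of the stabilizer separately. First, I would shrink $U_W$ so that every $z \in U_W$ parametrizes a K-semistable $\QQ$-Fano variety (by Theorem~\ref{t-open} together with Remark~\ref{r-smoothable}) and so that the transversality property for $\faut(X)^{\perp}$ on $\PP W$ (item~(3) in the proof of Lemma~\ref{disj}) holds throughout $U_W \cap \PP W$. This transversality forces the infinitesimal stabilizer $\ker \sigma_z \cap \fsl(N+1) \subset \faut(X)$, which immediately handles the identity component: $\SL(N+1)_z^0 \subset \aut(X)$ for every $z \in U_W$.

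For the discrete part, I would use Lemma~\ref{K-near} and Lemma~\ref{K-bdd} to reduce to the K-polystable setting. Given $z = \chow(Y) \in U_W$, Lemma~\ref{K-near} yields a one-parameter subgroup $\lambda \subset \SL(N+1)$ specializing $z$ to a K-polystable point $\hat z \in U$, with $\chow(\cX_{\hat z}, \omega_\KE(\hat z))$ approaching $\rU(N+1) \cdot z_0$ as $z \to z_0$. By Lemma~\ref{K-bdd} we remain within the fixed $\PP^N$, and by the uniqueness of K-polystable degenerations in Theorem~\ref{main} (in its higher-dimensional-base extension from Remark~\ref{r-smoothable}), the K-polystable limit $\cX_{\hat z}$ must be isomorphic to $X$. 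Hence $\hat z \in \SL(N+1) \cdot z_0$, and $\SL(N+1)_{\hat z}$ is an $\SL(N+1)$-conjugate of $\aut(X)$.

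To promote this from $\hat z$ back to $z$, I would adapt the finite-subgroup argument inside the proof of Lemma~\ref{aut-Y}. Given a finite subgroup $H < \SL(N+1)_z \cong \aut(Y)$, the claim embedded in that proof produces an $H$-invariant divisor $E \sim -mK_Y$ so that $(Y, \tfrac{1}{m}E)$ is klt and smoothable. By Remark~\ref{r-small} combined with Lemma~\ref{interpolate} and the K-semistability of $Y$, the pair $(Y, E)$ is $\beta$-K-polystable for all $\beta \in (0, 1)$. Theorem~\ref{log-main}(iii) applied along the closure of the $\lambda$-orbit of $(Y, E)$ then produces a continuous family of conical K\"ahler-Einstein metrics whose Tian embeddings give a continuous path of Chow points converging at $\beta \to 1$ to an embedding of $X$ (invoking Theorem~\ref{ss} and the identification $\cX_{\hat z} \cong X$). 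The log version of Claim~\ref{G-U} (Chen-Donaldson-Sun) forces a suitable conjugate of $H$ into $\rU(N+1) \cap \aut(X)$, and Lemma~\ref{Gz0} transfers this back to $H \subset \aut(X)$. The main obstacle will be verifying that this continuity argument, originally designed for K-polystable $Y$ in Lemma~\ref{aut-Y}, genuinely transfers to the K-semistable case; the crucial input is the identification $\cX_{\hat z} \cong X$, which guarantees that the terminal Chow point lies on $\rU(N+1) \cdot z_0$ rather than on some other K-polystable orbit. Once $\SL(N+1)_z \subset \aut(X)$ is established for all $z \in U_W$, Assumption~\ref{sp} follows directly.
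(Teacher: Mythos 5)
Your architecture is the paper's: the corollary is assembled from Lemmas \ref{K-bdd}, \ref{K-near} and \ref{aut-Y}, with the identity component of $\SL(N+1)_z$ controlled by the transversality property \eqref{tv} from Lemma \ref{disj} and the finite subgroups handled via the invariant-divisor trick, the path of Tian embeddings as $\beta\to 1$, and the local rigidity Lemma \ref{Gz0}. However, there is a genuine error at the step you yourself flag as ``the crucial input.'' The claim that for every $z=\chow(Y)\in U_W$ the K-polystable degeneration satisfies $\cX_{\hat z}\cong X$ is false, and no shrinking of $U_W$ can rescue it: whenever $X$ admits non-trivial K-polystable deformations (the typical situation, since the moduli space is positive-dimensional in general), every Zariski or analytic neighborhood of $z_0$ contains Chow points of K-polystable $Y\not\cong X$, and for such $Y$ one has $\hat z=z$ and $\cX_{\hat z}=Y$. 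The uniqueness in Theorem \ref{main}(ii), even in the higher-dimensional form of Remark \ref{r-smoothable}, only says that a family which \emph{does} degenerate to the K-polystable $X$ admits no other K-polystable filling; it does not force the degeneration of a nearby $Y$ to be $X$. Hence ``$\hat z\in\SL(N+1)\cdot z_0$'' and ``$\SL(N+1)_{\hat z}$ is a conjugate of $\aut(X)$'' fail in general, and your reduction to the K-polystable orbit of $z_0$ does not close.

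The repair is that the argument never needs the terminal Chow point to lie \emph{on} $\rU(N+1)\cdot z_0$, only \emph{near} it, and this weaker statement is exactly the quantitative ``Moreover'' clause of Lemma \ref{K-near}: $\chow(\cX_{\hat z},\omega_\KE(\hat z))$ converges to $\rU(N+1)\cdot \chow(X)$ as $z\to z_0$, with Lemma \ref{K-bdd} keeping everything inside the fixed $\PP^N$. Running the argument of Lemma \ref{aut-Y} for the pair $(Y,E)$ then places the $\beta\to 1$ limit of the Tian embeddings in the thin neighborhood $U_{1,\ep}=\exp(\faut(X)^{\perp}_{<\ep})\cdot U_1$ of $z_0$, carrying a unitary conjugate of the finite subgroup $H$ in its stabilizer; Lemma \ref{Gz0} requires only this closeness (the stabilizing element being of the form $g\cdot\exp\xi$ with $g\in\rU(N+1)$ and $|\xi|<\ep$) to conclude $H<\aut(X)$. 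So the isomorphism $\cX_{\hat z}\cong X$ should be replaced throughout by the convergence of Tian embeddings; with that substitution your proof coincides with the paper's.
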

Next in order to apply Lemma \ref{fini} in Section \ref{s-apendix}, we now establish Assumption \ref{nr}.  Let us fix $G=\SL(N+1)$ and $G_{z_0}=\aut(X)$. Recall from  Assumption \ref{nr}: An analytic {\em open neighborhood} of $z_0\in U^\nr\subset \PP W$  is of {\em finite distance} if there is a {\em compact}
subset  $G_{U^\nr}\Subset G/G_{z_0}$ depending only on $U^\nr$ and $z_0$ such that for any pair $(z, g)\in U^\nr\times G$ satisfying  $g\cdot z\in U^\nr $, there is an $h\in G,\  [h]\in G_{U^\nr}\Subset G/G_{z_0}$ such that $g\cdot z=h\cdot z$.
\begin{lem}\label{NR}
Let $z_0\in U_r\subset \PP W$ be defined in Definition \ref{Ur} and
$$U_{Z^\ast,r}:=U_r\times_{{\PP^M}} Z^\ast .$$
Then for $0<r$ sufficient small, {$U_{Z^\ast,r}$ is a $G_{z_0}$-invariant subset  of {\em finite distance} in the sense of {\em Assumption \ref{nr}}.}
\end{lem}
\begin{proof}
In order to better illustrate the idea, let us first deal with the case that $z_0$ is K-stable, hence $G_{z_0}<\infty$.  As we have seen in the proof of Theorem \ref{K-luna}, there is a {\em proper} $\UU(N+1)$-invariant slice $z_0\in\Sigma\subset \HH^{\chi;N}$ obtained via Tian's embedding. By the continuity of $\Sigma$ and transversality of the $\fg_{z_0}^\perp$-action on $U_0$(cf. the proof Lemma \ref{disj}),  for  some $0<r''<r'\ll 1$  and $0<\ep\ll 1$ we have
\begin{equation}\label{BZ-r}
 B_{Z^\ast}(z_0,r'')\subset U_{r'}\cap \exp \fg_{z_0,<\ep}^\perp\cdot \Sigma,
\end{equation}
where $\fg_{z_0,<\ep}^\perp:=\{\xi\in \fg^{\perp}_{z_0}\mid |\xi|<\ep\}$ and $B_{Z^\ast}(z_0,r'')$ denotes the ball of radius $\ep$ centered at $z_0\in Z^\ast$  with respect to a prefixed continuous metric on $Z^\ast$. Moreover, by choosing a small $r$ if necessary, we may assume $\cX_z$ is K-stable for all $z\in B_{Z^\ast}(z_0,r'')$.

To see the lemma, let $\{s_i\}$ be the local basis of $\pi_\ast (\sO_{\PP^N}(1)|_\cX)$ corresponding to the coordinate sections of $\PP^N$  such that the induced embedding of $X=\cX_{z_0}\subset\PP^N$ gives rise to $\hilb(X)$.  Now let us equip the line bundle $\sO_{\cX}(-rK_{\cX/Z^{\ast,\kps}})\cong \sO_{\PP^N}(1)|_\cX$ with a Hermitian metric which gives rise to the {\em unique} K\"ahler-Einstein metric when restricted to each $\cX_z$ with $z\in B_{Z^\ast}(z_0,r'')$, and we can introduce the matrix $A_\KE(z)$ as in the proof of Lemma \ref{aut-Y}. Then \eqref{BZ-r} follows from the continuity of $A_\KE(z)$ with respect to $z\in Z^{\ast}$ and  $A_\KE(z_0)=I_{N+1}$ (as $X\subset\PP^N$ is a  Tian's embedding).

As a consequence, for any pair
$(z,g)\in B_{Z^\ast}(z_0,r'')\times G$ satisfying  $g\cdot z\in B_{Z^\ast}(z_0,r'')$,  there are $h',h''\in G$ such that under the quotient map
 $$[\cdot]:G\to G/G_{z_0},$$
  $[h'], [h'']\in G/G_{z_0}$ are perturbations of $[1]\in G/G_{z_0}$ and $h'\cdot z, h''\cdot g\cdot z\in \Sigma$.  Since both $h\cdot z$ and $h'\cdot g\cdot z$ are the Hilbert points of Tian's embedding of the {\em same} $\QQ$-Fano variety, we know that $u:=h'^{-1}\cdot h''\cdot g\in \UU(N+1)$. This implies that $g\cdot z=h\cdot z$ with $h=h''^{-1}\cdot h'\cdot u$ and $[h]$ being {\em uniformly bounded} (with the bound depending only on $ B_{Z^\ast}(z_0,r'')$ and $z_0$)   in $G/G_{z_0}$. Since  the property whether or not $z$ lies in  ${U_{Z^\ast,r}}$  is independent of the $G_{z_0}$-translation,  we conclude that  Assumption \ref{nr} holds for all  points  in $ U_{Z^\ast, r}\subset G_{z_0}\cdot B_{Z^\ast}(z_0,r'')$ for some $0<r<r''$.

 For the general case, let us introduce   a  general divisor $\cD\in |-mK_{\cX}|$ for sufficiently divisible $m$ such  that
 \begin{enumerate}
 \item $(\cX,\cD)|_{U_W}$  (, where $U_W$ is given in the proof of Theorem \ref{K-luna}) are family of $\QQ$-Fano variety;
 \item  $\cD_z$ is smooth whenever $\cX_z$ is for $z\in U_W$.
 \end{enumerate}
   Then by Theorem \ref{log-main} we can construct a
    {\em proper}
   $\UU(N+1)$-invariant slice
   $\Sigma_{\frac{1-\be}{m}\cD}\subset \HH^{\chi;N}$ using Tian's embedding of $\cX_z\subset\PP^N$ with respect to the the {\em unique} conical K\"ahler-Einstein metric
 $$
\Ric(\omega_{\cX_z}(\be))=\be \omega_{\cX_z}(\be)+\frac{1-\be}{m}[\cD_z] \text{ on } \cX_z\
$$
for all $z\in U_W$ near $z_0$.
In particular,  Theorem \ref{log-main} and \ref{ss} imply that $\Sigma_{\frac{1-\be}{m}\cD}\to \Sigma$ in the sense that $\forall \ep>0$, $\Sigma_{\frac{1-\be}{m}\cD}$ falls into a $\ep$-tubular neighborhood of $\Sigma$ as $\be\to 1$. This implies that for $0<r'\ll1$ and $z,z'\in B_{Z^\ast}(z_0,r')$ that are contained in
$$
(G\cdot \hilb(\cX_z))\ \bigcap\  (\UU(N+1)\cdot\exp \sqrt{-1}\fg_{z_0,<\ep}^\perp)\cdot B_{Z^\ast}(z_0,r')\  \text{(cf. \eqref{bdd-G-G_0})}
$$
with $\fg_{z_0,<\ep}^\perp:=\{\xi\in \fg_{z_0}\mid |\xi|<\ep\}$,  the $\UU(N+1)$-orbits for Tian's embedding of $(\cX_z,\cD_z)$ and $(\cX_{z'},\cD_{z'})$ are very close in the sense that they can be translated to each other by an element $h\in \UU(N+1)\cdot\exp\sqrt{-1} \fg_{z_0,<\ep}^\perp\cdot G_{z_0}\subset G$ (i.e. $[h]\in G/G_{z_0}$ is  bounded in the sense of  \eqref{bdd-G-G_0}).
In particular, this allows us to  treat these two $\UU(N+1)$-orbits as {\em almost identical} one and we argue exactly the same way as  the K-stable case. This completes the justification of Assumption \ref{nr} for  a neighborhood of $z_0\in U_{Z^\ast, r}$ for some sufficient small $r>0$.
\end{proof}

{Finally, with all the preparation above we are ready to finish our main construction of this section.}
\begin{proof}[Proof of Theorem \ref{t-good}]
By Theorem \ref{t-AFScriterion}, proving our statement boils down to establishing the following:  for any closed point $[z_0]\in [Z^\ast/\SL(N+1)]$ there is an {\em affine} neighbourhood $z_0:=\hilb(X)\in U_W\subset \PP W$ determined in $\eqref{PWW}$ such that
\begin{enumerate}
\item The morphism $[U_W/G_{z_0}]\to [Z^\ast/G]$ is  stabilizer preserving and sending closed point to closed point, and
\item  For any $\mathbb{C}$-point $z\in Z^\ast$ specializing to $z_0$ under $G$-action, the closure of substack $[z]$ inside $[Z^\ast/G]$, $\overline{\{[z]\}}\subset [Z^\ast/G]$ admits a good moduli space,
\end{enumerate}
with $G=\SL(N+1)$ and $G_{z_0}=\aut(X)$ as before.

\Blue{We have shown the  \'etale  morphism $[U_W/G_{z_0}]\to [Z^\ast/G]$ is stabilizer preserving and sending closed points to closed points by Theorem \ref{K-luna}. Next we prove the morphism is affine to conclude it is a local presentation satisfying Condition (1) in Theorem \ref{t-AFScriterion}.}
Since $Z^*\to [Z^*/G]$ is faithfully flat, it suffices to show that
$$
\phi\colon G\times_{G_{z_0}} U_W \longrightarrow Z^* 
$$
is affine.
Since $\phi$ is quasi-finite and $Z^*$ is separated, it suffices to choose $U_W$ such that $G\times_{G_{z_0}} U_W$ is affine.
Let $U_W\subset Z^\ast\cap \mathbb{P}(W)$ be a $G_{z_0}$-invariant affine open set then we know $G\times_{G_{z_0}} U_W$ is affine since
it is a quotient of the affine scheme $G\times U_W$ by the free action of the reductive group $G_{z_0}$. Furthermore,  we have an isomorphism
$$
(G\times_{G_{z_0}}U_W)/\!\!/G\cong U_W/\!\!/G_{z_0}
$$
and $G\times_{G_{z_0}}U_W$ is the inverse image of the affine neigborhood
$$\pi_W|_{U_W}(z_0)=0\in U_W/\!\!/G_{z_0} \text{ with }\pi_W \text{ defined in } \eqref{pi-w} $$
 under the GIT quotient by $G$.

Now we  establish the second condition. Since we have already established
the  uniqueness of minimal orbit contained in $\overline{BO}_{z_0}$  stated after  diagram \eqref{Z*-chow},
all we need is the affineness of $G\cdot \pi^{-1}_W(0)$ as it implies that for any $z\in Z^\ast$ satisfying $\overline{G\cdot z}\ni z_0$ the closure of $[z]\in [Z^*/G]$   is a closed substack of $[G\cdot \pi^{-1}_W(0)/G]$, which can be written as the form $[{\rm Spec}(A)/G]$ for some affine scheme ${\rm Spec}(A)$, hence $\overline{[z]}$ admits a good moduli space.

To obtain the affineness,   one notices that Theorem \ref{K-luna} and Corollary \ref{semi-sp} guarantee the Assumption \ref{sp}, also we have already established Assumption \ref{nr} by Lemma \ref{NR}.  Thus the morphism
\begin{equation}\label{phi-r}
\phi|_{G\times_{G_{z_0}} U_{Z^\ast,r}}:G\times_{G_{z_0}} U_{Z^\ast,r} \to G\cdot U_{Z^\ast,r} 
\end{equation}
is a  {\em finite} morphism  for $0<r\ll 1$ by Lemma \ref{fini} in Section \ref{s-apendix}. By choosing  $0<r$ even smaller, we may conclude that $\phi|_{G\times_{G_{z_0}} U_r}$ is an {\em analytic isomorphism}, since
$\phi|_{G\cdot z_0}$
is an isomorphism and immersion near $G\cdot z_0$.
  Now we restrict $\phi$ to the fiber  over  $[z_0]\in [Z^\ast /G]$, we have a finite morphism
$$G\times_{G_{z_0}} \pi^{-1}_W(0)\longrightarrow  G\cdot \pi^{-1}_W(0)\ .$$
Since  $G\times_{G_{z_0}} \pi^{-1}_W(0)$ is a fiber of a GIT quotient morphism,  we conclude that $G\cdot \pi^{-1}_W(0)$ is affine.


As a consequence, the \'etale chart $\phi/G:(G\times_{G_{z_0}} U_W)/\!\!/G\to G\cdot U_W/G$ is actually a {\em finite} morphism, which implies  $G\cdot U_W/G$  is affine. This gives an  {\em affine}  neighborhood of $[z_0]\in \mathcal{KF}_N$. This  proves that the algebraic space $\mathcal{KF}_N$ is actually a {\em scheme}.
Finally to prove the last statement of Theorem \ref{t-good}, we observe that Lemma \ref{K-bdd} implies that the closed points of $\mathcal{KF}_N$ stabilizes. However, since $\mathcal{KF}_N$ is semi-normal, we indeed know that they are isomorphic (see \cite[7.2]{Kol96}).
\end{proof}

\begin{rem}
We want to point out that by shrinking $U_W$ if necessary  the map $\phi:G\times_{G_{z_0}} U_W \to G\cdot U_W$ is actually {\em strongly \'etale} in the sense of \cite[page 198]{MFK}, i.e. $U_W$ is a Luna's \'etale slice.  To see that one notices  that  we have already established in the above that the categorical quotient $(G\cdot U_W)/G$ is in fact a {\em good quotient} (see also \cite[Definition 2.12]{Dre2004}) and moreover the map $\phi$ induces an \'etale morphism
$$\phi/G: (G\times_{G_{z_0}} U_W)/G \to (G\cdot U_W)/G.$$
  So all we need to show is
\begin{equation*}
(\phi,\pi_{G\times_{G_{z_0}} U_W}):
\begin{array}{cccc}
G\times_{G_{z_0}} U_W & \overset{\phi%
}{ \xrightarrow{\hspace*{1.3cm}}} & G\cdot U_W\times_{(G\cdot U_W)/G}(G\times_{G_{z_0}} U_W)/G & \\
( g,w) & \longmapsto & (g\cdot w, [g\cdot w]) &
\end{array}%
\end{equation*}%
is an isomorphism, where $\pi_{G\times_{G_{z_0}} U_W}:G\times_{G_{z_0}} U_W\to (G\times_{G_{z_0}} U_W)/G\cong U_W/\!\!/G_{z_0}$ is the GIT quotient map. But this follows from the fact that   $\phi|_{G\times_{G_{z_0}} U_r}$ in \eqref{phi-r} is an analytic {\em isomorphism} for small $r$ and $\phi$ is {\em finite}.
\end{rem}

\begin{rem}\label{HiCh}

Notice that we can take the local GIT quotient of a similarly defined $Z^\circ_{\red}$ for each $N_r=\chi(X,\sO_X(-rK_X))-1$. Although we are {\em unable} conclude that those local GIT quotients we constructed in this section will be stabilized for $N \gg 1$, their semi-normalizations indeed will be.
Another reason we work over a seminormal base is that the condition of being smoothable does not yield a reasonable moduli functor for schemes, e.g., in general there is no good definition of smoothable varieties over an Artinian ring.

We also remark that there is no difference to work on Hilbert scheme or Chow variety in our case, at least after the seminormalization. This is because by our definition  of $Z^\circ$ (see Definition \ref{Z0}), the closed points correspond to $\QQ$-Fano varieties (see Remark \ref{Y-E}) which in particular are geometrically reduced,  hence the Hilbert-to-Chow morphism is a bijection (see \cite[Chapter I, Theorem 6.3]{Kol 96}) when restricted to  $Z^{\circ}$, thus they share the same semi-normalization (\cite[Section 3.15]{Kol96}).

\end{rem}

\section{Appendix}\label{s-apendix}
\subsection{Constructibility of $\kst$}\label{s-constru}

In this section, we  will prove Proposition \ref{p-con} in a more general setting. First, let us  recall some basics from \cite[section 2 of Chapter 2]{MFK}.   Let $G$ be a reductive group acting on a (quasi-)projective variety $(Z,L)$ polarised by a $G$-linearized  very ample line bundle $L$.

\begin{defn} The {\em rational flag complex $\De(G)$} is the set of non-trivial 1-PS's $\lam$ of $G$ modulo the {\em equivalence} relation:
$\lam_1\sim\lam_2$ if there are positive integers $n_1$ and $n_2$ and a point $\ga\in P(\lam_1)$ such that
$$\lam_2(t^{n_2})=\ga^{-1}\lam_1(t^{n_1})\ga\text{ for all } t\in \GG_m$$
where
$$P(\lam):=\left\{\ga\in G\left | \lim_{t\to 0}\lam(t)\ga\lam(t^{-1})\text{ exists }\right.\right\}\subset G$$
is the unique {\em parabolic subgroup} associated to $\lam$.
The point of $\De(G)$ defined by $\lam$ will be denoted by $\De(\lam)$. In particular, for a maximal torus $T\subset G$, $\De(T)=\Hom_\QQ(\GG_m,T)$.
\end{defn}

Then we have the following
\begin{lem}[Chapter 2, Proposition 2.7, \cite{MFK}]\label{mu-ga}
For any 1-PS $\lam:\GG_m\to G$, let $\mu^L(z,\lam)$ denote the {\em $\lam$-weight} of $z\in Z$ with respect to the $G$-linearization of $L$. Then for any $(\ga,z)\in G\times Z$, we have
$$\mu^{L}(z,\lam)=\mu^{L}(\ga z,\ga\lam\ga^{-1})\ .$$
Moreover, if  $\ga\in P(\lam)$ then $\mu^{L}(z,\lam)=\mu^{L}(z,\ga\lam\ga^{-1})\ .$
\end{lem}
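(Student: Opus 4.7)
The plan is to reduce both assertions to an explicit computation of the Hilbert--Mumford weight in a $G$-equivariant linear model. Choose a $G$-equivariant embedding $(Z,L)\hookrightarrow (\PP(V),\sO(1))$ with $V=H^0(Z,L)^*$, decompose $V=\bigoplus_{w\in\ZZ}V_w$ into weight spaces for the $\lam$-action, and for a chosen lift $\hat z\in V\setminus\{0\}$ of $z$ write $\hat z=\sum_w\hat z_w$ with $\hat z_w\in V_w$. Then the standard Białynicki--Birula-type computation of the limit $\lim_{t\to 0}\lam(t)z$ gives
$$
\mu^L(z,\lam)=-\min\{w\in\ZZ\mid \hat z_w\neq 0\}.
$$
Everything reduces to reading off this invariant in suitable coordinates.

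For the first identity, note that the $\ga\lam\ga^{-1}$-weight decomposition of $V$ is simply $V=\bigoplus_w \ga V_w$, and a lift of $\ga z$ is $\ga\hat z=\sum_w\ga(\hat z_w)$ with $\ga(\hat z_w)\in\ga V_w$. Since $\ga\colon V_w\to \ga V_w$ is an isomorphism, the set $\{w\mid \ga(\hat z_w)\neq 0\}$ coincides with $\{w\mid \hat z_w\neq 0\}$, and the equality $\mu^L(z,\lam)=\mu^L(\ga z,\ga\lam\ga^{-1})$ follows immediately from the formula above.

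For the second identity, first apply the first part to rewrite $\mu^L(z,\ga\lam\ga^{-1})=\mu^L(\ga^{-1}z,\lam)$, so it suffices to show $\mu^L(\ga^{-1}z,\lam)=\mu^L(z,\lam)$ whenever $\ga\in P(\lam)$ (equivalently $\ga^{-1}\in P(\lam)$, since $P(\lam)$ is a subgroup). The key structural input is the standard description of $P(\lam)$ acting on $V$: writing $\ga^{-1}\in\End(V)$ in block form $(\ga^{-1}_{w,w'}\colon V_{w'}\to V_w)$ relative to the weight decomposition, the condition $\ga^{-1}\in P(\lam)$ is equivalent to $\ga^{-1}_{w,w'}=0$ whenever $w<w'$, with the diagonal blocks $\ga^{-1}_{w,w}\in\mathrm{GL}(V_w)$ (they form the Levi component). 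Setting $w_0=\min\{w\mid \hat z_w\neq 0\}$, we obtain $(\ga^{-1}\hat z)_w=\sum_{w'\leq w}\ga^{-1}_{w,w'}\hat z_{w'}=0$ for $w<w_0$, while $(\ga^{-1}\hat z)_{w_0}=\ga^{-1}_{w_0,w_0}\hat z_{w_0}\neq 0$. Hence $\min\{w\mid(\ga^{-1}\hat z)_w\neq 0\}=w_0$, which yields the desired equality.

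The only substantive point is the identification of $P(\lam)$ with the block upper triangular elements in the weight decomposition of a faithful $\lam$-representation, together with invertibility of the diagonal blocks; both are classical consequences of the Białynicki--Birula decomposition for $\GG_m$-actions on affine varieties applied to $G\subset\mathrm{GL}(V)$ under $\mathrm{Ad}\lam$. Given that input, the rest is a one-line weight bookkeeping, so no serious obstacle is expected.
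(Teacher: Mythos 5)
Your proof is correct. The paper gives no proof of this lemma at all — it is quoted verbatim from \cite{MFK} (Chapter 2) — and your argument is exactly the standard one behind that reference: the formula $\mu^L(z,\lam)=-\min\{w:\hat z_w\neq 0\}$ in a linear model handles the conjugation identity by transport of the weight decomposition, and the block-triangular description of $P(\lam)$ (with invertible diagonal blocks, coming from the limit $\lim_{t\to 0}\lam(t)\ga\lam(t)^{-1}$ landing in $G$) shows the lowest nonzero weight component of $\hat z$ is preserved under $\ga^{-1}$, which gives the second identity.
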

The next Lemma is a slight extension of \cite[Chapter 2, Proposition 2.14]{MFK} essentially contained in \cite[proof of Lemma 2.11]{Odaka14}, hence the proof will be omitted.
\begin{lem}\label{T-L-M}
 Let $T\subset G$ be a  maximal torus and $L_i, i=1,2$ be two $G$-linearized {\em ample} line bundles over $Z$.  Then there is a finite set of {\em linear functional} $l^{L_i}_1,\cdots,l^{L_i}_{r_{L_i}}, i=1,2$ which are rational on  $\Hom_\QQ(\GG_m,T)$ with the following property:
 \begin{equation}\label{I}
 \forall z\in Z, \ \exists\  I(z,L_i)\subset \{1,\cdots, r_{L_i}\}, \ I(z,L_2)\subset \{1,\cdots,r_{L_2}\}
 \end{equation}
 such that the {\em $\lam$-weight} of $z\in Z$ with respect to the linearization of $G$ on $L_1\otimes L_2^{-1}$ is given by
\begin{eqnarray*}
\mu^{L_1}(z,\lam)-\mu^{L_2}(z,\lam)
&=& \max\{l^{L_1}_i(\lam)\mid i\in I(z,L_1)\}-\max\{l^{L_2}_i(\lam)\mid i\in I(z,L_2)\}
\end{eqnarray*}
 for all 1-PS  $ \lam\subset T $.
Moreover,  the function
$$
\begin{array}{cccc}
 \psi^{L_1,L_2}:& Z & \longrightarrow &2^{\{1,\cdots,r_{L_1}\}}\sqcup 2^{\{1,\cdots, r_{L_2}\}}\\
& z&\longmapsto & I(z; L_1,L_2):=I(z,L_1)\sqcup I(z,L_2)
\end{array}
$$
are {\em constructible} in the sense that $\forall I\in 2^{\{1,\cdots,r_{L_1}\}}\sqcup 2^{\{1,\cdots, r_{L_2}\}}$, the set  $\psi^{-1}(I)\subset Z$ is constructible.
\end{lem}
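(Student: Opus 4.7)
The plan is to combine the standard Hilbert--Mumford formula for $\mu^L$ in terms of $T$-weights of sections with the constructibility of the associated weight stratification, following the template of \cite[Chap.~2, Prop.~2.14]{MFK} and \cite[Lemma~2.11]{Odaka14} applied now to the two ample $G$-linearized line bundles $L$ and $M$ simultaneously. First I would replace $L$ and $M$ by sufficiently divisible positive tensor powers, which only rescales $\mu^L$ and $\mu^M$ by positive integers and is therefore harmless for the desired formula, so that both admit $G$-equivariant closed embeddings $\phi_L\colon Z \hookrightarrow \PP(V_L^\ast)$ and $\phi_M\colon Z \hookrightarrow \PP(V_M^\ast)$, where $V_L = H^0(Z,L)$ and $V_M = H^0(Z,M)$ are the corresponding $G$-representations. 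Under the maximal torus $T$ these decompose into finitely many weight spaces $V_L = \bigoplus_{i=1}^{r_L} V_{L,\chi_i^L}$ and $V_M = \bigoplus_{j=1}^{r_M} V_{M,\chi_j^M}$; define rational linear functionals on $\Hom_\QQ(\GG_m, T)$ by $l_i^L(\lambda) := -\langle \lambda, \chi_i^L\rangle$ and $l_j^M(\lambda) := -\langle \lambda, \chi_j^M\rangle$, with signs chosen so that $\mu^L(z,\lambda)$ coincides with minus the $\lambda$-weight on the fibre $L|_{\lim_{t\to 0}\lambda(t)\cdot z}$.

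For each $z \in Z$, fix a lift $\hat z_L \in V_L^\ast$ of $\phi_L(z)$, decompose $\hat z_L = \sum_i \hat z_{L,i}$ according to the dual weight decomposition, and set $I(z,L) := \{\,i : \hat z_{L,i}\neq 0\,\}$; this is manifestly independent of the choice of lift, and $I(z,M)$ is defined analogously. The standard limit computation in projective coordinates, whereby $\lim_{t\to 0}\lambda(t)\cdot[\hat z_L]$ is the projectivisation of the isotypic component with dominant exponent (cf.~\cite[Chap.~2, Prop.~2.7]{MFK}), yields the Mumford formula
$$\mu^L(z,\lambda) = \max_{i\in I(z,L)} l_i^L(\lambda)$$
for every 1-PS $\lambda \subset T$, and likewise for $M$. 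Additivity of $\mu^{(-)}$ in the linearization then gives $\mu^{L\otimes M^{-1}}(z,\lambda) = \mu^L(z,\lambda) - \mu^M(z,\lambda)$, which is the claimed identity.

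For the constructibility of $\psi^{L,M}$, observe that for each pair $(I_0, J_0) \in 2^{\{1,\ldots,r_L\}} \times 2^{\{1,\ldots,r_M\}}$ the stratum $\{\,z\in Z : I(z,L)=I_0,\ I(z,M)=J_0\,\}$ is the pullback under $(\phi_L, \phi_M)$ of the locus on $\PP(V_L^\ast)\times\PP(V_M^\ast)$ defined by the non-vanishing of the isotypic components indexed by $I_0 \sqcup J_0$ together with the vanishing of those outside; each such condition is open or closed, so the stratum is locally closed and hence constructible, and since there are only finitely many pairs $(I_0, J_0)$ the map $\psi^{L,M}$ is constructible. The main technical care required in executing this is consistency of sign conventions among the definition of $\mu^L$, the signs of $l_i^L$ and $l_j^M$, and the additivity $\mu^{L\otimes M^{-1}} = \mu^L - \mu^M$, so that the two maxima land correctly on the right-hand side of the claimed identity; beyond this bookkeeping the argument is essentially routine once the $T$-weight decomposition is in place.
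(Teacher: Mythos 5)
Your proof is correct and follows essentially the route the paper intends: the paper omits the argument, citing \cite[Chapter 2, Proposition 2.14]{MFK} and \cite[Lemma 2.11]{Odaka14}, and those references rest on precisely your ingredients, namely the $T$-weight decomposition of $H^0(Z,L^{\otimes k})$, the Hilbert--Mumford expression of $\mu^L(z,\lambda)$ as the maximum of the linear functionals indexed by the nonvanishing weight components of a lift of $z$, additivity of $\mu$ in the linearization giving $\mu^{L\otimes M^{-1}}=\mu^L-\mu^M$, and the locally closed weight-vanishing stratification yielding constructibility of $\psi^{L,M}$. The only point worth making explicit is that after replacing $L$ and $M$ by powers $L^{\otimes a}$, $M^{\otimes b}$ you must divide the resulting functionals by $a$ and $b$ respectively, which is harmless since the lemma only requires rational linear functionals on $\Hom_\QQ(\GG_m,T)$.
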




For any line bundle that can be written as $L_1\otimes L_2^{-1}$ with  $L_1$ and $L_2$ both being $G$-linearized and very ample,  we can similarly show that $Z$ can be decomposed into a union of finitely many constructible sets indexed by $2^{\{1,\cdots,r_{L_1}\}}\sqcup 2^{\{1,\cdots,r_{L_2}\}}$, such that restricted on each piece,
$$\mu^{L_1\otimes L_2^{-1}}(z,\lam)=\mu^{L_1}(z,\lam)-\mu^{L_2}(z,\lam)$$
is a rational function on ${\rm Hom}_{\QQ}(\GG_m,T)$.
\begin{prop}\label{be-constr} Let $G$ act on an polarized variety $(Z,L)$.
 Let $M_i,i=1,2$ be two  $G$-linearized line bundles on $Z$ (not necessarily being ample). For $z\in Z$ and $\de\in \De(G)$, we define
$$\nu_{1-\be}^{M_1,M_2}(z,\de):=\frac{\mu^{M_1}(z,\lam)-(1-\be)\mu^{M_2}(z,\lam)}{|\lam|}\text{  with } \De(\lam)=\de$$
 and define
$$\vpi_G^{M_1, M_2}(z):=\sup \left \{\beta \in (0,1]\left | \displaystyle \inf_{\de\in \De(G)} \nu_{1-\be'}^{L,M}(z,\de)\geq0,\  \forall \be'\in [0,\be)\right.\right\}$$
or 0 if the set on the right hand side is an empty set.
Suppose $S\subset Z$ is a {\em constructible } set such that $\left.\vpi_G^{M_1,M_2}\right |_S>0$.
Then $\vpi(M_1,M_2)$ defines a $\QQ$-valued constructible function on $S$, i.e. $S=\sqcup_i S_i$ is a union of finite constructible sets  with  $\vpi(M_1,M_2)$ being  constant on each $S_i$.
\end{prop}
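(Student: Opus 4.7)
The plan is to reduce $\varpi_G^{M_1,M_2}(z)$ to a combinatorial invariant depending only on a constructible stratification of $S$. First I would fix a maximal torus $T\subset G$. Since every 1-PS of $G$ is $G$-conjugate into $T$, Lemma \ref{mu-ga} rewrites the defining expression as
\[
f_\beta(z):=\inf_{\lambda\in 1\text{-PS}(G)}\frac{\mu^{M_1}(z,\lambda)-(1-\beta)\mu^{M_2}(z,\lambda)}{|\lambda|}=\inf_{z'\in G\cdot z}\,\inf_{\lambda\in 1\text{-PS}(T)}\tilde F_\beta(z',\lambda),
\]
where $\tilde F_\beta(z',\lambda)$ is the bracketed quotient at $(z',\lambda)$. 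Consequently $\varpi_G^{M_1,M_2}(z)=\sup\{\beta\in(0,1]: f_{\beta'}(z)\ge 0 \ \forall\beta'\in[0,\beta)\}$.

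Next, choose $k\gg 0$ so that both $L^{\otimes k}$ and $L^{\otimes k}\otimes M_i^{-1}$ are $G$-linearized and very ample for $i=1,2$, and write $M_i$ as their difference. Applying Lemma \ref{T-L-M} (and the remark immediately after it) simultaneously to these four very ample bundles produces finitely many rational linear functionals on $\Delta(T)=\Hom_{\QQ}(\GG_m,T)$ and a constructible decomposition $Z=\bigsqcup_{\alpha}Z_\alpha$ such that on each stratum $Z_\alpha$ the function $\lambda\mapsto\mu^{M_i}(z',\lambda)$ is given by a fixed piecewise-linear expression $\Phi_i^{\alpha}(\lambda)$ in the prefixed functionals, independent of $z'\in Z_\alpha$. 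In particular,
\[
\tilde f_\beta^{(\alpha)}:=\inf_{\lambda\in 1\text{-PS}(T)}\frac{\Phi_1^{\alpha}(\lambda)-(1-\beta)\Phi_2^{\alpha}(\lambda)}{|\lambda|}
\]
is a well-defined constant on $Z_\alpha$, and since it is the infimum over the unit sphere of finitely many affine-in-$\beta$ functions, $\tilde f_\beta^{(\alpha)}$ is a rational, concave, piecewise-linear function of $\beta$ with rational breakpoints.

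Finally, setting $A(z):=\{\alpha:(G\cdot z)\cap Z_\alpha\ne\emptyset\}$, the decomposition of the infimum yields
\[
f_\beta(z)=\min_{\alpha\in A(z)}\tilde f_\beta^{(\alpha)},
\]
so $\varpi_G^{M_1,M_2}(z)$ depends on $z$ only through the finite subset $A(z)$. The map $z\mapsto A(z)$ takes finitely many values, and each fibre
\[
\{z:A(z)=A\}=\bigcap_{\alpha\in A}G\cdot Z_\alpha\,\setminus\,\bigcup_{\alpha\notin A}G\cdot Z_\alpha
\]
is constructible, since $G\cdot Z_\alpha$ is constructible by Chevalley's theorem. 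Refining $S$ along these finitely many fibres produces the required partition $S=\bigsqcup_i S_i$ on each piece of which $\varpi_G^{M_1,M_2}$ equals the supremum of those $\beta\in(0,1]$ with $\min_{\alpha\in A}\tilde f_{\beta'}^{(\alpha)}\ge 0$ on $[0,\beta)$, a fixed rational number. The main obstacle will be justifying the reformulation $f_\beta(z)=\min_{\alpha\in A(z)}\tilde f_\beta^{(\alpha)}$: one must know that the combinatorial data $I(z',\cdot)$ of Lemma \ref{T-L-M} controls $\mu^{M_i}(z',\lambda)$ uniformly in all $\lambda\in 1\text{-PS}(T)$ and not merely at a single $\lambda$. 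This is precisely the content of Lemma \ref{T-L-M}, so once invoked the remainder of the argument is formal bookkeeping together with Chevalley's theorem.
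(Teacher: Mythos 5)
Your proposal is correct and follows essentially the same route as the paper: fix a maximal torus, use Lemma \ref{mu-ga} to reduce the infimum over all 1-PS's of $G$ to the $T$-weights along the orbit, invoke Lemma \ref{T-L-M} (after writing each $M_i$ as a difference of ample $G$-linearized bundles) to get a constructible stratification on which the normalized weight is a fixed piecewise-linear function of $(\lambda,\beta)$, and then saturate the strata by the $G$-action via Chevalley's theorem. The only cosmetic difference is the direction of the twist ($L^{\otimes k}\otimes M_i^{-1}$ versus the paper's $L\otimes M_i$), which changes nothing.
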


\begin{proof}
We replace $L\to Z$ by its power such that  $L_1:=L\otimes M_1$ and $L_2:=L\otimes M_2$ are both ample. Then we fix a maximal torus $T\subset G$ and  let $\{l_i^{L_1}\}$  and $\{l_i^{L_2}\}$ be the {\em rational} linear functionals on $\Hom_\QQ(\GG_m,T)$ associated to $L_i,i=1,2$.  By Lemma \ref{T-L-M}, for any $I\in 2^{\{1,\cdots,r_{L_1}\}}\sqcup 2^{\{1,\cdots,r_{L_2}\}} $,  $S^T_I:=\psi^{-1}(I)\cap S$ is a constructible set. Now we define
$$\vpi_T^{M_1, M_2}(z):=\sup \left \{\beta \in (0,1]\left | \displaystyle \inf_{\de\in \De(T)} \nu_{1-\be'}^{L,M}(z,\de)\geq0,\  \forall \be'\in [0,\be)\right.\right\}$$
or 0 if the right hand side is an empty set.
In other words, it is the {\em first time} such that  the difference of two {\em rational} piecewise linear {\em convex} functions
$$\mu^{L_1}(z,\cdot)-(1-\beta)\mu^{L_2}(z,\cdot)-\beta\mu^{L}(z,\cdot)=\mu^{M_1}(z,\cdot)-(1-\be)\mu^{M_2}(z,\cdot)$$ vanishes along a ray in $\Hom_\QQ(\GG_m,T)$ or in \{0,1\}.  Clearly, we have $\be_I\in \QQ$ and they are independent of the choice of $L$.

Now in order to pass from $\vpi_T^{M_1,M_2}$ to $\vpi_G^{M_1,M_2}$,  let us recall Chevalley's Lemma \cite[Chapter II, Exercise 3.19]{Har77} which states that the image of constructible set under an algebro-geometric morphism is again constructible.  By applying it to the group action morphism
$$
G\times Z\longrightarrow Z \ ,
$$
we obtain that $S^G_I:=G\cdot  (\psi^{-1}(I)\cap S)\supset S^T_I$ are all constructible $\forall I\in2^{\{1,\cdots,r_{L_1}\}}\sqcup 2^{\{1,\cdots,r_{L_2} \}}$ . Now for any 1-PS $\lam$, there is a $\ga\in G$ such that $\ga \lam \ga^{-1}\subset T$. By Lemma \ref{mu-ga}, we have $\mu^{L_i}(z,\lam)=\mu^{L_i}(\ga z,\ga\lam\ga^{-1})\ , i=1,2$, which implies that
\begin{eqnarray*}
\vpi_G^{M_1,M_2}(z)
&=&\min\left\{\be_J\left | S^T_J\cap G\cdot z\ne\varnothing \text{ for }J\in 2^{\{1,\cdots,r_{L_1}\}}\sqcup 2^{\{1,\cdots,r_{L_2} \}}\right.\right\}\\
\end{eqnarray*}
To see it is  a constructible function on the constructible set $G\cdot S$,  one  notices that  all possible finite intersections of $\{S^G_J\}_J$ form a stratification of $G\cdot S$ into constructible sets and
$\vpi_G^{M_1,M_2}$ is constant on each stratum.

\end{proof}

Now to apply the above set up to the $\be$-K-stability of  $(X,D)\subset \PP^N$ with respect to the ${\rm SL}(N+1)$ action. Let $N+1=\dim H^0(X,K_X^{\otimes(-r)})$ and we define an open subscheme
\begin{equation}\label{Z}
Z:=\left \{\hilb(X,D)\left | \mbox{\stackbox[v][m]{{\footnotesize
$(X,D)\subset \PP^N\times \PP^N$ be a {\em klt} pair with Hilbert polynomial $\bchi=(\chi,\ti\chi)$\\  satisfying: $D\subset X$,
$D\in|\!-\!mK_X\!|$ 
and  $\sO_{\PP^N}(1)|_X\cong K_X^{-\otimes r}$.
}}}\right .\right \}\subset \HH^{\bchi;N}.
\end{equation}
Let  $\lam_\CM\to Z$ (cf. {\cite{FS90}}, \cite[Definition 2.3]{FiRo06} or \cite[equation (2.4)]{PaTi06}) be the CM-line bundle over $Z$ normalized in such a way that the corresponding weight for any one parameter subgroup of $\SL(N+1)$ is exactly  the $\DF$ introduced in Definition \ref{be-K}, and
$$\lam_\chow(\cX):=\lam_\CH(\cX,\sO_\cX(-rK_\cX))\to Z\text{ and }\lam_\chow(\cD):=\lam_\CH(\cD,\sO_\cX(-rK_\cX)|_\cD)\to Z$$
be the Chow line bundles introduced in \cite[equation (3.3)]{FiRo06}) for the flat family $\cX\to Z$ and $\cD\to Z$ respectively.

\begin{proof}[Proof of Proposition \ref{p-con}]
Let   us introduce

$$
M_1 :=\lam_\CM \text{ and }  M_2:=\left(\lam_\chow(\cX)^{\otimes\frac{nm}{(n+1)r}}\otimes\lam_\chow(\cD)^{-1}\right)^{\otimes\frac{1}{2m r^n\cdot (-K_X)^n}}
  \text{ (cf.\eqref{CH})}.
$$

By Theorem \ref{unique}, we know $(\cX_t,\cD_t)$ is $\beta$-K-stable $\forall t\in C$ and $\beta\in (0,\beta_0]$. After removing finite number of points from $C$, we obtain a quasiprojective $0\in S\subset C$ over which $\pi_\ast \sO_\cX(-rK_{\cX/C})|_S\cong \sO_S^{\oplus N+1}$ .  By fixing a basis of  $\pi_\ast\sO_\cX(-rK_{\cX/C})|_S$, we obtain an embedding
$$\iota: (\cX, \cD;\sO_\cX(-rK_{\cX/C}))\times_C S\longrightarrow \PP^N\times \PP^N\times S$$
which in turn  induces an embedding $S\subset Z$ with $S$ being  constructible and $\vpi^{M_1,M_2}_{{\rm SL}(N+1)}\geq \be_0>0$.  By  applying Proposition \ref{be-constr} to $S\subset Z$, we obtain $\kst(\cX_t,\cD_t)=\vpi^{M_1,M_2}_{{\rm SL}(N+1)}(t),\ \forall t\in S$ is a constructible function. Our proof is completed.
\end{proof}
\begin{rem}\label{constr-Kpoly}
The above argument {first appeared in \cite{Pa2012} and\cite{Odaka14}  independently}, they observed that one can also conclude that  the K-polystable locus in $S$ is also constructible.
\end{rem}

\subsection{Stabilizer preserving and finite distance properties}\label{st-pre}
In this section, we will establish  the criteria that guarantee the {\em stabilizer preserving} condition and ingredients needed to prove the {\em existence of good moduli} for the closed substack $\overline{\{[z]\}}$ for any $\mathbb{C}$-point $[z]\in [Z^\ast/\SL(N+1)]$.
\subsubsection{Stabilizer preserving}
The following example indicates that stabilizer preserving condition is a condition of {\em properness} and cannot be deduced from the reductivity of stabilizer  alone.}
\begin{exmp}[Richardson's example]\label{Richard}
Consider $\SL(2,\CC)$-action on
$$\mathrm{Sym}^{\otimes 3} \CC^2=H^0(\sO_{\PP^1}(3))=\rm{Span}_\CC\{X^3,X^2Y,XY^2,Y^3\} $$
induced by the standard action on $\CC^2$.
 Then the stabilizer of $p_0(X,Y)=(X-Y)(X+Y)^2$ is trivial and the stabilizer of $p(X,Y)=(X-Y)(X-\omega Y)(X-\omega^2 Y)$ is given by
$$
\begin{bmatrix} \omega & 0\\ 0& \omega^{-1} \end{bmatrix}\in\SL(2,\CC)\text{ with } \omega^3=1.
$$
Let
$$
\al(t)
=\frac{1}{2}\begin{bmatrix}1 & 1\\ -1 & 1\end{bmatrix}\begin{bmatrix}t^2 &0\\ 0 & t^{-1}\end{bmatrix}\begin{bmatrix} 1 & -1\\ 1 & 1\end{bmatrix}\in \rm{GL}(2,\CC)
=\frac{1}{2}\begin{bmatrix}t^2+1/t & -t^2+1/t\\ -t^2+1/t & t^2+1/t\end{bmatrix}\in \rm{GL}(2,\CC)
$$
then
$$\al(t):\left\{
\begin{aligned}
X-Y &\longrightarrow & t^2(X-Y) \\
X+Y &\longrightarrow & t^{-1}(X+Y)
\end{aligned}
\right.
$$
hence  fixes $p_0(X,Y)=\frac{3}{4}(X-Y)(X+Y)^2\in\rm{Sym}^{\otimes 3} \CC^2$.
Now let us  define
\begin{eqnarray*}
&&p_t(X,Y)=p(\al(t)\cdot X,\al(t)\cdot Y)\\
&=&\frac{1}{4}t^2(X-Y)(t^2(1+\omega)+t^{-1}(1-\omega))X+(-t^2(1+\omega)+t^{-1}(1-\omega) )Y)\cdot \\
&&\cdot (t^2(1+\omega^2)+t^{-1}(1-\omega^2))X+(-t^2(1+\omega^2)+t^{-1}(1-\omega^2) )Y)\\
&=&\frac{1}{4}(X-Y)(t^3(1+\omega)+(1-\omega))X+(-t^3(1+\omega)+(1-\omega) )Y)\cdot \\
&&\cdot (t^3(1+\omega^2)+(1-\omega^2))X+(-t^3(1+\omega^2)+(1-\omega^2) )Y),
\end{eqnarray*}
then we have
$$\displaystyle \lim_{t\to 0}p_t(X,Y)=\frac{3}{4}(X-Y)(X+Y)^2$$
 and the  stabilizer  of $p_t$  is the subgroup $\langle \zeta_t:=\zeta_{p_t}\rangle\subset \rm{SL}(2)$ with
$$
\zeta_{p_t}:=\al(t^{-1})\begin{bmatrix}\omega &0\\ 0 & \omega^{-1}\end{bmatrix}\al(t)=\frac{1}{2}\begin{bmatrix}1 & 1\\ -1 & 1\end{bmatrix}\begin{bmatrix}\omega+\omega^{-1} & t^{-3}(\omega-\omega^{-1})\\ t^3(\omega-\omega^{-1}) & \omega+\omega^{-1}\end{bmatrix}\begin{bmatrix} 1 & -1\\ 1 & 1\end{bmatrix}\stackrel{t\to 0}{\longrightarrow }\infty ,
$$
In particular, the family of stabilizers $\langle \zeta_{t}\rangle\subset \rm{SL}(2,\CC)$ is {\em unbounded} as $t\to 0$ unless $\omega=1$.
\end{exmp}

Our goal here is to find conditions preventing the existence of  pathological examples like above. Let us collect some basic facts on compact Lie groups acting on $\PP^M$. Although our main application is to the situation in Summary \ref{sum-slice}, we will proceed in a more general fashion as it might be valuable for future applications.

Let $K$ be a compact Lie group \ and $\rho :K\rightarrow \SU( M+1)
$ be a linear representation and $\rho ^{\mathbb{C}}:G=K^{\mathbb{C}%
}\to \SL( M+1) $ be its complexification. Let $z_0\in
\PP^N$ with its stabilizer satisfying:
\begin{equation}\label{G_0-red}
G_{z_0}=(K_{z_0})^\CC:=( G_{z_0}\cap K) ^\CC.
\end{equation}
We fix a bi-invariant inner product $\langle \cdot ,\cdot
\rangle _{\fk}$ on $\fk$ and let $\fk_{z_0}^{\bot }\subset \fk$ denote the orthogonal complement  of $\fk_{z_0}=\mathrm{Lie}(K_{z_0})\subset\fk$ with
respect to $\langle \cdot ,\cdot \rangle _{\fk}.$ Then
the infinitesimal action $\si_{z_0}:\fg\longrightarrow T_{z_0}\PP^M$ is $G_{z_0}$-equivariant in the sense that
$$\si_{z_0}(\mathrm{Ad}_g\xi)=g\cdot \si_{z_0}(\xi)\ \text{ for all } g\in G_{z_0}\ , $$
and
there is a $G_{z_0}$-invariant linear subspace $W'\subset \CC^{M+1}$
such that
$$\CC^{M+1}=W\oplus ( \fk_{z_0}^{\bot})^\CC:= W'\oplus \CC \hat z_{0}\oplus ( \fk_{z_0}^{\bot})^\CC
\text{ with }( \fk^\bot_{z_0})^\CC:=\fk^\bot_{z_0}\otimes \CC$$
is a decomposition as $G_{z_{0}}$-module. Hence we have
\begin{equation}\label{PW}
\PP^M=\PP( W\oplus (
\fk_{z_0}^{\bot })^ \CC)=\PP( W'\oplus \CC \hat z_{0}\oplus (
\fk_{z_0}^{\bot })^ \CC),
\end{equation}
where $0\ne \hat z_0\in \CC^{M+1}$ is a lift of $z_0\in \PP^M$.


Consider the map%
\begin{equation}\label{phi}
\begin{array}{cccc}
G\times\PP W & \overset{\phi
}{ \xrightarrow{\hspace*{1.3cm}}} & G\cdot\PP W & \subset \PP^{M} \\
( g,w) & \longmapsto & g\cdot w &
\end{array}%
\end{equation}%
then for $\xi \in \fg_{z_0}$ and $\delta w\in
T_{z_{0}}\PP W$ we have
\begin{equation*}
d\phi|_{( e,z_{0}) }( \xi ,\delta w)
=\sigma _{z_{0}}( \xi ) +\delta w\in T_{z_{0}}\PP^{N}\cong  (\fk_{z_0}^\perp)^\CC\oplus T_{z_0}\PP W
\end{equation*}%
where $\sigma_{z_0}:\fg=\fk^\CC\to T_{z_0}\PP^M$ denotes the infinitesimal action and $e\in G$ denotes the identity,
and as a consequence $\ker d\phi|_{(e,z_0)}=\fg_{z_0}$.
Now let us define  an open set
$$U_0:=\left\{w\in \PP W \left | \mathrm{rk}\left(q\circ d\phi|_{\{1\}\times \PP W}: \fg\times T\PP W\to (T\PP^N|_{\PP W})/T\PP W\right)=\dim \fg_{z_0}^\perp \right. \right \}\subset \PP W $$
with $q: T\PP^N|_{\PP W}\to (T\PP^N|_{\PP W})/T\PP W$ being the quotient morphism between vector bundles over $\PP W$.  Then we have
\begin{lem}\label{G0-inv}
 $U_0\subset \PP W$ is a {\em $G_{z_0}$-invariant} Zariski open set.
\end{lem}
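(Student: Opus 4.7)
The plan is to verify the two assertions separately, treating Zariski openness as a standard rank condition and $G_{z_0}$-invariance as a consequence of the equivariance properties of the infinitesimal action.

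\textbf{Openness.} First observe that the assignment $w\mapsto q\circ\sigma_w\colon \fg\to T_w\PP^M/T_w\PP W$ is a morphism between trivial/algebraic vector bundles on $\PP W$ (the source being the trivial bundle with fibre $\fg$ and the target being the algebraic normal bundle of $\PP W\subset \PP^M$ along $\PP W$). By upper semi-continuity of rank, the locus where the rank attains a given maximum value is Zariski open. Thus one only needs to verify that this maximum equals $\dim\fg_{z_0}^{\perp}$ and is attained at $z_0$. Using the decomposition \eqref{PW}, $T_{z_0}\PP^M/T_{z_0}\PP W\cong (\fk_{z_0}^{\perp})^{\CC}=\fg_{z_0}^{\perp}$, and the composition $q\circ\sigma_{z_0}\colon \fg\to \fg_{z_0}^{\perp}$ kills $\fg_{z_0}$ while mapping $\fg_{z_0}^{\perp}$ isomorphically onto $\fg_{z_0}^{\perp}$; hence $\mathrm{rk}(q\circ\sigma_{z_0})=\dim \fg_{z_0}^{\perp}$, so $z_0\in U_0$ and $U_0$ is a non-empty Zariski open subset of $\PP W$.

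\textbf{$G_{z_0}$-invariance.} Fix $h\in G_{z_0}$ and $w\in U_0$. Since $W$ is $G_{z_0}$-invariant by construction, $h$ preserves $\PP W$ and in particular $hw\in\PP W$. The differential $dh\colon T_w\PP^M\to T_{hw}\PP^M$ is a linear isomorphism that restricts to an isomorphism $T_w\PP W\to T_{hw}\PP W$, hence induces an isomorphism on the quotients
\[
\bar h\colon T_w\PP^M/T_w\PP W\xrightarrow{\ \cong\ } T_{hw}\PP^M/T_{hw}\PP W.
\]
The $G$-equivariance of the infinitesimal action $\sigma_{hw}(\mathrm{Ad}_h\xi)=h\cdot\sigma_w(\xi)$ (already recorded above Assumption \ref{bdd}) then gives the commutative diagram
\[
q\circ\sigma_{hw}\circ\mathrm{Ad}_h=\bar h\circ(q\circ\sigma_w).
\]
Since $\mathrm{Ad}_h$ and $\bar h$ are isomorphisms, $\mathrm{rk}(q\circ\sigma_{hw})=\mathrm{rk}(q\circ\sigma_w)=\dim\fg_{z_0}^{\perp}$, so $hw\in U_0$.

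\textbf{Obstacles.} There is essentially no obstacle; the only subtle point is to verify that $dh$ preserves $T\PP W$, which is immediate from $h\cdot \PP W=\PP W$. Combining both steps completes the proof.
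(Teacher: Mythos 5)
Your proof is correct and follows essentially the same route as the paper: Zariski openness comes from the rank condition on an algebraic bundle map, and $G_{z_0}$-invariance from the equivariance $\sigma_{hw}(\mathrm{Ad}_h\xi)=h\cdot\sigma_w(\xi)$. Your invariance step is in fact marginally cleaner, since conjugating by the isomorphisms $\mathrm{Ad}_h$ and $\bar h$ lets you avoid the paper's proof by contradiction via the $\mathrm{Ad}(G_{z_0})$-invariance of $\fg_{z_0}^{\perp}$ (reductivity of $G_{z_0}$); the only cosmetic slip is that the relevant fact is \emph{lower} semicontinuity of the rank (the maximal-rank locus is open), not upper.
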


\begin{proof} Note that the Zariski openness follows from the fact that $q\circ d\phi\in H^0(\PP W, T(G\times \PP W)|_{\{1\}\times \PP W}^\vee\otimes (T\PP^N|_{\PP W})/T\PP W)$. So all we need is the $G_{z_0}$-invariance. To achieve that, one notices that for any $g\in G_{z_0},\ \xi\in \fg$ and $w\in \PP W$ we have
$$(g\cdot)_\ast \si_w(\xi)=\si_{g\cdot w}(\mathrm{Ad}_g\xi),$$
which implies that
$$\si_w(\xi)\in T_w\PP W\ \Longleftrightarrow \ \si_{g\cdot w}(\mathrm{Ad}_g\xi)\in T_{g\cdot w}\PP W\ .$$

Now  $w\in U_0$ can be characterized as  $q\circ d\phi$ being of full rank which is also equivalent to
\begin{equation}\label{tr-U0}
\si_w(\xi)\in T_w\PP W\Longleftrightarrow \xi\in \fg_{z_0}\ .
\end{equation}
If $g\cdot w\not\in U_0$ then there is a $0\ne\mathrm{Ad}_g\xi\in \fg_{z_0}^\perp$ such that $\si_{g\cdot w}(\mathrm{Ad}_g\xi)\in T_{g\cdot w}\PP W$, and hence $\si_w(\xi)\in T_w\PP W$.
On the other hand, we have decomposition  $\fg=\fg_{z_0}\oplus \fg_{z_0}^\perp$ as a {\em $G_{z_0}$-module} via the Adjoint action thanks to the {\em reductivity} of $G_{z_0}$. This implies that  $0\ne\xi\in \fg_{z_0}^\perp$, contradicting to \eqref{tr-U0} and the assumption that $w\in U_0$.  Thus our proof is completed.
\end{proof}

Now $\phi$ is {\em $G_{z_0}$-invariant} with respect to  the action $h\cdot (g,w)=(gh^{-1},h\cdot w)$, hence it  descends to a $K$-invariant map, which by abusing of notation is still denoted by
\begin{equation}\label{phi/}
\begin{array}{cccc}
G\times _{G_{z_{0}}}\PP W & \overset{\phi%
}{ \xrightarrow{\hspace*{1.3cm}}} & G\cdot\PP W & \subset \PP^{M} \ .\\
( g,w) & \longmapsto & g\cdot w &
\end{array}%
\end{equation}%
Moreover, it is a {\em bi-holomorphism} (see the proof of \cite[ Theorem 1.12]{Sj1995})
from a $K$-invariant tubular neighborhood
\begin{equation}\label{U-ep}
U_\ep :=\left\{ \left. ( g\exp \sqrt{-1}\xi ,w) \in G\times _{G_{z_0}}V\right |\ g\in K,\ \xi \in
\fk_{<\ep} \right \} \text{ with } \fk_{<\ep}:=\{\xi\in \fk\mid | \xi | <\ep  \}
\end{equation}
of  the orbit $K\cdot z_0\cong K/K_{z_0}$ onto $\phi(U_\ep)=K\cdot \exp\fk_{<\ep}\cdot V$ for $0<\ep\ll 1$, where $z_0\in V\subset \PP W$ is a $K$-invariant {\em analytic} open neighborhood.


Now suppose $\ti {g}=g\cdot\exp \sqrt{-1}\xi $ satisfies $g\in K$ and  $\xi\in \fk$ with $|\xi|
<\ep $ $\ $such that $\ti {g}\cdot w=w$ then:
\begin{equation*}
\phi( g\cdot \exp \sqrt{-1}\xi ,w) =\phi(
\ti {g},w) =\ti {g}\cdot w=w=\phi( e,w) \text{ and  } (\ti g,w)\in U_\ep
\end{equation*}
these together with the fact that $\phi|_{U_\ep}$ is bi-holomorphic imply that
$$( \ti {g},w) \stackrel{\scalebox{0.48}{$G_{z_0}$ } }{\sim} ( e,w)\in G\times\PP W\ $$
i.e. there is a $h\in G_{z_0}$ such that  $\ ( \ti {g} h^{-1},hw) =( e,w) $,
hence $\ti g=h\in G_{z_0}\cap G_w$.  In conclusion, we obtain the following:

\begin{lem}[{\bf Local Rigidity}]
\label{Gz0}Let  $w\in V\subset\PP W$ (defined in \eqref{U-ep}) and suppose $\ti g\in G_w$ is of the form
$\ti g=g\cdot \exp \xi$ with $g\in K$ and $\xi \in\fg$ satisfies $\ |\xi |<\ep $.  Then $\tilde{g}\in G_{z_0}.$
\end{lem}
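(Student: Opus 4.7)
The plan is to exploit the fact, already established in the paragraph immediately preceding the statement, that the map
\[
\phi\colon G\times_{G_{z_0}}\PP W\longrightarrow G\cdot \PP W\subset \PP^M
\]
restricts to a biholomorphism on the tubular neighborhood $U_\ep$ defined in \eqref{U-ep}, and that the Cartan decomposition $\fg=\fk\oplus\sqrt{-1}\fk$ lets us write the hypothesized $\tilde g=g\cdot\exp\sqrt{-1}\xi$ with $g\in K$ and $\xi\in\fk_{<\ep}$. The whole argument is essentially a uniqueness statement packaged in the biholomorphism of $\phi$.

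First I would observe that the pair $(\tilde g,w)\in G\times \PP W$ lies in $U_\ep$: since $w\in V\subset \PP W$ and $\tilde g=g\exp\sqrt{-1}\xi$ with $g\in K$ and $|\xi|<\ep$, this is exactly the form prescribed by the defining condition of $U_\ep$ in \eqref{U-ep}. Obviously $(e,w)\in U_\ep$ as well. Next I would compute the images under $\phi$:
\[
\phi(\tilde g,w)=\tilde g\cdot w=w=\phi(e,w),
\]
the first equality coming from the definition of $\phi$ and the middle one from the hypothesis $\tilde g\in G_w$. Since $\phi|_{U_\ep}$ is a biholomorphism onto its image, injectivity forces $(\tilde g,w)$ and $(e,w)$ to represent the same class in $G\times_{G_{z_0}}\PP W$.

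The equality of these classes means precisely that there exists $h\in G_{z_0}$ with
\[
(\tilde g\, h^{-1},\, h\cdot w)=(e,w),
\]
so that simultaneously $\tilde g=h$ and $h\cdot w=w$. In particular $\tilde g=h\in G_{z_0}\cap G_w\subset G_{z_0}$, which is what we wanted to prove.

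The only thing one should double-check is the claim that $\phi$ is indeed injective on $U_\ep$ (so that we are entitled to apply the biholomorphism directly): this is the content of the discussion around \eqref{phi} and \eqref{tr-U0} together with the transversality of the infinitesimal $(\fk_{z_0}^\perp)^\CC$-action on $U_0$, together with Sjamaar's slice theorem argument cited as \cite[Theorem 1.12]{Sj1995}. Once this is taken for granted, there is no obstacle; the lemma is a clean corollary of the local normal-form picture for the $G$-action near the $K$-orbit of $z_0$, and no further analytic or algebraic input is needed.
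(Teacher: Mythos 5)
Your proof is correct and is essentially identical to the paper's own argument, which appears in the discussion immediately preceding the lemma: membership of $(\tilde g,w)$ in $U_\ep$, the computation $\phi(\tilde g,w)=w=\phi(e,w)$, and injectivity of $\phi|_{U_\ep}$ forcing $(\tilde g,w)\sim_{G_{z_0}}(e,w)$, whence $\tilde g\in G_{z_0}\cap G_w$. No gaps.
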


\begin{assumption}[{\bf Properness}]\label{bdd}
There is a  {\em closed} $K$-invariant subset
$$
\xymatrix{
\Sigma\ \   \ar@{^{(}->}[r]   & \PP^M 
            }
$$
satisfying:
\begin{enumerate}
\item $\forall z\in \PP^M$, $(G\cdot z)\cap \Sigma$ consists  of {\em at most one} $K$-orbit.
$\Sigma$ is {\em continuous } in the sense that for any sequence  $\{z_i\}_{i=1}^\infty\subset\PP^M$ satisfying $(G\cdot z_i)\cap \Sigma\ne\varnothing$ and $\displaystyle\lim_{i\to \infty}z_i=z_\infty\in \Sigma$, we have
$$
\lim_{i\to \infty}\dist_{\PP^M}((G\cdot z_i)\cap \Sigma, K\cdot z_\infty)=0.
$$
\item
$G_z=( G_z\cap K)^{\CC}$ for all $z\in \Sigma .$
\end{enumerate}
\end{assumption}

\begin{thm}\label{S-pres}
Let $K$ be a compact Lie group acting on $\PP^{M}$ via a
representation $K\rightarrow \UU( M+1)$ and $G=K^\CC$ be its complexification, and $z_{0}\in \PP
^{M}$ with its stabilizer $G_{z_0}$ satisfying $G_{z_0}=( G_{z_0}\cap K) ^{\CC}$ and $z_{0}\in \Sigma \subset \PP^{M}$
satisfying {\rm Assumption} \ref{bdd}. Then \ there is an   $G_{z_0}$-invariant {\em Zariski open}  neighborhood $%
z_{0}\in U^{\rm sp}\subset \PP W$  such that for $\forall w\in U^{\rm sp}\cap (G\cdot \Sigma) $ we
have  $G_w<G_{z_0}.$
\end{thm}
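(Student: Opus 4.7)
The plan is to first establish an analytic version of the stabilizer-preserving conclusion in a neighborhood of $z_0\in\PP W$, and then upgrade to a Zariski open neighborhood via constructibility of stabilizer loci. The main tools are Lemma \ref{Gz0} (Local Rigidity), the biholomorphism $\phi\colon U_\epsilon\to K\cdot\exp(\sqrt{-1}\fk_{z_0,<\epsilon}^\perp)\cdot V$ from the slice construction, the $G_{z_0}$-invariance of $U_0\subset\PP W$ (Lemma \ref{G0-inv}), and the Boundedness Assumption \ref{bdd} which forces $G_z=(G_z\cap K)^\CC$ for $z\in\Sigma$.

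First I would reduce to a normalized form. Given $w$ close to $z_0$ in $\PP W$ with $w\in G\cdot\Sigma$, write $w=g\cdot z$ for some $z\in\Sigma$. The local slice decomposition $G=K\cdot\exp(\sqrt{-1}\fk_{z_0}^\perp)\cdot G_{z_0}$ together with the $K$-invariance of $\Sigma$ lets me absorb the $K$-factor of $g$ into $z$ (replacing $z$ by $k\cdot z\in\Sigma$), and the $G_{z_0}$-invariance of $\PP W$ lets me absorb the $G_{z_0}$-factor $g_0$ by passing from $w$ to $g_0^{-1}\cdot w\in\PP W$ (note $G_{g_0^{-1}w}=g_0^{-1}G_w g_0$, so the containment in $G_{z_0}$ is preserved under this swap). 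This reduces the problem to the case $w=\exp(\sqrt{-1}\eta)\cdot z'$ with $z'\in\Sigma$ close to $z_0$ and $\eta\in\fk_{z_0}^\perp$ of small norm (guaranteed by $w$ being close to $z_0$).

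Next I would verify $G_w\subset G_{z_0}$ in this reduced form. Assumption \ref{bdd} decomposes every $h\in G_{z'}$ as $h=h_K\cdot\exp(\sqrt{-1}\zeta)$ with $h_K\in G_{z'}\cap K$ and $\zeta\in\fk_{z'}$; upper semicontinuity of stabilizer dimensions along $\Sigma$ at $z_0$ gives $\fk_{z'}\subset\fk_{z_0}$, hence $\exp(\sqrt{-1}\zeta)\in G_{z_0}^0$ directly. The corresponding element of $G_w$ is $\exp(\sqrt{-1}\eta)\,h\,\exp(-\sqrt{-1}\eta)$. For the finite factor, $\exp(\sqrt{-1}\eta)\,h_K\,\exp(-\sqrt{-1}\eta)$ has polar decomposition $h_K\cdot\exp(\sqrt{-1}\xi')$ with $\xi'\in\fk$ of size $O(|\eta|)$ by the Baker--Campbell--Hausdorff formula, so Lemma \ref{Gz0} applies to place it in $G_{z_0}$. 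For the continuous factor, $\exp(\sqrt{-1}\eta)\,\exp(\sqrt{-1}\zeta)\,\exp(-\sqrt{-1}\eta)=\exp(\sqrt{-1}\mathrm{Ad}_{\exp(\sqrt{-1}\eta)}\zeta)$, whose polar decomposition I would analyze similarly using the smallness of $\eta$ together with $\zeta\in\fk_{z_0}$, producing a small imaginary-part correction to an element of $G_{z_0}^0$ to which Lemma \ref{Gz0} again applies.

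For the final Zariski openness, set $U^{\rm sp}:=\{w\in\PP W : G_w\subset G_{z_0}\}$. This set is $G_{z_0}$-invariant (as $G_{g_0\cdot w}=g_0 G_w g_0^{-1}\subset G_{z_0}$ whenever $G_w\subset G_{z_0}$ and $g_0\in G_{z_0}$) and constructible, since the universal stabilizer scheme inside $\PP W\times G$ is constructible by Chevalley's theorem. The analytic version shows $U^{\rm sp}\cap G\cdot\Sigma$ contains an analytic open neighborhood of $z_0$, so constructibility upgrades this to a Zariski open neighborhood of $z_0$ inside $\PP W$. The principal obstacle will be the polar-decomposition control in the previous paragraph: conjugation by $\exp(\sqrt{-1}\eta)$ can in principle translate elements of $G_z$ far from $K$, as illustrated by Richardson's example of unbounded stabilizers; the Boundedness Assumption \ref{bdd} is exactly what prevents this pathology by anchoring $G_z$ to the maximal compact $K$ inside $G$.
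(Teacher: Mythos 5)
Your overall architecture (an analytic statement near $z_0$ via the Local Rigidity Lemma \ref{Gz0}, then a constructibility upgrade to a $G_{z_0}$-invariant Zariski open set) is the same as the paper's, and your final paragraph coincides with the paper's last step. But the middle of your argument has genuine gaps, the most serious being the normalization step. From $w\in G\cdot\Sigma$ close to $z_0$ you write $w=g\cdot z$, split $g$ by the Mostow-type decomposition, and claim that after absorbing the $K$- and $G_{z_0}$-factors you arrive at $\exp(\sqrt{-1}\eta)\cdot z'$ with $\eta\in\fk_{z_0}^\perp$ small and $z'\in\Sigma$ close to $z_0$, ``guaranteed by $w$ being close to $z_0$''. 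Nothing guarantees this: the element $g$ carrying a slice point to a point near $z_0$ can be arbitrarily large (the orbit of a slice point far from $z_0$ may enter any neighborhood of $z_0$), so the Mostow components of $g$ are uncontrolled; and once you trade $w$ for $g_0^{-1}\cdot w$ with $g_0\in G_{z_0}$, the proximity to $z_0$ is lost, so there is nothing left from which to extract smallness. This is exactly the Richardson-type danger you mention at the end, and it is not excluded by the pointwise identity $G_z=(G_z\cap K)^{\CC}$, which is the only part of Assumption \ref{bdd} your argument actually uses. The paper's proof uses the other, substantive half of Assumption \ref{bdd}: the \emph{continuity} of the slice $\Sigma$, which is what produces, for every $w$ in a small neighborhood $\tilde V$ (met with $G\cdot\Sigma$), a representation $w=\exp\xi\cdot z$ with $z\in\Sigma$, $\xi\in(\fk_{z_0}^{\bot})^{\CC}$ and $|\xi|<\delta<\ep$ already small. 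Without invoking that continuity, your reduction does not go through.

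The treatment of the stabilizer itself also has two flaws. First, ``upper semicontinuity of stabilizer dimensions along $\Sigma$ gives $\fk_{z'}\subset\fk_{z_0}$'' is a non sequitur: semicontinuity bounds $\dim\fk_{z'}$ but says nothing about the subalgebra being contained in $\fk_{z_0}$; the containment is true for $z'$ in the neighborhood $V$ where Lemma \ref{Gz0} applies, but it follows from Lemma \ref{Gz0} applied to $\exp(\sqrt{-1}t\zeta)$ for small $t$, not from a dimension count. Second, for the conjugated continuous factor $\exp(\sqrt{-1}\eta)\exp(\sqrt{-1}\zeta)\exp(-\sqrt{-1}\eta)$ your plan of writing it as an element of $G_{z_0}$ times a ``small imaginary-part correction'' and applying Lemma \ref{Gz0} again does not work: $\zeta$ ranges over all of $\fk_{z'}$, so the correction is of size $O(|\eta||\zeta|)$ and is not uniformly small, and in any case it is not an element fixing a point of $V$, which Lemma \ref{Gz0} requires. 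The paper avoids both problems at once: with $w=\exp\xi\cdot z$, $z\in\Sigma$, it conjugates only the \emph{compact} group $K_z$, observes that $\exp\xi\,h\,\exp(-\xi)=h\exp(\mathrm{Ad}_{h^{-1}}\xi)\exp(-\xi)$ is of the form (element of $K$)$\cdot\exp(\sqrt{-1}\zeta')$ with $|\zeta'|$ small uniformly in $h\in K_z$ (since $\mathrm{Ad}_{h^{-1}}$ is an isometry for the bi-invariant metric), applies Lemma \ref{Gz0} once to get $\exp\xi\,K_z\exp(-\xi)\subset G_{z_0}$, and then obtains all of $G_w=\bigl(\exp\xi\,K_z\exp(-\xi)\bigr)^{\CC}\subset G_{z_0}$ from Assumption \ref{bdd} together with the reductivity of $G_{z_0}$, never conjugating unbounded elements. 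You should restructure your middle steps along these lines.
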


\begin{proof}
We will first prove that our statement holds true for an {\em analytic} neighborhood, then we can pass from analytic open to a Zariski open neighborhood by the constructibility.

Suppose Assumption \ref{bdd} holds, then the {\em continuity} of $\Sigma$ implies that there is a sufficiently small  analytic $K_{z_0}$-invariant neighborhood  $z_0\in \ti V\subset V\subset \PP W$ such that for any $w\in\ti V\cap (G\cdot \Sigma)$,
there is a  $\xi \in ( \fk_{z_{0}}^{\bot }) ^{\CC}$
satisfying $|\xi | <\delta <\ep $ and $z\in
\Sigma $ such that $w=\exp \xi \cdot z.$ In particular, $\exp \xi \cdot
K_{z}\cdot \exp ( -\xi)\subset G_{w}$ is a maxmal
compact subgroup of $G_{w}$. \ Since $K_{z}<K$ is compact
we have
\begin{eqnarray*}
\exp \xi\cdot K_z\cdot \exp (-\xi )
&=&\{h\cdot \exp ( \mathrm{Ad}_{h^{-1}}\xi ) \cdot \exp ( -\xi )\mid h\in K_z\}\\
&\subset&\{ g\cdot\exp\sqrt{-1}\zeta | \zeta\in \fg,\ |\zeta|<\ep \text{ and } g\in K\}\ .
\end{eqnarray*}
By Lemma \ref{Gz0}, we
must have $\exp (-\xi)\cdot K_z\cdot \exp \xi  \subset G_{z_0}$.
Hence
$$G_{z_0}\supset \Big( \exp (-\xi)\cdot K_z\cdot \exp \xi  \Big)^{\CC}=G_w,$$
since $G_{z_0}$ is reductive.
Finally, one notices that the set
$$ \{w\in \PP W\mid G_w<G_{z_0}\}\supset G_{z_0}\cdot \ti V$$
is $G_{z_0}$-invariant and constructible.  This allows us to choose   a $G_{z_0}$-invariant Zariski open subset $U^{\rm sp}\supset G_{z_0}\cdot \ti V$, and our proof is completed.
\end{proof}

\begin{assumption}[{\bf Stabilizer Preserving}]\label{sp}
There is a $G_{z_0}$-invariant Zariski open neighborhood of $z_0\in U^\SP\subset\PP W$ such that $G_w<G_{z_0}$ for all $w\in U^\SP$.
\end{assumption}
\begin{exmp}Notice that Assumption \ref{sp}  does not hold in general, even in the situation that a 1-PS $\al(t)$ degenerating $\displaystyle \lim_{t\to 0} \al(t)\cdot z=z_0$, we cannot conclude that $G_{z_t}<G_{z_0}$.
Consider the $\SL(2)$-action on $\PP(\sym^{\otimes 3}\CC^2)$ as in Example \ref{Richard}. The 1-PS
 $$
\al(t)
=\frac{1}{2}\begin{bmatrix}1 & 1\\ -1 & 1\end{bmatrix}\begin{bmatrix}t &0\\ 0 & t^{-1}\end{bmatrix}\begin{bmatrix} 1 & -1\\ 1 & 1\end{bmatrix}
=\frac{1}{2}\begin{bmatrix}t+1/t & -t+1/t\\ -t+1/t & t+1/t\end{bmatrix}\in \rm{SL}(2,\CC)
$$
degenerates $p(X,Y)$ to $p_0(X,Y)\in \PP(\sym^{\otimes 3}\CC^2)$. Then $\ZZ/3\ZZ\cong\SL(2)_{p_t}\not\subset \SL(2)_{p_0}=\langle \al(t)\rangle\cong\GG_m$, and the map
$$
\SL(2)\times_{\GG_m}\PP W\longrightarrow \SL(2)\cdot\PP W
$$
is {\em not} finite.
\end{exmp}

\subsubsection{Finite distance property}
In this subsection, we establish the criteria that guarantees the properness of the map $\phi$ (defined  in \eqref{phi/}) near $z_0$, which is crucial to prove the existence of a good moduli space of  $\overline{\{[z]\}}\subset [Z^\ast/G]$ for any $[z]$ specializing to $[z_0]\in [Z^\ast/G]$ in Section \ref{ss-luna}.

Twisting the linearization of $G_{z_0}$ on $\sO_{\PP ^M}(1)|_{\PP W}$ by the inverse of the character corresponding to the action  $G_{z_0}\curvearrowright\sO_{\PP^M}(1)|_{z_0}$  as in the proof of Lemma \ref{disj}, we  obtain that $z_0\in \PP W$ is  GIT-polystable with respect to  the new  $G_{z_0}$-linearization on $\sO_{\PP ^M}(1)|_{\PP W}$. Let $U^\mathrm{ss}\subset \PP W$ denote the GIT-semistable points with respect to this linearization and
\begin{equation}\label{pi-w}
\pi_W: \PP W\supset U^\mathrm{ss}\longrightarrow \cM:=\PP W/\!\!/G_{z_0} \text{ with } \pi_W(z_0)=0\in \cM
\end{equation}
denote the GIT quotient map. Let $0\in B_\cM(0,r)\subset \cM$ be the {\em  open} ball of radius $r$ with respect to a prefixed continuous metric.
Then for  each $r>0$, we introduce
\begin{defn}\label{Ur}
 Let $U_r$  be the {\em connected component} of
 $$(G\cdot \pi_W^{-1}(B(0,r)))\cap \PP W\subset  U^\mathrm{ss} $$
 containing $z_0$.  In particular, $U_r$ is {\em $G_{z_0}$-invariant.}
\end{defn}

Let $[\cdot ]:G\to G/G_{z_0}$ denote the quotient map. We say a sequence $\{h_i\}\subset G$ is {\em bounded} in $G/G_{z_0}$ if and only if  $\{\psi^{-1}([h_i])\}$ is  contained in  a {\em bounded} subset of $K\times _{K_{z_{0}}}(\sqrt{-1}\fk_{z_0}^\perp)$, where $\psi$ is the Cartan decomposition (cf. \cite[equation (1.8)]{Sj1995})
\begin{equation}\label{bdd-G-G_0}
\begin{array}{cccc}
\psi:K\times _{K_{z_{0}}}(\sqrt{-1}\fk_{z_0}^\perp)& \overset{}{ \xrightarrow{\hspace*{1.3cm}}} & G/G_{z_0} &,\\
( g,\sqrt{-1}\xi ) & \longmapsto & (g\cdot\exp\sqrt{-1}\xi) \cdot G_{z_0} &
\end{array}%
\end{equation}
which is a $K$-equivariant {\em diffeomophism}.

\begin{assumption}[\bf Finite Distance]\label{nr}
 An analytic {\em open neighborhood} of $z_0\in U^\nr\subset \PP W$  is of {\em finite distance} if there is a {\em bounded} (in the sense above)
set  $G_{U^\nr}\Subset G/G_{z_0}$ depending only on $U^\nr$ and $z_0$ such that for any pair $(z, g)\in U^\nr\times G$ satisfying  $g\cdot z\in U^\nr $, there is an $h\in G,\  [h]\in G_{U^\nr}\Subset G/G_{z_0}$ such that $g\cdot z=h\cdot z$, where $[\cdot ]: G\to G/G_{z_0}$ is the {\em quotient} map, and $\Subset$ stands for the {\em compact embedding} with respect to the analytic topology.  It follows from the definition that $U^\nr$ is {\em $G_{z_0}$-invariant}.
\end{assumption}

\begin{lem} \label{fini}
Suppose both {\em Assumption \ref{sp} and \ref{nr}} are  satisfied. Then there is a  positive $\ep>0$ such that for any $0<r<\ep$,
$U_r$ (defined in Definition \ref{Ur}) satisfies the following: for any sequence
 $\{(g_i,y_i)\}\in G\times_{G_{z_0}}U_r$ satisfying $z_i=g_i\cdot y_i\to z_\infty\in G\cdot U_r$,
as $i\to \infty$, after passing to a subsequence, there is a
$$(g_\infty, y_\infty)\in \overline{\{(g_i,y_i)\}_i}\subset G\times_{G_{z_0}}U_r \text{ such that } g_\infty\cdot y_\infty=z_\infty.$$
In particular, the map $\phi|_{G\times_{G_{z_0}} U_r}:G\times_{G_{z_0}} U_r \to G\cdot U_r$ is a {\em finite} morphism.
\end{lem}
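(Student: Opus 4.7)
The plan is to verify the sequential compactness claim directly, using Assumption \ref{sp} to handle fiber finiteness and Assumption \ref{nr} to extract the boundedness needed in $G/G_{z_0}$. First, one chooses $\epsilon>0$ small enough that $U_\epsilon\subset U^{\SP}\cap U^{\nr}$: this is possible because $U^{\SP}$ is a Zariski open and $U^{\nr}$ an analytic open neighborhood of $z_0\in\PP W$, while $\pi_W$ is continuous so the family $\{U_r\}_r$ contracts onto a small neighborhood of $z_0$ as $r\to 0$. Fix $0<r<\epsilon$ with enough room (say $\overline{B_{\cM}(0,r)}\subset B_{\cM}(0,\epsilon)$).

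Given a sequence $(g_i,y_i)$ with $z_i:=g_i y_i\to z_\infty$, extract a subsequence so that $y_i\to y_\infty$ in $\PP W$ by compactness. Continuity of $\pi_W$ gives $\pi_W(y_\infty)\in\overline{B_{\cM}(0,r)}\subset B_{\cM}(0,\epsilon)$, so $y_\infty\in U_\epsilon\subset U^{\SP}\cap U^{\nr}$; since $U_r$ is the connected component through $z_0$ and $y_i\in U_r$, the limit lies in $\overline{U_r}$, and after a slight enlargement of $r$ we may assume $y_\infty\in U_r$. Next, the transversality of the $\fg_{z_0}^\perp$-action on $\PP W$ from the proof of Lemma \ref{disj} (see \eqref{PW}) shows that $\phi$ is an open map near $(e,y_\infty)$, hence $G\cdot \PP W$ is an open neighborhood of $y_\infty$ in $\PP^M$. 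Combined with $z_i\to z_\infty$ and $z_i\in G\cdot y_i\subset G\cdot U_r$, this forces $z_\infty=g^*y^*$ for some $g^*\in G$ and $y^*\in U^{\nr}\cap \PP W$. Replacing $(g_i,y_i)$ by $((g^*)^{-1}g_i,y_i)$, we may assume $z_\infty\in U^{\nr}$, so by openness the translated $z_i$ lies in $U^{\nr}$ for all large $i$.

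With both $y_i$ and $z_i=g_i y_i$ in $U^{\nr}$, Assumption \ref{nr} supplies $h_i\in G$ with $[h_i]$ in a fixed bounded subset $G_{U^\nr}\Subset G/G_{z_0}$ and $g_i y_i=h_i y_i$. Therefore $h_i^{-1}g_i\in G_{y_i}$, and Assumption \ref{sp} gives $G_{y_i}\subset G_{z_0}$, which yields $(g_i,y_i)\sim (h_i,y_i)$ in $G\times_{G_{z_0}}U_r$ via the equivalence $(g,y)\sim(gk,k^{-1}y)$ for $k\in G_{z_0}$. Using boundedness of $[h_i]$ in $G/G_{z_0}$, pass to a further subsequence so $h_i\to h_\infty$ modulo $G_{z_0}$; then $(g_i,y_i)\to (h_\infty,y_\infty)\in G\times_{G_{z_0}}U_r$, and $h_\infty y_\infty=\lim h_i y_i=\lim z_i=z_\infty$, giving the desired $(g_\infty,y_\infty)$ after undoing the $g^*$-shift.

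The main obstacle is the delicate boundary handling: ensuring $y_\infty\in U_r$ (rather than merely $\overline{U_r}$) and that $z_\infty$ admits a representation $z_\infty=g^*y^*$ with $y^*\in U^{\nr}\cap \PP W$ so that Assumption \ref{nr} applies. Both issues hinge on the openness of $G\cdot U^{\nr}$ in $\PP^M$ coming from transversality of the slice direction, and on a careful choice of $r$ strictly smaller than $\epsilon$; this is where the specific GIT-based definition of $U_r$ really enters the argument.
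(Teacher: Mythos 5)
Your argument is essentially the paper's own proof: after arranging (by a $G$-translation and passing to a subsequence) that $z_\infty$ and $y_\infty$ lie in the relevant neighborhoods, you invoke Assumption \ref{nr} to produce $h_i$ with $g_i\cdot y_i=h_i\cdot y_i$ and $[h_i]$ bounded in $G/G_{z_0}$, then Assumption \ref{sp} to get $h_i^{-1}g_i\in G_{y_i}<G_{z_0}$, and conclude precompactness of $\{(g_i,y_i)\}$ in $G\times_{G_{z_0}}U_r$ and extract the limit. The extra transversality/boundary discussion you add is just a more explicit version of the paper's brief "translate $z_\infty$ and shrink $r$" step, so the approach is the same.
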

\begin{proof}

First, we notice that after translating $z_\infty$ by  a $g\in G$ if necessary, we may assume that $z_\infty\in U_r$.
Since $\overline{U}_r\subset\PP W$ is compact for  small $r>0$ by Definition \ref{Ur}, by passing to a subsequence we may and will assume $y_i\stackrel{i\to \infty}{\longrightarrow} z_\infty\in U_r$  after a possible decreasing of $r$.

By Assumption \ref{nr}, we may choose $0<r\ll 1$ such that $U_r\subset U^\nr$ then  there is a sequence $\{h_i\}\subset G$, with $\{[h_i]\}$  being bounded in $G/G_{z_0}$ and satisfing $g_{i}\cdot y_i=h_i\cdot y_i$, hence $h_i^{-1}\cdot g_i\in G_{y_i},\ \forall i$. Now by Assumption \ref{sp}, we have
$$h_i^{-1}\cdot g_i\in G_{y_i}< G_{z_0},\ \forall i,$$
from which we conclude that $\{[g_i]\}$ is {\em bounded} in $G/G_{z_0}$ and hence the set $\{(g_i,y_i)\}\subset G\times_{G_{z_0}} U_r$  is precompact.
Thus the morphism $\phi|_{G\times_{G_{z_0}} U_r}:G\times_{G_{z_0}} U_r \to G\cdot U_r$ is a proper and \'etale morphism hence finite.
\end{proof}
\begin{rem}\label{r-finitedistance}

Assumption \ref{nr} is introduced  to guarantee that the multiplication morphism
$$\phi|_{G\times_{G_{z_0}} U_r}:G\times_{G_{z_0}} U_r \to G\cdot U_r$$
is {\em proper}.
For that purpose, we want to make sure that  for $0<r\ll1$, any point $z\in U_r$ and any infinite sequence $\{g_i\}_{i=1}^\infty\subset G$ satisfying  $G/G_{z_0}\ni [g_i]\to \infty$ (with respect to the analytic topology) there is no  infinite recurrence of points  $g_i\cdot z$  inside $U_r$.
As we have seen in the proof of Lemma \ref{NR} that the {\em properness} of  slice $\Sigma$ obtained via Tian's embedding guarantee the Assumption \ref{nr}.  
\end{rem}

\begin{bibdiv}
\begin{biblist}

\bib{AFSV14}{article}{
author={Alper, Jarod }
author={Fedorchuk, Maksym }
author={Smyth, David Ishii }
title={Second flip in the Hassett--Keel program: existence of good moduli spaces}
journal={Compos. Math.}
date={2017}
volume={153}
number={8}
pages={1584-1609}
}

\bib{Alp13}{article}{
author={Alper, Jarod }
title={Good moduli spaces for Artin stacks.}
journal={Ann. Inst. Fourier (Grenoble)}
date={2013}
volume={63}
number={6}
pages={2349-2042}
}



\bib{Aubin78}{article}{
author={Aubin, Thierry}
title={\'Equations du type Monge-Amp\`ere sur les vari\'et\'es k\"ahl\'eriennes compactes.}
journal={Bull. Sci. Math.(2)}
date={1978}
volume={102}
number={1}
pages={63-95}}

\bib{BBEGZ}{article}{
  author={Berman, Robert J.}
  author={Boucksom, S\'ebstien}
  author={Eyssidieux,Philippe}
  author={Guedj, Vincent}
  author={Zariahi, Ahmed}
  title={K\"ahler-Einstein metrics and the K\"ahler-Ricci flow on log Fano varieties}
  date={2011}
  journal={to appear in J. Reine Angew. Math., arXiv:1111.7158}
}

\bib{BCHM}{article}{
   author={Birkar, Caucher},
  author={Cascini, Paolo},
  author={Hacon, Christopher D.},
  author={McKernan, James},
  title={Existence of minimal models for varieties of log general type},
  journal={J. Amer. Math. Soc.},
   volume={23},
   date={2010},
   number={2},
   pages={405--468},
   }

\bib{BBer2015}{article}{
  author={Berndtsson, Bo}
  title={A Brunn-Minkowski type inequality  for Fano manifolds and some uniqueness theorems in K\"ahler geometry}
  JOURNAL = {Invent. Math.},
  FJOURNAL = {Inventiones Mathematicae},
    VOLUME = {200},
      YEAR = {2015},
    NUMBER = {1},
     PAGES = {149--200},
      ISSN = {0020-9910},
   MRCLASS = {53C55 (32Q20 53C25)},
  MRNUMBER = {3323577},
MRREVIEWER = {Vincent Guedj},
       URL = {http://dx.doi.org/10.1007/s00222-014-0532-1},
}

\bib{Ber12}{article}{
  author={Berman, Robert J.}
    TITLE = {K-polystability of {${\Bbb Q}$}-{F}ano varieties admitting
              {K}\"ahler-{E}instein metrics},
   JOURNAL = {Invent. Math.},
  FJOURNAL = {Inventiones Mathematicae},
    VOLUME = {203},
      YEAR = {2016},
    NUMBER = {3},
     PAGES = {973--1025},
      ISSN = {0020-9910},
   MRCLASS = {53C55 (14J45 53C25)},
  MRNUMBER = {3461370},
       URL = {http://dx.doi.org/10.1007/s00222-015-0607-7},
}

\bib{BG13}{article}{
  author={Berman, Robert J.}
   author={Guenancia, Henri}
  title={K\"ahler-Einstein metrics on stable varieties and log canonical pairs}
 JOURNAL = {Geom. Funct. Anal.},
  FJOURNAL = {Geometric and Functional Analysis},
    VOLUME = {24},
      YEAR = {2014},
    NUMBER = {6},
     PAGES = {1683--1730},
      ISSN = {1016-443X},
   MRCLASS = {53C25 (14C20 32Q20 53C55)},
  MRNUMBER = {3283927},
MRREVIEWER = {V. V. Chueshev},
       URL = {http://dx.doi.org/10.1007/s00039-014-0301-8},
}

\bib{CDS1}{article}{
 author={Chen, Xiuxiong}
 author={Donaldson, Simon}
 author={Sun, Song}
 title={K\"ahler-Einstein metrics on Fano manifolds. I: Approximation of metrics with cone singularities},
 journal={ J. Amer. Math. Soc.  },
 volume={28},
 number={1},
 date={2015},
 pages={183-197},
 }

\bib{CDS2}{article}{
 author={Chen, Xiuxiong}
 author={Donaldson, Simon}
 author={Sun, Song}
 title={K\"ahler-Einstein metrics on Fano manifolds. II: Limits with cone angle less than $2\pi$},
 journal={ J. Amer. Math. Soc.  },
 volume={28},
 number={1},
 date={2015},
 pages={199-234},
 }

\bib{CDS3}{article}{
 author={Chen, Xiuxiong}
 author={Donaldson, Simon}
 author={Sun, Song}
 title={K\"ahler-Einstein metrics on Fano manifolds. III: Limits as cone angle approaches $2\pi$ and completion of the main proof},
 journal={ J. Amer. Math. Soc.  },
 volume={28},
 number={1},
 date={2015},
 pages={235-278},
 }


 \bib{CS2014}{article}{
 author={Chen, Xiuxiong}
 author={Sun, Song}
 title={Calabi flow, geodesic rays, and uniqueness of constant scalar curvature K\"ahler metrics.}
 journal={Ann. of Math. (2)}
 volume={180}
 number={2}
 year={2014}
 pages={407-454}

 }


 \bib{Don01}{article}{
author={Donaldson, Simon K.},
title={Scalar curvature and projective embeddings. I.},
journal={J. Differential Geom.},
volume={59},
year={2001},
number= {3},
pages={479-522},
}

 \bib{Don00}{article}{
 author={Donaldson, Simon K.},
 title={Scalar curvature and stability of toric varieties.}
 journal={J. Differential Geom.},
 volume={62},
 year={2002},
 pages={289-349},
 }

\bib{Don2008}{article}{
author={Donaldson, Simon K.},
title={K\"ahler geometry on toric manifolds, and some other manifolds with large symmetry.}
note={Adv. Lect. Math. (ALM), Handbook of geometric analysis. No. 1, Int. Press, Somerville, MA}
volume={7}
year={2008}
pages={29-75}
}

\bib{Don2009}{article}{
author={Donaldson, Simon K.},
title={Discussion of the K\"ahler-Einstein problem.},
note={http://www.imperial.ac.uk/~skdona/KENOTES.PDF}
year={2009}
}

\bib{Don2011}{article}{
   author={Donaldson, Simon K.},
   title={K\"ahler metrics with cone singularities along a divisor},
   journal={Essays in mathematics and its applications, Springer, Heidelberg. }
   pages={49-79}
   date={2012}
    }

\bib{Don2012}{article}{
   author={Donaldson, Simon K.},
   title={Stability, birational transformations and the K\"ahler-Einstein problem},
   journal={Surv. Differ. Geom.},
   number={17},
   date={2012},
   pages={203-228},
}

\bib{Don2013}{article}{
author={Donaldson, Simon K.},
     TITLE = {Algebraic families of constant scalar curvature {K}\"ahler
              metrics},
 BOOKTITLE = {Surveys in differential geometry 2014. {R}egularity and
              evolution of nonlinear equations},
    SERIES = {Surv. Differ. Geom.},
    VOLUME = {19},
     PAGES = {111--137},
 PUBLISHER = {Int. Press, Somerville, MA},
      YEAR = {2015},
   MRCLASS = {53C55},
  MRNUMBER = {3381498},
       URL = {http://dx.doi.org/10.4310/SDG.2014.v19.n1.a5},
}

\bib{Dre2004}{article}{
author={Dr\'ezet, Jean-Marc}
title={Luna's slice theorem and applications},
booktitle={Algebraic group actions and quotients,\ Hindawi Publ. Corp. Cairo},
date={2004},
pages={39-89},
publisher={Hindawi Publ. Corp. Cairo},
}

\bib{DS2012}{article}{
   author={Donaldson, Simon K.}
   author={Sun, Song},
   title={Gromov-Hausdorff limits of K\"ahler manifolds and algebraic geometry},
   journal={Acta Math. },
   volume={213}
   number={1},
   date={2014},
   pages={63-106},
   issn={0073-8301},
}


 \bib{FiRo06}{article}{
 author={Fine, Joel}
 author={Ross, Julius},
 title={A note on positivity of the CM line bundle. },
 journal={Int. Math. Res. Not., Art. ID 95875,}
 volume= {}
 number={},
 pages={14 pp.},
 date={2006}

 }

\bib{FS90}{article}{
author={Fujiki,A.},
author={Schumacher,G.},
title={The moduli space of extremal compact K\"{a}hler
manifolds and generalized Weil-Petersson metrics},
journal={Publ. Res. Inst. Math.},
volume={26},
year={1990},
pages={101-183},
}


\bib{Har77}{book}{
    AUTHOR = {Hartshorne, Robin},
     TITLE = {Algebraic geometry},
      NOTE = {Graduate Texts in Mathematics, No. 52},
 PUBLISHER = {Springer-Verlag, New York-Heidelberg},
      YEAR = {1977},
     PAGES = {xvi+496},
 }

\bib{HMX2014}{article}{
 author={Hacon, Christopher}
 author={McKernan, James}
 author={Xu, Chenyang}
 title={ACC for log canonical thresholds},
 journal={Ann. of Math. (2)},
 volume={180},
 number={2},
 date={2014},
 pages={523-571},
}

\bib{HX11}{incollection}{
     author={Hacon, Christopher}
 author={Xu, Chenyang}
   title={On Finiteness of B-representations and Semi-log Canonical Abundance},
 BOOKTITLE = {Minimal models and extremal rays (Kyoto, 2011)},
    SERIES = {Adv. Stud. Pure Math.},
    VOLUME = {70},
     PAGES = {361--378},
 PUBLISHER = {Math. Soc. Japan, Tokyo},
      YEAR = {2016},
 }

\bib{Kol96}{book}{
    AUTHOR = {Koll{\'a}r, J{\'a}nos}
     TITLE = {Rational curves on algebraic varieties},
    SERIES = {Ergebnisse der Mathematik und ihrer Grenzgebiete. 3. Folge. A
              Series of Modern Surveys in Mathematics [Results in
              Mathematics and Related Areas. 3rd Series. A Series of Modern
              Surveys in Mathematics]},
    VOLUME = {32},
 PUBLISHER = {Springer-Verlag, Berlin},
      YEAR = {1996},
     PAGES = {viii+320},
  }


\bib{Kollar13}{article}{
    AUTHOR = {Koll{\'a}r, J{\'a}nos},
     TITLE = {Moduli of varieties of general type},
 BOOKTITLE = {Handbook of moduli. {V}ol. {II}},
    SERIES = {Adv. Lect. Math. (ALM)},
    VOLUME = {25},
     PAGES = {131--157},
 PUBLISHER = {Int. Press, Somerville, MA},
      YEAR = {2013},
      }


\bib{KM98}{book}{
   author={Koll{\'a}r, J{\'a}nos}
   author={Mori, Shigefumi}
   title={Birational geometry of algebraic varieties},
   series={Cambridge Tracts in Mathematics},
   volume={134},
   note={With the collaboration of C. H. Clemens and A. Corti;
   Translated from the 1998 Japanese original},
   publisher={Cambridge University Press},
   place={Cambridge},
   date={1998},
   pages={viii+254},
}

\bib{KMM92}{article}{
    AUTHOR = {Koll{\'a}r, J{\'a}nos}
    author={Miyaoka, Yoichi}
    author={ Mori, Shigefumi},
     TITLE = {Rational connectedness and boundedness of {F}ano manifolds},
   JOURNAL = {J. Differential Geom.},
  FJOURNAL = {Journal of Differential Geometry},
    VOLUME = {36},
      YEAR = {1992},
    NUMBER = {3},
     PAGES = {765--779},
}

\bib{KSB}{article}{
author={Koll\'ar, J\'anos},
author={Shepherd-Barron, N. I.},
title={Threefolds and deformations of surface singularities},
journal={Invent. Math. },
volume={91},
year= {1988},
number={2},
pages={299-338},
}


\bib{LS2014}{article}{
   author={Li, Chi},
   author={Sun, Song},
   title={Conical K\"ahler-Einstein metrics revisited},
    journal={Comm. Math. Phys.},
    volume={331}
   number={3},
   date={2014},
   pages={927-973},
}

\bib{LW2015}{article}{
author={Li, Jun},
author={ Wang, Xiaowei},
title={Hilbert-Mumford criterion for nodal curves},
journal={Compos. Math.},
volume={151},
date={2015},
pages={2076-2130},
}

\bib{LWX2014}{article}{
   author={Li, Chi},
   author={Wang, Xiaowei}
   author={Xu, Chenyang},
   title={Degeneration of Fano K\"ahler-Einstein manifolds},
   journal={ArXiv:1411.0761 v1},
   date={2014}
}

\bib{LWX2018}{article}{
   author={Li, Chi},
   author={Wang, Xiaowei}
   author={Xu, Chenyang},
   title={Quasi-projectivity of the moduli space of smooth K\"{a}hler-Einstein Fano manifolds},
   journal={Ann. Sci. \'Ecole Norm. Sup. (4)},
   volume={51},
   date={2018},
   number={3},
   pages={739-772},
}

\bib{LX}{article}{
   author={Li, Chi},
   author={Xu, Chenyang},
   title={Special test configurations and K-stability of Fano varieties},
   journal={Annals of Math.},
   volume={180}
   number={1},
   pages={197-232},
   date={2014}
}

\bib{MFK}{book}{
  author={Mumford, D.},
  author={Forgarty, J.},
  author={Kirwan, F.},
  title={Geometric Invariant Theory},
 series={Ergebnisse der Mathematik und ihrer Grenzgebiete, Vol. 34},
  publisher={Springer-Verlag},
 place={Berlin},
 date={1994},
 pages={xiv+292},
 }


\bib{Odaka13}{article}{
 author={Odaka, Yuji}
 title={The GIT stability of polarized varieties via discrepancy},
 journal={ Annals of Math. },
 volume={177},
 number={2},
 date={2013},
 pages={171-185},
 issn={}
 }

\bib{Odaka14}{article}{
 author={Odaka, Yuji}
 title={On the moduli of K\"ahler-Einstein Fano manifolds},
 journal={ Proceeding of Kinosaki algebraic geometry symposium 2013, arXiv:1211.4833. },
 volume={},
 number={},
 date={2013},
 pages={},
 issn={}
 }

\bib{Odaka14a}{article}{
 author={Odaka, Yuji}
 title={Compact moduli space of K\"ahler-Einstein Fano varieties},
    JOURNAL = {Publ. Res. Inst. Math. Sci.},
  FJOURNAL = {Publications of the Research Institute for Mathematical            Sciences},
    VOLUME = {51},
     YEAR = {2015},
    NUMBER = {3},
    PAGES = {549--565},
      ISSN = {0034-5318},
   MRCLASS = {14D20 (14J45 32Q20)},
  MRNUMBER = {3395458},
MRREVIEWER = {Francesco Bottacin},
       DOI = {10.4171/PRIMS/164},
       URL = {http://dx.doi.org/10.4171/PRIMS/164},
 }

\bib{OSS}{article}{
 author={Odaka, Yuji}
  author={Spotti, Cristiano }
   author={Sun, Song}
     TITLE = {Compact moduli spaces of {D}el {P}ezzo surfaces and
              {K}\"ahler-{E}instein metrics},
   JOURNAL = {J. Differential Geom.},
  FJOURNAL = {Journal of Differential Geometry},
    VOLUME = {102},
      YEAR = {2016},
    NUMBER = {1},
     PAGES = {127--172},
      ISSN = {0022-040X},
   MRCLASS = {53C55 (14J10 32Q25 53C25 58D27)},
  MRNUMBER = {3447088},
       URL = {http://projecteuclid.org/euclid.jdg/1452002879},
}

\bib{OSY}{article}{
    AUTHOR = {Ono, Hajime}
    author={Sano, Yuji}
    author={Yotsutani, Naoto},
     TITLE = {An example of an asymptotically {C}how unstable manifold with
              constant scalar curvature},
   JOURNAL = {Ann. Inst. Fourier (Grenoble)},
    VOLUME = {62},
      YEAR = {2012},
    NUMBER = {4},
     PAGES = {1265--1287},
   }

\bib{Pa2012}{article}{
 author={Paul, Sean}
 title={CM stability of projective varieties},
 journal={arXiv:1206.4923 },
 volume={},
 number={},
 date={2012},
 }

\bib{PaTi06}{article}{
 author={Paul, Sean}
 author={Tian, Gang}
 title={CM stability and the generalized Futaki invariant I.},
 journal={arXiv:math/0605278 },
 volume={},
 number={},
 date={2006},
 issn={0003-486X}
 }

\bib{PS2010}{book}{
author={Phong, D. H.}
author={Sturm, Jacob}
title={Lectures on stability and constant scalar curvature }
volume={14}
series= {Handbook of  geometric analysis, Adv. Lect. Math. (ALM)}
publisher={Int. Press}
 year= {2010},
}

\bib{Sj1995}{article}{
author={Sjamaar, Reyer}
title={Holomorphic Slices, Symplectic Reduction and Multiplicities of Representations}
volume={131},
number={1},
journal={Ann. of Math. (2)}
year={1995}
pages={87-129},
}


\bib{Sp2012}{article}{
author={Spotti, Cristiano}
title={Degenerations of K\"ahler-Einstein Fano Manifolds}
journal={Ph.D. Thesis: arXiv:1211.5334}
year={2012}
}

\bib{SSY}{article}{
author={Sun, Song},
author={Spotti, Cristiano},
author={Yao, Chengjian},
title={Existence and deformations of Kahler-Einstein metrics on smoothable $\QQ$-Fano varieties},
journal={Duke Math. J.},
year={2018},
volume={165}
number={16}
pages={3043-3083}
}

\bib{SW2012}{article}{
 author={Song, Jian}
 author={Wang, Xiaowei}
 title={The greatest Ricci lower bound, conical Einstein metrics and the Chern number inequality}
 journal={Geom. Topol.},
year={2016},
volume={20}
number={1}
pages={49--102 }
 }

\bib{Sz2010}{article}{
author={Sz\'ekelyhidi, G\'abor}
title={The K\"ahler-Ricci flow and K-polystability}
volume={132}
number={4}
journal={Amer. J. Math. }
year={2010}
pages={1077-1090}
}

 \bib{Th2006}{article}{
author={Thomas, Richard},
title={Notes on GIT and symplectic reduction for bundles and varieties},
journal={Surveys in Differential Geometry, Int. Press. },
volume={10},
pages={221-273},
year={2006}
}

\bib{Tian1990}{article}{
author={Tian, Gang}
title={On Calabi's conjecture for complex surfaces with positive first Chern class.}
journal={Invent. Math.}
volume={101}
year={1990}
number={1}
pages={101-172}
}

\bib{Tian1997}{article}{
author={Tian, Gang},
title={K\"{a}hler-Einstein metrics with positive scalar curvature},
journal={Invent. Math.},
volume={130},
year={1997},
pages={1-39},
}

\bib{Tian2012}{article}{
author={Tian, Gang},
title={Existence of Einstein metrics on Fano manifolds},
journal={Metric and Differential Geometry,}
volume={297},
year={2012},
pages={119-159},
}

\bib{Tian13}{article}{
    AUTHOR = {Tian, Gang},
     TITLE = {Partial {$C^0$}-estimate for {K}\"ahler-{E}instein metrics},
   JOURNAL = {Commun. Math. Stat.},
  FJOURNAL = {Communications in Mathematics and Statistics},
    VOLUME = {1},
      YEAR = {2013},
    NUMBER = {2},
     PAGES = {105--113},}

\bib{Tian2014}{article}{
author={Tian, Gang},
title={K-stability and K\"{a}hler-Einstein metrics},
     TITLE = {K-stability and {K}\"ahler-{E}instein metrics},
   JOURNAL = {Comm. Pure Appl. Math.},
  FJOURNAL = {Communications on Pure and Applied Mathematics},
    VOLUME = {68},
      YEAR = {2015},
    NUMBER = {7},
     PAGES = {1085--1156},
      ISSN = {0010-3640},
   MRCLASS = {53C55 (53C25)},
  MRNUMBER = {3352459},
MRREVIEWER = {Matthew B. Stenzel},
       URL = {http://dx.doi.org/10.1002/cpa.21578},
}


\bib{Woo10}{article}{
author={Woodward, Christopher},
title={Moment maps and geometric invariant theory-Corrected version (October 2011)},
journal={ Les cours du CIRM},
volume={1},
number={ 1}
issue={1},
pages={121-166},
year={2010}
note={Available at \url{http://ccirm.cedram.org/cedram-bin/article/CCIRM_2010__1_1_121_0.pdf}}}

\bib{WX}{article}{
author={Wang, Xiaowei},
author={Xu, Chenyang}
title={Nonexistence of aymptotic GIT compactification},
journal={ Duke Math. J. },
volume={163},
issue={12},
pages={2217-2241},
year={2014}}

 \bib{Vie83}{book}{
  AUTHOR = {Viehweg, Eckart},
     TITLE = {Quasi-projective moduli for polarized manifolds},
    SERIES = {Ergebnisse der Mathematik und ihrer Grenzgebiete (3) [Results
              in Mathematics and Related Areas (3)]},
    VOLUME = {30},
 PUBLISHER = {Springer-Verlag, Berlin},
      YEAR = {1995},
 }

\bib{Yau78}{article}{
author={Yau, Shing-Tung},
title={On the Ricci curvature of a compact Kähler manifold and the complex Monge-Ampère equation. I.},
journal={ Comm. Pure Appl. Math. },
volume={31},
number={3},
pages={339-441},
year={1978}}

\end{biblist}
\end{bibdiv}

\vspace{.4cm}

\end{document}